\pdfoutput=1
\documentclass[11pt]{amsart}
\usepackage[style=alphabetic]{biblatex}
\usepackage{amsmath}

\usepackage{tikz-cd}
\usepackage{amsfonts,amssymb}
\usepackage{mathrsfs}
\usepackage{color}
\usepackage{quiver}
\usepackage[T1]{fontenc}
\usepackage{lmodern}
\definecolor{darkred}{rgb}{0.75,0,0}
\definecolor{darkblue}{rgb}{0,0,0.6} 

\providecommand{\pr}{\text{pr}}

\providecommand{\et}{\acute{e}t}
\newcommand{\mono}{\hookrightarrow}

\providecommand{\Fun}{\text{Fun}}
\providecommand{\End}{\text{End}}

\providecommand{\ob}{\text{ob}}

\providecommand{\Hom}{\text{Hom}}

\providecommand{\id}{\text{id}}
\providecommand{\ker}{\text{ker}}
\providecommand{\lim}{\text{lim}}
\providecommand{\Mod}{\text{Mod}}

\providecommand{\pr}{\text{pr}}
\providecommand{\Spec}{\mathrm{Spec}}
\DeclareMathOperator{\QCoh}{{QCoh}}

\DeclareMathOperator{\Fil}{{Fil}}
\DeclareMathOperator{\conj}{{conj}}

\newcommand{\dSt}{\operatorname{dSt}}
\newcommand{\dR}{\operatorname{dR}}

\newcommand{\dSch}{\operatorname{dSch}}
\newcommand{\Sch}{\operatorname{Sch}}
\newcommand{\FSch}{\operatorname{FSch}}
\newcommand{\Der}{\operatorname{Der}}

\newcommand{\Spc}{\operatorname{Spc}}

\newcommand{\cofib}{\operatorname{cofib}}
\newcommand{\Crys}{\operatorname{Crys}}
\newcommand{\fib}{\operatorname{fib}}

\DeclareMathOperator{\Vect}{{Vect}}
\DeclareMathOperator{\Cone}{{Cone}}
\DeclareMathOperator{\Ker}{{Ker}}
\DeclareMathOperator{\Perf}{{Perf}}

\newcommand{\bA}{{\mathbf A}}

\newcommand{\bE}{{\mathbb E}}
\newcommand{\bF}{{\mathbf F}}
\newcommand{\bG}{{\mathbf G}}

\newcommand{\bN}{{\mathbb N}}

\newcommand{\bQ}{{\mathbb Q}}

\newcommand{\bT}{{\mathbf T}}

\newcommand{\bV}{{\mathbf V}}

\newcommand{\bZ}{{\mathbf Z}}

\newcommand{\cA}{{\mathcal A}}
\newcommand{\cB}{{\mathcal B}}
\newcommand{\cC}{{\mathcal C}}
\newcommand{\cD}{{\mathcal D}}
\newcommand{\cE}{{\mathcal E}}

\newcommand{\cH}{{\mathcal H}}
\newcommand{\cI}{{\mathcal I}}
\newcommand{\cJ}{{\mathcal J}}

\newcommand{\cL}{{\mathcal L}}
\newcommand{\cM}{{\mathcal M}}

\newcommand{\cO}{{\mathcal O}}

\newcommand{\cR}{{\mathcal R}}

\newcommand{\cX}{{\mathcal X}}
\newcommand{\cY}{{\mathcal Y}}

\DeclareMathOperator{\gr}{{gr}}

 \DeclareMathOperator{\Spf}{{Spf}}
\DeclareMathOperator{\Sym}{{Sym}}

\DeclareMathOperator{\MIC}{{MIC}}
\DeclareMathOperator{\HIG}{{HIG}}

\newcommand{\Map}{\operatorname{Map}}
\newcommand{\CAlg}{\operatorname{CAlg}}

\newcommand{\Cech}{\operatorname{Cech}}
\newcommand{\Hig}{\operatorname{Hig}}
\newcommand{\DAlg}{\operatorname{DAlg}}
\newcommand{\an}{\operatorname{an}}

\usepackage{mathabx}
\usepackage{xcolor}

\usepackage{amsthm} 
\usepackage{thmtools}

\usepackage[colorlinks=true,citecolor=darkred,linkcolor=darkred,urlcolor=darkblue]{hyperref} 

\usepackage[nameinlink]{cleveref}

\theoremstyle{definition}
\newtheorem{theorem}{Theorem}[section]

\newtheorem{corollary}[theorem]{Corollary}
\newtheorem{question}[theorem]{Question}
\newtheorem{construction}[theorem]{Construction}

\newtheorem{definition}[theorem]{Definition}

\newtheorem{lemma}[theorem]{Lemma}

\newtheorem{remark}[theorem]{Remark}

\usepackage[margin=1.25in]{geometry}
\usepackage{tikz-cd}
\usetikzlibrary{arrows.meta}

\title{Ogus-Vologodsky equivalence via stacks}
\author[Terentiuk]{Gleb Terentiuk}

\begin{document}
\maketitle

\setcounter{tocdepth}{1}
\begin{abstract}
    Using the relative de Rham stack for a family $X \to S$ in characteristic $p,$ we reprove the (local and global) Ogus-Vologodsky equivalence. Moreover, we observe that a lift of $S$ is not necessary. Instead, we use a lift of $X$ to the second Witt vectors of $S.$ The main ingredient is that, for a quasi-syntomic family $X/S,$ the relative de Rham stack admits a structure of a torsor over $X'$ which is the analogue of the Azumaya property of the algebra of differential operators. This can be applied to families of (reasonable) algebraic stacks, which gives rise to a logarithmic version of the Cartier equivalence. Along the way, we also obtain a decompleted version of the global Cartier equivalence.
\end{abstract}
\tableofcontents

\section{Introduction}
For a compact K\"ahler manifold $X$, the non-abelian Hodge correspondence provides an equivalence between suitable categories of $D$-modules and Higgs modules. Despite being highly-transcendental, it admits a characteristic $p$ counterpart established by Ogus and Vologodsky in their celebrated paper \cite{MR2373230}. To formulate their results, let us introduce some notations. Let $X\to S$ be a smooth family of schemes over $\bF_p.$ One of the main tools in analyzing modules with integrable connection in characteristic $p$ is the Azumaya property of the algebra of differential operators. Namely, there exists a map $\psi: ST_{X'/S} \to F_{X/S, *}D_{X/S}$ exhibiting the source as the center of the target. It is defined by $\psi(v)=\partial_{v}^{p} - \partial_{v^{[p]}} $ where $v^{[p]}$ is a vector field obtained by noticing that the $p$-fold composition of a derivation is again a derivation.
\begin{itemize}
    \item[(1)] Denote by $\MIC(X/S)$ the category of $F_{X/S, *}D_{X/S}$-modules. Then $\MIC_{\le n}(X/S)$ is a full subcategory such that the action of $ST_{X'/S}$ factors through $S^{\le n}T_{X'/S}.$ Also, denote $\MIC^{\cdot}(X/S)$ to be the full subcategory of nilpotent $D$-modules, i.e., $(M, \nabla)$ such that every $m \in M$ is killed by $S^{\ge N}T_{X'/S}$ for some $N$.
    \item[(2)] Denote $\HIG(X'/S) \simeq \QCoh(\bT^*_{X'/S})$ to be the category of Higgs modules on $X'/S$. Then $\HIG_{\le n}(X'/S)$ will stand for the full subcategory of those Higgs modules where the action of $ST_{X'/S}$ factors through $S^{\le n}T_{X'/S}$, i.e., Higgs modules supported on $n$-th neighbourhood of the zero section. Similarly, $\HIG^{\cdot}(X'/S) = \QCoh(\widehat{\bT_{X'/S}^*})$ is the category of nilpotent Higgs modules.
\end{itemize}
One of the main results they prove, which is referred to as the \textit{local Cartier equivalence}, is the following.
\begin{theorem}(\cite[Theorem 2.11]{MR2373230})
   Let $X \to S$ be a smooth morphism. Any lift  of the relative Frobenius $X \to X'$ to a flat lift $\tilde{S} \to \bZ/p^2$ induces a symmetric monoidal equivalence $\HIG^{\cdot}(X'/S) \simeq \MIC^{\cdot}(X/S).$
\end{theorem}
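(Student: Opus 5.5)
The plan is to work with the relative de Rham stack $X^{\dR}_{/S}$ and its structure over $X'$, in the spirit of the abstract. The proof has three steps.

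\textbf{Step 1: identify the two sides.} $\MIC(X/S)$ is $\QCoh$ of the relative de Rham stack $X^{\dR}_{/S}$, defined by $X^{\dR}_{/S}(R) = X(R/\bG_a^\sharp\text{-ideal})$, i.e.\ $R$ modulo divided-power nilpotents. The relative Frobenius factors as
\[
X \to X^{\dR}_{/S} \to X',
\]
and the second map $\pi\colon X^{\dR}_{/S}\to X'$ is the key object. The central subalgebra $ST_{X'/S}\subset F_{X/S,*}D_{X/S}$ corresponds to pushforward along $\pi$ followed by the $p$-curvature action. Under this dictionary, nilpotent objects $\MIC^{\cdot}(X/S)$ are the quasi-coherent sheaves on $X^{\dR}_{/S}$ whose $p$-curvature is nilpotent, i.e.\ which are supported on the formal completion $\widehat{X^{\dR}_{/S}}$ along the kernel of the $p$-curvature.

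\textbf{Step 2: the torsor structure.} Show that $\pi$ is a gerbe banded by $T^\sharp_{X'/S}$, the divided-power completion of the tangent bundle, or equivalently a torsor for $B T^\sharp_{X'/S}$ over $X'$. This is the stacky form of the Azumaya property. I would check it étale locally: there $X$ is étale over $\bA^n_S$, and the gerbe property reduces to $\bA^1$. For $\bA^1$, $(\bA^1)^{\dR} = \bA^1/\bG_a^\sharp$, and Frobenius identifies $\bA^1/\bG_a^\sharp$ with a $B(\text{Frobenius kernel data})$-torsor over $\bA^{1\prime}$ via the exact sequence
\[
0\to \alpha_p \to \bG_a^\sharp \to \bG_a^\sharp{}' \to 0.
\]

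\textbf{Step 3: trivialize the torsor from the lift and apply Cartier duality.} A lift $\tilde F$ of Frobenius to $\tilde S$ gives a splitting of the gerbe over the nilpotent locus: it is a section of the torsor, i.e.\ a map $X'\to \widehat{X^{\dR}_{/S}}$ over $X'$. The construction uses the usual $\frac{1}{p}\tilde F^*$ trick, sending $d x'\mapsto \frac{1}{p}d\tilde F^*(\tilde x')$, now phrased as a map of stacks. Once trivialized,
\[
\widehat{X^{\dR}_{/S}} \simeq X'\times B\widehat{T^\sharp}.
\]
Cartier duality between the PD-formal group $\widehat{T^\sharp_{X'/S}}$ and the formal group $\widehat{\bT^*_{X'/S}}$ then gives
\[
\QCoh(B\widehat{T^\sharp}_{X'})\simeq \QCoh(\widehat{\bT^*_{X'/S}}) = \HIG^{\cdot}(X'/S).
\]
This equivalence is symmetric monoidal, because pullback along the section and Cartier duality are both monoidal once the tensor product on the Higgs side is convolution, which agrees with the usual tensor product on nilpotent modules.

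\textbf{Main obstacle.} I expect the hardest part to be producing the section from $\tilde F$ canonically and globally. The local sections coming from coordinates must glue, and this needs the torsor to be abelian, with differences of sections landing in $T^\sharp$-valued maps. Verifying that the induced functor matches the Ogus–Vologodsky formula is the expected sticking point.
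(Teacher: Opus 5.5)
Your overall architecture matches the paper's: $(X/S)^{dR}\to X'$ is a $T^\sharp_{X'/S}$-gerbe, the Frobenius lift splits it, and Cartier duality turns $\QCoh(B_{X'}T^\sharp_{X'/S})$ into $\QCoh(\widehat{\bT^*_{X'/S}})=\HIG^{\cdot}(X'/S)$. The Cartier-duality and monoidality parts of Step 3 are fine.

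\textbf{The main gap.} The one step that carries the content, producing the section from $\tilde F$, is not supplied. The formula $dx'\mapsto\frac1p d\tilde F^*(\tilde x')$ is only degree-one cochain-level data: a map $\zeta=\tilde F^*/p\colon\Omega^1_{X'/S}\to F_{X/S,*}Z^1\Omega^\bullet_{X/S}$ lifting $C^{-1}$. Saying it is ``phrased as a map of stacks'' does not produce a point of $(X/S)^{dR}(X')$. Two ingredients from the paper are missing.
\begin{enumerate}
\item $\nu_{X/S}$ is affine with $\nu_{X/S,*}\cO=F_{X/S,*}\dR_{X/S}$ (\Cref{dR-is-affine-relative}). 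Hence sections of $\nu_{X/S}$ are exactly maps of derived $\cO_{X'}$-algebras $F_{X/S,*}\dR_{X/S}\to\cO_{X'}$.
\item $\zeta$ extends multiplicatively, $\omega_1\wedge\dots\wedge\omega_i\mapsto\zeta(\omega_1)\wedge\dots\wedge\zeta(\omega_i)$, to a quasi-isomorphism of algebras $(\bigoplus_i\Omega^i_{X'/S}[-i],0)\simeq F_{X/S,*}\dR_{X/S}$. This is Deligne--Illusie, upgraded by Bhatt to an equivalence of derived algebras, and it needs only $\tilde S$, $\tilde X$, $\tilde X'$ and $\tilde F\colon\tilde X\to\tilde X'$.
\end{enumerate}
The section is the inverse of this equivalence followed by projection onto degree $0$.

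Your diagnosis of the obstacle is also off. $\tilde F$ is global, and this construction is coordinate-free and functorial in Zariski opens, so no gluing of chart-wise sections arises. In any case, gluing sections of a gerbe would require coherent isomorphisms on overlaps, not merely an abelian band.

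\textbf{Step 1 is incorrect.} $\QCoh((X/S)^{dR})$ is not all of $\MIC(X/S)$. Quasi-coherent sheaves on the de Rham stack automatically have locally nilpotent $p$-curvature. For $\bA^1$, a $\bG_a^\sharp$-representation is a module over the Cartier dual $\bF_p[[\partial]]$ on which $\partial$ acts locally nilpotently. So $\QCoh((X/S)^{dR})$ is already $\MIC^{\cdot}(X/S)$ (\Cref{lm:mod-xdr=d-mod}). Your ``formal completion along the kernel of the $p$-curvature'' is undefined and unnecessary; what you want is a section of $\nu_{X/S}$ itself.

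\textbf{Step 2 is also incorrect.} The sequence $0\to\alpha_p\to\bG_a^\sharp\to\bG_a^{\sharp\prime}\to0$ is backwards: over $\bF_p$ there is no nonzero homomorphism $\alpha_p\to\bG_a^\sharp$. Instead, $\bG_a^\sharp\to\bG_a$ has image $\alpha_p$, since $t^p=p!\,t^{[p]}=0$, and its kernel is a Frobenius-twisted copy of $\bG_a^\sharp$. That kernel, acting trivially on $\bA^1$, is what bands $\bA^1/\bG_a^\sharp\to\bA^1/\alpha_p\simeq\bA^{1\prime}$.

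Moreover, a chart-by-chart computation gives the gerbe structure only locally. The paper gets a canonical band differently. It uses that $\bG_a^{dR}\to F_*\bG_a$ is a square-zero extension with fiber $F_*\bG_a^\sharp[1]$, so by deformation theory the fibers of $\nu_{X/S}$ are torsors under $\Map(L_{X'/S},\bG_a^\sharp[1])$. Local non-emptiness is then checked on quasi-regular semiperfect covers (\Cref{torsor-structure}).
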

Even when $S=\Spec(\bF_p)$, having a lift of the relative Frobenius is extremely restrictive. To formulate a more flexible result that they prove, let us also denote $$\HIG^{\cdot}_{\gamma}(X'/S) := \QCoh(\widehat{\bT^{*, \gamma}_{X'/S})}$$ and $\MIC^{\cdot}_{\gamma}(X/S)$ to be the category of locally nilpotent modules over $$F_{X/S, *}D_{X/S} \otimes_{S T_{X'/S}} \hat \Gamma T_{X'/S}.$$ We will refer to the following as \textit{the global Cartier equivalence}.
\begin{theorem}(\cite[Theorem 2.8]{MR2373230})\label{global-of-intro}
    Let $X \to S$ be a smooth morphism. Any flat lift of $X' \to S$ to $\bZ/p^2$ induces a symmetric monoidal equivalence $\MIC^{\cdot}_{\gamma}(X/S) \simeq \HIG^{\cdot}_{\gamma}(X'/S).$ In particular, one obtains an equivalence $\MIC_{\le p-1}(X/S) \simeq \HIG_{\le p-1}(X'/S).$
\end{theorem}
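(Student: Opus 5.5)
The plan is to obtain the equivalence from a torsor structure on the relative de Rham stack, following the approach announced in the abstract. Let $X_{\dR/S}$ be the relative de Rham stack. By (the analogue of) Cartier's theorem, $\QCoh(X_{\dR/S})$ recovers $\MIC(X/S)$, and pushforward along the relative Frobenius gives a map $X_{\dR/S} \to X'$.

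\textbf{Step 1: torsor structure.} I will show that $X_{\dR/S} \to X'$ is a gerbe, or torsor, banded by the group scheme $T^\sharp_{X'/S}$. This is the divided-power (PD) completion of the zero section of the tangent bundle $T_{X'/S}$. Concretely, for a test algebra $R$, points of $X_{\dR/S}$ are $X(R/I)$ with $I$ a PD-nilpotent ideal, and the fibres over $X'$ are computed through the kernel of Frobenius on $\bG_a^\sharp$. This is the stacky form of the Azumaya property: the central action of $ST_{X'/S}$ on $D_{X/S}$ corresponds to the $B T^\sharp$-action.

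\textbf{Step 2: splitting the gerbe.} Nilpotent $\gamma$-Higgs modules, that is $\QCoh(\widehat{\bT^{*,\gamma}_{X'/S}})$, are exactly $\QCoh(B_{X'}T^\sharp_{X'/S})$ via Cartier duality between $T^\sharp$ and the formal PD-completion of $\bT^*$. So the equivalence reduces to trivializing the gerbe $X_{\dR/S} \to X'$, i.e. identifying it with $B_{X'}T^\sharp_{X'/S}$.

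The class of the gerbe lies in $H^2(X', T^\sharp_{X'/S})$. Using the sequence $0 \to \alpha_p\text{-type kernel} \to T^\sharp \to \cdots$, I expect this class to be the obstruction class to lifting $X'$ to $\bZ/p^2$ (or to $W_2$), up to the Frobenius-twist identification. This is the stacky analogue of the Ogus–Vologodsky splitting of the Azumaya algebra over the PD-completion by the lift. So a flat lift $\tilde X'$ provides a trivialization. Explicitly, a lift defines a section of $X_{\dR/S}$ over $X'$: lift $R/I$-points using the PD structure and the lifted Frobenius on $W_2$-type thickenings.

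\textbf{Step 3: monoidality and the truncated case.} The trivialization $X_{\dR/S}\simeq B T^\sharp$ is an equivalence of stacks over $X'$, so pullback is symmetric monoidal. The statement $\MIC_{\le p-1} \simeq \HIG_{\le p-1}$ follows because $\Gamma^{\le p-1}$ and $S^{\le p-1}$ agree: divided powers below degree $p$ are just scaled symmetric powers.

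\textbf{Main obstacle.} I expect Step 2 to be the hardest: producing the section from a lift of $X'$ rather than of $S$, and checking that it respects the $T^\sharp$-action. I would first try this étale locally, where Frobenius lifts exist. There I would compare with the local Cartier equivalence (Theorem 2.11 above) and show that changing the Frobenius lift changes the section by a $T^\sharp$-valued cocycle that is canonically a coboundary once $\tilde X'$ is fixed. The local trivializations then glue.
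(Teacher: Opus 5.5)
There is a genuine gap in Step 2, and it starts with the Cartier duality you invoke there. The group $T^\sharp_{X'/S}$ is dual to the \emph{ordinary} formal completion $\widehat{\bT^*_{X'/S}}$. It is the vector group $T_{X'/S}$ that is dual to the PD-formal completion $\widehat{\bT^{*,\gamma}_{X'/S}}$. Hence $\QCoh(B_{X'}T^\sharp_{X'/S})\simeq\HIG^{\cdot}(X'/S)$ and $\QCoh((X/S)^{dR})\simeq\MIC^{\cdot}(X/S)$, not the $\gamma$-categories.

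A trivialization of the $T^\sharp_{X'/S}$-gerbe $(X/S)^{dR}\to X'$ would therefore give the \emph{local} equivalence. It cannot come from a lift alone. Pushing $\cO$ forward along $(X/S)^{dR}\simeq B_{X'}T^\sharp_{X'/S}\to X'$ would give a decomposition $F_{X/S,*}\dR_{X/S}\simeq\bigoplus_i\Omega^i_{X'/S}[-i]$ in all degrees. A $\bZ/p^2$-lift only yields the Deligne--Illusie splitting of $\tau^{<p}$, and by work of Petrov there are liftable varieties whose de Rham complex does not decompose. Correspondingly, the class of the gerbe in $H^2(X',T^\sharp_{X'/S})$ is not the lifting obstruction; only its image under $T^\sharp_{X'/S}\to T_{X'/S}$ is.

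Your gluing argument fails for the same reason. Two local Frobenius lifts differ by a section $h$ of $F_{X/S}^*T_{X'/S}$, and the two local transforms are related by a twist of the form $\exp(\langle h,F_{X/S}^*\theta\rangle)$. That twist only makes sense if the Higgs field carries divided powers (or is nilpotent of level $<p$). It is not a $T^\sharp$-coboundary, and this is exactly why the global statement lives on the $\gamma$-categories.

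The missing idea is to push the de Rham gerbe out along $T^\sharp_{X'/S}\to T_{X'/S}$. This gives the $T_{X'/S}$-gerbe $\nu^\gamma_{X/S}:(X/S)^{dR,\gamma}\to X'$; equivalently, transmute the ring stack $F_*\bG_a\otimes^L_{\bZ_p}\bF_p$ in place of $\bG_a^{dR}$. For this gerbe, the class in $H^2(X',T_{X'/S})$ really is the obstruction to lifting $X$ to $W_2(S)$. The paper shows its gerbe of splittings is the gerbe of such lifts, and a lift of $X'$ to $\bZ/p^2$ gives a section through the map $X'^{dR,\gamma}\to(X/S)^{dR,\gamma}$ over $X'$.

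Cartier duality then gives $\QCoh(B_{X'}T_{X'/S})\simeq\HIG^{\cdot}_{\gamma}(X'/S)$. You still need a separate argument that $\QCoh((X/S)^{dR,\gamma})\simeq\MIC^{\cdot}_{\gamma}(X/S)$. Descend along the fppf cover $(X/S)^{dR}\times_{X'}B_{X'}T_{X'/S}\to(X/S)^{dR,\gamma}$; on it, a sheaf is a crystal with a locally nilpotent action of $\Gamma(F_{X/S}^*T_{X'/S})$. The descent datum then forces the underlying symmetric-algebra action to be the $p$-curvature, because the $B_{X'}T^\sharp_{X'/S}$-action on $(X/S)^{dR}$ is the $p$-curvature. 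Your Step 3 passage to $\le p-1$ is fine once this $\gamma$-equivalence is in place.
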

Their proof is based on exploiting the Azumaya property of the algebra of differential operators $F_{X/S, *} D_{X/S}$. That is, they use the given data to produce a splitting module for $F_{X/S, *}D_{X/S}$ over an appropriate neighbourhood of the zero section in $\bT^*_{X'/S}.$ Namely, they observe that a lift $\tilde{X'}/\tilde{S}$ gives rise to a $F_{X/S}^* T_{X'/S}$-torsor $\cL_{\cX}$ on $X$ with a flat connection\footnote{Note $F_{X/S}^*T_{X'/S}$ has a canonical connection with the vanishing $p$-curvature.} and, moreover, the dual of the ring of functions is a splitting module for $F_{X/S, *}D_{X/S}$ over $\widehat{\bT}^{*,\gamma}_{X'/S}$, the completed PD-envelope of the zero section in $\bT^*_{X'/S}.$ Any lift of $F_{X/S}$ gives a section $X \to \cL_{\cX}$ and they observe that its PD-envelope is preserved by the connection and (the dual of) the functions on this PD-envelope define a splitting module for $F_{X/S, *}D_{X/S}$ over $\widehat{\bT}_{X'/S}.$
\newline
Our approach can be seen as Koszul-dual. Instead of studying the algebra of differential operators $F_{X/S, *}D_{X/S}$, we focus on studying the de Rham complex $F_{X/S, *}\dR_{X/S} \in \cD_{qc}(X').$ One advantage of this approach is that the de Rham complex gives rise to a geometric object, called the \textit{de Rham stack}, and denoted by $(X/S)^{dR}.$ One of the key structures possessed by this stack is a map $\nu_{X/S}: (X/S)^{dR} \to X'$ which, when $X/S$ is smooth, exhibits the source as a $T_{X'/S}^{\sharp}$-gerbe over the target and which can be viewed as an avatar of the Azumaya property of $F_{X/S, *}D_{X/S}.$ Using the map $T_{X'/S}^{\sharp} \to T_{X'/S},$ we construct a $T_{X'/S}$-gerbe over $X'$ which we denote by $(X/S)^{dR, \gamma}.$ One of the main results is the following:
\begin{theorem}\label{thm:gerbe-of-lifts}
    Let $X/S$ be a representable quasi-syntomic morphism of algebraic stacks over $\bF_p$. The gerbe of splittings of $\nu_{X/S}^{\gamma}: (X/S)^{dR, \gamma} \to X'$ is identified with the gerbe of liftings of $X/S$ to $W_2(S).$ 
\end{theorem}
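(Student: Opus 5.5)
We reduce to the case where $X$ and $S$ are affine and then glue. Both sides of the claimed identification are gerbes over $X'$, banded by compatible objects: $\nu^{\gamma}_{X/S}$ is a $T_{X'/S}$-gerbe by construction, being the pushout of the $T^{\sharp}_{X'/S}$-gerbe $\nu_{X/S}$ along $T^{\sharp}_{X'/S}\to T_{X'/S}$. In the quasi-syntomic case $T_{X'/S}$ is replaced by the dual of the cotangent complex, so the band is a complex concentrated in degrees allowing at most two nonvanishing obstruction groups. The gerbe of liftings of $X$ to $W_2(S)$ is, by deformation theory, a gerbe banded by $\underline{\mathrm{Hom}}(L_{X/S}, \cO_X)$ twisted by the ideal $\ker(W_2(\cO_S)\to\cO_S)\simeq F_*\cO_S$ (via $V$). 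Pulling this back along Frobenius identifies it with $F^*T_{X'/S}$, which descends to $T_{X'/S}$ on $X'$. So the first step is to check that the two bands agree. This amounts to identifying $(F_*\cO_S)\otimes$ pullbacks correctly and checking it is compatible with the $p$-curvature/Cartier map. Since all constructions are functorial and satisfy étale (indeed flat) descent on $X$ and $S$, it then suffices to build a natural map of gerbes. A map of gerbes with the same band is automatically an equivalence.

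To construct the map, we start from a lift $\tilde X$ of $X$ over $W_2(S)$ and produce a section of $(X/S)^{dR,\gamma}\to X'$. We use the description of $(X/S)^{dR}$ as the functor $R\mapsto X(R/\text{(nilpotent PD-ideal)})$ over $S$, i.e.\ via $\mathbf G_a^{\dR}=\mathbf G_a/\mathbf G_a^{\sharp}$, with $\nu$ coming from $W_2$-type Frobenius: $\mathbf G_a^{\sharp}\to\mathbf G_a$ having cokernel related to Frobenius. Concretely, for $R$ over $S$ and a point $x'\in X'(R)$, the fiber of $\nu$ over $x'$ consists of $R$-points of $X$ modulo $\sharp$-thickenings lifting $x'$. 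A $W_2$-lift $\tilde X$ yields, via the map $W_2(R)\to R$, $(a_0,a_1)\mapsto a_0$, together with the ghost/Frobenius relation $F\colon W_2(R)\to R$, a canonical $W_2(R)$-point. After pushing out $\sharp\to T$ (i.e.\ killing divided powers), this point becomes well defined. More precisely, we identify $(X/S)^{dR,\gamma}$ with a stack parametrizing pairs consisting of a point of $X'$ and a $W_2$-lift of the corresponding Frobenius-twisted point. This identification is done first for $X=\mathbf A^n_S$, using $\mathbf G_a^{\dR,\gamma}\simeq \mathbf G_a/\mathbf G_a^{\text{Frob-kernel-pushout}}$ computed via $W_2$. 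For representable quasi-syntomic $X$ we then bootstrap by Kan extension and descent, writing $X$ locally as a fiber of a map between affine spaces.

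The main obstacle I expect is this identification of the affine line case. We must show that $\mathbf G_a^{\dR}\to\mathbf G_a'$, pushed out along $\mathbf G_a^{\sharp}\to\mathbf G_a$, is the gerbe of $W_2$-lifts; that is, there is an exact sequence of group stacks relating $W_2$, $F$, $V$ and $\mathbf G_a^{\sharp}$, presumably via $\mathbf G_a^{\sharp}\simeq W[F]$ and the sequence $0\to W[F]\to W\xrightarrow{F} W\to 0$ truncated at level $2$. We would first verify it on points, and then check compatibility with the ring structure so that it globalizes to all quasi-syntomic $X$ via the animated-ring description. Finally, the bands are compared on the affine-space case, which by the above suffices.
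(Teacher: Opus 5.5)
Your outer frame is sound. Both sides are $T_{X'/S}$-gerbes on $X'$, so a natural, band-compatible map from lifts to splittings would suffice. For lifts the band is $\underline{\Hom}(L_{X/S},\nu_*\cO_{X'})\simeq\nu_*T_{X'/S}$, by adjunction and base change along $\nu\colon X'\to X$. It is not obtained by a Frobenius pullback; $F_{X/S}^*T_{X'/S}$ is the band of a different torsor.

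The gap is that the one step carrying the content is never supplied: producing from $\tilde X$ a section $s_{\tilde X}$ of $\nu^{\gamma}_{X/S}$, naturally in $\tilde X$. The mechanisms you sketch do not supply it.

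\begin{itemize}
\item As far as it can be made precise, your ``canonical $W_2(R)$-point'' asks for $W_2(R)$-points of $\tilde X$ lifting points of $X$. These are not canonical. They are sections of the Greenberg transform $\tilde X^{W_2}\to X$, an $F_{X/S}^*T_{X'/S}$-torsor whose sections are exactly strong Frobenius lifts. That is the extra datum the paper needs later to split the full de Rham gerbe, and pushing out along $\bG_a^\sharp\to\bG_a$ does nothing to remove this ambiguity.
\item The reduction to $\bA^n_S$ does not help either. There both gerbes are neutral, so an identification has content only if it is natural for $W_2(S)$-morphisms between lifts. To propagate it along a presentation $X=\bA^n_S\times_{\bA^r_S}S$ you must carry lifted equations $\tilde f_i$, so you already need a construction functorial in $\tilde X$, which is the whole problem.
\item The sequence $0\to W[F]\to W\xrightarrow{F}W\to 0$ describes the band $\bG_a^\sharp$, not a splitting.
\end{itemize}

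The missing idea is a property of the ring stack itself. The obvious section $c\colon F_*R\to\bG_a^{dR,\gamma}(R)=F_*R\otimes^L\bF_p$ of $\nu^{\gamma}$ is not $R$-linear, but it is $W_2(R)$-linear (\Cref{rem-two-maps}, \Cref{rem:obvious-splittig-not-R-linear}). So over $W_2(S)$, $\Spec\bG_a^{dR,\gamma}(R)$ is the split square-zero extension of $\Spec F_*R$ by $F_*R[1]$, with a $W_2(S)$-linear retraction $\pi$. Since $X\simeq\tilde X\times_{W_2(S)}S$, one has $(X/S)^{dR,\gamma}(R)=\Map_S(\Spec\bG_a^{dR,\gamma}(R),X)\simeq\Map_{W_2(S)}(\Spec\bG_a^{dR,\gamma}(R),\tilde X)$. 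One then sets $s_{\tilde X}(x)=x\circ\pi$ for $x\in X'(R)=\Map_S(\Spec F_*R,X)$. This is a section because $\pi$ retracts the inclusion $\Spec F_*R\to\Spec\bG_a^{dR,\gamma}(R)$.

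This construction is natural in $\tilde X$ and local on $X$. It involves no reduction to affine space and applies verbatim to representable quasi-syntomic maps of stacks. The lift enters only through the displayed identification. One checks by deformation theory that changing $\tilde X$ by a class in $\mathrm{Ext}^1(L_{X/S},\nu_*\cO_{X'})\simeq\mathrm{Ext}^1(L_{X'/S},\cO_{X'})$ changes $s_{\tilde X}$ by the corresponding class. That is the band compatibility your final ``map of gerbes with the same band'' step needs.
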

In particular, any flat lift of $X/S$ to $W_2(S)$ gives rise to a global Cartier equivalence.
\begin{corollary}\label{global-cartier-from-W}
    Any flat lift $\tilde{X}/W_2(S)$ gives rise to an equivalence $(X/S)^{dR, \gamma} \simeq B_{X'}T_{X'/S}$ of $X'$-stacks. One can identify quasi-coherent sheaves on $(X/S)^{dR, \gamma}$ with locally nilpotent modules over $F_{X/S, *} D_{X/S} \otimes_{S T_{X'/S}} \hat\Gamma T_{X'/S}$ and Cartier duality identifies $\QCoh(B_{X'}T_{X'/S}) \simeq \QCoh(\widehat{\bT_{X'/S}^{*,\gamma}})$ implying $\MIC_{\gamma}^{\cdot}(X/S) \simeq \HIG^{\cdot}_{\gamma}(X'/S).$ In particular, one obtains an equivalence $\MIC_{\le p-1}(X/S) \simeq \HIG_{\le p-1}(X'/S).$
\end{corollary}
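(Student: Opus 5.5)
The plan has three stages: a trivialization of the gerbe from Theorem \ref{thm:gerbe-of-lifts}, a description of quasi-coherent sheaves on the stack $(X/S)^{dR,\gamma}$, and Cartier duality on $X'$.

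\emph{Trivializing the gerbe.} By Theorem \ref{thm:gerbe-of-lifts}, a flat lift $\tilde X/W_2(S)$ is an object of the gerbe of liftings. It therefore gives a splitting of the $T_{X'/S}$-gerbe $\nu^{\gamma}_{X/S}:(X/S)^{dR,\gamma}\to X'$, i.e.\ a section $X'\to (X/S)^{dR,\gamma}$. A $T_{X'/S}$-gerbe with a section is canonically equivalent over $X'$ to the trivial gerbe $B_{X'}T_{X'/S}$, which gives the first claim. One point to check is that $T_{X'/S}$ is commutative, so banded gerbes are classified by $H^2$ and a section trivializes the band compatibly.

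\emph{Identifying $\QCoh((X/S)^{dR,\gamma})$.} I would first treat $\QCoh((X/S)^{dR})$. Since $\nu_{X/S}$ is a $T^\sharp_{X'/S}$-gerbe, it is fpqc locally on $X'$ of the form $B T^\sharp_{X'/S}$. Quasi-coherent sheaves on $BT^\sharp$ are comodules over $\cO(T^\sharp)=\Gamma$-type divided-power functions. Dually, they are modules over $ST_{X'/S}$ that are locally nilpotent in the appropriate sense. Globally, the gerbe twists this by the Azumaya algebra $F_{X/S,*}D_{X/S}$. Concretely, I would compute via the Čech nerve of the flat cover $X\to (X/S)^{dR}$. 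The groupoid $X\times_{(X/S)^{dR}}X$ is the PD-neighborhood of the diagonal, so descent data are exactly modules with integrable connection. Hence $\QCoh((X/S)^{dR})$ is identified with $F_{X/S,*}D_{X/S}$-modules with the nilpotence condition coming from the $p$-curvature action of $ST_{X'/S}$.

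Next, $(X/S)^{dR,\gamma}$ is the pushout of the gerbe along $T^\sharp\to T$. I would then show that $\QCoh$ of the pushed-out gerbe corresponds to the base change $F_{X/S,*}D_{X/S}\otimes_{ST_{X'/S}}\hat\Gamma T_{X'/S}$, with local nilpotence. Locally this is the statement that representations of $T$, namely comodules over $\Sym(T^\vee)$, are locally nilpotent modules over the completed divided power algebra $\hat\Gamma T$. I expect this base-change comparison to be the main obstacle. It requires checking that the central map $ST_{X'/S}\to F_*D$ from the $p$-curvature is compatible with the map $T^\sharp\to T$ on Cartier duals, and that the completion and local-nilpotence conditions match exactly. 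I would reduce to the case where $X$ and $S$ are affine and $X/S$ is smooth, where everything is explicit in coordinates.

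\emph{Cartier duality and the truncated statement.} Cartier duality for the vector group $T_{X'/S}$ over $X'$ gives
\[
\QCoh(B_{X'}T_{X'/S})\simeq \QCoh(\widehat{\bT^{*,\gamma}_{X'/S}}).
\]
The dual of $\bV(T)$ is the formal PD-completion of the zero section of $\bT^*$. Combining the three identifications gives $\MIC^{\cdot}_{\gamma}(X/S)\simeq\HIG^{\cdot}_{\gamma}(X'/S)$.

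For the last claim, observe that $S^{\le p-1}T$ and $\Gamma^{\le p-1}T$ agree, since the factorials $n!$ are invertible for $n\le p-1$. So objects on which the action factors through degree $\le p-1$ lie in both categories, and they correspond under the equivalence. This yields $\MIC_{\le p-1}(X/S)\simeq\HIG_{\le p-1}(X'/S)$.
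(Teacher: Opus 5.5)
The overall route is the paper's, and your first and third stages agree with it. The section of $\nu^{\gamma}_{X/S}$ is the one from \Cref{rem:splittings-push=lifts-to-witt}, a section of a $B_{X'}T_{X'/S}$-torsor trivializes it, and Cartier duality is applied as you say.

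The gap is in the second stage, which carries all the content of the corollary and which you only announce. Reducing to affine $X$, $S$ and computing ``explicitly in coordinates'' does not by itself give a global equivalence $\QCoh((X/S)^{dR,\gamma})\simeq\MIC^{\cdot}_{\gamma}(X/S)$. On a chart, any explicit identification depends on choices, such as coordinates or a local Frobenius lift splitting the gerbe and hence a splitting module for $F_{X/S,*}D_{X/S}$. To glue the local equivalences you would have to show they are independent of these choices. Alternatively, you would first have to write down a canonical global comparison functor and only then check it locally. Your proposal does neither.

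The paper supplies the missing mechanism in \Cref{gamma-crystals-are-gamma-d-modules}, by a coordinate-free descent argument.
\begin{itemize}
\item Set $C_1:=(X/S)^{dR}\times_{X'}B_{X'}T_{X'/S}$ and $C_0:=(X/S)^{dR,\gamma}$. Since $C_0$ is the quotient of $C_1$ by the antidiagonal action of $B_{X'}T^{\sharp}_{X'/S}$, the map $C_1\to C_0$ is an fppf cover.
\item One has $C_1\times_{C_0}C_1\simeq B_{X'}T_{X'/S}\times_{X'}B_{X'}T^{\sharp}_{X'/S}\times_{X'}(X/S)^{dR}$. One projection is the action of $B_{X'}T^{\sharp}_{X'/S}$ on $B_{X'}T_{X'/S}$ through $T^{\sharp}\to T$; the other is the action $a$ on $(X/S)^{dR}$.
\item By Cartier duality, an object on $C_1$ is a crystal $\cE$ with a horizontal, locally nilpotent action $\psi^{\gamma}$ of $\Gamma(F_{X/S}^*T_{X'/S})$.
\item Pulling back along the first projection restricts $\psi^{\gamma}$ to $S(F_{X/S}^*T_{X'/S})$ via $S\to\Gamma$. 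Pulling back along $a$ produces the $p$-curvature of $\cE$, by \Cref{rem:p-curv=action}.
\item Descent data are therefore crystals with a $\hat\Gamma$-action whose restriction to $ST_{X'/S}$ is the $p$-curvature. These are exactly the locally nilpotent $F_{X/S,*}D_{X/S}\otimes_{ST_{X'/S}}\hat\Gamma T_{X'/S}$-modules.
\end{itemize}
This is precisely the compatibility you singled out. The point is that the only local check required is \Cref{rem:p-curv=action}. It compares $a^*$ with a canonically defined operation, so it can legitimately be verified on $\bA^1$ without any gluing issue.

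The same description also fills a small hole in your last step. When you say the objects of level at most $p-1$ ``correspond'', you are implicitly using that the equivalence is $\hat\Gamma T_{X'/S}$-linear. This holds because the trivialization is an equivalence of $B_{X'}T_{X'/S}$-torsors, so it intertwines the actions inducing the $\hat\Gamma T_{X'/S}$-module structures. On the $\MIC$ side, that structure restricts to the $p$-curvature on $ST_{X'/S}$. Hence the subcategories killed by $\Gamma^{\ge p}T_{X'/S}$ match.
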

Moreover, any flat lift $\tilde{S} \to \bZ/p^2$ identifies the gerbe of flat lifts of $X/S$ to $W_2(S)$ with the gerbe of flat lifts of $X'/S$ to $\tilde{S}.$ In particular, it makes sense to compare the equivalence from \Cref{global-cartier-from-W} with the global Cartier equivalence of \Cref{global-of-intro} in the presence of $\tilde{S}.$ In \Cref{main:global-ov} we prove that they are equivalent.
By construction, $(X/S)^{dR, \gamma}$ comes equipped with a map $(X/S)^{dR} \to (X/S)^{dR, \gamma}$. By \Cref{thm:gerbe-of-lifts}, any lift $\tilde{X}/W_2(S)$ defines a splitting $(X/S)^{dR, \gamma} \simeq B_{X'}T_{X'/S},$ thus we get a map $(X/S)^{dR} \to (X/S)^{dR, \gamma} \simeq B_{X'}T_{X'/S}$. This defines a $F_{X/S}^*T_{X'/S}$-torsor on $(X/S)^{dR}$ which, by descent along $X \to (X/S)^{dR},$ can be viewed as a $F_{X/S}^*T_{X'/S}$-torsor with a flat connection on $X$. We identify the underlying torsor on $X$ with the torsor of strong lifts of $F_{X/S}$ to $\tilde{X},$ see \Cref{def:strong-frob-lift}. Moreover, we prove that any flat lift $\tilde{S}$ of $S$ to $\bZ/p^2$ identifies this torsor on $(X/S)^{dR}$ with the torsor constructed by Ogus and Vologodsky for $\tilde{X'} := \tilde{X} \times_{W_2(S)} \tilde{S}$, where $\tilde{S} \to W_2(S)$ is the canonical map.
\subsection{Local Cartier transform} Unfortunately, we do not have a statement similar to \Cref{thm:gerbe-of-lifts} in the case of $\nu_{X/S}: (X/S)^{dR} \to X'$. One source of splittings of $\nu_{X/S}$ comes from the fact that $\nu_{X/S}$ is an affine map. Namely, it was observed by Bhatt that lifts of $X$ and $S$ to $\bZ/p^2$ with a compatible lift of Frobenii give rise to an isomorphism $$F_{X/S, *}\dR_{X/S} \simeq (\bigoplus_i \Omega^i_{X'/S}[-i], 0)$$
of derived $\cO_{X'}$-algebras. In particular, this data defines a splitting of the $T_{X'/S}^\sharp$-gerbe $\nu_{X/S}: (X/S)^{dR} \to X'.$  In the absence of a lift of $S$, and assuming only a lift of $X$ to $W_2(S)$ with a strong Frobenius lift, we adapt Bhatt’s argument to prove the following.
\begin{theorem}\label{th::local-OV-intro}
Given a flat lift $\tilde{X}/W_2(S)$ of  $X/S$ and a map $f: W_2(X) \to \tilde{X}$, which reduces to the identity map on $X$, via the natural embedding $X \mono W_2(X)$ and $X \mono \tilde{X},$ we obtain a section of $\nu_{X/S}: (X/S)^{dR} \to X'$. 
\end{theorem}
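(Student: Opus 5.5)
Since $\nu_{X/S}\colon (X/S)^{dR}\to X'$ is affine, a section is the same as a map of derived $\cO_{X'}$-algebras $F_{X/S,*}\dR_{X/S}\to \cO_{X'}$, i.e.\ an augmentation of the relative de Rham complex. Following Bhatt's argument, I would first build a map of derived algebras $\dR_{X/S}\to$ (something with trivial differential) out of the Frobenius-lift data, and then compose with a projection to degree zero. I would not mimic the formula $\Omega^1_X\to\Omega^1_{X'}$, $\omega\mapsto \tilde F^*\omega/p$, directly, because without a lift of $S$ there is no lift of $X'$. Instead the plan is to realize the de Rham stack functorially. For a test $S$-algebra $R$ with a point of $X'$, a point of $(X/S)^{dR}$ is a lift along the nilpotent thickening $\mathbb G_a^\sharp$-quotient, in the transmutation picture $(X/S)^{dR}(R)=X(W(R)/V(W(R)^\sharp))$ relative to $S$. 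The map $f\colon W_2(X)\to\tilde X$ is then used to produce a natural map $X'(R)\to (X/S)^{dR}(R)$.

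Concretely, the steps are as follows. (1) Recall the description $(X/S)^{dR}=X\times_{S}$-relative version of $\mathbb G_a^{dR}=\mathbb G_a/\mathbb G_a^\sharp$, and of $\nu_{X/S}$ via the Frobenius $\mathbb G_a/\mathbb G_a^\sharp\to\mathbb G_a$ (kernel of $F$ modulo the divided-power subgroup). Everything reduces to a statement for affine $X$ by descent, provided the construction is canonical. (2) Given $x'\in X'(R)$, i.e.\ $x\colon \Spec R\to X$ twisted by Frobenius on $S$, take its Witt lift $W_2(x)\colon \Spec W_2(R)\to W_2(X)$ and compose with $f$ to get a $W_2(R)$-point of $\tilde X$ over $W_2(S)$. (3) Reduce this point modulo the ideal $V(W_1(R)^\sharp)$ (the image of $p$-divided-power elements) to get a point of $X$ with values in the ring $W_2(R)/V(\cdots)$, which computes $(X/S)^{dR}(R)$ after restricting to the $p$-torsion part. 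Flatness of $\tilde X$ and the fact that $f$ reduces to the identity on $X$ ensure that this point lies over $x'$ under $\nu_{X/S}$ and is compatible with the $W_2(S)$-structure. (4) Check functoriality in $R$ and compatibility with étale localization, so that the construction glues to a section $X'\to (X/S)^{dR}$.

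The main obstacle is step (3): identifying precisely the ring (or animated ring) whose $X$-points give $(X/S)^{dR}(R)$ in terms of $W_2(R)$. One must also verify that the point produced by $f$ depends only on $x'$, i.e.\ descends along relative Frobenius $X\to X'$, rather than on $x$. I would handle this by showing that the composite $W_2(X)\to\tilde X\to X$ restricted along $V$ factors through $\phi$, using that $f$ is the identity modulo $p$. Equivalently, on the algebra side, I would show that $f^*$ kills $p\cdot$(higher de Rham classes), producing the required augmentation of $F_{X/S,*}\dR_{X/S}$, much as Bhatt's $\tilde F^*/p$ does in the lifted setting.
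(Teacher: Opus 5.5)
Your opening is exactly the paper's approach. Since $\nu_{X/S}$ is affine with $\nu_{X/S,*}\cO=F_{X/S,*}\dR_{X/S}$, a section is an augmentation of derived algebras, and it is obtained by adapting Bhatt's formality argument (\Cref{affine-splitting-strong-frob}, \Cref{affine-section-of-dR-relative}).

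You abandon that approach for a wrong reason. A lift of $X'$ does exist: $W_2(S)$ carries a Frobenius, $\tilde X':=\tilde X\times_{W_2(S),F_{W_2(S)}}W_2(S)$ lifts $X'$, and $f$ induces $\tilde F\colon\tilde X\to\tilde X'$. Without a flat lift of $S$, the only problem is that $W_2(S)$ is not $p$-torsion free. So $\tilde F^*\omega/p$ is ill-defined for an arbitrary Frobenius lift, and the strong lift is exactly what repairs this. Take a free simplicial resolution $\tilde P_\bullet$ of $\tilde A$ with a compatible map $\tilde P_\bullet\to W_2(P_\bullet)$. Then one has canonically $\tilde F(x)=x^p+p\delta_x$ with $\delta_x\in P_\bullet$, so $\Omega^k(\tilde F)/p$ is well defined. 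This gives $L_{A'/R}[-1]\to\dR_{A/R}$ splitting the conjugate filtration in degree one. It extends multiplicatively to an isomorphism $\bigoplus_k\Lambda^kL_{A'/R}[-k]\simeq\dR_{A/R}$ of derived algebras, by the Cartier isomorphism. The section is then projection to degree $0$. Your vague substitute, ``$f^*$ kills $p\cdot$(higher classes)'', does not yield a map of derived algebras; this multiplicative step is what does.

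The functor-of-points route you use instead has a genuine gap at step (3), and no choice of quotient ring repairs it. $(X/S)^{dR}(R)$ consists of $S$-maps $\Spec\bG_a^{dR}(R)\to X$, where the ring stack $\bG_a^{dR}=\Cone(F_*W\xrightarrow{p}F_*W)$ is a square-zero extension of $F_*\bG_a$ by $F_*\bG_a^\sharp[1]$. Its $\pi_1$ involves all divided powers, equivalently all Witt components ($\bG_a^\sharp=W[F]$). So it is not computed by $X$-points of any discrete quotient of $W_2(R)$. If you carry out (2)–(3) correctly with $W_2$-level data, the ring you can actually restrict to is $\bG_a^{dR,F}(R)=\Cone(F_*R\xrightarrow{p}F_*W_2(R))$, whose $\pi_1$ is only $\alpha_p$. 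The result is a section of $(X/S)^{dR,F}\to X'$, the pushout of the de Rham torsor along $\bG_a^\sharp\to\alpha_p$ (essentially \Cref{A1-split-scheme-lift-frob}). That does not give a section of $(X/S)^{dR}$. A functor-of-points construction reaching $(X/S)^{dR}$ itself needs all of $W(R)$, i.e. a $\delta$-lift over $W(S)$ as in \Cref{delta-splitting}.

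The missing idea is how to upgrade $W_2$-level information ($\alpha_p$- or $\bG_a$-level) to $\bG_a^\sharp$-level information. The paper does this in two ways. One is the multiplicative Cartier argument above. The other is the stacky argument of \Cref{construction-of-triv}:
\begin{itemize}
\item $\tilde X$ splits $(X/S)^{dR,\gamma}\simeq B_{X'}(T_{X'/S})$, producing the torsor $\cL_{\tilde X}$ of strong Frobenius lifts with a flat connection.
\item $f$ is a section of $\cL_{\tilde X}\to X$. Applying $(-/S)^{dR}$ gives a $B_{X'}(T^\sharp_{X'/S})$-equivariant section of $(\cL_{\tilde X}/S)^{dR}\to(X/S)^{dR}$.
\item That section descends to a section of $\cL^c_{\tilde X}\to X'$, which trivializes the de Rham gerbe. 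This is the PD-envelope mechanism of \Cref{cor:obtain-E-sharp-torsor}.
\end{itemize}

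Finally, your worry about descending along $X\to X'$ is spurious. An $R$-point of $X'$ already is an $S$-map $\Spec(F_*R)\to X$.
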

Another way to construct the splitting of $\nu_{X/S}$ is based on the following observation. Let $E$ be a locally free sheaf on $(X/S)^{dR}$ and $f: (X/S)^{dR} \to B(E)$ be a map classifying an $E$-torsor on $(X/S)^{dR}$. Denote $\bar{f}: X \xrightarrow{\pi_{X/S}} (X/S)^{dR} \xrightarrow{f} B(E)$ to be the underlying $E$-torsor $Y \to X$ on $X.$ Applying $(-/S)^{dR}$ gives $(Y/S)^{dR} \to (X/S)^{dR}$ an $(\bV(E)/S)^{dR}$-torsor and we claim it is classified by the composition 
$$(X/S)^{dR} \xrightarrow{f} B(E) \to B(E/E^\sharp) \simeq B((\bV(E)/S)^{dR}).$$
Therefore, any trivialization of $\bar{f}: X \to B(E)$  gives rise to a map 
$$(X/S)^{dR} \to \fib(B(E) \to B((\bV(E)/S)^{dR})) \simeq B(E^\sharp) $$
and we describe this $E^\sharp$-torsor on $X$ with its flat connection explicitly. Namely, if $s: X \to Y$ is a section provided by the trivialization of $\bar{f},$ then its PD-envelope $D_{X}(Y)$ has a natural connection compatible with the connection on $Y$. This is motivated by \cite[Remark 2.4]{MR2373230}.
\newline
Now, a lift $\tilde{X}/W_2(S)$ gives rise to $$(X/S)^{dR} \to (X/S)^{dR, \gamma} \simeq B_{X'}(T_{X'/S}), $$
a $F_{X/S}^*T_{X'/S}$-torsor on $X$ equipped with a flat connection. We observe that this torsor can be split in the presence of $f: W_2(X) \to \tilde{X}$ which reduces to the identity map on $X.$ Applying the observation above, we obtain a map $(X/S)^{dR} \to B_{X'}(T_{X'/S}^\sharp)$ which we check to be an isomorphism of $T_{X'/S}^\sharp$-gerbes.
\begin{corollary}
    Let $X/S$ be a smooth morphism. Let $\tilde{X}$ be a lift to $W_2(S)$ equipped with a strong Frobenius lift\footnote{We refer to \Cref{def:strong-frob-lift} for the definition.}. One obtains a symmetric monoidal equivalence $C_f: \MIC^\cdot(X/S) \simeq \HIG^\cdot(X'/S).$ If, moreover, we are given a flat lift $\tilde{S} \to \bZ/p^2$ of $S$ and a lift $\tilde{X}_S$ of $X$ to $\tilde{S}$, then $f$ gives rise to a map $\tilde{F}_f: \tilde{X}_S \to \tilde{X'} := \tilde{X} \times_{W_2(S)} \tilde{S}$ lifting the relative Frobenius $F_{X/S}: X \to X'.$ Then $C_f$ is equivalent to the local Cartier transform of Ogus-Vologodsky which is a symmetric monoidal equivalence $C_{\tilde{F}_f}: \MIC^{\cdot}(X/S) \simeq \HIG^{\cdot}(X'/S)$ constructed from $\tilde{F}_f: \tilde{X}_S \to \tilde{X'},$ see \cite[Theorem 2.11]{MR2373230}.
\end{corollary}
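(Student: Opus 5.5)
The first assertion follows from \Cref{th::local-OV-intro}, upgraded to an isomorphism of gerbes. The strong Frobenius lift $f: W_2(X) \to \tilde{X}$ gives a section $\sigma_f: X' \to (X/S)^{dR}$ of $\nu_{X/S}$. Since $X/S$ is smooth, $\nu_{X/S}$ is a $T^\sharp_{X'/S}$-gerbe, and a section of a gerbe banded by an abelian group scheme trivializes it: $\sigma_f$ yields an isomorphism of $X'$-stacks $(X/S)^{dR} \simeq B_{X'}T^\sharp_{X'/S}$. Taking quasi-coherent sheaves, $\QCoh((X/S)^{dR})$ is the category of modules with flat connection, and the nilpotent objects $\MIC^\cdot(X/S)$ correspond to objects whose $p$-curvature action is locally nilpotent. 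On the other side, Cartier duality for $T^\sharp_{X'/S}$ gives
\[
\QCoh(B_{X'}T^\sharp_{X'/S}) \simeq \QCoh(\widehat{\bT^*_{X'/S}}) = \HIG^\cdot(X'/S).
\]
To check the nilpotence matching, I would use that $\QCoh(B T^\sharp)$ is comodules over $\cO(T^\sharp)$, the divided powers algebra. This is dual to the completed symmetric algebra, and under this duality the $ST_{X'/S}$-action on both sides is the same action of $\cO(\bT^*_{X'/S})$. The equivalence is symmetric monoidal because pullback along the isomorphism of stacks is, and Cartier duality exchanges the tensor product of representations with the convolution-free (pointwise) tensor product on the formal group's dual. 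This defines $C_f$.

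For the comparison, I would first construct $\tilde{F}_f$. Given $\tilde{S}$, the canonical map $\tilde{S} \to W_2(S)$ lifts the absolute Frobenius. A lift $\tilde{X}_S$ of $X$ over $\tilde{S}$ admits a canonical map $\tilde{X}_S \to W_2(X)$ lifting Frobenius, namely the map $W_2$ of the reduction composed with the ghost-type map $\cO_{W_2(X)} \to \cO_{\tilde{X}_S}$, $(a_0,a_1) \mapsto \tilde a_0^p + p\tilde a_1$. Composing with $f$ gives $\tilde{X}_S \to \tilde{X}$ over $\tilde{S} \to W_2(S)$, hence by the universal property of the fiber product a map $\tilde{F}_f: \tilde{X}_S \to \tilde{X'}$ lifting $F_{X/S}$. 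Verifying the reduction is a local computation.

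The core step, and the main obstacle, is identifying the two splittings. Ogus--Vologodsky build from $\tilde{F}_f$ the torsor $\cL_{\cX}$ with a section, and the PD-envelope of that section gives the splitting module. Our $\sigma_f$, by the construction sketched before \Cref{th::local-OV-intro}, arises as follows. The lift $\tilde{X}$ gives a map $(X/S)^{dR} \to B_{X'}T_{X'/S}$, i.e.\ a torsor with flat connection on $X$. The introduction already identifies this torsor, via $\tilde{X'} = \tilde{X}\times_{W_2(S)}\tilde{S}$, with the Ogus--Vologodsky torsor $\cL_{\cX}$. Its trivialization on $X$ produced by $f$ then lifts the map to $B(E^\sharp)$, with $E^\sharp$-torsor the PD-envelope $D_X(Y)$ of the section, by the observation modelled on \cite[Remark 2.4]{MR2373230}.

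So two things must be checked:
\begin{itemize}
\item[(a)] The section of $\cL_{\cX}$ determined by $f$ coincides with the section determined by $\tilde{F}_f$ in Ogus--Vologodsky. This is a local comparison of torsors of lifts, which I would do by choosing coordinates and comparing $\tilde{F}_f^*$ on functions.
\item[(b)] Pulling back along the resulting $B(T^\sharp)$-trivialization of $(X/S)^{dR}$ recovers $C_{\tilde{F}_f}$. Here $C_{\tilde{F}_f}$ is defined by tensoring with the dual of functions on the PD-envelope, and descending a module along $X \to (X/S)^{dR}$ twisted by $D_X(Y)$ is exactly that tensor operation.
\end{itemize}
I expect (a), the compatibility of sign and Frobenius twists in the torsor identification, to be the delicate part.
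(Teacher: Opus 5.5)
Your comparison route is the paper's: identify the torsor with flat connection coming from $\tilde X$ with $\cL_{\cX}$, take the PD-envelope of the section given by $f$ as the $T^\sharp_{X'/S}$-torsor with connection, and read off the Ogus--Vologodsky splitting module as in \Cref{rem:splitting-module}. The gap is that you \emph{define} $C_f$ with one section and \emph{compare} using a different one.

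The section of \Cref{th::local-OV-intro} comes from adapting Bhatt's formality argument. The Deligne--Illusie type map of \Cref{affine-splitting-strong-frob} splits $F_{X/S,*}\dR_{X/S}$, and affineness of $\nu_{X/S}$ turns this splitting into a section. The torsor/PD-envelope recipe is introduced \emph{after} \Cref{th::local-OV-intro}, as another way to split the gerbe. It does not directly output a section of $\nu_{X/S}$; it outputs a map $\alpha^\sharp\colon (X/S)^{dR}\to B_{X'}(T^\sharp_{X'/S})$ over $X'$ refining $(X/S)^{dR}\to B_{X'}(T_{X'/S})$.

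Your sentence saying that $\sigma_f$ arises from this recipe is therefore an unproved identification of two splittings. The paper lists their agreement as a separate claim, and it is not formal. Both sections satisfy $s\circ F_{X/S}\simeq\pi_{X/S}$, but that property does not obviously characterize a section; the paper's uniqueness statement \Cref{lm:all-splitting-same} needs a $\delta$-lift and reduced $S$. As written, your steps (a) and (b) compare $C_{\tilde F_f}$ with the functor induced by $\alpha^\sharp$. You have neither identified that functor with your $C_f$ nor shown it is an equivalence.

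The direct fix is to define $C_f$ from the recipe and prove that $\alpha^\sharp$ is an isomorphism of $T^\sharp_{X'/S}$-gerbes; this is the content of \Cref{construction-of-triv}. A section $f$ of $\cL_{\tilde X}\to X$ gives $f^{dR}\colon (X/S)^{dR}\to(\cL_{\tilde X}/S)^{dR}$, which is $B_{X'}(T^\sharp_{X'/S})$-equivariant by \Cref{p-curv-equivariant-relative}. Since $(\cL_{\tilde X}/S)^{dR}\to\cL^c_{\tilde X}$ is the base change of $\nu_{X/S}$ along $\cL^c_{\tilde X}\to X'$, the map $f^{dR}$ descends to $X'\to\cL^c_{\tilde X}\to(X/S)^{dR}$. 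Then \Cref{cor:obtain-E-sharp-torsor}, together with the lemma following it, describes the corresponding torsor with connection on $X$ as the PD-envelope with the connection of \Cref{connection-preserves-pd-envelope}, which is exactly what your (b) uses.

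Your worry about (a) is misplaced. The identification of \Cref{iso-of-torsor-deformation} is literally $f\mapsto\tilde F_f$, and your composite $\tilde X_S\to W_2(X)\to\tilde X$ is correct. The substantive input is the connection formula $\nabla(f)=d\tilde F/p$ of \Cref{lem:connection-of-global-stacky}.

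Two small slips. First, $\QCoh((X/S)^{dR})$ is already $\MIC^\cdot(X/S)$, so no separate nilpotence matching is needed. Second, the tensor product on $\QCoh(B_{X'}T^\sharp_{X'/S})$ corresponds to convolution on $\widehat{\bT^*_{X'/S}}$ (Higgs fields add), not a pointwise product. That is the standard monoidal structure on $\HIG^\cdot(X'/S)$, so your symmetric monoidality conclusion stands.
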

We also verify that the two splittings of the de Rham gerbe constructed from a strong Frobenius lift agree.

As a corollary, we obtain the local Cartier equivalence for all toric fibrations \(X \to S\), since they satisfy the conditions of \Cref{th::local-OV-intro}, and for ordinary abelian schemes \(A/S\); see \cite[Theorem A]{MR4157014}.

\subsection{Powers of Frobenii}
For a smooth family $X \to S$ equipped with a lift $\tilde{X}/W_2(S),$ the global Cartier equivalence yields an equivalence between certain categories of 'complete' modules.
In order to get a decompleted version of this equivalence, we study the $\hat T_{X'/S}$-gerbe over $X'$ obtained from the $T_{X'/S}^\sharp$-gerbe $(X/S)^{dR} \to X'$ by pushing out along $T_{X'/S}^\sharp \to \hat T_{X'/S},$ we denote it by $\widehat{\nu}_{X/S}: \widehat{(X/S)^{dR}} \to X'$. In pursuit of this, we prove the following.
\begin{theorem}\label{decompleted}
Let $X/S$ be a smooth family of schemes over $\bF_p.$
\begin{itemize}
    \item Given a flat lift $\tilde{X}/W_2(S)$ of $X/S$ together with a lift of some power of $F_{X/S},$ we obtain a splitting of $\widehat{\nu}_{X/S}: \widehat{(X/S)^{dR}} \to X'.$ 
    \item $\cD_{qc}(\widehat{(X/S)^{dR}})$ is identified with the derived category of modules over the algebra $D_{X/S}^{\gamma} := F_{X/S, *}D_{X/S} \otimes_{ST_{X'/S}} \Gamma T_{X'/S}$.
\end{itemize}
\end{theorem}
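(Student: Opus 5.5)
For the first bullet, I would imitate the construction sketched for \Cref{th::local-OV-intro}, replacing $T^\sharp_{X'/S}$ by its completion. A lift $\tilde{X}/W_2(S)$ gives, by \Cref{thm:gerbe-of-lifts}, a splitting $(X/S)^{dR,\gamma}\simeq B_{X'}T_{X'/S}$. Hence we get an $F_{X/S}^*T_{X'/S}$-torsor $Y\to X$ with flat connection, classified by $f:(X/S)^{dR}\to B_{X'}T_{X'/S}$. Its underlying torsor is the torsor of strong Frobenius lifts. The pushout $\widehat{(X/S)^{dR}}$ sits in the factorisation
\[
(X/S)^{dR}\to\widehat{(X/S)^{dR}}\to (X/S)^{dR,\gamma},
\]
coming from $T^\sharp\to\hat T\to T$. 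A splitting of $\widehat\nu_{X/S}$ is therefore the same as a reduction of the induced $T$-torsor to a $\hat T$-torsor, i.e. a trivialisation of the induced map to $B(T/\hat T)$.

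The key step is to use a lift $\tilde F$ of $F_{X/S}^{n}$ (suitably interpreted via $W_2$, as a map $W_2(X)\to \tilde X$ composed with Frobenius twists) to produce not a section of $Y$ but a section of the pushout of $Y$ along $T\to T/\hat T$. More concretely, I would show that the connection on $Y$ has $p$-curvature-type iterates: the $p^n$-th power Frobenius twist $F^{n*}Y$, or equivalently the torsor pulled back along $(X/S)^{dR}\to (X^{(p^n)}/S)^{dR}$, is trivialised by the lift of $F^n$. Then I would use the analogue of the observation for $E^\sharp$: a section $s$ of $Y$ defined only after $n$ Frobenius twists gives a formal neighbourhood $\hat Y_s$ (the completion of $Y$ along the graph, rather than the PD-envelope $D_X(Y)$). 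One checks that it is preserved by the connection, since $\nabla$ moves $s$ by elements killed by a power of Frobenius, hence nilpotently. Its ring of functions then defines the required $\hat T$-torsor with flat connection, i.e. a map $(X/S)^{dR}\to B_{X'}\hat T_{X'/S}$. Finally, as in the local case, one verifies $T^\sharp$-equivariance, so that the map descends to a section of $\widehat\nu_{X/S}$. The main obstacle I expect is precisely showing stability of the completion under the connection (the nilpotence estimate). I would first try it locally with coordinates, where the difference $\nabla(s)-s$ can be written in terms of $p$-th powers.

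For the second bullet, I would use that $\widehat\nu_{X/S}$ is a $\hat T_{X'/S}$-gerbe which is affine-locally split over $X'$; the first bullet gives local splittings, since lifts exist locally. Then $\cD_{qc}(\widehat{(X/S)^{dR}})$ is a twisted form of $\cD_{qc}(B\hat T)\simeq$ modules over the Cartier dual of $\hat T$, which is $\Gamma T_{X'/S}$ viewed as functions on $\bT^{*,\gamma}$. I would compute $\widehat\nu_*$ of the generator given by pushforward from $X$. This yields $\mathrm{End}=F_{X/S,*}D_{X/S}\otimes_{ST}\Gamma T$ by comparing with the $(X/S)^{dR}$ case, where $\mathrm{End}=F_*D_{X/S}$, via base change along $ST\to\Gamma T$. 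Lastly, I would check generation locally, where it follows from the split case, and conclude by Barr–Beck/Morita theory.
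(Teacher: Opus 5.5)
Your first bullet is a workable alternative to the paper's argument. Your second bullet has a genuine gap.

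\emph{Second bullet.} The object you want to use, the pushforward $\hat\pi_*\cO_X$ along $\hat\pi\colon X\to\widehat{(X/S)^{dR}}$, is not a compact generator, so Morita theory does not apply to it. Work locally with the splitting coming from a local strong Frobenius lift, for which $s_f\circ F_{X/S}\simeq\pi_{X/S}$. Then $\widehat{(X/S)^{dR}}\simeq B_{X'}\hat T_{X'/S}$, and $\hat\pi$ is the zero section $e$ composed with $F_{X/S}$. Under Cartier duality $e^*$ is the forgetful functor on $\Gamma T_{X'/S}$-modules. Hence $\hat\pi_*\cO_X$ is the coinduced module $\Hom_{\cO_{X'}}(\Gamma T_{X'/S},F_{X/S,*}\cO_X)$, a countable product of copies of $F_{X/S,*}\cO_X$. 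This is not finitely generated over $\Gamma T_{X'/S}$, and its endomorphism ring involves the dual of an infinite product. So neither compactness nor $\End\simeq D^\gamma_{X/S}$ comes for free.

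Your reference point is also inaccurate. $\cD_{qc}((X/S)^{dR})$ is not the category of $F_{X/S,*}D_{X/S}$-modules but $\cD^{ln}(\hat D_{X/S})$ (\Cref{lm:mod-xdr=d-mod}). Its compact generator $F_{X/S,*}\cO_X$ has endomorphisms $F_{X/S,*}\dR_{X/S}$, so there is no ``$\End=F_{X/S,*}D_{X/S}$'' to base change along $ST_{X'/S}\to\Gamma T_{X'/S}$.

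Switching to the left adjoint of $\hat\pi^*$ (locally the free module $\Gamma T_{X'/S}\otimes F_{X/S,*}\cO_X$) would fix compactness. Identifying its endomorphisms with $D^\gamma_{X/S}$ still needs the idea missing from your sketch: the factor $\otimes_{ST}\Gamma T$ appears because the $B_{X'}T^\sharp_{X'/S}$-action on $(X/S)^{dR}$ is the $p$-curvature (\Cref{rem:p-curv=action}). The paper proceeds as follows:
\begin{itemize}
\item it writes $\widehat{(X/S)^{dR}}$ as the quotient of $(X/S)^{dR}\times_{X'}B_{X'}\hat T_{X'/S}$ by the antidiagonal $B_{X'}T^\sharp_{X'/S}$-action;
\item it uses Cartier duality on this fppf cover to get crystals with a horizontal $\Gamma_{\cO_X}(F^*_{X/S}T_{X'/S})$-action, with no nilpotence condition;
\item it checks that the descent datum forces the $ST_{X'/S}$-action through $\Gamma T_{X'/S}$ to agree with the $p$-curvature, as in \Cref{gamma-crystals-are-gamma-d-modules}.
\end{itemize}

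\emph{First bullet.} The paper pushes the de Rham gerbe out along $\bG_a^\sharp\to\alpha_{p^n}$ to get $(X/S)^{dR,n}$. It identifies this with the gerbe of lifts of $F^n_{X/S}$ by reducing to $n=1$ (\Cref{A1-split-scheme-lift-frob}) via $\ob_n=F_{X'}^{n-1,*}\ob_1$, using reducedness of $S$ to describe $R\varepsilon_*\alpha_{p^n}$. It then maps $(X/S)^{dR,n}\to\widehat{(X/S)^{dR}}$.

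You instead reduce the strong-lift torsor $Y=\cL_{\tilde X}$ to $\hat T$ directly. With $E=F_{X/S}^*T_{X'/S}$, the quotient $\bV(E)/\widehat{\bV(E)}$ is the colimit of the relative Frobenii $\bV(E)\to\bV(F_X^*E)\to\bV(F_X^{2,*}E)\to\cdots$. So a section of $Y/\hat T$ is a section of some $F_X^{m,*}Y$.

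The substantive input, which you only assert, is that a lift of $F^n_{X/S}$ gives such a section. Lifts of $F_{X/S}$ are maps $W_2(X')\to\tilde X$ deforming $\nu$, i.e. sections of $\nu^*Y$ (\Cref{frob-lifts-append}). The same obstruction identity gives $F_{X'}^{n-1,*}\nu^*[Y]=0$. Pulling back along $F_{X/S}$ gives $F_X^{n,*}[Y]=0$, since $\nu\circ F_{X'}^{n-1}\circ F_{X/S}=F_X^n$.

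The step you flag as the main obstacle is automatic:
\begin{itemize}
\item Sections of a $\bV(E)/\widehat{\bV(E)}$-torsor are rigid along the nil-thickening $X\subset D_\Delta(X\times_S X)$, because a PD-ideal in characteristic $p$ is nil. So the reduction is horizontal.
\item $BT^\sharp$-equivariance also comes for free. Lifts through $B\hat T\to BT$ form a sheaf of sets on $X'$, and $(X/S)^{dR}\to X'$ is a gerbe.
\end{itemize}

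Your route bypasses $(X/S)^{dR,n}$ and its Zariski description, where the paper uses that $S$ is reduced. It does not, however, give the finer identification of $(X/S)^{dR,n}$ with the gerbe of lifts of $F^n_{X/S}$.
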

The first part of the theorem is a corollary of a refined statement. Namely, by pushing out the de Rham stack along $T_{X'/S}^{\sharp} \to T_{X'/S} \otimes_{\bG_a} \alpha_{p^n}$ we get an fppf-torsor for $T_{X'/S} \otimes_{\bG_a} \alpha_{p^n}$ which we denote by $(X/S)^{dR, n} \to X'.$ This gerbe is equivalent to the gerbe of lifts $F_{X/S}^n$ to $W_2(S).$ We denote the category of quasi-coherent sheaves on a trivial $\hat T_{X'/S}$-gerbe by $\HIG_{\gamma}(X'/S).$ Via the Cartier duality it is identified with $\QCoh(\bT^{*, \gamma}_{X'/S})$ equipped with the convolution monoidal structure. Combining both parts in \Cref{decompleted} gives the following.
\begin{corollary}
    For smooth $X/S$ with a flat lift $\tilde{X}/W_2(S)$ together with a lift of $F_{X/S}^n,$ one obtains a symmetric monoidal equivalence $\MIC_{\gamma}(X/S) \simeq \HIG_{\gamma}(X'/S)$.
\end{corollary}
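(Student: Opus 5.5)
The plan is to combine the two parts of \Cref{decompleted}, in the same way that \Cref{global-cartier-from-W} combines \Cref{thm:gerbe-of-lifts} with Cartier duality.

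\emph{Step 1: a splitting.} By the first part of \Cref{decompleted}, the lift $\tilde{X}/W_2(S)$ together with the lift of $F_{X/S}^n$ gives a splitting of $\widehat{\nu}_{X/S}\colon \widehat{(X/S)^{dR}} \to X'$. Concretely, the lift of $F^n_{X/S}$ trivializes $(X/S)^{dR,n}$, the pushout along $T^\sharp_{X'/S} \to T_{X'/S}\otimes_{\bG_a}\alpha_{p^n}$. Since $T^\sharp_{X'/S}\to \hat T_{X'/S}$ factors through $T_{X'/S}\otimes\alpha_{p^n}$, a further pushout trivializes the $\hat T_{X'/S}$-gerbe. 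This yields an equivalence of $X'$-stacks
\[
\widehat{(X/S)^{dR}} \simeq B_{X'}\hat T_{X'/S}.
\]
Passing to quasi-coherent sheaves gives a symmetric monoidal equivalence $\cD_{qc}(\widehat{(X/S)^{dR}}) \simeq \cD_{qc}(B_{X'}\hat T_{X'/S})$ for the tensor products.

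\emph{Step 2: the two sides.} By the second part of \Cref{decompleted}, the left side is $D^\gamma_{X/S}$-modules, i.e.\ $\MIC_\gamma(X/S)$. I would check that the symmetric monoidal structure matches the usual tensor product of connections. This should follow because pullback along $X\to(X/S)^{dR}\to\widehat{(X/S)^{dR}}$ is symmetric monoidal, conservative on the relevant objects, and recovers the underlying $\cO_X$-module. The right side is $\HIG_\gamma(X'/S)$ by definition. Via Cartier duality $\hat T_{X'/S}^\vee \simeq \bT^{*,\gamma}_{X'/S}$, the tensor product on $B\hat T$ corresponds to convolution on $\QCoh(\bT^{*,\gamma}_{X'/S})$.

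\emph{Main obstacle.} The key step is making sure the equivalence of Step 1 is one of stacks over $X'$, so that it is compatible with the $\cO_{X'}$-linear structures used in the identification of the second part of \Cref{decompleted}. Otherwise the module-theoretic description could be twisted. I would handle this by noting that any splitting of a gerbe is an equivalence over the base. One also has to restrict from the derived categories $\cD_{qc}$ to the abelian hearts, i.e.\ to the categories $\MIC_\gamma$ and $\HIG_\gamma$. Both identifications are t-exact: pullback to $X'$ detects the standard t-structure on each side, and the splitting commutes with that pullback. So the equivalence restricts to the hearts and gives the claimed symmetric monoidal equivalence $\MIC_\gamma(X/S)\simeq\HIG_\gamma(X'/S)$.
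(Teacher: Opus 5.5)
This is correct and is essentially the paper's argument (\Cref{cor:rel-dR-n-splits-provided-lift-of-nth-power}). The lift of $F^n_{X/S}$ splits $(X/S)^{dR,n}$, and the $X'$-linear map $(X/S)^{dR,n}\to\widehat{(X/S)^{dR}}$ turns this into $\widehat{(X/S)^{dR}}\simeq B_{X'}\hat T_{X'/S}$; one then concludes via the identification of $\QCoh(\widehat{(X/S)^{dR}})$ with $D^\gamma_{X/S}$-modules and Cartier duality. Your extra t-structure step is harmless but unnecessary, since an equivalence of stacks over $X'$ already induces an equivalence of the abelian categories of quasi-coherent sheaves.
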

\subsection{Applications.}
The advantage of allowing families $X \to S$ whose inputs are (reasonable) stacks is that one can leverage the torsor property to obtain useful results about schemes. The two cases we consider are the following.
\begin{itemize}
    \item[(1)] If $(X, D)$ is an snc pair with smooth $X/\bF_p$, one obtains $f: X \to (\bA^1/\bG_m)^n =:  S_n$ classifying $D$. Contemplating various versions of the de Rham stacks, one obtains versions of non-abelian Hodge correspondence related to the logarithmic geometry of $(X, D).$ See \Cref{log-F-lift} and \Cref{log-just-lift}.
    \item[(2)] If $G$ is an affine algebraic group acting on $X,$ one can contemplate either various de Rham stacks of $X/G$, or of the family $X/G \to BG.$ This gives versions of the non-abelian Hodge correspondence for weakly and strongly $G$-equivariant $D$-modules on $X$, see \Cref{cor:G-equiv-local}.
\end{itemize}
A version of logarithmic non-abelian Hodge theory in characteristic $p$ was developed by Schepler in \cite{MR2708584}. Our results appear to be of a different nature: on the de Rham side, we do not impose any nilpotence condition on the residues, but allow all logarithmic connections with nilpotent $p$-curvature. On the Higgs side, however, we consider Higgs modules in $1/p$-parabolic sheaves relative to the divisor defining the logarithmic structure.

\subsection{Conventions.}
We list some conventions.
    \begin{itemize}
        \item[1.] All products are derived unless otherwise stated. In particular, for a module $M$, by $M/p$ we mean $M \otimes^L \bF_p$.
        \item[2.] Let $S \mono \tilde{S}$ be a nilpotent thickening. By a lift of $X/S$ to $\tilde{S}$ we mean a map $\tilde{X} \to \tilde{S}$ such that $\tilde{X} \times_{\tilde{S}} S \simeq X.$
        \item[3.] For a quasi-syntomic map $X \to S$ we denote by $T_{X/S} \in \Perf(X)$ the $\cO_X$-linear dual of $L_{X/S}.$
    
    \end{itemize}

\subsection{Acknowledgments}
This paper owes its existence to Bhargav Bhatt, who explained the main idea and provided constant support. I am grateful to Bogdan Zavyalov, with whom I started this project, but who declined to be a coauthor. I am also extremely grateful to Vadim Vologodsky for many enlightening discussions.

I also thank Piotr Achinger, Michael Barz, Andy Jiang, Mitya Kubrak, and Sasha Petrov. Special thanks to Michael Barz for helpful comments on an earlier draft.

\section{Preliminaries}
\subsection{Animated rings}
Even though the main results concern schemes, it will be useful for us to work with derived stacks. For this, recall the $\infty$-category $\CAlg_{\bZ}^{\an}$ of animated commutative rings, which can be defined by localizing the category of simplicial rings at weak equivalences. The resulting $\infty$-category inherits many good properties. For example, it can be shown that it is the universal $\infty$-category containing the $1$-category $\CAlg_{\bZ}$ and closed under sifted colimits. This procedure (\textit{i.e.}, free generation under sifted colimits) is called animation, and we refer to \cite[\S 5.1.4]{MR4681144}. Using it, one defines the category of derived schemes $\dSch_{\bZ}$ and derived stacks $\dSt_{\bZ}$. For example, for us a derived stack will be an accessible fppf sheaf $\CAlg_{\bZ} \to \Spc$. For an animated ring $A$, one also has the category of animated $A$-algebras $\CAlg_{A}^{\an}$, defined as the slice category $(\CAlg_\bZ^{\an})_{/A}$.

Let us recall the cotangent complex for derived stacks. Let $X/A$ be a derived stack over an animated ring $A$. Its cotangent complex $L_{X/A},$ when it exists, satisfies the following universal property: for any $x \in X(R)$ one has an equivalence $\Map_{R}(x^* L_{X/A}, M) \simeq \Der_x(X/A, M)$ where the right-hand side is the space of $A$-linear derivations of $X \to \Spec(A)$ at $x$ valued in an $R$-animated module $M$, see \cite{MR4681144}. In particular, let $R' \to R$ be a square zero extension of animated $A$-algebras with the fiber $N \in \cD(R)$ (see \cite[\S 5.1.9]{MR4681144}). The fiber of $X(R') \to X(R)$ over $x \in X(R)$, if non-empty, is a torsor for the animated abelian group $\Map_R(x^*L_{X/A}, N). $

Mostly, we will be working with $\CAlg_{\bF_p}^{\an}$ which can also be defined via animation of $\CAlg_{\bF_p}.$ In particular, every animated $\bF_p$-algebra $A$ carries the Frobenius endomorphism $F_A: A \to A$.  Let us collect some facts about animated $\bF_p$-algebras.

\begin{lemma}\label{Frob-factors-1-truncated}
    Let $A$ be a $1$-truncated animated $\bF_p$-algebra. Then there exists an essentially unique map of $\bF_p$-algebras $F_A': \pi_0(A) \to A$ such that the composition $A \to \pi_0(A) \xrightarrow{F_{A'}} A$ is homotopic to $F_{A}$. In particular, this map is characterized by the property that $\pi_0(A) \xrightarrow{F_A'} A \to \pi_0(A)$ is equal to $F_{\pi_0(A)}.$
\end{lemma}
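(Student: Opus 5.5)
The plan is to use the Postnikov square-zero extension $A \to \pi_0(A)$ and lift Frobenius along it. Since $A$ is $1$-truncated, $A \to \pi_0(A)$ is a square-zero extension with fiber $\pi_1(A)[1]$. In other words, there is a pullback square of animated rings
\[
A \simeq \pi_0(A) \times_{\pi_0(A) \oplus \pi_1(A)[2]} \pi_0(A),
\]
classified by a derivation $d: \pi_0(A) \to \pi_0(A)\oplus \pi_1(A)[2]$. By the universal property of this pullback, the data of a map $F_A': \pi_0(A) \to A$ lifting a given $g: \pi_0(A) \to \pi_0(A)$ is the same as a nullhomotopy of the obstruction class $g^*d \in \Map(L_{\pi_0(A)/\bF_p}, g_*\pi_1(A)[2])$. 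The space of such lifts is then a torsor under $\Map(L_{\pi_0(A)/\bF_p}, g_*\pi_1(A)[1])$.

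Take $g = F_{\pi_0(A)}$, with $\pi_1(A)$ viewed as a $\pi_0(A)$-module via Frobenius. The key input is that Frobenius induces the zero map on cotangent complexes in characteristic $p$. More precisely, $F$ acts as $0$ on $L_{R/\bF_p}$ for any animated $\bF_p$-algebra $R$, functorially; by animation it suffices to check this on polynomial algebras, where $d(x^p) = px^{p-1}dx = 0$. I would use this, together with naturality of Frobenius, to show the obstruction vanishes canonically. Indeed, $F_A$ itself is a lift of $F_{\pi_0(A)}$ along the composition $A \to \pi_0(A)$. Naturality of $F$ with respect to the square $A \to \pi_0(A)$ then says that $F_A$ factors through the Postnikov truncation after precomposition. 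The clean way to see this is that $F_A: A \to A$ restricted to the fiber $\pi_1(A)[1]$ is zero, since Frobenius kills the square-zero ideal: $x^p$ vanishes on higher homotopy because all $p$-th power operations on positive-degree homotopy are trivial in degree $1$ for connective $\bF_p$-algebras.

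To make this rigorous, I would use the functorial description $F_A = $ animation of Frobenius, together with the fact that for a square-zero extension $\tilde R \to R$ with ideal $I$, $F_{\tilde R}$ factors canonically through $R$. This can be proved universally: the universal square-zero extension (the pullback of the trivial square-zero extension $R \oplus I$ along a derivation) reduces the claim to trivial square-zero extensions $R\oplus I$. There $F(r, i) = (r^p, 0)$ canonically, since $F$ on $R\oplus I$ equals the composite $R \oplus I \to R \xrightarrow{F} R \to R\oplus I$ by animating from the polynomial case $\Sym(V) \oplus W$. The factorization should then pass to pullbacks by functoriality.

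For essential uniqueness, two factorizations differ by an element of $\pi_0\Map(L_{\pi_0(A)/\bF_p}, \pi_1(A)[1]) = \mathrm{Ext}^{-1}$, and higher homotopies are also negative Ext groups. Both vanish because $L_{\pi_0(A)}$ is connective and $\pi_1(A)[1]$ is $1$-truncated with $\pi_0 = 0$. More precisely, $\Map(L, M[1])$ for $M$ discrete and $L$ connective is $1$-truncated with $\pi_0 = \mathrm{Ext}^1$, so uniqueness is not automatic. I would therefore instead characterize $F_A'$ by the stated property: any two lifts whose composite to $\pi_0(A)$ is $F_{\pi_0(A)}$ and whose precomposition gives $F_A$ are identified. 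The main obstacle is this uniqueness/characterization step. I would handle it by noting that the map $\Map(\pi_0 A, A) \to \Map(A, A)$ given by precomposition is fully faithful onto its image, because $\pi_0(A)$ is the $0$-truncation and $\Map(A,A) \simeq$ maps out of the localization. This shows the factorization is unique once it exists.
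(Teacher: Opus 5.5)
Your existence argument works in its second form but not its first. The absolute computation (the obstruction $g^*d$ vanishes because $dF\simeq 0$ on $L_{\pi_0(A)/\bF_p}$) only produces \emph{some} lift of $F_{\pi_0(A)}$ to $A$. Nothing in it shows that the composite of that lift with $\mathrm{pr}\colon A\to\pi_0(A)$ is $F_A$, and since such lifts are not unique, this matters. The universal square-zero argument does factor $F_A$, up to a routine coherence check: animate $F_{R\oplus M}\simeq s_0\circ F_R\circ\mathrm{pr}$ from polynomial pairs, then use naturality of Frobenius on $A\simeq\pi_0(A)\times_{\pi_0(A)\oplus\pi_1(A)[2]}\pi_0(A)$. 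This is a legitimate alternative to the paper's route. The paper instead stays relative to $A$: the obstruction to factoring a map $f\colon A\to B$ of $1$-truncated rings through $\pi_0(A)$ lies in $\Map_A(L_{\pi_0(A)/A},\pi_1(B)[2])\simeq\Hom(\pi_1(A),\pi_1(B))$ and equals $\pi_1(f)$, which is zero for $f=F_A$. Your remark that $F_A$ vanishes on the fiber $\pi_1(A)[1]$ is exactly this input, but it becomes a ring-level factorization only through that relative obstruction theory. Your route replaces the fact that Frobenius kills higher homotopy by the explicit computation on trivial square-zero extensions.

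The genuine gap is uniqueness. You claim precomposition $\Map(\pi_0(A),A)\to\Map(A,A)$ is a monomorphism because $\pi_0(A)=\tau_{\le 0}A$, but that does not follow. The universal property of $\tau_{\le 0}$ only gives $\Map(\pi_0(A),B)\simeq\Map(A,B)$ for discrete $B$, and $A$ is not discrete. Your reasoning would apply to any target, yet for targets with $\pi_2\neq 0$ the conclusion fails. For example, for $B=\pi_0(A)\oplus\pi_1(A)[2]$ the fiber over $A\to\pi_0(A)\xrightarrow{s_0}B$ is the discrete set $\Hom_{\pi_0(A)}(\pi_1(A),\pi_1(A))$. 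So you need an argument specific to $1$-truncated targets.

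The missing idea is deformation theory of $\pi_0(A)$ relative to $A$ rather than to $\bF_p$. Since $A\to\pi_0(A)$ is an isomorphism on $\pi_0$ and surjective on $\pi_1$, $L_{\pi_0(A)/A}$ is $2$-connective. Hence for $1$-truncated $B$ the fiber of $\Map(\pi_0(A),B)\to\Map(A,B)$ over any $f$ is empty or a torsor under $\Map(L_{\pi_0(A)/A},\pi_1(B)[1])\simeq *$. Taking $B=A$ and $f=F_A$ gives the uniqueness.

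Your suspicion about the absolute group $\Map(L_{\pi_0(A)/\bF_p},F_*\pi_1(A)[1])$ is correct. For $A=R\oplus R[1]$ with $R=\bF_p[x]/(x^2)$, its $\pi_0$ is $\mathrm{Ext}^1_R(L_{R/\bF_p},F_*R)\cong F_*R\neq 0$, so lifts of $F_{\pi_0(A)}$ alone are not unique. The last sentence of the statement must therefore be read, as you do, as a characterization among factorizations of $F_A$.
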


\begin{proof}
    Uniqueness follows from a more general statement: let $f: A \to B$ be a map of $1$-truncated animated rings such that $\pi_1(f)=0$, then the fiber of $\Map(\pi_0(A), B) \to \Map(A, B)$ over $f$, if non-empty, is a torsor for $\Map(L_{\pi_0(A)/A}, \pi_1(B)[1])$ and the latter space is contractible since $L_{\pi_0(A)/A}$ is $2$-connective. In particular, such map $f$ is uniquely determined by $\pi_0(f).$ For the existence part: given $f: A \to B$ a map of $1$-truncated animated algebras, the obstruction of deforming the map $\pi_0(A) \to \pi_0(B)$ of $A$-algebras to $\pi_0(A) \to B$ lies in $\Map_{A}(L_{\pi_0(A)/A}, \pi_1(B)[2])$. Since $\pi_2 L_{\pi_0(A)/A} = \pi_1(A),$ the latter space is discrete and equivalent to $\Hom_{\pi_0(A)}(\pi_1(A), \pi_1(B))$. Under this identification, the obstruction class corresponds to $\pi_1(f).$ To finish the proof note that the Frobenius map for animated $\bF_p$-algebras induces $0$ on higher homotopy groups.
\end{proof}

\subsection{Ring stacks}
For any smooth scheme $X/\bQ$, Simpson introduces a stack $X^{dR}$ which \textit{geometrizes} de Rham cohomology in the sense that $\Vect(X^{dR})$ is identified with the category of vector bundles on $X$ endowed with a flat connection. Moreover, for any $(E, \nabla) \in \Vect(X^{dR})$, one has $$R\Gamma(X^{dR}, (E, \nabla)) \simeq R\Gamma(X, E \xrightarrow{\nabla} E \otimes \Omega^1_{X} \to \cdots).$$ Following this idea, Drinfeld and Bhatt-Lurie constructed, for any $p$-adic formal scheme $X$, certain stacks that \textit{geometrize} the theory of prismatic cohomology. The crucial notion for this construction is the notion of a \textit{ring stack}. Namely, given an fppf sheaf on $A$-algebras $$\cR: \CAlg_{A} \to \CAlg_{B}^{\an}$$ valued in animated $B$-algebras, one defines a cohomology theory on the category of $B$-schemes.

\begin{construction}(transmutation)
    Let $\cR$ be an $A$-algebra stack. Define a prestack $\dSt_{B} \to \Fun(\CAlg_{A}^{\an}, \Spc)$ which takes $X/B$ to $X^{\cR}$ defined by $X^{\cR}(R) := \Map_{B}(\Spec(\cR(R)), X).$
\end{construction}
The ring stacks we will need are $1$-truncated. These admit an explicit model discovered by Drinfeld in \cite{drinfeld2021notion}. Namely, he constructed a $(2,1)$-category whose objects are morphisms of $R$-modules $d \colon I \to R$, called quasi-ideals, where $R$ is a commutative ring, $I$ is an $R$-module, and such that for any $x, y \in I$ one has $d(x)y = x d(y)$. He also explicitly describes the mapping groupoids, but we do not need the full description. Let us only mention that an obvious commutative square $(I \to R) \to (J \to S)$ of quasi-ideals, where $f \colon R \to S$ is a map of rings and $I \to f_*J$ is a map of $R$-modules, gives a point of the mapping groupoid. This notion also makes sense when working with schemes, i.e., if $R$ is a ring scheme and $I$ is an $R$-module, then $d \colon I \to R$ is a quasi-ideal in schemes if $d(x)y = x d(y)$ for any test points $x, y$ of $I$.
To give an example, recall the Witt vector ring scheme $W$. It can be extended to all animated rings; see \cite[Appendix A]{bhatt2022prismatization}.
\begin{definition}\label{def:de-Rham-ring-stack}
    Denote $\bG_a^{dR}$ to be the $W$-algebra stack $\Cone(F_*W \xrightarrow{p} F_*W).$
\end{definition}

\begin{remark}\label{rem:fp-str-two-frobs-the-same}
For any ring $R$ we have a commutative diagram 
\[\begin{tikzcd}
	{F_*W(R)} && {W(R)} \\
	{F_*W(R)} && {F_*W(R)}
	\arrow["V", from=1-1, to=1-3]
	\arrow[equals, from=1-1, to=2-1]
	\arrow["F", from=1-3, to=2-3]
	\arrow["p", from=2-1, to=2-3].
\end{tikzcd}\] It is a map of quasi-ideals and thus defines $\pi: \bG_a \to \bG_a^{dR}$. In particular, $\bG_a^{dR}$ is an $\bF_p$-algebra stack and one has its Frobenius endomorphism $F: \bG_a^{dR} \to \bG_a^{dR}$. Let $R$ be an $\bF_p$-algebra, then we have two endomorphisms of $\bG_a^{dR}(R):$ namely, $\bG_a^{dR}(F_R)$ and $F(R).$ We note that they are naturally homotopic.\footnote{We warn the reader it is not automatic for an $\bF_p$-algebra stack. For example, it fails for $\bG_a \otimes^L_{\bZ} \bF_p$.} See \cite[Remark 5.1.10]{bhatt-lecture-notes} or use \Cref{Frob-factors-1-truncated}. In particular, for any $X/\bF_p$ one has a natural homotopy between $F_{X^{dR}}, F_{X}^{dR}: X^{dR} \to X^{dR}.$
\end{remark}

The map $\pi: \bG_a \to \bG_a^{dR}$ from \Cref{rem:fp-str-two-frobs-the-same} gives rise to a map of commutative animated rings $\pi_A: A \to W(A)/p$. In particular, if $A=W(R)/p,$ there is another natural map $W(\pi_R)/p: A \to W(A)/p$ obtained by reducing $W(\pi_R)$ modulo $p.$ 
We learned the following argument from Bhatt.
\begin{lemma}\label{rem:dRdRvsdR}
    Let $R$ be an animated $\bF_p$-algebra. For $A=W(R)/p$, the two maps $\pi_A$ and $W(\pi_R)/p$ are naturally homotopic.
\end{lemma}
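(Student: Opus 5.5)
The plan is to construct the homotopy naturally on a class of rings where everything is discrete or $1$-truncated, and then extend to all animated $R$ by animation. Both $R \mapsto \pi_{W(R)/p}$ and $R \mapsto W(\pi_R)/p$ are natural transformations between the functors $R \mapsto W(R)/p$ and $R \mapsto W(W(R)/p)/p$ from $\CAlg^{\an}_{\bF_p}$ to $\CAlg^{\an}_{\bF_p}$.

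First I will check that both functors preserve sifted colimits. The functors $W_n$ are given by polynomial ring schemes, reduction mod $p$ is a finite colimit, and $W=\lim_n W_n$ is a limit whose transition maps are surjective on $\pi_0$. If this holds, natural transformations and homotopies between them are determined by their restriction to polynomial $\bF_p$-algebras $R=\bF_p[x_1,\dots,x_n]$, and it suffices to produce a homotopy there, naturally in $R$.

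For polynomial $R$, the ring $W(R)$ is $p$-torsion free, so $A=W(R)/p$ is discrete. Its nilpotents include $V(1)^2=pV(1)$, so $W(A)$ can have $p$-torsion and $B:=W(A)/p$ is a $1$-truncated animated $\bF_p$-algebra. I will then apply \Cref{Frob-factors-1-truncated} and the uniqueness argument in its proof. A map from a discrete ring into a $1$-truncated $B$ is not determined by $\pi_0$ alone, so the plan is to factor both maps through a universal construction: each of $\pi_A$ and $W(\pi_R)/p$ should be the map $F'_{B'}$ of \Cref{Frob-factors-1-truncated} attached to a suitable $1$-truncated algebra $B'$ with $\pi_0(B')=A$ (roughly $B' = W(A)/p$ viewed through the projection $W(A)/p\to W_1(A)=A$), precomposed with a canonical identification. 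Concretely, I expect to verify two things. First, the composite $\pi_0(B)\to B\to \pi_0(B)$ is the Frobenius in both cases. For $\pi_A$ this comes from the description of $\pi$ via the quasi-ideal map in \Cref{rem:fp-str-two-frobs-the-same}, with $F$ on $W$ and the identity on $F_*W$. For $W(\pi_R)/p$ it comes from functoriality of $W$ applied to the map $R\to W(R)/p$, which is "lift and apply $F$". Second, the uniqueness clause of \Cref{Frob-factors-1-truncated} then gives a canonical homotopy, which is automatically natural in $R$.

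The main obstacle is making this identification precise on $\pi_0$ and on $\pi_1$. We need that both maps become the Frobenius after projecting, and that the $\pi_1$-obstruction in the proof of \Cref{Frob-factors-1-truncated} vanishes for both, which holds because Frobenius kills higher homotopy. On $\pi_0$ the check reduces to the classical identity $W(F)\circ F = F\circ W(F)$ on $W(W(R))$, followed by reduction mod $p$ and mod $V$. To avoid delicate computations with ghost components, I would first verify it for perfect $R$, where $A=R$, $B=W(R)/p=R$, and both maps are the Frobenius of $R$. I would then deduce the polynomial case by mapping $R$ into its perfection $R_{\mathrm{perf}}$, using that $W(R)/p\to W(R_{\mathrm{perf}})/p$ is injective on $\pi_0$ for polynomial $R$. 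If the $\pi_1$-bookkeeping in the uniqueness statement turns out to be subtle, the fallback is to construct the homotopy directly at the level of Drinfeld's quasi-ideals. There I would write both composites as maps of quasi-ideals $(W\xrightarrow{p}W)\to(W(W)\xrightarrow{p}W(W))$ and exhibit the $2$-morphism between them explicitly.
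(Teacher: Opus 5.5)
There is a genuine gap: nothing in your plan produces the homotopy when $R$ is polynomial, and that is the case everything is reduced to. There, as you note, $B=W(A)/p$ is $1$-truncated. So a map out of the discrete ring $A=W(R)/p$ is not determined by its image in $\Map(A,\pi_0B)$: the fibres of $\Map(A,B)\to\Map(A,\pi_0B)$ are, when nonempty, torsors under $\Map_A(L_A,\pi_1(B)[1])$, which need not be contractible.

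\Cref{Frob-factors-1-truncated} does not fill this. It only controls the factorisation $F_B=F'_B\circ(B\to\pi_0B)$. Combined with naturality of Frobenius, and granting the $\pi_0$-agreement you plan to check, it gives $\pi_A\circ F_A\simeq (W(\pi_R)/p)\circ F_A$, since both equal $F'_B$ composed with the common $\pi_0$-map. But $F_A$ cannot be cancelled unless $A$ is perfect, which is why only your perfect case goes through. Also, $\pi_0(W(A)/p)=W(A)/pW(A)\neq A$: the kernel of the projection to $A$ is $VW(A)/VFW(A)\cong W(A)/FW(A)\neq 0$, because $A$ surjects onto the non-semiperfect ring $R$. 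So there is no evident $B'$ as you describe.

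The passage from $R_{\mathrm{perf}}$ back to $R$ fails as well. The map $W(R)/p\to W(R_{\mathrm{perf}})/p=R_{\mathrm{perf}}$ kills $V[x]=p[x^{1/p}]$, which is nonzero in $W(R)/p$ because $x$ is not a $p$-th power in $R$. In any case, detecting a homotopy between two maps into $W(A)/p$ would require the map of \emph{targets} to be a monomorphism, which it is far from being. The quasi-ideal fallback is left unspecified.

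Finally, the opening reduction is unjustified too. $R\mapsto W(R)/p$ does not commute with filtered colimits: the class of $\sum_{n\ge 1}V^n[x_n]$ in $W(\bF_p[x_1,x_2,\dots])/p$ comes from no $W(\bF_p[x_1,\dots,x_m])/p$. So transformations out of this functor are not determined on finitely generated polynomial algebras.

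The missing idea, which is how the paper argues, is to recast the problem in terms of $\delta$-rings so that the test objects have discrete targets.
\begin{enumerate}
\item For an animated $\delta$-ring $A$, let $u_A\colon A\to W(A/p)$ be the $\delta$-lift of $A\xrightarrow{\phi_A}A\to A/p$.
\item Since $W$ is cofree on $\delta$-rings, $W(\pi_R)$ is the unique $\delta$-map $W(R)\to W(W(R)/p)$ whose composite with restriction is $\pi_R\circ\mathrm{Res}$. By definition of $\pi_R$, the latter is $W(R)\xrightarrow{F}W(R)\to W(R)/p$, so $W(\pi_R)=u_{W(R)}$.
\item The comparison $u_A \bmod p\simeq\pi_{A/p}\circ(A\to A/p)$ concerns transformations out of the forgetful functor $A\mapsto A$, which commutes with sifted colimits. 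So it suffices to check it on free $\delta$-rings. There $A/p$ is a polynomial ring, $W(A/p)$ is $p$-torsion free, and every ring involved is discrete. The check is then the elementwise identity $u_A(x)=F([\bar x]+V(z))\equiv[\bar x]^p=\pi_{A/p}(\bar x)$ modulo $p$.
\item Since $W(R)/p=W(R)\otimes^L\bF_p$, the two $\bF_p$-algebra maps in the statement are determined by their restrictions to $W(R)$. These are $u_{W(R)}\bmod p$ and $\pi_{W(R)/p}\circ(W(R)\to W(R)/p)$, which agree by the previous step.
\end{enumerate}
The identification $W(\pi_R)=F\circ\sigma$, with $\sigma$ the $\delta$-lift of $W(R)\to W(R)/p$, is also exactly what your quasi-ideal fallback would need. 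After representing $\mathrm{id}_A$ via $\sigma$, it makes the two composites literally the same map of quasi-ideals for discrete $R$.
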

\begin{proof}

    \textit{Step 1.} Let $A$ be a $\delta$-ring. Let $u_A: A \to W(A/p)$ be a $\delta$-lift of $A \xrightarrow{\phi_A} A \to A/p.$ The following diagram is commutative 
\[\begin{tikzcd}
	A &&& {W(A/p)} \\
	{A/p} &&& {W(A/p)/p}
	\arrow["u_A", from=1-1, to=1-4]
	\arrow[from=1-1, to=2-1]
	\arrow[from=1-4, to=2-4]
	\arrow["{\pi_{A/p}}", from=2-1, to=2-4].
\end{tikzcd}\]
Indeed, it is enough to check it for a free $\delta$-ring. Then all rings involved are discrete and one can check commutativity on elements. The composition $A \to A/p \xrightarrow{\pi_{A/p}} W(A/p)/p$ sends $x \to \bar{x} \to [\bar{x}]^p$. Note the delta lift of $A \to A/p$ carries $x$ to $[x]+V(z)$ for some $z,$ applying Frobenius and reducing modulo $p$ gives the desired commutativity.
\newline
\textit{Step 2.} Now let $A=W(R).$ Note $u_A$ coincides with $W(\pi_R).$ Indeed, both maps are $\delta$-maps so it is enough to check it after composing with $W(W(R)/p) \xrightarrow{\text{Res}} W(R)/p$. Then the claim amounts to observing that $W(R) \xrightarrow{F} W(R) \to W(R)/p$ is homotopic to $W(R) \xrightarrow{\text{Res}} R \xrightarrow{\pi_R} W(R)/p$ which follows from the definition of $\pi_R.$
\newline
\textit{Step 3.}
Therefore, the diagram 
\[\begin{tikzcd}
	{W(R)} &&&& {W(W(R)/p)} \\
	{W(R)/p} &&&& {W(W(R)/p)/p}
	\arrow["{W(\pi_R)}", from=1-1, to=1-5]
	\arrow[from=1-1, to=2-1]
	\arrow[from=1-5, to=2-5]
	\arrow["{\pi_{W(R)/p}}", from=2-1, to=2-5]
\end{tikzcd}\]
is commutative. This finishes the proof as $\pi_{W(R)/p}$ is a morphism of $\bF_p$-algebras.
\end{proof}

For a ring stack $\cR$, we will want to use descent properties of $X^\cR$. The following lemma is essentially proven in \cite[Lemma 3.3]{petrov2025decompositionrhamcomplexquasifsplit}, but we include the proof for the reader’s convenience.
\begin{lemma}
    Let $\cR$ be a $1$-truncated $A$-algebra stack. Suppose $\pi_0(\cR), \pi_1(\cR)$ satisfy \'etale descent. For a stack $X/A$ assume that $X^{\pi_0(\cR)}$ satisfies \'etale descent. Then $X^{\cR}$ also satisfies \'etale descent.
\end{lemma}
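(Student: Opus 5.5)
The plan is to exhibit $X^{\cR}$ as a fiber of maps between stacks already known to satisfy étale descent, and to use that limits of étale sheaves are étale sheaves. The key input is the Postnikov truncation $\cR \to \pi_0(\cR)$ of the $1$-truncated ring stack. For each $R$, the map of animated rings $\cR(R) \to \pi_0(\cR)(R)$ is a square-zero extension with fiber $\pi_1(\cR)(R)[1]$. Deformation theory, as recalled above for derived stacks, then describes $X^{\cR}(R) = \Map_B(\Spec(\cR(R)), X)$ over $X^{\pi_0(\cR)}(R)$. Concretely, a square-zero extension $\widetilde{A} \to A$ with fiber $N$ is classified by a derivation, giving a pullback square
\[\widetilde{A} \simeq A \times_{A \oplus N[1]} A,\]
where one map $A \to A\oplus N[1]$ is the trivial section and the other is the classifying derivation.

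Applying this with $A = \pi_0(\cR)(R)$ and $N = \pi_1(\cR)(R)[1]$, functorially in $R$, gives a pullback of $A$-algebra stacks
\[\cR \simeq \pi_0(\cR) \times_{\pi_0(\cR) \oplus \pi_1(\cR)[2]} \pi_0(\cR).\]
Here $\pi_0(\cR) \oplus \pi_1(\cR)[2]$ is the trivial square-zero extension, viewed as a ring stack. This pullback is formed objectwise in animated rings, and the cone is taken in the fppf-sheafified sense, which is harmless because we only evaluate on $R$ after sheafification. Since $\Spec$ turns pullbacks of animated rings into pushouts of affine derived schemes, and $\Map(-,X)$ turns pushouts into pullbacks, we obtain
\[X^{\cR} \simeq X^{\pi_0(\cR)} \times_{X^{\pi_0(\cR)\oplus \pi_1(\cR)[2]}} X^{\pi_0(\cR)}.\]
A subtlety is that $\Spec$ of a pushout only computes maps into $X$ when $X$ is a derived stack, i.e.\ is evaluated on animated rings. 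This is the case here, since $X$ is a derived stack and $\Map_B(\Spec(-),X)$ simply means $X(-)$.

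It remains to show that the middle term $X^{\pi_0(\cR)\oplus \pi_1(\cR)[2]}$ satisfies étale descent. Over a point $x \in X^{\pi_0(\cR)}(R)$, the fiber of $X(\pi_0\cR(R) \oplus \pi_1\cR(R)[2]) \to X(\pi_0\cR(R))$ is $\Map(x^*L_{X/B}, \pi_1(\cR)(R)[2])$, by the universal property of the cotangent complex recalled in the preliminaries. So the middle term is the total space of a family over $X^{\pi_0(\cR)}$ whose fibers are mapping spaces into $\pi_1(\cR)[2]$. Étale descent for this total space reduces to two facts: descent for the base $X^{\pi_0(\cR)}$, which is assumed, and descent for $R \mapsto \Map(x^*L, \pi_1(\cR)(R)[2])$, which follows from étale descent of $\pi_1(\cR)$ together with étale-locality of module mapping spaces. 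The latter holds because for an étale $R \to R'$ the base change of the module $\pi_1(\cR)(R)$ is $\pi_1(\cR)(R')$, and $\pi_0(\cR)$ satisfies étale descent.

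The main obstacle is this last step. We must make sure that the étale base change property of the quasi-coherent-like sheaf $\pi_1(\cR)$ is what descent of $\pi_1(\cR)$ gives us, in order to handle the twisted family rather than a constant one. If $L_{X/B}$ does not exist, one should instead argue directly: evaluating a stack on a limit diagram of animated rings (an étale Čech nerve) commutes with the fiber product above, so the fiber-product presentation already reduces everything to $X^{\pi_0\cR}$ and to the trivial square-zero term. For the latter, we use that $\pi_0(\cR)\oplus\pi_1(\cR)[2]$ takes étale Čech nerves to limit diagrams. This holds because each summand satisfies étale descent and $\pi_1(\cR)[2]$ remains a limit after shifting. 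Finally, we need that $X$ sends such limits of the relevant rings to limits, which is exactly the descent assumed for $X^{\pi_0(\cR)}$, upgraded along the square-zero thickening by nilinvariance of the étale site.
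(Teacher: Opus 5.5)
Your argument has the same skeleton as the paper's proof. You fiber $X^{\cR}$ over $X^{\pi_0(\cR)}$, use the assumed descent for the base, and identify the fibers via deformation theory as mapping spaces out of $x^*L_X$ into shifts of $\pi_1(\cR)$, whose descent comes from that of $\pi_1(\cR)$. First writing $X^{\cR}\simeq X^{\pi_0(\cR)}\times_{X^{\pi_0(\cR)\oplus\pi_1(\cR)[2]}}X^{\pi_0(\cR)}$ is a harmless repackaging. It even handles possibly empty fibers cleanly, since the obstruction lives in $\Map(x^*L_X,\pi_1(\cR)(R)[2])$; the paper leaves this implicit. However, three of your justifications are wrong, and your fallback does not work.

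\emph{The pullback presentation is not formal.} $\Spec$ of a pullback of animated rings is a pushout only among affine derived schemes, not in (pre)stacks. So nothing forces $\Map(-,X)$ to turn it into a pullback, and observing that $\Map(\Spec(-),X)$ ``simply means $X(-)$'' does not address this. What you need is that $X$ is infinitesimally cohesive, so that $X(\widetilde A)\simeq X(A)\times_{X(A\oplus N[1])}X(A)$ for $\widetilde A=A\times_{A\oplus N[1]}A$. This is exactly the deformation-theoretic input behind the paper's description of the fibers of $X(R')\to X(R)$ along square-zero extensions, and it is what the paper uses.

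\emph{The fallback paragraph fails for the same reason.} Without a cotangent complex and cohesion, nothing transfers descent from $X^{\pi_0(\cR)}$ to $X$ evaluated on square-zero thickenings. Nil-invariance of the \'etale site says nothing about $X$ preserving these limits.

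\emph{The fppf-sheafification remark should be dropped.} For the homotopy \emph{sheaves}, the fiber of $\cR(R)\to\pi_0(\cR)(R)$ is $(B\pi_1(\cR))(R)$, which sees $H^1(R,\pi_1(\cR))$; it is not $\pi_1(\cR)(R)[1]$. Take $\pi_i(\cR)$ objectwise, $R\mapsto\pi_i(\cR(R))$. There the Postnikov truncation really is the square-zero extension you want.

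\emph{Your ``main obstacle'' is easy.} It needs neither an \'etale base-change property of $\pi_1(\cR)$, which is not among the hypotheses, nor descent for $\pi_0(\cR)$. Let $x_n$ be the image of $x$ over the $n$-th term $S^n$ of the \v{C}ech nerve. Then $x_n^*L_X\simeq\pi_0(\cR)(S^n)\otimes_{\pi_0(\cR)(R)}x^*L_X$, so by adjunction $\Map_{\pi_0(\cR)(S^n)}(x_n^*L_X,M(S^n))\simeq\Map_{\pi_0(\cR)(R)}(x^*L_X,M(S^n))$. Moreover, $\Map(x^*L_X,-)$ commutes with the totalization. For $M=\pi_1(\cR)[k]$, everything therefore reduces to $\pi_1(\cR)(R)\simeq\lim_n\pi_1(\cR)(S^n)$ in $\cD(\pi_0(\cR)(R))$. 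That is precisely the descent hypothesis on $\pi_1(\cR)$, read in the derived sense.

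With these repairs your proof coincides with the paper's.
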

\begin{proof}
    Given $R \to S$ an \'etale cover in $\CAlg^{an}$, we have to prove that $X^{\cR}(R) \to \lim X^{\cR}(S^{\otimes_{R} n})$ is an equivalence. Consider
\[\begin{tikzcd}
	{X(\cR(R))} &&& {\lim X(\cR(S^{\otimes_{R} n}))} \\
	\\
	{X(\pi_0(\cR)(R))} &&& {\lim X(\pi_0(\cR)(S^{\otimes_{R} n}))}
	\arrow[from=1-1, to=1-4]
	\arrow[from=1-1, to=3-1]
	\arrow[from=1-4, to=3-4]
	\arrow[from=3-1, to=3-4]
\end{tikzcd}\]
By assumption the lower horizontal arrow is an equivalence. Thus, it is enough to check it fiberwise. For $x \in X(\pi_0(\cR)(R))$ the fiber of the left vertical arrow is $\Map(x^* L_{X}, \pi_1(\cR)[1]). $ Fiber on the right over $x$ is $\lim \Map_{\pi_0(\cR)(S^{\otimes_R n})}(x^* L_X, \pi_1(\cR)(S^{\otimes_R n})[1])$.
\end{proof}

 \begin{remark}\label{rem:transmutations-commute-limits}
     For a ring stack $\cR$, the presheaf sending a derived stack $X$ to the prestack $X^{\cR}$ commutes with arbitrary limits. Indeed, for any diagram $X_i$ of derived stacks one has $(\lim X_i)(A) = \lim X_i (A)$ for any animated ring $A$.
 \end{remark}


 \begin{lemma}\label{torsor-ring-stack}
    Let $\cR$ be a $\bF_p$-algebra stack with $\pi_0 (\cR)=\bG_a$ and $\pi_1(\cR) := G$ representable by an affine group scheme such that semiperfect algebras are $\pi_1(\cR)$-acyclic. Assume that $\pi_0(\cR(S)) = S$ for any semiperfect algebra $S$. If $X/\bF_p$ is lci, then the natural map $\nu_X: X^{\cR} \to X$ realizes the source as a quasi-syntomic $G \otimes_{\bG_a}  L_{X}^{\vee}[1]$-torsor over the target.
\end{lemma}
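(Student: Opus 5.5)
The plan is to describe $X^{\cR}$ by deformation theory, using the $1$-truncatedness of $\cR$. The map $\nu_X$ comes from $\cR \to \pi_0(\cR) = \bG_a$. For an animated $\bF_p$-algebra $R$, the animated ring $\cR(R)$ is a square-zero extension of $\pi_0(\cR(R))$ by $\pi_1(\cR)(R)[1]$; this uses the Postnikov tower of a $1$-truncated ring, which is square-zero. Hence, for $x \in X(\pi_0(\cR(R)))$, the fiber of $X^{\cR}(R) \to X(\pi_0\cR(R))$ over $x$ is either empty or a torsor for
\[
\Map(x^*L_X, G(R)[1]) = (G \otimes_{\bG_a} L_X^\vee[1])(R),
\]
just as in the proof of the descent lemma above. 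Since $X$ is lci, $L_X$ is perfect of Tor-amplitude $[-1,0]$, so $L_X^\vee$ makes sense.

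The torsor structure would be set up globally as follows. The square-zero extension is classified by a derivation, which gives a pullback square expressing $\cR$ as the fiber product of $\bG_a$ and $\bG_a \oplus G[2]$ over $\bG_a \oplus G[2]$, taken along the zero section and the classifying derivation. By \Cref{rem:transmutations-commute-limits}, transmutation commutes with limits. Therefore $X^{\cR}$ is the fiber product of $X$ and $X$ over $X^{\bG_a \oplus G[2]}$. The latter is the linear stack over $X$ given by the total space of $L_X^\vee \otimes G[2]$. This exhibits $X^{\cR}$ as a pseudo-torsor over $X$ for the loop group $G \otimes L_X^\vee[1]$.

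Representability and the quasi-syntomic property would come from local triviality on a cover. It remains to show that the torsor is locally nonempty, meaning that $\nu_X$ admits sections quasi-syntomic locally on $X$. The key steps are these:
\begin{enumerate}
\item Take the quasi-syntomic cover of $X$ by affine semiperfect schemes $\Spec S \to X$. These exist locally for lci $X$, for instance by extracting $p$-power roots of coordinates in a presentation and perfecting.
\item For semiperfect $S$, use $\pi_0(\cR(S)) = S$ and the $G$-acyclicity. This lets one lift the tautological point $S \to S$ to an $\cR(S)$-point: the obstruction lies in $H^2$-type groups of $G(S)$ twisted by $L_X$, which vanish by acyclicity together with the Tor-amplitude of $L_X$.
\item Pass from the cover to $X$ using étale/quasi-syntomic descent for $X^{\cR}$, via the descent lemma above.
\end{enumerate}

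The main obstacle is step 2. One must verify that the obstruction class vanishes, not just that some ambient group vanishes: $\Map(x^*L_X, G(S)[2])$ involves $\pi_1$ of $L_X$ paired against $G(S)$ in degree $1$, and this need not vanish for trivial reasons. I would first try to avoid the obstruction entirely by computing $X^{\cR}(S)$ directly for $X$ affine lci. Writing $X$ as a quotient of a smooth (or polynomial) scheme, for polynomial $X = \bA^n$ one has $X^{\cR}(S) = \cR(S)^n$, which surjects onto $S^n$ on $\pi_0$. A lci presentation should then reduce the question to the fiber of a regular sequence. Quasi-syntomic local surjectivity of $\pi_0$ would follow after a further semiperfect cover, using $G$-acyclicity to kill the $\pi_1$-contributions.
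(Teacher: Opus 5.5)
Your argument is correct, and its skeleton is the paper's: a pseudo-torsor coming from the square-zero extension $\nu\colon \cR \to \bG_a$ with fiber $G[1]$, then local sections over semiperfect covers. Your local-splitting step is organised differently, though, and the ``main obstacle'' you flag there is not an obstacle.

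For semiperfect $S$, the hypotheses make $\cR(S) \to S$ a square-zero extension by the discrete $S$-module $G(S)$, shifted by $1$. So the obstruction to lifting $x\colon \Spec S \to X$ lies in $\pi_0\Map_S(x^*L_X, G(S)[2]) = \mathrm{Ext}^2_S(x^*L_X, G(S))$. Since $X$ is lci, $x^*L_X$ is perfect of Tor-amplitude $[-1,0]$. It is therefore represented by a complex $P_1 \to P_0$ of finite projective $S$-modules, and $R\mathrm{Hom}_S(x^*L_X, G(S))$ is concentrated in cohomological degrees $0$ and $1$. Hence the whole group vanishes, not just the class.

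The pairing of $\pi_1(x^*L_X)$ with $G(S)$ that worries you appears only as an $E_2$-term of the spectral sequence of the Postnikov filtration of $x^*L_X$. It cannot survive, since the abutment is zero. So your first formulation of step 2 already finishes the proof, for every semiperfect $S$ over $X$, and the fallback through explicit presentations and regular sequences is unnecessary.

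The paper instead splits the ring map $\cR(S) \to S$ itself. There the obstruction lies in $\pi_0\Map_S(L_S, G(S)[2]) = \mathrm{Ext}^1_S(M, G(S))$ with $L_S = M[1]$. This is why the paper passes to a quasiregular semiperfect cover with $\pi_1 L_S$ finite free \cite[Remark 4.29]{MR3949030}. Your version trades that input for perfectness of $L_X$, which the statement needs anyway to make sense of $L_X^\vee$. The paper's version gives a splitting of $\cR(S) \to S$ that is independent of the map to $X$.

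A few smaller points:
\begin{enumerate}
\item The fibers of $\nu_X$ should be taken over $X(R)$ via $\cR(R) \to \bG_a(R) = R$, not over $X(\pi_0\cR(R))$. These differ for non-semiperfect $R$; for the de Rham ring stack, $\pi_0$ of the value at $R$ is $W(R)/pW(R)$, which is in general larger. The sheaf-level square-zero extension, as used in the paper, is the right input.
\item \Cref{rem:transmutations-commute-limits} is about limits in $X$, not in the ring stack. To get $X^{\cR} \simeq X \times_{X^{\bG_a \oplus G[2]}} X$, reduce to affine $X = \Spec A$, where $\Map(A,-)$ preserves limits.
\item Step 3 is superfluous. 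Having sections over a quasi-syntomic cover is exactly what quasi-syntomic local triviality means, and the descent lemma you cite is only for the \'etale topology in any case.
\end{enumerate}
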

\begin{proof}
    This is essentially proven in \cite[Footnote 18]{bhatt-lecture-notes} but we recall the idea. The action of $\Map(L_X, G[1])$ on $X^{\cR} \to X$ comes from the derived deformation theory since $\nu: \cR \to \bG_a$ is a square-zero extension by $G[1].$ Namely, for a point $x: \Spec(R) \to X$ the fiber of $X^{\cR}(R) = X(\cR(R)) \to X(R)$ over $x$, if nonempty, is a torsor for $\Map_{R} (x^* L_{X}, G[1](R))$. When $X$ is lci over $\bF_p$ we get $$\Map_{R} (x^* L_{X}, G[1](R)) \simeq x^* L_{X}^{\vee} \otimes_{R} G[1](R) \in \cD_{qc}^{\le 0}(R)$$ due to dualizability of $L_{X}.$ To finish the proof it is enough to construct a section of $X^{\cR} \to X$ fppf-locally. Passing to open subschemes of $X$, we can assume $X=\Spec(R)$ where $R$ is quasi-syntomic, then giving a section of $\nu_X$ is equivalent to giving a section of $\nu(R): \cR(R) \to R.$    Moreover, we can find a faithfully flat map $f: R \to S$ where $S$ is quasi-regular semiperfect and $\pi_1L_S$ is a finite free $S$-module by \cite[Remark 4.29]{MR3949030}. Thus, we assume $X=\Spec(S)$ a qrsp ring with $L_X=M[1]$ where $M$ is a finite free $S$-module of finite rank. In that case, the map $\cR(S) \to S$ exhibits the source as a square-zero extension with the fiber $G[1](S).$ Therefore, we get the obstruction class $\nu^*L_S \to G[2](S)$ whose vanishing is sufficient for existence of a section. Since $L_S=M[1]$ with $M$ being finite free, the claim follows from the assumption that $G$ is acyclic on semiperfect rings.

\end{proof}

\subsection{Derived rings and affine stacks}
We extensively use the notion of affine stack which is based on a notion of a derived ring, discovered by Mathew. This notion is supposed to extend animated rings to the nonconnective setting. A typical example is the complex $R\Gamma(X, \cO_{X}) \in \cD(k)$ for a scheme $X/k.$ More generally, any cosimplicial commutative ring gives rise to an example. Moreover, coconnective derived rings are exactly cosimplicial commutative rings. For us, the main example of a derived ring will be derived de Rham cohomology $\dR_{B/A}$ of a map $A \to B$ between commutative rings.  In general, it is not represented by a cosimplicial commutative ring since $H^i(\dR_{B/A})$ might not vanish for $i<0.$ For an animated algebra $A$ we denote the category of derived $A$-algebras by $\DAlg_A$, see \cite[\S 4.2]{raksit2020hochschildhomologyderivedrham}. Any derived $A$-algebra $R \in \DAlg_{\bF_p}$ gives rise to a derived stack $\Spec(R)$ whose functor of points takes $A \in \CAlg^{\an}_{A}$ to the space $(\Spec(R))(A) = \Map_{\DAlg_{\bF_p}}(R, A).$ Analogously, one defines affine stack in the relative setting. 
\begin{definition}(\cite[Remark 4.5]{derivedstacks})
A morphism $X \to S$ of derived $k$-stacks is said to be affine if for every derived affine scheme $\Spec(A)$ with a map $\Spec(A) \to S,$ the fiber product $X \times_S \Spec(A)$ is an affine derived stack.
\end{definition}
\begin{remark}
    It is clear from the definition that affine $S$-stacks are closed under limits. Similarly, if $X \to Y$ is affine, then for any $Y' \to Y,$ the map $X \times_Y Y' \to Y'$ is affine. Therefore, if $f: X \to Y$ is an $S$-map with affine $X\to S,$ then $f$ is affine. Indeed, for any $\Spec(A) \to Y$ one has $\Spec(A) \times_Y X = (X \times_S Y) \times_Y \Spec(A)$.
\end{remark}
For a scheme $X$ with a derived $\cO_X$-algebra $\cA \in \DAlg(\cD(X))$ one defines an affine map $\underline{\Spec}_X(\cA) \to X$. The main object of interest in this paper is the de Rham stack $X^{dR}$ for a scheme $X/\bF_p$ and its relative version $(X/S)^{dR}$ for a morphism $X \to S$ of $\bF_p$-schemes.  We will use that $(X/S)^{dR}$ is an affine $X \times^L_{S, F_S} S  := X'$-stack represented by $F_{X/S, *} \dR_{X/S} \in \cD_{qc}(X').$ 

\begin{remark}\label{rem:affine-torsor}
    For a vector bundle $E$ on a scheme $X$, a stack $B^n_X E^{\sharp}$ is affine over $X$ for any $n \ge 0.$ Indeed, Zariski localising $X$ we can assume $E=\bG_a^n$ and then it follows since $B^n\bG_a^\sharp$ is an affine stack, see \cite[Proposition 4.14, 4.19]{derivedstacks}. Moreover, if $a: E_1 \to E_0$ is a map of vector bundles, then $B^n_XE^{\sharp} \to X$ is affine for any $n$ where $E:=\cofib(a)$. Indeed, note $B^n_{X}E^{\sharp} = \fib(B^{n+1}E_0^{\sharp} \to B^{n+1}E_1^{\sharp})$. In particular, if $f:Y \to X$ is a torsor for $E^{\sharp},$ then $f$ is affine. Indeed, if $X \to B_XE^{\sharp}$ is the map classifying $Y$, then $f$ is obtained as the pullback of the canonical map $X \to B_XE^{\sharp}$ along $f.$
\end{remark}

\section{Relative de Rham stack}

\subsection{Preliminaries on de Rham stack}

Recall from \Cref{def:de-Rham-ring-stack} the $\bG_a$-algebra stack $\bG_a^{dR} := \operatorname{Cone}(F_*W \xrightarrow{p} F_*W)$ on the category of animated $\bF_p$-algebras.
\begin{remark}\label{rem:pi_i-of-gadr}
    Recall $\pi_0(\bG_a^{dR}) =F_*\bG_a$ and $\pi_1(\bG_a^{dR})=F_*\bG_a^{\sharp}$ as $W$-modules. In particular, there is a map of ring stacks $\nu: \bG_a^{dR} \to \bG_a$ that is a square-zero extension with the fiber $\bG_a^{\sharp}[1].$ There is also a map $\pi: \bG_a \to \bG_a^{dR}$ such that $\nu \circ \pi = F_{\bG_a}.$ 
\end{remark}
\begin{definition}\label{def:dR-stack}
    For a stack $X/\bF_p$ we define a derived prestack $X^{\dR}$ by the rule  $$X^{\dR}(R) := \Map_{\dSt_{\bF_p}}(\Spec(\bG_a^{dR}(R)), X)$$.
\end{definition}
\begin{remark}\label{rem:def-of-nu-pi-for-X}
    Transmuting  $\pi, \nu$ from \Cref{rem:pi_i-of-gadr} we get $\pi_{X}: X \to X^{dR}$ and $\nu: X^{dR} \to X$ for any derived stack $X/\bF_p.$
\end{remark}
\begin{remark}\label{DRDR-geometric}
    Recall the natural transformation $\pi_{X}:X \to X^{dR}$ from \Cref{rem:def-of-nu-pi-for-X}. It gives rise to two natural transformations $(\pi_{X})^{dR}, \pi_{X^{dR}}: X^{dR} \to (X^{dR})^{dR}$. They are naturally homotopic, see \Cref{rem:dRdRvsdR}  
\end{remark}

We will need a version of the de Rham stack for any morphism $X \to S.$ 
\begin{definition}
    Given a morphism $f: X \to S$ of $\bF_p$-stacks, one defines $(X/S)^{dR}$ to be the fiber product $X^{dR} \times_{S^{dR}} S$ in the $\infty$-category of derived prestacks, where the maps are $f^{dR}$ and $\pi_S: S \to S^{dR}$. Sometimes, we will also denote it by $(X \xrightarrow{f} S)^{dR}$.
\end{definition}

\begin{remark}
    The relative de Rham stack $(X/S)^{dR}$ is an $S$-stack via the projection map $(X/S)^{dR} \to S.$ Explicitly, it represents the functor which takes $\Spec (R) \to S$ to $\Map_{S} (\Spec(F_* W(R) \otimes^L \bF_p), X)$; here $\Spec (F_* W(R) \otimes^L \bF_p)$ is a stack over $S$ via $$\Spec (F_* W(R) \otimes^L \bF_p) \to \Spec (R) \to S$$ where the first map is from \Cref{rem:pi_i-of-gadr}. In particular, this definition coincides with the one in \cite[Definition 2.5.3]{bhatt-lecture-notes}.
\end{remark}

\begin{remark}
Recall that for an $\infty$-category $\cC$ the arrow category $\cC^{\Delta^1}$ defined as $\Fun(\Delta^1, \cC).$ The relative de Rham stack defines a functor ${(-/-)}^{dR}: \operatorname{dStk}_{\bF_p}^{\Delta^1} \to \operatorname{dStk}_{\bF_p}$.
\end{remark}

\begin{remark}\label{dR-commutes-colimits}
    The relative de Rham stack functor ${(-/-)}^{dR}: \operatorname{dStk}_{\bF_p}^{\Delta^1} \to \operatorname{dStk}_{\bF_p}$ commutes with all limits. Indeed, it is true for the functor $(-)^{dR}: \operatorname{dStk}_{\bF_p} \to \operatorname{dStk}_{\bF_p}$ by \Cref{rem:transmutations-commute-limits}. It formally implies the statement as $(X/S)^{dR} = X^{dR} \times_{S^{dR}} S$ and fiber products commute with arbitrary limits.
\end{remark}

\begin{corollary} The restriction of $(-/-)^{dR}: \operatorname{dStk}_{\bF_p}^{\Delta^1} \to \operatorname{dStk}_{\bF_p}$ to the subcategory $\operatorname{Sch}_{\bF_p}^{\Delta^1}$ commutes with Tor-independent limits.
    
\end{corollary}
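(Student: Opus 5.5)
The plan is to deduce the statement from \Cref{dR-commutes-colimits}, which says that $(-/-)^{dR}$ commutes with all limits computed in $\operatorname{dStk}_{\bF_p}^{\Delta^1}$. The only issue is that limits in $\operatorname{Sch}_{\bF_p}^{\Delta^1}$ are classical (underived) limits. These need not agree with the limits of the same diagram computed in derived stacks.

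First, I make precise what a Tor-independent limit means here. A diagram $i \mapsto (X_i \to S_i)$ of schemes has Tor-independent limit if the classical limits $\lim X_i$ and $\lim S_i$ coincide with the corresponding limits in $\operatorname{dStk}_{\bF_p}$. Concretely, for fiber products $X_1 \times_{X_0} X_2$ this is the vanishing of the higher Tor sheaves, so that the derived fiber product is discrete. The same should hold for the base diagram $S_i$. Limits in the arrow category are computed pointwise (source and target separately), so Tor-independence of both the source and target diagrams means the inclusion $\operatorname{Sch}_{\bF_p}^{\Delta^1} \to \operatorname{dStk}_{\bF_p}^{\Delta^1}$ preserves this limit. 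For cofiltered limits with affine transition maps this is automatic, since filtered colimits of discrete rings are discrete and flat over each other in the relevant sense. The general case reduces to finite limits and cofiltered ones.

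Second, I apply \Cref{dR-commutes-colimits} to the limit computed in $\operatorname{dStk}_{\bF_p}^{\Delta^1}$. This gives
\[
\Bigl(\lim X_i \big/ \lim S_i\Bigr)^{dR} \simeq \lim\, (X_i/S_i)^{dR},
\]
where the left side uses the derived limit. By the first step, that derived limit agrees with the classical scheme-theoretic limit, which gives the claim.

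I expect the main obstacle to be the first step: pinning down the definition of Tor-independence so that it guarantees agreement of the classical and derived limits for both the source and target diagrams. The rest is formal. I would present the argument for pullbacks $X_1 \times_{X_0} X_2 \to S_1 \times_{S_0} S_2$ and note that arbitrary limits are handled in the same way.
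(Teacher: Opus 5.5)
Your proposal is correct and matches the paper, which gives no separate argument and treats the statement as an immediate consequence of \Cref{dR-commutes-colimits}. The implicit input is exactly the one you name: Tor-independent limits of schemes agree with the corresponding limits in derived stacks, checked for the source and target diagrams separately since limits in the arrow category are computed pointwise. One small correction to your aside on cofiltered limits: filtered colimits of discrete rings are discrete simply because $\pi_i$ commutes with filtered colimits, and no flatness is involved.
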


\begin{remark}
    Let $f: X \to S$. There is a map $X \to (X/S)^{dR} := S \times_{S^{dR}} X^{dR}$ given by $(f, \pi_X)$ with the homotopy between two compositions provided by naturality of $\pi_{-}$. We will denote it by $\pi_{X/S}.$
\end{remark}
\begin{lemma}\label{covering-for-smooth}
    If $f: X \to S$ is smooth, then $\pi_{X/S}: X \to (X/S)^{dR}$ is surjective fpqc locally.
\end{lemma}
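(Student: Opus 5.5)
We need to show: given a point $y \in (X/S)^{dR}(R)$, there is an fpqc cover $R \to R'$ such that $y|_{R'}$ lifts along $\pi_{X/S}$ to $X(R')$. The plan reduces to the case $X=\bA^n_S$ and then exploits the explicit description of $\bG_a^{dR}(R)$ as a square-zero extension of $F_*R$.

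Unwinding the definitions, such a point is a map $\Spec(R) \to S$ together with an $S$-map
\[
x\colon \Spec(F_*W(R)\otimes^L \bF_p) \to X,
\]
where the source is an $S$-stack via $\pi$. A lift to $X(R')$ is a point $x_0\colon \Spec(R') \to X$ over $S$ such that $x_0 \circ (\text{the map } \Spec(\bG_a^{dR}(R')) \to \Spec(R') \text{ induced by } \pi)$ agrees with $x|_{R'}$.

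\textbf{Step 1: reduce to affine space.} The question is Zariski local on $X$: after a Zariski cover of $R$ (pulled back along $\nu$ and the map to $S$), the point $x$ factors through an affine open. Since $X \to S$ is smooth, $X$ is Zariski locally étale over $\bA^n_S$. I then claim $(X/S)^{dR} \simeq (\bA^n_S/S)^{dR}\times_{\bA^n_S{}'} X'$, where primes denote Frobenius twists and the map $(\bA^n_S/S)^{dR}\to \bA^n_S{}'$ is via $\nu$. This is the standard étale base change for de Rham stacks. It follows from the fact that $\bG_a^{dR}(R) \to F_*R$ is a nilpotent thickening, so that étale maps have unique lifts of points through it. Equivalently, it follows from the deformation theory of \Cref{torsor-ring-stack}, because $L_{X/\bA^n_S}=0$. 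Combined with $X \simeq X' \times_{\bA^n_S{}'} \bA^n_S$ (relative Frobenius of an étale map is an isomorphism onto the twisted base change), it then suffices to treat $X=\bA^n_S$. By \Cref{dR-commutes-colimits} we may further reduce to $n=1$, i.e. to the map $S\times\bG_a \to S\times\bG_a^{dR}$-type statement: $\pi\colon \bG_a \to \bG_a^{dR}$ is surjective fpqc locally.

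\textbf{Step 2: surjectivity of $\pi\colon \bG_a \to \bG_a^{dR}$.} An element of $\bG_a^{dR}(R)$ is represented, after passing to a flat cover, by an element of $W(R)$, and we must show it becomes, flat locally, of the form $F(V(\cdot))$-equivalent to $p\cdot w + V(a)$ type elements. Concretely, $\pi_0(\bG_a^{dR}(R))=R/\text{(image)}$ is the Frobenius-twisted $R$, namely $F_*R$ modulo the image of $F$. The task is to hit a class represented by $r \in R$ via $\pi(s)$, which amounts to finding $s$ with $s^p=r$ modulo the indeterminacy coming from $\pi_1=\bG_a^\sharp$. Adjoining $p$-th roots is faithfully flat, via $R \to R[t]/(t^p-r)$. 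After that, the remaining discrepancy lies in the fiber $\bG_a^\sharp[1]$, i.e. in $H^1_{fpqc}$ of $\bG_a^\sharp$. Such classes can be killed by a further faithfully flat extension: $\bG_a^\sharp$-torsors are trivial fpqc locally, for instance by passing to a semiperfect cover as in the proof of \Cref{torsor-ring-stack}.

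\textbf{Expected main obstacle.} The delicate step is the homotopy-coherent bookkeeping in Step 2. Lifting $\pi_0$ only is easy; what requires care is ensuring that the $\pi_1$ ambiguity (the $\bG_a^\sharp$-gerbe structure of $\nu$) is trivialized flat locally rather than merely étale locally. I expect to use the quasi-regular semiperfect cover, on which $\pi_0\bG_a^{dR}(S)$ is understood and $\bG_a^\sharp$ is acyclic, to conclude.
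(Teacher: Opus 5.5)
Your proof is correct, and it is more self-contained than the paper's. The paper only uses \Cref{dR-commutes-colimits} to base change to an affine open of $S$, and then defers entirely to the proof of \cite[Theorem 2.5.6]{bhatt-lecture-notes}.

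You carry out the reduction yourself, down to the single statement that $\pi\colon\bG_a\to\bG_a^{dR}$ is an fpqc epimorphism. The reduction uses three ingredients:
\begin{enumerate}
\item \'etale base change $(X/S)^{dR}\simeq(\bA^n_S/S)^{dR}\times_{(\bA^n_S)'}X'$, which is valid because $\nu\colon\bG_a^{dR}(R)\to R$ is a nilpotent thickening;
\item cartesianness of the relative Frobenius square for \'etale maps;
\item compatibility with products.
\end{enumerate}
You then deduce that statement from $\nu\circ\pi=F$ and the fact that the fiber of $\nu$ is $\bG_a^\sharp[1]$. This is essentially the content hidden in the citation, so your route gives an actual argument at the cost of some length.

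Step 2 needs two corrections.
\begin{enumerate}
\item The presheaf $\pi_0$ of $\bG_a^{dR}(R)$ is $W(R)/pW(R)$. It is its fpqc sheafification that is identified with $R$ via $\nu$; it is not ``$F_*R$ modulo the image of $F$''.
\item The obstacle you anticipate is not there. Surjectivity of a map of sheaves of spaces is a statement about $\pi_0$-sheaves only, so no higher coherence arises. The ``discrepancy in $\bG_a^\sharp[1]$'' is just the class of an element of $W(R')/FW(R')$, i.e.\ an $H^1$-class of $W[F]\simeq\bG_a^\sharp$, and such a class dies locally.
\end{enumerate}

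In fact one cover suffices. Choose a faithfully flat semiperfect $R\to R'$, for instance by adjoining all $p$-power roots of all elements. Then $F$ is surjective on $W(R')$, so $pW(R')=VFW(R')=VW(R')$. Hence $\pi_0\bG_a^{dR}(R')=W(R')/VW(R')=R'$, and $\pi$ becomes $F_{R'}$, which is surjective.

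Writing $w=[w_0]+V(w')$, one sees that hitting the class of $w$ forces every Witt component of $w$ to become a $p$-th power. This is why the lemma genuinely needs fpqc covers (infinitely many $p$-th roots) rather than fppf or \'etale ones. Your instinct about flat versus \'etale covers was right; the difficulty just sits at the level of $\pi_0$, not $\pi_1$.
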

\begin{proof}
Let $V \to S$ be an affine open subscheme. It follows from \Cref{dR-commutes-colimits} that both squares in the following diagram are cartesian.
\[\begin{tikzcd}
	{X \times_{S} V} & X \\
	{(X \times_S V/V)^{dR}} & {(X/S)^{dR}} \\
	V & S
	\arrow[from=1-1, to=1-2]
	\arrow[from=1-1, to=2-1]
	\arrow[from=1-2, to=2-2]
	\arrow[from=2-1, to=2-2]
	\arrow[from=2-1, to=3-1]
	\arrow[from=2-2, to=3-2]
	\arrow[from=3-1, to=3-2]
\end{tikzcd}\] Therefore, we can assume that $S$ is affine. In this case we refer to the proof of \cite[Theorem 2.5.6]{bhatt-lecture-notes} where the statement is proven.

\end{proof}

\begin{remark}\label{for-lci-dR-torsor}
For any derived stack $X$ one obtains $\nu_{X}: X^{dR} \to X$ by transmuting $\nu$ from \Cref{rem:pi_i-of-gadr}. When $X/\bF_p$ is an lci scheme, the map $X^{dR} \to X$ realizes the source as a quasi-syntomic torsor for the group stack $L_{X}^{\vee}[1] \otimes_{\bG_a} \bG_a^{\sharp}.$ This follows from \Cref{torsor-ring-stack} since $\pi_1 (\bG_{a}^{dR}) = F_* \bG_{a}^{\sharp}$ as sheaves of $W$-modules for fppf topology and so semiperfect algebras are $\pi_1(\bG_a^{dR})$-acyclic by \cite[Corollary 2.6.4]{bhatt-lecture-notes}. If, moreover, $X/\bF_p$ is smooth, then $X^{dR} \to X$ realizes the source as a quasi-syntomic $T_{X}^{\sharp}$-gerbe over the target. 
\end{remark}
If $X$ is any derived stack over $\bF_p$ and $s \colon X \to X^{dR}$ is any section of $\nu_X \colon X^{dR} \to X$, we obtain two points in $X^{dR}(X)$, namely $s \circ F_X$ and $\pi_X$. In the next lemma, we observe that they are canonically homotopic.

\begin{lemma}\label{rem:any-section-factorizes-pi_X}
    Let $s: X \to X^{dR}$ be any map satisfying $\nu_X \circ s \simeq \operatorname{Id}_X.$ Then there exists a canonical homotopy $F_X\circ s \simeq \pi_X.$
\end{lemma}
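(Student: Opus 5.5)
The plan is to factor the Frobenius of $X^{dR}$ through $\nu_X$ at the level of the ring stack $\bG_a^{dR}$, and then transmute. Concretely, I will show that $F_{X^{dR}}\simeq \pi_X\circ\nu_X$ as endomorphisms of $X^{dR}$. Granting this, precomposing with $s$ and using $\nu_X\circ s\simeq \operatorname{Id}_X$ gives
$$F_{X^{dR}}\circ s\simeq \pi_X\circ\nu_X\circ s\simeq \pi_X .$$
Naturality of the Frobenius on $\bF_p$-stacks gives $F_{X^{dR}}\circ s\simeq s\circ F_X$. By \Cref{rem:fp-str-two-frobs-the-same}, $F_{X^{dR}}\simeq F_X^{dR}$, so the same homotopy can also be read with $F_X^{dR}\circ s$. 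Either reading of the composite in the statement is therefore homotopic to $\pi_X$.

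For the ring-level claim, fix an animated (or discrete) $\bF_p$-algebra $R$ and set $A=\bG_a^{dR}(R)=F_*W(R)\otimes^L\bF_p$. This is $1$-truncated with $\pi_0(A)=R$, by \Cref{rem:pi_i-of-gadr}. \Cref{Frob-factors-1-truncated} provides an essentially unique map $F'_A:\pi_0(A)\to A$ with $F_A\simeq F'_A\circ(A\to\pi_0(A))$. It is characterized by the property that $\pi_0(A)\xrightarrow{F'_A}A\to\pi_0(A)$ equals $F_{\pi_0(A)}$.

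Under $\pi_0(A)=R$, the truncation $A\to\pi_0(A)$ is $\nu(R)$. The map $\pi(R):R\to A$ satisfies $\nu\circ\pi=F_{\bG_a}$, again by \Cref{rem:pi_i-of-gadr}. Hence $\pi(R)$ satisfies the characterizing property, and uniqueness forces $F'_A\simeq\pi(R)$. This yields $F_{\bG_a^{dR}}\simeq \pi\circ\nu$ as endomorphisms of the ring stack.

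The homotopy must be natural in $R$, so that it is one of ring-stack maps. For this I will use that the fiber space in the uniqueness part of \Cref{Frob-factors-1-truncated} is contractible. The characterizing property is functorial, so the space of pairs (factorization, homotopy) is contractible objectwise and assembles into a contractible space of natural choices.

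Transmuting then gives the endomorphism of $X^{dR}$. The ring map $\nu(R):A\to R$ induces $X(A)\to X(R)$, which is $\nu_X$, and $\pi(R)$ induces $\pi_X$. So the ring-level composite $\pi\circ\nu:A\to A$ transmutes to $\pi_X\circ\nu_X$, while $F_A$ transmutes to $F_{X^{dR}}$, as identified in \Cref{rem:fp-str-two-frobs-the-same}. I expect the main obstacle to be bookkeeping rather than substance. One must check that the order of composition is correct under this contravariant-then-covariant passage. One must also confirm that the identification of the Frobenius of the transmuted stack with transmutation of the ring-stack Frobenius matches the one recorded in \Cref{rem:fp-str-two-frobs-the-same}, which is exactly where the $F_*$-twists in $\bG_a^{dR}$ enter.
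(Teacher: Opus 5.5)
Your skeleton is the paper's: naturality of Frobenius, $F_{X^{dR}}\simeq F_X^{dR}$ from \Cref{rem:fp-str-two-frobs-the-same}, the factorization $F_X^{dR}\simeq \pi_X\circ\nu_X$ via \Cref{Frob-factors-1-truncated}, then precomposition with $s$. The transmutation bookkeeping is fine. The step that does not hold as you argue it is ``$\pi(R)$ satisfies the characterizing property, and uniqueness forces $F'_A\simeq\pi(R)$''.

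What the proof of \Cref{Frob-factors-1-truncated} establishes is that restriction along $\nu\colon A\to\pi_0(A)$ is a monomorphism $\Map(\pi_0(A),A)\to\Map(A,A)$. So a \emph{factorization of $F_A$} through $\nu$ is unique. It does not show that a \emph{lift of $F_{\pi_0(A)}$} along $\nu$ is unique. Such lifts form a torsor under $\Map_{\pi_0(A)}(L_{\pi_0(A)},F_*\pi_1(A)[1])$, which need not be connected. For $R=\bF_p[x^{1/p^\infty}]/(x)$ and $A=\bG_a^{dR}(R)=W(R)\otimes^L\bF_p$, one has $L_R\simeq (x)/(x^2)[1]$ with $(x)/(x^2)$ free of rank one. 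So $\pi_0$ of that torsor is $\pi_1(A)=W(R)[F]$, which contains $V[x^{1/p}]\neq 0$. Hence $\nu\circ\pi=F$ does not by itself give $\pi\circ\nu\simeq F_A$. The sentence ``in particular, this map is characterized by\dots'' in \Cref{Frob-factors-1-truncated} says more than its proof gives, and your step rests on it, as does the paper's one-line appeal to that lemma. Relatedly, $\pi_0(\bG_a^{dR}(R))=R$ holds only for semiperfect $R$ or at the level of sheaves; for $R=\bF_p[x]$ it is $W(R)/pW(R)$.

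The missing input is a direct proof that $\pi\circ\nu\simeq F$ on $\bG_a^{dR}$. Treat these as maps of rings, forgetting the $F_*$-decorations, which only record $W$-module structures. Then $\nu$ is induced by the quasi-ideal map $(W\xrightarrow{p}W)\to(W\xrightarrow{V}W)$ that is $F$ on the ideal term and the identity on the ring term. By \Cref{rem:fp-str-two-frobs-the-same}, $\pi$ is induced by $(W\xrightarrow{V}W)\to(W\xrightarrow{p}W)$, the identity on the ideal term and $F$ on the ring term. So $\pi\circ\nu$ is $F$ on both terms, i.e.\ it is $F_{W(R)}\otimes^L\bF_p$ on $W(R)\otimes^L\bF_p$.

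For any $\delta$-ring $B$, the map $\phi_B\otimes^L\bF_p$ is naturally homotopic to the Frobenius of the animated $\bF_p$-algebra $B\otimes^L\bF_p$. This is clear for free $\delta$-rings, which are $p$-torsion free, so the reduction is discrete. It follows in general by resolving by free $\delta$-rings, in the spirit of the proof of \Cref{rem:dRdRvsdR}. Applied to $B=W(R)$, this gives $F_A\simeq\pi(R)\circ\nu(R)$ naturally in $R$, for all $R$, so no restriction to semiperfect test rings is needed. Only then does the uniqueness in \Cref{Frob-factors-1-truncated} identify this with the canonical factorization, and the rest of your argument goes through unchanged.
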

\begin{proof}
    Indeed, one has a canonical isomorphism $s \circ F_{X} \simeq F_{X^{dR}} \circ s$ since Frobenius commutes with any morphism of derived $\bF_p$-stacks. By \Cref{Frob-factors-1-truncated} one has a canonical identification $F_{X}^{dR} \simeq \pi_{X} \circ \nu_{X}$ and since $F_{X}^{dR} \simeq F_{X^{dR}}$ canonically by \Cref{rem:fp-str-two-frobs-the-same}, we get $F_{X^{dR}} \circ s \simeq \pi_{X} \circ \nu_{X} \circ s \simeq \pi_{X}.$
\end{proof}


\begin{lemma}\label{BTsharp-equivariant-map}
    Let $f: Y \to X$ be a map over $\bF_p.$ Then $f^{dR}: Y^{dR} \to X^{dR} \times_{X'} Y'$ is equivariant for the map $\Map(f^*L_X, \bG_a^\sharp[1]) \to \Map(L_Y, \bG_a^\sharp[1])$ of $Y$-group stacks.
\end{lemma}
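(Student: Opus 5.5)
The plan is to work pointwise over a test animated $\bF_p$-algebra $R$ and use the deformation-theoretic description of the torsor structures, exactly as in \Cref{torsor-ring-stack}. A point of $X^{dR}\times_{X'}Y'$ over $R$ consists of a point $x\in X(\bG_a^{dR}(R))$ together with $y\in Y(R)$ and an identification $\nu(x)\simeq f(y)$ in $X(R)$. Here we use \Cref{rem:fp-str-two-frobs-the-same} to identify the map to $X'$ with $\nu$ followed by the Frobenius twist. Since $\nu:\bG_a^{dR}(R)\to R$ is a square-zero extension with fiber $\bG_a^{\sharp}[1](R)$ (\Cref{rem:pi_i-of-gadr}), the fiber of $X(\bG_a^{dR}(R))\to X(R)$ over $f(y)$ is a torsor under $\Map_R(y^*f^*L_X,\bG_a^\sharp[1](R))$. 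Hence $X^{dR}\times_{X'}Y'\to Y'$ carries an action of the $Y$-group stack $\Map(f^*L_X,\bG_a^\sharp[1])$. Likewise, $Y^{dR}\to Y'$ carries the action of $\Map(L_Y,\bG_a^\sharp[1])$.

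The key step is the functoriality of these deformation-theoretic actions. For a square-zero extension $\tilde R\to R$ with fiber $N$, the action of $\Map(y^*L_Y,N)$ on the fiber of $Y(\tilde R)\to Y(R)$ arises from the pushout description $\tilde R\simeq R\times_{R\oplus N[1]}R$. Under this description, lifts of $y$ correspond to nullhomotopies of the obstruction class, and two lifts differ by a derivation $y^*L_Y\to N$. Composition with $f$ sends this data to the corresponding data for $X$, with derivations transported along $df:f^*L_X\to L_Y$. So I will show that the map of fibers $Y(\tilde R)_y\to X(\tilde R)_{f(y)}$ is equivariant along the map $\Map(y^*L_Y,N)\to\Map(y^*f^*L_X,N)$ given by precomposition with $df$.

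Note that the direction in the statement is written as $\Map(f^*L_X,\ldots)\to\Map(L_Y,\ldots)$. I will read this as the natural map induced by $L_f$, namely precomposition with $df$, and make the direction consistent in the write-up.

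The naturality should be formal once we express everything via mapping spaces into the trivial square-zero extension $R\oplus N[1]$. The action is the one obtained from $\Map_{/R}(\Spec(R\oplus N[1]),Y)\simeq\Map(y^*L_Y,N[1])$. Here I also need to check that $f$ commutes with the identifications of the loop spaces, which is just functoriality of the cotangent complex universal property recalled in the preliminaries. I will apply this with $\tilde R=\bG_a^{dR}(R)$ and $N=\bG_a^\sharp[1](R)$, functorially in $R$, which gives the equivariance of stacks.

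The main obstacle is coherence: making the equivariance a structure of $\infty$-categorical actions rather than a pointwise statement. I will handle this by expressing both actions through the pullback squares $\tilde R\simeq R\times_{R\oplus N[1]}R$ functorially in $R$, so that the actions are induced by the group objects $\Map_{/R}(\Spec(R\oplus N[1]),-)$ and equivariance follows from applying the functor $\Map(-,f)$.
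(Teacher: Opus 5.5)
Your proposal is correct and is essentially the paper's argument: the paper likewise says both actions come from the deformation theory of the square-zero extension $\nu$, and that for a square-zero extension $A'\to A$ with ideal $I$ the map $Y(A')\to X(A')$ is equivariant along $\Map_A(y^*L_Y,I)\to\Map_A(x^*L_X,I)$ induced by $df$. This is also the direction you adopt, which corrects the reversed arrow in the statement. Your extra coherence argument via $R\times_{R\oplus N[1]}R$ fills in detail the paper leaves implicit, and the appeal to the homotopy $F_{X^{dR}}\simeq F_X^{dR}$ is unnecessary, since over $\bF_p$ one has $X'=X$ and the map to $X'$ is just $\nu_X$.
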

\begin{proof}
    This follows since the action comes from derived deformation theory. Let $A' \to A$ be a square-zero extension with the ideal $I \in \cD^{\le 0}(A).$ Fix $x \in X(A)$ and $y \in Y(A)$ compatibly with respect to $f.$ The fiber of $X(A') \to X(A)$ over $x$, if non-empty, is a torsor for $\Map_{A}(x^*L_X, I)$ and similarly for $y.$ Pulling back the map $df: f^*L_X \to L_Y$ along $x$ gives $\Map_A(y^*L_Y, I) \to \Map_{A}(x^*L_X, I)$ and the map $Y(A') \to X(A')$ is equivariant with respect to this map.
\end{proof}

\begin{remark}\label{delta-absolute-splitting}
    Let $\tilde{X}$ be a flat $\bZ_p$-scheme with a $\delta$-structure. Let us show how that splits $X^{dR} \to X,$ it is due to \cite[Remark 5.13]{bhatt2022prismatization}. First note  $$ X(R) = \Map_{\bF_p} (\Spec (R), X) = \Map_{\bZ_p} (\Spec (R), \tilde{X}) = \Map_{\delta-\bZ_p} (\Spec (W(R)), \tilde{X}).$$ Using the natural map $\Map_{\delta-\bZ_p}(\Spec (W(R)), \tilde{X}) \to \Map_{\bF_p} (\Spec (W(R)) \otimes^L \bF_p, X),$ one obtains a splitting $X \to X^{dR}.$ It was observed by Bhatt and later by Petrov and Vologodsky, that it is enough to have a flat lift of $X$ to $\bZ/p^2$ with a lift of Frobenius to split $X^{dR} \to X$. One proof is based on exploiting affinness of $X^{dR} \to X$, we refer to \cite[Proposition 5.14]{petrov2023nondecomposabilityrhamcomplexnonsemisimplicity} for the argument. Moreover, it was observed by Petrov and Vologodsky that the splitting coming from a flat $\delta$-scheme $\tilde{X}$ only depends on $\tilde{X} \times \Spec (\bZ/p^2)$ with the Frobenius on it. Moreover, the splitting is isomorphic to the splitting constructed using affinness of the de Rham stack.
\end{remark}

\begin{lemma}\label{torsor-structure}
    There exists a map $\nu_{X/S}: (X/S)^{dR} \to X' := X \times_{S, F_S} S$ such that for an lci $X/S$ it exhibits the source as a quasi-syntomic torsor for $L_{X'/S}^{\vee}[1] \otimes_{\bG_a} \bG_a^{\sharp}$.
\end{lemma}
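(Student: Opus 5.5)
We construct $\nu_{X/S}$ first and then check the torsor property, following the proof of \Cref{torsor-ring-stack} (equivalently \Cref{for-lci-dR-torsor}).

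\textbf{Construction.} Recall $(X/S)^{dR} = X^{dR}\times_{S^{dR}} S$. We have $\nu_X: X^{dR}\to X$, and $\nu_S\circ \pi_S \simeq F_S$ by \Cref{rem:pi_i-of-gadr}. So the square formed by $\nu_X\circ \pr_1$ and $\pr_2$ commutes over $S$ via
\[\nu_S\circ f^{dR}\simeq f\circ\nu_X, \qquad \nu_S\circ\pi_S\simeq F_S.\]
Concretely, on an $R$-point $(x:\Spec\,\bG_a^{dR}(R)\to X,\ s:\Spec R\to S)$ with an identification $f\circ x\simeq s\circ\pi^*$, the map $\nu_{X/S}$ sends it to $(x\circ\nu^*, s)$. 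Here $f\circ x\circ \nu^* \simeq s\circ \pi^*\circ\nu^* \simeq s\circ F$, since $\pi\circ\nu$ corresponds to Frobenius. This gives the map $(X/S)^{dR}\to X\times_{S,F_S}S = X'$.

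\textbf{Deformation-theoretic action.} Fix an $R$-point $(x_0,s)$ of $X'$, so $x_0\in X(R)$ with $f x_0\simeq F_S\circ s$. The fiber of $\nu_{X/S}$ over it is the space of lifts of $x_0$ along the square-zero extension $\bG_a^{dR}(R)\to R$ (with fiber $\bG_a^\sharp[1](R)$), taken compatibly with the fixed $S$-point $s\circ\pi^*$ of $S(\bG_a^{dR}(R))$. By relative deformation theory this fiber, if nonempty, is a torsor for
\[\Map_R(x_0^*L_{X/S}, \bG_a^\sharp[1](R)).\]
Two identifications then give the claimed group:
\begin{itemize}
\item Base change of the cotangent complex gives $L_{X'/S}$ pulled back to $X$ equal to $L_{X/S}$, so $x_0^*L_{X/S}$ is the pullback of $L_{X'/S}$ along the point of $X'$.
\item For lci $X/S$ the cotangent complex is perfect of amplitude $[-1,0]$, so dualizability yields $x^*L_{X'/S}^\vee[1]\otimes\bG_a^\sharp(R)$.
\end{itemize}
Functoriality in $R$ gives the action of $L_{X'/S}^\vee[1]\otimes_{\bG_a}\bG_a^\sharp$ on $(X/S)^{dR}$ over $X'$, with the same equivariance bookkeeping as \Cref{BTsharp-equivariant-map}.

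\textbf{Local sections.} It remains to produce sections quasi-syntomic locally on $X'$. First reduce to $S$ affine via the cartesian squares in the proof of \Cref{covering-for-smooth}, which use \Cref{dR-commutes-colimits}. Then reduce to $X'$ affine. Choose a quasi-syntomic cover of $X'$ by $\Spec S'$ with $S'$ quasi-regular semiperfect such that the relevant $\pi_1$ of the cotangent complex is finite free (\cite[Remark 4.29]{MR3949030}).

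\textbf{Main obstacle.} Over such $S'$ we must lift the point to $\bG_a^{dR}(S')$ relative to $S$. The obstruction lies in $\Map(x^*L_{X/S},\bG_a^\sharp[2](S'))$. Since $L_{X/S}$ is lci and we may arrange its pullback to be of the form $M[1]\oplus N$ with $M$, $N$ finite free, the obstruction group reduces to $H^1$ and $H^2$ of $\bG_a^\sharp$ on a semiperfect ring. These vanish by \cite[Corollary 2.6.4]{bhatt-lecture-notes}.

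The delicate point is the compatibility with the $S$-structure. Since the point of $S$ is fixed as $s\circ\pi^*$, the relative cotangent complex $L_{X/S}$ governs the problem rather than $L_X$. One must also check that $\pi_0\bG_a^{dR}(S')=S'$ for semiperfect $S'$, so that the underlying reduction really is the given point of $X'$; this is what makes the Frobenius twist come out as $X'$ and not $X$. I expect verifying this identification, namely that $\nu$ followed by the $\pi_0$-level map reproduces exactly the base change along $F_S$, to be the main technical step.
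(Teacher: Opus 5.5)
Your proposal follows the paper's proof. It uses the same construction of $\nu_{X/S}$ from $\nu_S\circ\pi_S\simeq F_S$. It identifies the acting group the same way, as the fiber of the $L_X$-action over the $L_S$-action, giving $\Map(x_0^*L_{X/S},\bG_a^\sharp[1])$ and then $L_{X'/S}$ by base change. It also gets local sections by passing to a semiperfect cover and killing a deformation obstruction. The paper splits the universal extension $W(F_R^*B)\otimes^L\bF_p\to F_R^*B$ using $L_{F_R^*B/R}$, while you lift a given point with obstruction in $x^*L_{X/S}$; that variant works equally well.

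One step needs correcting. You cannot in general arrange $x^*L_{X/S}\simeq M[1]\oplus N$ with $M,N$ finite free: by faithfully flat descent this would force $\Omega_{X'/S}$ to be locally free. You do not need it, though. For $K$ perfect of Tor-amplitude $[-1,0]$ and $J=W(S')[F]$ discrete, $\pi_0\Map(K,J[2])=\mathrm{Ext}^2(K,J)=0$, as you see by representing $K$ by two finite projectives. Semiperfectness of $S'$ is used only to make $W(S')\otimes^L\bF_p\to S'$ a square-zero extension with discrete fiber $J[1]$. So any semiperfect quasi-syntomic cover of $X'$ suffices, and you do not need the qrsp/finite-free condition.
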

\begin{proof}
The map $(X/S)^{dR} \to X'$ comes from the following commutative diagram
\[\begin{tikzcd}
	&& {X^{dR}} &&&& X \\
	\\
	{(X/S)^{dR}} &&&& {S^{dR}} &&&& S \\
	\\
	&& S
	\arrow[from=3-1, to=1-3]
	\arrow["{f^{dR}}", from=1-3, to=3-5]
	\arrow[from=3-1, to=5-3]
	\arrow["{\pi_{S}}", from=5-3, to=3-5]
	\arrow["{\nu_{X}}", from=1-3, to=1-7]
	\arrow["f", from=1-7, to=3-9]
	\arrow["{F_S}", from=5-3, to=3-9]
	\arrow["{\nu_{S}}", from=3-5, to=3-9]
\end{tikzcd}\]
For a scheme $Y/S$ denote $\Hom(L_{Y/S}, \bG_a^{\sharp})$ to be the group stack on $Y$ sending $\eta: \Spec (R) \to Y$ to $\tau^{\le 0}\Hom_{R} (\eta^* L_{Y/S}, R^{\sharp}[1])$ computed in $\cD(R).$
\Cref{torsor-ring-stack} implies that the fiber product

\begin{equation}
\begin{tikzcd}
	{\Hom(L_{X'/S}, \bG_{a}^{\sharp}[1])} &&& {\Hom(L_{X}, \bG_{a}^{\sharp}[1])} \\
	\\
	\\
	{*} &&& {\Hom(L_{S}, \bG_{a}^{\sharp}[1])}
	\arrow[from=1-1, to=1-4]
	\arrow[from=1-1, to=4-1]
	\arrow[from=4-1, to=4-4]
	\arrow[from=1-4, to=4-4]
\end{tikzcd}
\label{action-for-torsor}
\end{equation}
of $X'$-group stacks acts on $(X/S)^{dR}.$
Indeed, it is enough to check that the fiber of $\Hom(L_{X}, \bG_{a}^{\sharp}[1]) \to \Hom(L_{S}, \bG_{a}^{\sharp}[1])$ is identified with $\Hom(L_{X'/S}, \bG_{a}^{\sharp}[1]).$ This amounts to showing that $(f \circ \rho)^* L_{X} \to \rho^* L_{X} \to L_{X'/S}$ is a fiber sequence where $\rho: X' \to X$ is the canonical projection. This follows from the base change $L_{X'/S} = \rho^* L_{X/S}$ and the transitivity triangle $f^{*}L_{S} \to L_{X} \to L_{X/S}.$
To construct a local splitting of $\nu_{X/S}: (X/S)^{dR} \to X'$ we proceed as in  \Cref{torsor-ring-stack}. First, we can assume that $S=\Spec (R)$ and $X = \Spec (A)$. Then write $$\Map_R(F_R^*A, W(F_R^*A) \otimes^L \bF_p) = (X/S)^{dR}(X') \to X'(X')=\Map_R(F_R^*A, F_R^*A).$$ In particular, finding a section of $\nu_{X/S}$ is equivalent to finding a section of $R: W(F_R^*A) \otimes^L \bF_p \to F_R^*A$. By \cite[Variant 4.33]{MR3949030} we can find a quasisyntomic cover $A \to B$ with $B$ being a quasiregular semiperfect $R$-algebra such that $\pi_1(L_{B/R})$ is a finite free $B$-module. In this case the map $W(F_R^*B) \xrightarrow{p} W(F_R^*B)$ is surjective and thus $W(F_R^*B) \otimes^L \bF_p \to F_R^*B$ is a square-zero extension with fiber $\bG_a^\sharp[1](F_R^*B)$. Thus, we obtain a map $L_{F_R^*B/R} \to \bG_a^{\sharp}[2]$ whose vanishing is sufficient for splitting this square-zero extension in $R$-algebras. Since $B$ is semiperfect, the claim follows.
\end{proof}


\begin{remark}\label{p-curv-equivariant-relative}
Let $X \to Y$ be a map of $S$-stacks. Then the induced map
\[
(X/S)^{dR} \to (Y/S)^{dR} \times_{Y'} X'
\]
is equivariant with respect to the map
\[
\Map(L_{X/S}, F_*\bG_a^{\sharp}[1]) \to \Map(f^*L_{Y/S}, F_*\bG_a^{\sharp}[1])
\]
of $Y'$-group stacks. This follows from \Cref{BTsharp-equivariant-map} and the definition of the action as in diagram \eqref{action-for-torsor}.
\end{remark}


\begin{remark}
    Note $X'$ is a classical scheme when $X/S$ is lci. This amounts to observing that for $f: A \to B=A/(f_1, \cdots, f_n),$ the $A$-modules $F_*A$ and $B$ are tor-independent. See \cite[Lemma 3.41]{bhatt2012padic} for the argument.
\end{remark}

\begin{remark}
    Let $f: X \to Y$ be a map of $S$-stacks. Recall $(X/Y)^{dR}$ is defined as $X^{dR} \times_{Y^{dR}} Y$. However, it is naturally isomorphic to $(X/S)^{dR} \times_{(Y/S)^{dR}} Y$. It is enough to observe that $(X/S)^{dR} \simeq X^{dR} \times_{Y^{dR}} (Y/S)^{dR}$ where the map $(Y/S)^{dR} \to Y^{dR}$ is the canonical projection. This follows since $$X^{dR} \times_{Y^{dR}} (Y/S)^{dR} \simeq X^{dR} \times_{Y^{dR}} (Y^{dR} \times_{S^{dR}} S) \simeq X^{dR} \times_{S^{dR}} S.$$
\end{remark}

\begin{lemma}\label{lem:dR-preserves-et-covers}
    If $f: X \to Y$ is an \'etale surjection of Noetherian algebraic stacks with a quasi-affine diagonal, then $f^{dR}: X^{dR} \to Y^{dR}$ is an \'etale  surjection.
\end{lemma}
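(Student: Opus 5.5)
The plan is to show first that for an étale map $f: X \to Y$ the square
\[\begin{tikzcd}
	{X^{dR}} & {Y^{dR}} \\
	X & Y
	\arrow["{f^{dR}}", from=1-1, to=1-2]
	\arrow["{\nu_X}", from=1-1, to=2-1]
	\arrow["{\nu_Y}", from=1-2, to=2-2]
	\arrow["f", from=2-1, to=2-2]
\end{tikzcd}\]
is cartesian. Granting this, $f^{dR}$ is a base change of $f$ along $\nu_Y$, so it is étale and surjective, since both properties are stable under base change. Concretely, cartesianness means that for an animated ring $R$ and a point $y \in Y(\bG_a^{dR}(R))$, lifts of $y$ to $X(\bG_a^{dR}(R))$ correspond to lifts of $\nu(y) \in Y(R)$ to $X(R)$.

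The key input is \Cref{rem:pi_i-of-gadr}: $\nu: \bG_a^{dR}(R) \to R$ is a square-zero extension with fiber $\bG_a^\sharp[1](R)$, and in particular it is a nilpotent thickening on $\pi_0$. For $f$ étale we have $L_{X/Y} = 0$. By derived deformation theory, the fiber of
\[
X(\bG_a^{dR}(R)) \to X(R) \times_{Y(R)} Y(\bG_a^{dR}(R))
\]
is governed by $\Map(x^*L_{X/Y}, \bG_a^\sharp[1](R))$ and by obstructions in $\Map(x^*L_{X/Y}, \bG_a^\sharp[2](R))$. Both vanish, so this map is an equivalence. We therefore expect the square to be cartesian objectwise, hence as prestacks, and hence after sheafification.

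The subtle point is that deformation theory for stacks requires $X$ and $Y$ to admit cotangent complexes and to satisfy infinitesimal cohesiveness and nilcompleteness, and the argument above should apply pointwise with $R$ arbitrary. This is where the hypotheses enter. Noetherian algebraic stacks with quasi-affine diagonal are geometric, so they satisfy these conditions (cf.\ Lurie's representability). The step I expect to be the main obstacle is justifying that $X(A') \to X(A)\times_{Y(A)}Y(A')$ is an equivalence for a square-zero extension $A' \to A$ of animated rings when $f$ is only a representable étale map of algebraic stacks, rather than of schemes.

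I would handle this by reducing to schemes. Choose a smooth atlas $U \to Y$. The base change $X_U \to U$ is then an étale map of algebraic spaces, which can be treated étale-locally. Since both $(-)^{dR}$ and fiber products commute with limits (\Cref{rem:transmutations-commute-limits}), the cartesian property can be checked after this base change. For étale maps of schemes the statement is the classical topological invariance of the étale site under nilpotent thickenings, extended to animated rings via $\pi_0$. The quasi-affine diagonal hypothesis should be what makes the Čech nerves of the atlas consist of schemes or quasi-affine pieces, so that this descent argument goes through.

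Finally, surjectivity of $f^{dR}$ is checked on points with values in $\pi_0$. There it follows from surjectivity of $f$, because $\nu$ induces a surjection on $\pi_0$ with nilpotent kernel, and étale surjections lift uniquely along such thickenings.
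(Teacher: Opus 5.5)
Your argument is correct, and it takes a genuinely different route from the paper.

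\textbf{The paper's route.} The paper never shows that the square over $\nu$ is cartesian. It invokes Bhatt's algebraization theorem to rewrite $Y(W(R)/p)$ as maps from $\Spf(W(R)/p)$ with its $V$-adic topology; this is exactly where the Noetherian and quasi-affine diagonal hypotheses are used. It then lifts the reduction $\bar y$ \'etale locally using surjectivity of $f$. Finally it extends the lift using $\Spf(W(R)/p)_{\et}\simeq\Spec(R)_{\et}$, under which an \'etale $R\to S$ corresponds to $\Spf(W(S)/p)$. What comes out is \'etale-local liftability of points of $Y^{dR}$, i.e.\ surjectivity. The formal detour is needed there because $W(R)/p\to W(S)/p$ is not \'etale on the nose, only after $V$-adic completion.

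\textbf{Your route.} You use instead that $\nu\colon W(R)/^{L}p\to R$ is already surjective on $\pi_0$ with square-zero kernel $VW(R)/pW(R)$. State this explicitly rather than citing the ring-stack remark: it follows from $V(a)V(b)=V(a\,FV(b))=pV(ab)$. Hence $L_{X/Y}=0$ gives $X^{dR}\simeq Y^{dR}\times_Y X$ pointwise, and $f^{dR}$ is literally a base change of $f$. This buys three things:
\begin{itemize}
\item a stronger conclusion, namely that $f^{dR}$ is \'etale in the representable sense and not merely an epimorphism;
\item no need for algebraization;
\item no need to compare the \'etale lift of $R\to S$ over $W(R)/p$ with $W(S)/p$, since the transition $Y^{dR}(R)\to Y^{dR}(S)$ is just functoriality.
\end{itemize}

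\textbf{Two corrections.} First, contrary to what you write, your route uses neither the Noetherian nor the quasi-affine diagonal hypothesis. Algebraic stacks (as derived stacks) are infinitesimally cohesive with cotangent complexes regardless, so these hypotheses only matter for the paper's algebraization step. Second, drop the atlas paragraph. Checking cartesianness after base change along $U^{dR}\to Y^{dR}$ would presuppose that $U^{dR}\to Y^{dR}$ is a cover for smooth $U\to Y$, which you have not shown. It suffices to base change $f$ along a point $\Spec(W(R)/^{L}p)\to Y$ and use invariance of the \'etale site under maps of animated rings that are surjective on $\pi_0$ with nilpotent kernel. For non-representable \'etale $f$, use $L_{X/Y}=0$ directly. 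Your final paragraph on surjectivity is then redundant, since surjectivity follows immediately from the base change.
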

\begin{proof}
Fix $y \in Y^{dR}(R).$ Endow $W(R)/p$ with its induced $V$-adic filtration. Then $$Y^{dR}(R) = \Map(\Spec(W(R)/p), X) \simeq \Map(\Spf(W(R)/p), Y)$$ by \cite[Corollary 1.5]{MR3672914}.
Consider $\bar{y}: \Spec(R/p) \to \Spf(W(R)/p) \to Y$ and choose an \'etale $\bar{g}: R/p \to \bar{S}$ such that $\bar{g}(\bar{y}): \Spec(\bar{S}) \to Y$ comes from $\bar{x} \in X(S/p).$ Note $\bar{S}$-corresponds to a unique \'etale algebra $g: R \to S$ and under
the equivalence $\Spf(W(R)/p)_{\acute{e}t} \simeq \Spec(R)_{\acute{e}t}$ sends $g$ to $\Spf(W(S)/p) \to \Spf(W(R)/p)$ which completes the proof.
\end{proof}

\begin{lemma}
    Let $G$ be an affine group scheme over $\bF_p.$ Then $(B_{\et}G)^{dR} \simeq B_{\et}G^{dR}$. Moreover, this equivalence sends $\pi_{B_{\et}G}: B_{\et}G \to (B_{\et}G)^{dR}$ to $B(\pi_{G}): B_{\et}G \to B_{\et}G^{dR}.$ 
\end{lemma}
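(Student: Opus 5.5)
We construct the comparison map directly from the functor of points and then check it is an equivalence after étale localization. Here $G^{dR}$ is the group stack obtained by transmuting $G$, so $G^{dR}(R)=G(\bG_a^{dR}(R))=\Map(\Spec(W(R)/p),G)$. The plan has three steps.

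\textbf{Step 1: the map.} Since $(-)^{dR}$ commutes with all limits (\Cref{rem:transmutations-commute-limits}), applying it to the Čech nerve of $*\to B_{\et}G$ gives a simplicial object $(G^{\times n})^{dR}=(G^{dR})^{\times n}$ with augmentation to $(B_{\et}G)^{dR}$. Its geometric realization in prestacks is $B(G^{dR})$, so we obtain a map $B(G^{dR})\to(B_{\et}G)^{dR}$ and, after étale sheafification, a map
\[
\Phi\colon B_{\et}G^{dR}\to (B_{\et}G)^{dR}.
\]
This uses that $(B_{\et}G)^{dR}$ is an étale sheaf, which I would deduce from the descent lemma for $1$-truncated ring stacks, noting that $\pi_0(\bG_a^{dR})$ and $\pi_1(\bG_a^{dR})$ satisfy étale descent. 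Alternatively, one checks directly that the map $R\mapsto \Map(\Spec(W(R)/p),B_{\et}G)$ satisfies étale descent, since $\Spec(W(-)/p)$ carries étale covers of $R$ to étale covers.

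\textbf{Step 2: $\Phi$ is an equivalence.} It is enough to prove two things.
\begin{itemize}
\item[(a)] $\Phi$ is fully faithful. Both sides have loop object $G^{dR}$ at the base point: for $(B_{\et}G)^{dR}$ this follows because $(-)^{dR}$ preserves the fiber product $*\times_{B_{\et}G}*=G$.
\item[(b)] $\Phi$ is étale-locally essentially surjective: every $R$-point of $(B_{\et}G)^{dR}$ is étale-locally trivial.
\end{itemize}
For (b), apply \Cref{lem:dR-preserves-et-covers} to the cover $*\to B_{\et}G$ to conclude that $*=*^{dR}\to(B_{\et}G)^{dR}$ is an étale surjection. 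If that lemma's hypotheses are awkward here (Noetherian, quasi-affine diagonal; $B_{\et}G$ has affine diagonal), I would instead run its proof directly. A point of $(B_{\et}G)^{dR}(R)$ is a $G$-torsor $P$ on $\Spec(W(R)/p)$. Restrict $P$ along $\Spec(R)\to\Spec(W(R)/p)$, the map coming from $W(R)/p\to R$, which is a nilpotent thickening in a suitable completed sense. Trivialize the restriction étale-locally on $R$, and use the equivalence of étale sites $\Spf(W(R)/p)_{\et}\simeq\Spec(R)_{\et}$ to lift the trivializing cover to one of $W(R)/p$ of the form $W(S)/p$ with $R\to S$ étale.

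\textbf{Main obstacle.} The hard part is the lifting in (b). One has to check that an étale-local trivialization over the reduction extends to a trivialization over $W(S)/p$; a priori it only yields a trivialization of the reduction. For smooth $G$ this follows from formal smoothness and the $V$-adic completeness of $W(S)/p$, via the algebraization result \cite[Corollary 1.5]{MR3672914} used in \Cref{lem:dR-preserves-et-covers}. For general affine $G$ I would argue differently: a $G$-torsor over $W(S)/p$ whose reduction to $S$ is trivial is, étale-locally, again trivial. This is because both sides are affine and étale maps lift uniquely, so the étale-local sections of the affine torsor $P$ over the henselian-type pair $(W(S)/p,S)$ lift.

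\textbf{Step 3: compatibility with $\pi$.} The map $\pi_{B_{\et}G}$ is natural, so it commutes with the Čech nerve construction: on $n$-simplices it is $\pi_{G^{\times n}}=(\pi_G)^{\times n}$. Passing to realizations therefore identifies $\pi_{B_{\et}G}$ with $B(\pi_G)$ under $\Phi$.
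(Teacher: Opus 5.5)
Your argument is essentially the paper's proof. In both, $(-)^{dR}$ carries the Čech nerve $G^{\bullet}$ of $* \to B_{\et}G$ to $(G^{dR})^{\bullet}$, and \Cref{lem:dR-preserves-et-covers} makes $* \to (B_{\et}G)^{dR}$ an \'etale cover, so $(B_{\et}G)^{dR}$ is the \'etale realization $B_{\et}G^{dR}$; your ``fully faithful plus \'etale-locally essentially surjective'' formulation is the same statement, and naturality of $\pi$ gives $B(\pi_G)$ in both.

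One correction to (b): the ``main obstacle'' does not exist. A point of $B_{\et}G$ over $W(R)/p$ is \'etale-locally trivial on $\Spec(W(R)/p)$ by definition, and such covers are refined by ones of the form $W(S)/p$ with $R \to S$ \'etale, so nothing needs to be trivialized over $S$ and then lifted. This matters because your justification via unique lifting of \'etale maps does not apply to sections of a torsor under a non-smooth $G$.
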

\begin{proof}
    Consider an \'etale covering $f: \Spec(\bF_p) \to B_{\et} G$. Then $f^{dR}: \Spec(\bF_p) \to (B_{\et}G)^{dR}$ is an \'etale covering by \Cref{lem:dR-preserves-et-covers}. Its Cech nerve is obtained from the Cech nerve $G^{\bullet}$ of $f$ by applying $(-)^{dR}$, since $(-)^{dR}$ preserves products. Thus, it is identified with $(G^{dR})^{\bullet}$ the Cech nerve of $\Spec(\bF_p) \to B_{\et}G^{dR}.$ The map $B(\pi_G)$, by definition, is represented by a map of simplicial stacks $G^{\bullet} \xrightarrow{(\pi_{G}^{dR})^{\bullet}} (G^{dR})^{\bullet}$. To finish the proof, it is enough to observe that the map $B_{\et}G \to (B_{\et}G)^{dR} \simeq B_{\et} G^{dR}$ is pointed and the induced map $G \to G^{dR}$ is $\pi_{G}.$
\end{proof}

\begin{lemma}\label{torsor-for-stacks}
    Let $X \to S$ be a representable quasi-syntomic map of algebraic stacks over $\bF_p.$ Then $(X/S)^{dR} \to X'$ is a quasi-syntomic torsor for $L_{X'/S}^{\vee} \otimes_{\bG_a} \bG_a^{\sharp}$
\end{lemma}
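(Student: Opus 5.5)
We reduce to the case of schemes, \Cref{torsor-structure}, by smooth (or \'etale) descent along an atlas of $S$. Since $X \to S$ is representable, choose a smooth surjection $U \to S$ from a scheme, or an \'etale one if $S$ is Deligne--Mumford; then $X_U := X \times_S U$ is a scheme and $X_U \to U$ is quasi-syntomic. By \Cref{dR-commutes-colimits}, the relative de Rham stack commutes with limits in the arrow category. Hence the base change of $(X/S)^{dR} \to S$ along $U \to S$ is $(X_U/U)^{dR}$; the cartesian squares in the proof of \Cref{covering-for-smooth} give the same conclusion.

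Similarly $X'_U = X' \times_{S,F_S}\ldots$ must be matched with $X' \times_S U$. Here the relevant base change of $X'$ is along $F_S$, so the atlas should be pulled back carefully. We have $X' \times_{S} U \simeq X_U \times_{U, F_U} U \times \ldots$ only after noting that $U \to S$ being \'etale gives $U \simeq U \times_{S,F_S} S$ (the relative Frobenius of an \'etale map is an isomorphism). If only a smooth atlas is available, we instead base change along $U \to S$ via the structure map $(X/S)^{dR} \to S$ itself, which is the map over which $\nu_{X/S}$ lives. We then check that $X' \times_S U$, where $X' \to S$ is the second projection, equals $(X_U)' = X_U \times_{U,F_U} U$. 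This holds because $X' \times_S U = X \times_{S,F_S} U$ and $X_U \times_{U,F_U} U = X\times_S U \times_{U,F_U} U = X \times_{S, F_S\circ ?} U$; the two agree since $F$ commutes with $U \to S$.

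Next we transport the group action. The action of $\Hom(L_{X'/S},\bG_a^\sharp[1])$ on $(X/S)^{dR}$ is defined as in diagram \eqref{action-for-torsor}, through derived deformation theory for the square-zero extension $\bG_a^{dR} \to \bG_a$. That construction only uses cotangent complexes of stacks, which exist for algebraic stacks. Compatibility of the action with base change then follows from \Cref{p-curv-equivariant-relative} and $L_{X'_U/U} = L_{X'/S}|_{X'_U}$.

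Finally, the torsor property is local on $X'$ in the quasi-syntomic topology. The action map $G \times_{X'} (X/S)^{dR} \to (X/S)^{dR}\times_{X'}(X/S)^{dR}$ is an equivalence, and a local section exists, both checkable after the faithfully flat base change $X'_U \to X'$, where they hold by \Cref{torsor-structure}.

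The main obstacle is the case where $S$ admits only a smooth atlas. There we must confirm two things: that $X'_U \to X'$ is indeed a cover, which is immediate as a base change of $U \to S$; and that no derived structure appears in $X'$. The latter holds because $X/S$ is flat, so the fiber product is classical.
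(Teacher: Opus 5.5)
Your argument is correct and is essentially the paper's: you base change along a smooth atlas $U\to S$, use $(X_U/U)^{dR}\simeq (X/S)^{dR}\times_S U\simeq (X/S)^{dR}\times_{X'}X'_U$, and import local sections from the scheme case \Cref{torsor-structure}. The identification $X'\times_S U\simeq (X_U)'$ holds for any $U\to S$, so the \'etale digression is unnecessary. One caveat: ``representable'' normally means representable by algebraic spaces, so $X_U$ need not be a scheme. As the paper does, you should additionally choose a scheme $Y$ with a smooth cover $g\colon Y\to X_U$, obtain local sections of $(Y/U)^{dR}\to Y'$ from the scheme case, and push them forward along $g^{dR}$; this gives sections quasi-syntomic locally on the cover $Y'\to X'$.
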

\begin{proof}
    It is enough to show local non-emptiness. For this, choose a smooth cover $f: T \to S$ where $T/\bF_p$ is a scheme. Also, choose a scheme $Y/S$ with a smooth cover $g: Y \to X_T := X \times_S T$ of $S$-schemes. By \Cref{torsor-structure} we can choose an $h: U \to Y$ quasi-syntomic cover such that $\nu_{U/S}: (U/S)^{dR} \to U'$ admits a section $s.$ Define $U' \to (X/S)^{dR} \times_{X'} U'$ to be equal to the identity on the second component, and the first component is $$U' \xrightarrow{s} (U/T)^{dR} \xrightarrow{h^{dR}} (Y/T)^{dR} \xrightarrow{g^{dR}} (X_T/T)^{dR} \simeq (X/S)^{dR} \times_S T \to (X/S)^{dR}.$$
\end{proof}

\subsection{Affineness}
\begin{lemma}\label{absolute-dr-affine}
    For any scheme $X/\bF_p$ the derived stack $X^{dR}$ is relatively affine over $X$ and, moreover, is isomorphic to $\underline{\Spec}_{X} (F_{X, *}\dR_{X}).$
\end{lemma}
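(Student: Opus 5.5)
We aim to show that $\nu_X\colon X^{dR}\to X$ is affine and to identify $\nu_{X,*}\cO_{X^{dR}}$ with $F_{X,*}\dR_X$. We first reduce to the affine case. The functor $X\mapsto X^{dR}$ commutes with limits (\Cref{rem:transmutations-commute-limits}). For an open immersion $U\subset X$ we also want the square with vertices $U^{dR}$, $X^{dR}$, $U$, $X$ to be cartesian. A point of $X^{dR}(R)$ is a map $\Spec(W(R)/p)\to X$, and it factors through $U$ if and only if its restriction along $\nu$ does. This holds because $\nu\colon W(R)/p\to R$ is a nilpotent thickening on $\pi_0$: its kernel is generated by $V(W(R))$, and the composite with $F$ is the Frobenius, so the underlying topological spaces agree. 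Affineness is Zariski local on the target, and $\dR_X$ and $F_{X,*}$ are compatible with Zariski localization. So we may assume $X=\Spec(A)$, and we must produce a natural equivalence
$$X^{dR}(R)\simeq \Map_{\DAlg_{\bF_p}}(\dR_A,R)\times_{\Map(A,R)}\ldots$$
More precisely, we must show $X^{dR}\simeq\Spec(\dR_A)$ as stacks over $\Spec(A)$, where $\dR_A$ is regarded as an $A$-algebra via the Frobenius-twisted structure map $A\to \dR_A$ (the map $a\mapsto a^p$ landing in the conjugate-filtration piece).

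For the affine case we use that both sides are left Kan extended from polynomial algebras. The functor $A\mapsto \dR_A$ is the left Kan extension of its restriction to polynomial $\bF_p$-algebras. The functor $A\mapsto X^{dR}(R)=\Map(A, W(R)/p)$ sends sifted colimits in $A$ to limits. Hence it suffices to construct a natural equivalence $\Map_{\CAlg^{\an}}(A,W(R)/p)\simeq\Map_{\DAlg}(\dR_A,R)$ for polynomial $A=\bF_p[x_1,\dots,x_n]$, natural in $A$. Equivalently, we need an equivalence of derived rings $\dR_{\bF_p[x]}\simeq R\Gamma(\bG_a^{dR},\cO)$ compatible with the ring-stack structure, and then we take products. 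Here we follow Bhatt--Lurie and \cite[\S 2.5]{bhatt-lecture-notes}. By \Cref{for-lci-dR-torsor}, $\bA^{1,dR}=\bG_a^{dR}\to\bA^1$ is a $\bG_a^\sharp$-gerbe, and it is split by the $\delta$-structure $x\mapsto x^p$ (\Cref{delta-absolute-splitting}). Thus $\bG_a^{dR}\simeq \bA^1\times B\bG_a^\sharp$ as stacks over $\bA^1$. By \Cref{rem:affine-torsor}, $B\bG_a^\sharp$ is affine with cohomology ring $\bF_p\oplus\bF_p[-1]$. The pushforward to $\bA^1$ is therefore $\cO\oplus\cO[-1]$, and this matches the Cartier isomorphism $F_*\dR_{\bF_p[x]}\simeq \cO_{\bA^{1\prime}}\oplus\Omega^1[-1]$. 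To make the identification canonical rather than splitting-dependent, we construct a comparison map $\dR_A\to R\Gamma(X^{dR},\cO)$ as follows. The map $\pi_X\colon X\to X^{dR}$ together with the crystal structure gives such a map for polynomial $A$, via the Čech nerve of the fpqc cover $\pi_X$ (\Cref{covering-for-smooth}). The Čech nerve $X\times_{X^{dR}}X$ is the PD-envelope of the diagonal, so $\pi_X$-descent computes crystalline cohomology, which equals de Rham cohomology. We then check that this map is an equivalence using the computation above.

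This gives affineness for polynomial $A$, namely $X^{dR}=\Spec(\dR_A)$ over $X$. Affineness for general $A$ then follows formally: affine stacks correspond to derived rings, so $\Spec$ turns colimits of derived rings into limits of stacks, and $\dR$ preserves sifted colimits. The main obstacle we expect is making the comparison natural in $A$, including the $\cO_X$-algebra structure through $F_X$. This requires compatibility of the Čech/crystalline identification with the sifted-colimit extension. We would handle it by defining the map $\Spec(\dR_A)\to X^{dR}$ for all $A$ through the universal property of left Kan extension, and checking that it is an equivalence only on polynomial rings, where the splitting above is available.
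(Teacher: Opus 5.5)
Your proposal is correct and follows essentially the same route as the paper: reduce to affine $X$ using $U^{dR}\simeq X^{dR}\times_X U$ for open $U\subset X$, then reduce to polynomial (smooth) algebras, using that both $(\Spec A)^{dR}$ and $\Spec(\dR_A)$ turn sifted colimits in $A$ into limits. The only differences are in what is proved versus cited. The paper cites affineness of $\bG_a^{dR}$ and the smooth-case identification $\nu_{X,*}\cO_{X^{dR}}\simeq F_{X,*}\dR_X$ (Bhatt's notes, Petrov), whereas you sketch both, via the $\delta$-splitting $\bG_a^{dR}\simeq \bA^1\times B\bG_a^\sharp$ and via Čech descent along $\pi_X$ (crystalline $=$ de Rham), and you also justify the open-immersion base change that the paper only asserts.
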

\begin{proof}
First, let us observe that $X^{dR} \to X$ is affine. Note $\bG_a^{dR}$ is affine by \cite[Proposition 2.34]{MR4718127}. Thus, it is true for $X=\bA^n$ since affine stacks are closed under products. Thus, we know it for any affine $X$ since affine stacks are closed under cosifted limits. To finish the proof, note that for an arbitrary $X$ and open subscheme $U \xrightarrow{j}X$, the natural map $U^{dR} \xrightarrow{(j^{dR}, \nu_U)} X^{dR} \times_{X} U$ is an isomorphism. 
    
Now, it is enough to show $\nu_{X, *} \cO_{X^{dR}} \simeq F_{X, *} \dR_{X}$ for any $X.$ By \cite[Corollary 2.7.2 (3)]{bhatt-lecture-notes} or \cite[Proposition 3.6]{petrov2025decompositionrhamcomplexquasifsplit} we have this identification for any smooth $X.$ This implies the statement for any affine $X$ since both functors commute with limits in $X.$ Finally, both functors satisfy Zariski descent and thus the statement holds for any $X.$
\end{proof}

\begin{lemma}\label{rem:sharp-of-vb-and-dR}
    Let $E$ be a vector bundle over $X.$ The $X$-group stack $(\bV(E)/X)^{dR}$ is naturally identified with the quotient $\Cone(\bV(E)^{\sharp} \to \bV(E)).$ 
\end{lemma}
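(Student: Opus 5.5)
Here $(\bV(E)/X)^{dR} = \bV(E)^{dR}\times_{X^{dR}} X$, with the map $X \to X^{dR}$ given by $\pi_X$. Its $X$-points over $\eta\colon \Spec(R)\to X$ are the maps $\Spec(\bG_a^{dR}(R)) \to \bV(E)$ lying over the composite $\Spec(\bG_a^{dR}(R)) \to \Spec(R) \xrightarrow{\eta} X$. I would first reduce to the case where $E$ is trivial. Both sides are compatible with Zariski restriction on $X$ (\Cref{dR-commutes-colimits}), so it suffices to build the identification naturally in $\bV(E)$ with its linear structure, i.e. $GL(E)$-equivariantly, and then glue. Locally, then, $E=\cO_X^n$, $\bV(E)=\bG_a^n\times X$, and since $(-/-)^{dR}$ commutes with products (\Cref{dR-commutes-colimits}) we get $(\bV(E)/X)^{dR} \simeq (\bG_a/\bF_p)^{dR}\times X$ raised to the $n$-th power over $X$. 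The claim therefore reduces to $\bG_a^{dR} \simeq \Cone(\bG_a^\sharp \to \bG_a)$ as group stacks, compatibly with the $\bG_a$-scalar action.

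For $X=\bA^1$, the functor of points of $(\bA^1)^{dR}$ sends $R$ to the underlying space of $\bG_a^{dR}(R)=\Cone(F_*W(R)\xrightarrow{p}F_*W(R))$. I would take the map of quasi-ideals from \Cref{rem:fp-str-two-frobs-the-same}, namely $V\colon F_*W\to W$ over $F\colon W\to F_*W$, which exhibits $\bG_a^{dR}$ as the cone of $\ker(F\colon W\to W)\to \bG_a$. To identify $\ker F$ on $W$ in characteristic $p$: since $FV=p$, the map from the quasi-ideal $(W\xrightarrow{p}W)$ to $(W[F]\to W/VW=\bG_a)$ is a quasi-isomorphism. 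On homotopy groups this reads $\pi_0=W/p = \bG_a$ via $F$ (because $W/VW\simeq \bG_a$ and $F$ is surjective up to $V$), and $\pi_1 = W[p]\simeq W[F]\simeq \bG_a^\sharp$, using the standard identification $W[F]\simeq \bG_a^\sharp$ (Drinfeld; \cite{bhatt-lecture-notes}, compare \Cref{rem:pi_i-of-gadr}). So I would prove that the natural map of quasi-ideals $(W[F]\to W \to W/V = \bG_a)$ into $(F_*W\xrightarrow{p}F_*W)$, given by $F$ on the target and the inclusion on the source, induces an isomorphism on $\pi_0$ and on $\pi_1$. This yields $\bG_a^{dR}\simeq \Cone(\bG_a^\sharp\to\bG_a)$, and the map $\bG_a^\sharp\to\bG_a$ is the canonical one $x\mapsto x_0$.

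Next I would check the relative twist. In the relative stack over $X$, the map $X\to X^{dR}$ is $\pi_X$, and $\pi$ involves Frobenius, so one must see that the fibre is $\Cone(E^\sharp\to E)$ and not a Frobenius twist of it. This can be tracked through $\nu\circ\pi=F$. On the $\bG_a$-level, the fibre of $\bG_a^{dR}\times\bA^1 \to \bA^1{}^{dR}$ along $\pi$ is the group $\bG_a^{dR}$ acted on through the $W$-module structure $F_*$, and the $\bG_a$-linear structure needed for $GL(E)$-equivariance comes from the $\bG_a^{dR}$-module structure pulled back along $\pi$. This is exactly the linearity used in the gluing step.

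I expect the main obstacle to be this last bookkeeping: ensuring that the local identification is canonical and $\cO_X$-linear, so that it glues over non-trivial $E$, with no spurious Frobenius twist. To address it, I would write the identification intrinsically as the map of $\bG_a^{dR}$-module stacks $\bV(E)\otimes_{\bG_a}\bG_a^{dR} \to (\bV(E)/X)^{dR}$, which is evidently functorial in $E$, and then verify it is an equivalence locally using the computation above.
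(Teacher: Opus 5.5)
Your outline rests on the same fact as the paper's proof: $\pi\colon \bG_a\to\bG_a^{dR}$ is surjective with fiber $\bG_a^\sharp$, compatibly with the $\bG_a$-module structure through $\pi$. The paper gets there more directly. It uses the globally defined map of $X$-group stacks $\pi_{\bV(E)/X}\colon \bV(E)\to(\bV(E)/X)^{dR}$. For $X=\Spec(A)$ with $E$ corresponding to $M$, it identifies this map on $R$-points with $\id_M\otimes\pi_R\colon M\otimes_A R\to M\otimes_A\bG_a^{dR}(R)$, where $\bG_a^{dR}(R)$ is an $A$-algebra through $\pi_R$. The fiber is then $M\otimes_A\fib(\pi_R)\simeq M\otimes_A\bG_a^\sharp(R)=\bV(E)^\sharp(R)$. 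Because the map is global and the formula is natural in $M$, no trivialization or $GL(E)$-equivariant gluing is needed. Your concern about a Frobenius twist also disappears, since the linear structure is through $\pi_R$ by construction. Your last paragraph points toward exactly this.

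The step that does not work as written is the rank-one verification. Your first sentence there is correct and already suffices. The map of quasi-ideals $(F_*W\xrightarrow{V}W)\to(F_*W\xrightarrow{p}F_*W)$ is the identity on the ideal parts and $F$ on the ring parts. Comparing the two cofiber sequences therefore gives $\fib(\pi)\simeq\fib(F\colon W\to F_*W)$. This is $W[F]\simeq\bG_a^\sharp$ in degree $0$, because $F$ is locally surjective on $W$ in the flat topology; hence $\bG_a^{dR}\simeq\Cone(\bG_a^\sharp\to\bG_a)$.

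However, the map you then propose to check does not exist. You describe a map of quasi-ideals from $(W[F]\to\bG_a)$ to $(F_*W\xrightarrow{p}F_*W)$, given by $F$ on rings and the inclusion on ideals.
\begin{itemize}
\item Its ring part would be a $W$-algebra map $\bG_a=W/VW\to F_*W$ induced by $F$. But $F$ does not kill $VW$, since $F(Vx)=px\neq0$.
\item The inclusion $W[F]\hookrightarrow F_*W$ is not linear for the Frobenius-twisted module structure. For example, $[\lambda][\varepsilon]\neq[\lambda^p][\varepsilon]$ for $\varepsilon^p=0$.
\end{itemize}
Similarly, the ``quasi-isomorphism'' from $(W\xrightarrow{p}W)$ to $(W[F]\to\bG_a)$ is not a specified map. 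Matching $\pi_0$ and $\pi_1$ abstractly proves nothing without an actual morphism of group stacks. Replace that part with the fiber computation from your first sentence, which is precisely the input $\fib(\pi_R)\simeq\bG_a^\sharp(R)$ used in the paper.
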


\begin{proof}
    There is a natural map of $X$-group stacks $\pi_{\bV(E)/X}: \bV(E) \to (\bV(E)/X)^{dR} = \bV(E)^{dR} \times_{X^{dR}} X$ given by $(\pi_{\bV(E), h}, \pr)$ where $\pi_{\bV(E)}: \bV(E) \to \bV(E)^{dR}$ and $h$ is the natural homotopy $\pi_X\circ \pr \simeq \pr^{dR} \circ \pi_{\bV(E)}$. It is enough to conclude that $\fib(\pi_{\bV(E)/X}) \simeq \bV(E)^{\sharp}.$ For this we assume $X=\Spec(A)$ and $M$ is the $A$-module corresponding to $E.$ Then the map $\bV(E)(R) \to (\bV(E)/X)^{dR}(R)$ is identified with $M \otimes_A R \xrightarrow{\operatorname{Id} \otimes \pi_R} M \otimes_A \bG_a^{dR}(R),$ the claim follows since $\bV(E)^{\sharp}(R) = M \otimes_A \bG_a^{\sharp}(R).$
\end{proof}

\begin{remark}\label{rel-dR-of-BE}
    Let $E$ be a vector bundle on $S.$ Then $B_{S}(E^\sharp)$ is identified with the relative de Rham stack of $S \to B_{S}(E)$. Indeed, by definition, the relative de Rham stack is given by the fiber product of $S \simeq (S/S)^{dR} \to B_S(E)^{dR}$ and $B_{S}(E) \xrightarrow{\pi_{B_SE/S}} (B_S(E)/S)^{dR}$. Thus, it is isomorphic to $\fib(\pi_{B_SE/S}) \simeq B_{S}(E^\sharp)$ by \Cref{rem:sharp-of-vb-and-dR}
\end{remark}

\begin{remark}
    Let $X$ be a smooth scheme with an action of a smooth group scheme $G$. Then $\nu_{X/G}: (X/G)^{dR} \to X/G$ is affine by \Cref{rem:affine-torsor}. Indeed, $T_{X/G}$ is represented by a $G$-equivariant complex $g \boxtimes \cO_{X} \to T_{X}$ in $\Perf(X),$ where $T_X$ is placed in degree $0.$
\end{remark}

\begin{remark}
    The derived stack $X^{dR}$ is not classical in general, see \cite[Warning 7.4]{bhatt2022prismatization}. By \Cref{for-lci-dR-torsor} one sees that the derived stack $X^{dR}$ is classical for lci $X$. It also follows from \cite[Corollary 8.13]{bhatt2022prismatization}.
\end{remark}

\begin{lemma}\label{dR-is-affine-relative}
    For a map of schemes $X \to S$, the map $\nu_{X/S}: (X/S)^{dR} \to X'$ from \Cref{torsor-structure} is an affine map and exhibits the source as $\underline{\Spec}_{X'} (F_{X/S, *} \dR_{X/S})$.
\end{lemma}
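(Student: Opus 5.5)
We reduce to the absolute statement \Cref{absolute-dr-affine} by writing $(X/S)^{dR}$ as a base change of absolute de Rham stacks. By definition $(X/S)^{dR} = X^{dR}\times_{S^{dR}} S$, with $S\to S^{dR}$ the map $\pi_S$. By \Cref{absolute-dr-affine}, $X^{dR}\simeq \underline{\Spec}_X(F_{X,*}\dR_X)$ and $S^{dR}\simeq\underline{\Spec}_S(F_{S,*}\dR_S)$. By \Cref{rem:any-section-factorizes-pi_X}-type arguments (more precisely, \Cref{Frob-factors-1-truncated} applied to $\pi\circ\nu\simeq F$), the map $\pi_S$ factors as $S\xrightarrow{F_S} S \leftarrow S^{dR}$ on the level of underlying $S$-structure. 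Concretely, $\nu_S\circ\pi_S\simeq F_S$, so $\pi_S$ is a map over $S$ when the target $S$ is viewed via $F_S$. Thus $\pi_S$ corresponds to a map of derived $\cO_S$-algebras $F_{S,*}\dR_S\to F_{S,*}\cO_S$, namely the augmentation to $\cH^0$ composed appropriately.

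The key steps run in order. First, I observe that $(X/S)^{dR}\to X'$ is affine. It is the composite of the base change of the affine map $X^{dR}\to S^{dR}\times_S X$ (affine because $X^{dR}$ is affine over $X$, using the remark that an $S$-map out of an affine $S$-stack is affine) along $S\to S^{dR}$. Here I identify $S\times_{S^{dR}}(S^{dR}\times_{S}X)$ with $S\times_{F_S,S}X = X'$, using $\nu_S\circ\pi_S\simeq F_S$. This produces the map $\nu_{X/S}$ of \Cref{torsor-structure} and shows it is affine, since affine maps are stable under base change. Second, I compute the pushforward of the structure sheaf. Zariski-locally on $S$ and $X$ (both sides satisfy Zariski descent, and $(-/-)^{dR}$ commutes with these limits by \Cref{dR-commutes-colimits}), I take $S=\Spec R$ and $X=\Spec A$. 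Then $\cO((X/S)^{dR}) \simeq \dR_A\otimes_{\dR_R} R$, where $\dR_R\to R$ is the map induced by $\pi_S$ and $R$ is regarded as a $\dR_R$-algebra through it. Affine stacks turn fibre products into pushouts of derived rings, which is where affineness of both $X^{dR}$ and $S^{dR}$ is used.

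Third, I identify $\dR_A\otimes_{\dR_R}R$ with $\dR_{A/R}$, with its $F_R^*A$-algebra structure coming from the relative Frobenius. This is the derived de Rham base-change statement. Both sides commute with sifted colimits in $A$ (over fixed $R$) and in $R$, so I reduce to polynomial $R=\bF_p[y_j]$ and $A=R[x_i]$. The claim there follows from the Künneth formula $\dR_{A}\simeq \dR_R\otimes_{\bF_p}\dR_{\bF_p[x]}$, together with the fact that $\dR_{\bF_p[x]}$ is the $\bF_p[x]$-relative de Rham complex of $A/R$ after base change along $\dR_R\to R$. Alternatively, I would use the conjugate filtration: the Cartier isomorphism shows $\dR_R\to R$ kills the positive-degree graded pieces, and a filtered comparison gives $\gr$ equal to $\wedge^i L_{X'/S}[-i]$ on both sides.

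I expect the main obstacle to be the third step, specifically making sure that the map $\dR_R\to R$ induced by $\pi_S$ is indeed the one for which the pushout is relative de Rham cohomology. This requires checking the homotopy in \Cref{Frob-factors-1-truncated} compatibly, so that the $\cO_{X'}$-module structure agrees with $F_{X/S,*}$. I would first verify it on polynomial rings via the conjugate filtration and then extend by sifted colimits.
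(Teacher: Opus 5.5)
Your argument is organized differently from the paper's, and the overall architecture is sound.

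\textbf{How the paper proceeds.} It reduces to affine $S$ by base change. It then reduces to affine $X$ by showing that open immersions $U\subset X$ give open immersions $(U/S)^{dR}\subset (X/S)^{dR}$ with cartesian squares, and that $(X/S)^{dR}$ is glued from these. For affine $X,S$, affineness is then immediate because $X^{dR}$, $S^{dR}$, $S$ are affine stacks. The identification of $\nu_{X/S,*}\cO$ is obtained by reducing (via descent and sifted colimits) to smooth affine $X\to S$ and citing the lecture notes, where the relative smooth case is computed directly.

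\textbf{How you proceed.} You exhibit $\nu_{X/S}$ globally as the base change of $X^{dR}\to S^{dR}\times_S X$ along $X'\to S^{dR}\times_S X$. Both stacks are affine over $X$ by \Cref{absolute-dr-affine}, so affineness needs no gluing. The identification then becomes the algebraic base change $\dR_A\otimes_{\dR_R}R\simeq\dR_{A/R}$, which you check on polynomial pairs by K\"unneth and propagate by sifted colimits in the arrow category. This derives the relative statement from the absolute one and avoids the relative smooth computation from the notes. One correction on localization: what you need is $(X/S)^{dR}\times_{X'}U'\simeq (U/V)^{dR}$. With your factorization this follows from $U^{dR}\simeq X^{dR}\times_X U$ for opens (as in the proof of \Cref{absolute-dr-affine}, applied to both $X$ and $S$), not from preservation of limits.

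\textbf{The gap.} The price of your route is that you must know exactly which map $\epsilon\colon\dR_R\to R$ the section $\pi_S$ induces. This is the one step with real content, and your proposal leaves it open.
\begin{itemize}
\item Your description of $\epsilon$ as ``the augmentation to $\cH^0$'' is not right. For smooth $R$ the natural map goes $\cH^0(\dR_R)=\Fil^{\conj}_0\to\dR_R$. The map you need is the Hodge augmentation $\dR_R\to R$, i.e.\ the functoriality map $\dR_{R/\bF_p}\to\dR_{R/R}=R$.
\item This matters for your reduction. The comparison map $\dR_A\otimes_{\dR_R}R\to\dR_{A/R}$ exists naturally only if $\dR_R\xrightarrow{\epsilon}R\to\dR_{A/R}$ agrees with $\dR_R\to\dR_A\to\dR_{A/R}$. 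Taking $A=R$ shows this forces $\epsilon$ to be the functoriality map.
\item K\"unneth alone gives an equivalence on each polynomial pair for \emph{any} $\epsilon$ lying over $F_R$. These equivalences do not assemble into a natural transformation, because maps of polynomial pairs do not respect the tensor decomposition.
\item Your suggested check via the conjugate filtration cannot decide the question. Every $\epsilon$ lying over $F_R$ restricts to Frobenius on $\Fil_0^{\conj}$ and kills the higher graded pieces for weight reasons. So all such $\epsilon$ look the same on associated graded, yet for a single $R$ there is in general more than one.
\end{itemize}

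\textbf{How to close it.} Naturality in $R$ pins $\epsilon$ down. For smooth $S$, the \v{C}ech nerve of $\pi_S$ is $D_\Delta(S^{\bullet+1})$, so $\pi_S^*$ is the projection of the \v{C}ech--Alexander complex onto its zeroth term. Under the Poincar\'e-lemma double complex comparing it with $\Omega^\bullet_S$, that projection is exactly $\Omega^\bullet_S\to\cO_S$. Both $\pi_S^*$ and the Hodge augmentation are natural transformations between sifted-colimit-preserving functors, hence determined on polynomial algebras, so this settles the identification in general. Once the comparison map is natural, the $\cO_{X'}$-structure matches $F_{X/S,*}$ by functoriality of $\Fil_0^{\conj}$ together with $\nu_S\circ\pi_S\simeq F_S$.
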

\begin{proof}
\textit{Step 1. Reduction to the case of affine $S$.} It is enough to check that $(X/S)^{dR} \to X'$ is affine after changing the base along $X' \times_{S} V \to X'$ for any affine open $V \subset S.$ One computes $(X/S)^{dR} \times_{X'} (X' \times_{S} V) \simeq (X/S)^{dR} \times_{S} V \simeq (X \times_{S} V/V)^{dR}.$ Thus, we can assume that $S$ is affine.

\textit{Step 2. Reduction to the case of affine $X$.} 
For an open immersion $U \to X$ the map $U^{dR} \to X^{dR}$ is a representable open immersion by \cite[Theorem 2.5.6]{bhatt-lecture-notes} and one gets that the diagram
\[\begin{tikzcd}
	U &&& {U^{dR}} \\
	\\
	X &&& {X^{dR}}
	\arrow[from=1-1, to=1-4]
	\arrow[from=1-1, to=3-1]
	\arrow[from=3-1, to=3-4]
	\arrow[from=1-4, to=3-4]
\end{tikzcd}\]
is cartesian. To prove surjectivity $X \to (X/S)^{dR}$ it is enough to do this Zariski locally on $(X/S)^{dR},$ for an open immersion $U \to X$ we get the following diagram 
\[\begin{tikzcd}
	U && {(U/S)^{dR}} && {U^{dR}} \\
	\\
	X && {(X/S)^{dR}} && {X^{dR}} \\
	\\
	&& S && {S^{dR}}
	\arrow[from=1-3, to=3-3]
	\arrow[from=1-3, to=1-5]
	\arrow[from=3-3, to=3-5]
	\arrow[from=1-5, to=3-5]
	\arrow[from=3-5, to=5-5]
	\arrow[from=3-3, to=5-3]
	\arrow[from=5-3, to=5-5]
	\arrow[from=1-1, to=1-3]
	\arrow[from=1-1, to=3-1]
	\arrow[from=3-1, to=3-3]
\end{tikzcd}\]
where the vertical upper right arrow is an open immersion which implies that $(U/S)^{dR} \to (X/S)^{dR}$ is also an open immersion. Moreover, right two squares are cartesian which implies that the left upper square is also cartesian. Moreover, if $X$ is written as a colimit of a diagram $U^{\bullet}$ consisting of open affine subschemes of $X$, then the colimit of $(U/S)^{\bullet, dR}$ is $(X/S)^{dR}$, it also follows from the proof of \cite[Theorem 2.5.6]{bhatt-lecture-notes} since the analogous statement holds for $X^{dR}$ combining with the fact that the upper right square is cartesian. Thus, we can assume that $X$ is affine. In this case $(X/S)^{dR} = X^{dR} \times_{S^{dR}} S$ is an affine stack since all terms are affine.

\textit{Step 3. Computing $\nu_* \cO_{(X/S)^{dR}}$.} The argument is similar to the one in \Cref{absolute-dr-affine}. By Zariski descent it is enough to prove it for affine $X, S$ and, moreover, it is enough to assume that the map $X \to S$ is smooth. In that case we refer to \cite[Corollary 2.7.1. (3)]{bhatt-lecture-notes}.

\begin{remark}
    Let $X \to S$ be a representable quasi-syntomic map of $\bF_p$-algebraic stacks. Then $\nu_{X/S}: (X/S)^{dR} \to X'$ is an affine map. Indeed, by \Cref{torsor-for-stacks} it is a pullback of $X' \to B_{X'}(T_{X'/S}^{\sharp})$ which is affine by \Cref{rem:affine-torsor}. Moreover, $\nu_{X/S,*} \cO_{(X/S)^{dR}}$ is identified with $F_{X/S, *} \dR_{X/S}$ by the computation for smooth schemes.
\end{remark}

\begin{remark}
    Note that $(X/S)^{dR}$ is classical for lci $X/S$ since it admits a structure of a $L_{X'/S}^{\vee}[1] \otimes_{\bG_a} \bG_{a}^{\sharp}$-torsor over $X'.$ Indeed, fppf locally on $X'$ one can split $\nu_{X/S}$ by \Cref{torsor-structure}. Since $L_{X'/S}^{\vee}[1] \otimes_{\bG_a} \bG_a^{\sharp}$ is classical, the statement follows.
\end{remark}

\end{proof}

\begin{lemma}\label{lm:regular-affine-stack}
    Let $i: Z \to X$ be a regular closed immersion. Then $(Z/X)^{dR}$ is identified with the PD-envelope $D_{Z}(X).$
\end{lemma}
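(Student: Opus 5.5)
The plan is to show that both sides are affine over $X$ and that their rings of functions agree; the one real input is Bhatt's theorem that derived de Rham cohomology of a regular immersion in characteristic $p$ is the PD-envelope.

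\emph{Step 1: affineness.} By \Cref{dR-is-affine-relative} applied to $Z \to X$, the map $\nu_{Z/X}: (Z/X)^{dR} \to Z' = Z \times_{X, F_X} X$ is affine, with $\nu_{Z/X,*}\cO = F_{Z/X,*}\dR_{Z/X}$. Since $Z' \to X$ is a base change of the closed immersion $i$, it is affine. Hence $(Z/X)^{dR}$ is affine over $X$. Both sides satisfy Zariski descent on $X$, and by \Cref{dR-commutes-colimits} the construction is compatible with restriction to opens. So we may assume $X = \Spec(A)$ and $Z = \Spec(A/I)$ with $I = (f_1, \dots, f_n)$ generated by a regular sequence.

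\emph{Step 2: comparing functions.} We now have $(Z/X)^{dR} = \Spec(\dR_{(A/I)/A})$ as an affine derived stack. Bhatt's computation (\cite[Proposition 3.29/Theorem 3.27]{bhatt2012padic}) identifies the derived de Rham cohomology of a regular quotient in characteristic $p$ with the PD-envelope $D_A(I)$, sitting in degree $0$. The argument reduces to the universal case $\bF_p[x_1, \dots, x_n] \to \bF_p$ by base change along $x_i \mapsto f_i$. This base change is Tor-independent because $(f_i)$ is regular, and $\dR$ commutes with such base change. In the universal case the conjugate filtration, whose graded pieces are $\wedge^j L_{\bF_p/\bF_p[x]}[-j] = \Gamma^j(\bF_p^n)$, shows that $\dR$ is discrete and equals $\bF_p\langle x_1, \dots, x_n \rangle/(x_i)$, i.e. the PD-polynomial algebra $D$. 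Consequently $(Z/X)^{dR} \simeq \Spec D_A(I) = D_Z(X)$ as derived stacks.

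\emph{Step 3: the map.} The equivalence should be canonical, so I would write down the comparison map directly and then check it is an isomorphism. A point of $D_Z(X)$ with values in $R$ is a map $A \to R$ together with a PD-nilpotent structure on the image of $I$. Such a point produces a lift $\Spec(W(R)/p) \to X$ restricting to $Z$ along $\pi$: use that $W(R)/p \to R$ has PD kernel $\bG_a^\sharp$ (compare \Cref{rem:pi_i-of-gadr}). The resulting map $D_Z(X) \to (Z/X)^{dR}$ is then checked on rings of functions using Step 2.

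The main obstacle is Step 2, or equivalently making Step 3 canonical. Computing $\dR$ for a regular immersion needs the conjugate filtration and Cartier isomorphism for derived de Rham cohomology rather than anything proven earlier in the paper. I would cite Bhatt for it and only verify that his identification is compatible with the map of stacks, for instance by checking on the universal example $\Spec \bF_p \to \bA^n$ and using functoriality.
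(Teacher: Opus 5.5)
Your overall route is the paper's: reduce to $X=\Spec A$, $Z=\Spec A/I$ with $I$ generated by a regular sequence, use \Cref{dR-is-affine-relative} to get $(Z/X)^{dR}\simeq\Spec \dR_{(A/I)/A}$, and invoke Bhatt's identification $\dR_{(A/I)/A}\simeq D_A(I)$. The paper cites \cite[Corollary 3.40]{bhatt2012padic} for this identification. The canonicity you worry about in Step 3 comes from the functoriality of that isomorphism in the pair $(A,I)$. This is exactly what the paper uses for the Zariski gluing, so Step 3 is unnecessary.

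However, your sketch of Bhatt's computation is wrong and should not be relied on.

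\emph{The universal computation.} Take $P=\bF_p[x_1,\dots,x_n]\to B=\bF_p$. The graded pieces of the conjugate filtration are $\wedge^j L_{B^{(1)}/P}[-j]$, not $\wedge^j L_{B/P}[-j]$. Here $B^{(1)}=B\otimes_{P,F_P}P=P/(x_1^p,\dots,x_n^p)$ is the Frobenius twist. So the graded pieces are the free $B^{(1)}$-modules $\Gamma^j_{B^{(1)}}\bigl((B^{(1)})^{n}\bigr)$, not $\Gamma^j(\bF_p^n)$. Consequently $\dR_{\bF_p/P}$ is the whole divided power algebra $\bF_p\langle x_1,\dots,x_n\rangle$, free over $P/(x_i^p)$, and not its quotient by $(x_i)$.

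This matters for the base change. With your answer, base change along $x_i\mapsto f_i$ would produce a free $A/I$-module. With the correct one, it produces a free $A/(f_1^p,\dots,f_n^p)$-module, which is discrete because $f_1^p,\dots,f_n^p$ is again regular, and this is $D_A(I)$.

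\emph{The points in Step 3.} An $R$-point of $\Spec D_A(I)$ is just a ring map $D_A(I)\to R$. Such a map need not induce a PD structure on $IR$. The relevant input is also not a PD structure on the kernel of $W(R)/p\to R$, but the identification $\fib(\pi\colon\bG_a\to\bG_a^{dR})\simeq\bG_a^\sharp$.

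If you want a stack-theoretic argument instead of citing Bhatt, use that identification. It gives $(\Spec\bF_p/\bA^n)^{dR}\simeq(\bG_a^\sharp)^n$, as in the proof of \Cref{lm:closed-immersion-action}. The affine case then follows by base change along $f\colon X\to\bA^n$ using \Cref{dR-commutes-colimits}, together with the flatness computation above.
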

\begin{proof}
    First, assume that $X=\Spec (A)$ and $Z=\Spec (A/I)$ where $I$ is generated by a regular sequence $(f_1, \cdots, f_n).$ Then \cite[Corollary 3.40]{bhatt2012padic} provides a functorial isomorphism of $\bE_{\infty}-A/\phi(I)$-algebras $\dR_{(A/I)/A } \simeq D_{I}(A)$. By Zariski descent and \Cref{dR-is-affine-relative} it gives rise to $(Z/X)^{dR} \simeq D_{Z}(X)$.
\end{proof}

\begin{remark}\label{rem:two-actions}
    Given an lci map $X \to S,$  then \Cref{torsor-structure} provides us with a torsor structure on the relative de Rham stack. In particular, one can take $X \to S$ to be a regular closed immersion. In this case, by \Cref{lm:regular-affine-stack} the relative de Rham is identified with the PD-envelope $D_{X}(S).$ The twist $X'$ is identified with the Frobenius neighbourhood of $X$ in $S$ which we denote by $X^{\phi}.$ The group stack $BT_{X'/S}^\sharp$ is identified with the PD-envelope of the zero section in the normal bundle $\mathbf{N}_{X^\phi/S}^\sharp$. Indeed, the tangent complex of a regular immersion is given by $(I/I^2)^\vee[-1]$ where $I$ is the ideal of $X \to S.$   Therefore, one obtains an action of $\mathbf{N}_{X^{\phi}/S}^{\sharp}$ on $D_{X}(S) \to X^{\phi}$. Let us compare this action with the one coming from $D$-module theory which was discovered by Ogus and explained to us by Vologodsky. Denote $f: D_{X} (S) \to X^{\phi}$. Then $f_* \cO_{D_X (S)}$ is naturally a $D$-module on $X^{\phi}$. Then the desired action is given by the integration of the $p$-curvature of $f_* \cO_{D_{X}(S)}$. More explicitly, to define an action of $\mathbf{N}_{X^{\phi}/S}^{\sharp}$ on $D_X(S) \to X^{\phi},$ it is enough to define an action of the Lie algebra of $\bN_{X^{\phi}/S}$. In local coordinates the action is given by $\partial \cdot \gamma_n(x) = c'(\partial) \gamma_n(x)$ where $c': F_{X^{\phi}}^* T_{{X^{\phi}}'} \to D_{X^{\phi}}$ is the $p$-curvature map. It is easy to see that it factors through the quotient $T_{X^{\phi}} \to N_{X^{\phi}/S}$. 
\end{remark}

\begin{lemma}\label{lm:closed-immersion-action}
    For a regular immersion $X \to S$ of $\bF_p$-schemes, the two actions of $\bN_{X^{\phi}/S}^{\sharp}$ on $D_{X} (S)$ constructed in \Cref{rem:two-actions} coincide.
\end{lemma}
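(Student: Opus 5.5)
Both actions are compatible with Zariski localization on $S$ (and on $X$), and both are functorial for flat base change. So I would first reduce to the local model: $S=\Spec(A)$ and $X=\Spec(A/I)$ with $I=(f_1,\dots,f_n)$ generated by a regular sequence. Then $X^{\phi}=\Spec(A/I^{[p]})$ with $I^{[p]}=(f_1^p,\dots,f_n^p)$, and $D_X(S)=\Spec D_I(A)$. An action of the PD-group $\bN^\sharp_{X^\phi/S}$ on the affine $X^\phi$-scheme $D_X(S)$ is a coaction of the divided power Hopf algebra $\Gamma(N^\vee)$ on $D_I(A)$. Both actions are through derivations in the sense of deformation theory, so it suffices to compare the induced actions of the Lie algebra, i.e. of $N_{X^\phi/S}=(I^{[p]}/I^{[2p]})^\vee$, by derivations of $D_I(A)$ over $A/I^{[p]}$, together with compatibility of the divided powers of these operators. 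Both sides are determined by the coaction map, so I would compare the maps $D_I(A)\to D_I(A)\otimes\Gamma(N^\vee)$ directly.

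Next I would reduce further to a universal case. Both constructions are functorial in maps of pairs $(A,I)\to(B,J)$ with $J=IB$ and the sequence staying regular. The de Rham side is functorial by \Cref{p-curv-equivariant-relative}; the $D$-module side is functorial because $p$-curvature is natural. The universal example is $A_0=\bF_p[t_1,\dots,t_n]$ with $I_0=(t_1,\dots,t_n)$, mapping by $t_i\mapsto f_i$. Since the regular immersion is a flat base change of this one along $\Spec(A)\to\bA^n$, both actions on $D_I(A)=D_{I_0}(A_0)\otimes_{A_0}A$ are base changed from the universal case. So it is enough to treat $\{0\}\hookrightarrow\bA^n$, and by products even $n=1$: $X=\{0\}\subset\bA^1$, with $D_X(S)=\Spec\,\bF_p\langle t\rangle$ over $X^\phi=\Spec\,\bF_p[t]/t^p$.

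In this case the $D$-module action can be computed explicitly. $\partial_t$ acts on $\gamma_m(t)$ by $\gamma_{m-1}(t)$, and the $p$-curvature $\partial_t^p$ sends $\gamma_m(t)\mapsto\gamma_{m-p}(t)$. Integrating this to a $\Gamma$-coaction gives, up to the standard identification, the translation $\gamma_m(t)\mapsto\sum_k\gamma_{m-pk}(t)\otimes\gamma_k(s)$, where $s$ is the coordinate on $N^\vee$. On the de Rham side, I would unwind \Cref{torsor-structure}. The action of $\Hom(L_{X'/S},\bG_a^\sharp[1])$ on $(X/S)^{dR}(R)=\Map_S(\Spec(W(R)/p),X)$ comes from $\bG_a^\sharp[1]=\fib(W(R)/p\to R)$. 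Concretely, a point is an element $t\in W(R)/p$ in the cone model, with $t\mapsto0$ in the appropriate sense. The action of $a\in\bG_a^\sharp(R)=\pi_1$ modifies the null-homotopy by $V$-type elements, which shifts the image of $t^p$ by $a$ via the map $\bG_a^\sharp\to\bG_a$. Under the identification $\dR_{(A/I)/A}\simeq D_I(A)$ of \Cref{lm:regular-affine-stack}, the element $t^p/p!$-type divided powers $\gamma_p(t)$ correspond to the coordinate on $\pi_1$. So the action is $\gamma_p(t)\mapsto\gamma_p(t)+\gamma_1(s)$ with PD-compatibility, which matches the formula above.

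The main obstacle is this last identification. It requires making the isomorphism $(Z/X)^{dR}\simeq D_Z(X)$ explicit on functors of points, namely a description of maps $\Spec(W(R)/p)\to\{0\}\subset\bA^1$ over $\bA^1$ as PD-points. It also requires tracking the sign and normalization conventions, in particular the factor $-1$ in $\gamma_p(t)=t^p/p!$ with $p!\equiv-1$, which makes the two actions agree on the nose rather than up to an automorphism of $\bN^\sharp$. I would first check it on $\pi_1$-level, i.e. Lie algebras, where both are determined by a single derivation of $\bF_p\langle t\rangle$ killing $t$. Then I would use that the primitives together with PD-structure determine a $\Gamma$-coaction on a flat PD-algebra.
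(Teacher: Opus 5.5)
Your proposal is correct and is essentially the paper's argument: localize, reduce to a single equation via the product decomposition of the torsor, base change to $\{0\}\subset\bA^1$, and match $\partial_x^p\colon\gamma_n(x)\mapsto\gamma_{n-p}(x)$ against the de Rham action; note that this base change is Tor-independent rather than flat, which suffices since $(-/-)^{dR}$ commutes with fibre products. The step you flag as the main obstacle is handled in the paper without any functor-of-points description of $D_X(S)$: in the universal case $(\Spec\bF_p/\bA^1)^{dR}\simeq\fib(\pi\colon\bG_a\to\bG_a^{dR})\simeq\bG_a^\sharp$, and since $L_{\Spec\bF_p}=0$ the acting group from \Cref{torsor-structure} is just $\pi_1(\bG_a^{dR})=F_*\bG_a^\sharp$, acting by $(t,x)\mapsto x+V(t)$.
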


\begin{proof}
    The statement is local and we can assume that $X \to S$ comes from a surjection $A \to A/I$ with the regular $I = (f_1, f_2, \cdots, f_n).$ In this case $$A/I^p = A/f_{1}^p \otimes_A \cdots \otimes_{A} A/f_{n}^p \to D_I (A) = \otimes_i D_{f_i} A$$ is a torsor for $\bN_{A/I^p} (I/I^2)^{\vee}$ and it is a product of $\bN_{A/f_i^p} (f_i/f_{i}^2)^{\vee}$-torsors. Thus, it is enough to prove it in the case of $I=(f).$ Consider the map $\bF_p[x] \xrightarrow{x \to f} A,$ then $A/f \simeq A \otimes_{\bF_p[x]} \bF_p$ and by the base change we can assume that $A=\bF_p[x]$ and $f=x.$
    Thus, we are reduced to prove it for $X = \Spec (\bF_p) \xrightarrow{} \Spec (\bF_p[x]) = S.$ In this case the map $(X/S)^{dR} \to X'$ is identified with the canonical map $\bG_a^\sharp \to \alpha_p.$ Indeed, $$(\Spec(\bF_p)/\bG_a^{dR})= \Spec(\bF_p) \times_{\bG_a} \bG_a^{dR} \simeq \fib(\bG_a \to \bG_a^{dR}) \simeq \bG_a^\sharp$$
    and $X' = \Spec(\bF_p) \times_{\bG_a, F_{\bG_a}} \bG_a \simeq \fib(\bG_a \xrightarrow{F_{\bG_a}} \bG_a) \simeq \alpha_p.$ Recall that the action of $\Hom(L_{X'/S}, \bG_a^{\sharp}[1])$ comes from the diagram \eqref{action-for-torsor}.  Note that in our case $\Hom(L_{X}, \bG_a^{\sharp}[1]) = \Spec (\bF_p)$ as a stack since $L_{X} = 0.$ In particular, we learn that $\Hom(L_{X'/S}, \bG_a^\sharp[1]) \simeq F_*\bG_a^\sharp.$ Unwinding the definitions, the action is given by $F_*\bG_a^\sharp \times \bG_a^\sharp \to \bG_a^\sharp$ sending $(t, x)$ to $V(t)+x.$ In particular, if $R=\bF_p[\varepsilon],$ then for any $x \in \bF_p[\varepsilon]^{\sharp}$ one has $\gamma_n(\varepsilon \cdot x) = \gamma_{n-p}(x) \varepsilon$ for $n \ge p$ and $0$ otherwise. It is easy to see the action coming from $p$-curvature is given by the same formula since $\partial_{x}^p\gamma_n(x) = \gamma_{n-p} (x)$ where $x$ is a coordinate on $\bG_a$ and $\partial_x$ is the associated vector field.
\end{proof}

\begin{remark}\label{rem:p-curv=action}
    Another case where the action of $B_{X'}(T_{X'/S}^{\sharp})$ on $(X/S)^{dR}$ can be explicitly understood is when $X \to S$ is smooth.  Denote $a: B_{X'} (T_{X'/S}^{\sharp}) \times_{X'} (X/S)^{dR} \to (X/S)^{dR}$ to be the action morphism. Note $(X/S)^{dR} \times_{X'} B_{X'} (T_{X'/S}^{\sharp}) \simeq B_{(X/S)^{dR}} (F_{X/S}^* T_{X'/S}^{\sharp})$ and by Cartier duality the derived category of quasi-coherent sheaves on it is identified with the $\infty$-category of pairs $(M, \theta)$ where $M \in \cD_{qc}((X/S)^{dR})$ and $\theta: M \to M \otimes_{\cO_{X}} F_{X/S}^* \Omega_{X'/S}$ is a map in $\cD_{qc} ((X/S)^{dR})$ such that $\theta$ acts locally nilpotently on $\cH^i(M)$ for any $i.$ We claim that $a^*(M) = (M \xrightarrow{\theta_{M}} M \otimes F_{X/S}^* \Omega^1_{X'/S})$ where $\theta_M$ is the $p$-curvature map of $M$. Equivalently, it is map $F_{X/S}^* T_{X'/S} \to \underline{\End}(M)$ and for a local section $\partial$ the endomorphism $\theta_M(\partial)$ is multiplication by the corresponding central element of $F_{X/S, *} D_{X/S}.$ The statement is local and by standard reductions we can assume that $X = \bA^1.$ In this case the computation is the same as in \Cref{lm:closed-immersion-action}. Indeed, the pullback along $\bG_a^{dR} \to B_{\bG_a^{dR}} (F_* \bG_a^{\sharp})$ takes $(M, \theta_M)$ to $M$. In particular, the functor $a^*$ does not change the underlying crystal. Explicitly, $a$ is given by $\Cech(\bG_a \to B_{\bG_a^{dR}} (\pi^* F_* \bG_a^{\sharp})) \to \Cech(\bG_a \to \bG_a^{dR})$. In particular, we have a map $a_1: \bG_a \times \bG_a^{\sharp} \times F_* \bG_a^{\sharp} \to \bG_a \times \bG_a^{\sharp}$ given by $a_1(x, a, b) = (x, a + V(b)).$ Note that the map induced by  $V: F_* \bG_a^{\sharp} \to \bG_a^{\sharp}$ on Lie algebras takes the standard derivation $\partial_{x}$ to $\partial_{x}^{[p]}.$ Thus, the nilpotent operator $\theta_M$ on $a^* M$ is given by the action of the central element $\partial^p.$ 
\end{remark}

In \cite[Remark 1.8]{MR2373230}, it is explained how to compute the $p$-curvature under the inverse image functor. In the next remark, we note that this can be understood via the interpretation of the $p$-curvature from \Cref{rem:p-curv=action} in terms of the torsor action, together with the fact that any $S$-map $X \to Y$ induces a map $(X/S)^{dR} \to (Y/S)^{dR}$ which is equivariant with respect to this action; see \Cref{p-curv-equivariant-relative}.
\begin{remark}
    Let $h: X \to Y$ be a map of smooth $S$-schemes. Consider the map $$H: (X/S)^{dR} \times _{X'} B_{X'}(T_{X'/S}^{\sharp}) \to (Y/S)^{dR} \times_{Y'} B_{Y'}(T_{Y'/S}^{\sharp})$$ over $h': X' \to Y'$. The derived category of quasi-coherent sheaves on the target is equivalent to the $\infty$-category of pairs $(M, \psi_M)$ where $M \in \cD_{qc}((Y/S)^{dR})$ and $$\psi_M: M \to M \otimes F_{Y/S}^*\Omega^1_{Y'/S}$$ is a horizontal map. Then $H^*$ sends $M \xrightarrow{\psi_M} M \otimes F^*_{Y/S}\Omega^1_{Y'/S}$ to the composition
    
    \begin{equation}\label{p-curv}
        h^*M \xrightarrow{h^* \psi_M} h^*M \otimes h^*F_{Y/S}^* \Omega^1_{Y'/S} \to h^*M \otimes F_{X/S}^* \Omega^1_{X'/S}
    \end{equation}
        where the last map is induced by $h^*F_{Y/S}^* \Omega^1_{Y'/S} \simeq F_{X/S}^* h'^* \Omega^1_{Y'/S} \xrightarrow{dh'} F_{X/S}^* \Omega^1_{X'/S}.$  Thus, for $E \in \MIC^{\cdot}(X/S)$ the $p$-curvature is given by \eqref{p-curv}. Indeed, the diagram
\[\begin{tikzcd}
	{(X/S)^{dR} \times B_{X'}(T_{X'/S}^{\sharp})} &&& {(X/S)^{dR}} \\
	{(Y/S)^{dR}  \times_{Y'} B_{X'}(h^*T_{Y'/S}^{\sharp})} &&& {(Y/S)^{dR} }
	\arrow["{a_{X/S}}", from=1-1, to=1-4]
	\arrow["{h^{dR} \times dh}"', from=1-1, to=2-1]
	\arrow["{h^{dR}}", from=1-4, to=2-4]
	\arrow["{a_{Y/S}}"', from=2-1, to=2-4]
\end{tikzcd}\]
is commutative by \Cref{p-curv-equivariant-relative}, and the pullback along the action maps sends a $D$-module to its $p$-curvature by \Cref{rem:p-curv=action}.
\end{remark}

\begin{corollary}
    Let $(E, \nabla) \in \MIC^{\cdot}(X/S)$ for a representable smooth map $X/S$ of algebraic $\bF_p$-stacks. Then the Frobenius pullback of the complex $F_{X/S, *}\dR(E, \nabla) \in \cD_{qc}(X')$ is isomorphic to 
    \begin{equation}\label{complex}
        E \xrightarrow{\psi_E} E \otimes F_{X/S} \Omega^1_{X'/S} \xrightarrow{\psi_E} E \otimes F_{X/S} \Omega^2_{X'/S} \to \cdots
    \end{equation} 
    where $\psi_E$ is the $p$-curvature operator. Indeed, recall from \Cref{rem:p-curv=action} that the pullback of $(E, \nabla) \in \MIC^{\cdot}(X/S)$ under the action map $$a: B_{X'}(T_{X'/S}^{\sharp}) \times_{X'} (X/S)^{dR} \to (X/S)^{dR}$$
    is a pair $(E, \psi_E)$ where $\psi_E: E \to E \otimes F_{X/S}^* \Omega^1_{X'/S}$ is a map of crystals given by the $p$-curvature. Pulling back further to $B_{X'}(T_{X'/S}^{\sharp}) \times_{X'} X$ and pushing forward to $X$ gives the complex \eqref{complex}. On the other hand, by the base change this complex is isomorphic to $F_{X/S}^*\nu_{X/S, *}(E, \nabla)$ where $\nu_{X/S}: (X/S)^{dR} \to X'$, it completes the proof.

\end{corollary}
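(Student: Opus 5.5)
The plan is to compute $F_{X/S}^*\nu_{X/S,*}(E,\nabla)$ by base change along the $T^\sharp$-gerbe structure, and to identify the resulting complex through the action map. First I reduce to the case where $X/S$ is a smooth morphism of schemes with $S$ affine. Both sides are compatible with smooth base change on $S$ and with étale localization on $X$ (by \Cref{dR-commutes-colimits} and \Cref{lem:dR-preserves-et-covers}), and both satisfy descent. So I may assume the local situation of \Cref{rem:p-curv=action}.

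The key step is to form the cartesian square
\[
(X/S)^{dR}\times_{X'} X \to X, \qquad (X/S)^{dR}\times_{X'}X \to (X/S)^{dR},
\]
where $X \to X'$ is $F_{X/S}$. Since $\nu_{X/S}$ is affine (\Cref{dR-is-affine-relative}), base change gives
\[
F_{X/S}^*\nu_{X/S,*}(E,\nabla)\simeq \mathrm{pr}_{X,*}\,\mathrm{pr}^*(E,\nabla).
\]
By \Cref{rem:any-section-factorizes-pi_X}, in its relative form, the composite $X\to X'$ factors as $\nu_{X/S}\circ\pi_{X/S}$. The point $\pi_{X/S}$ therefore trivializes the torsor after pullback, and we get
\[
(X/S)^{dR}\times_{X'}X \simeq B_X(F_{X/S}^*T_{X'/S}^\sharp)\times ... ,
\]
more precisely an identification with $B_{X'}(T^\sharp_{X'/S})\times_{X'}X$ under which $\mathrm{pr}$ becomes the composite of the action map $a$ with $\pi_{X/S}$. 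This is the standard statement that a torsor pulled back to itself is trivial, here applied to the $T^\sharp$-gerbe of \Cref{torsor-structure}.

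Under this identification, \Cref{rem:p-curv=action} gives $a^*(E,\nabla)=(E,\psi_E)$. Restricting along $X$ then gives a quasi-coherent sheaf on $B_X(F_{X/S}^*T^\sharp_{X'/S})$, namely $E$ with the nilpotent action $\psi_E$. Pushing forward along $B_X(V^\sharp)\to X$ is group cohomology of $V^\sharp$. Via Cartier duality with the formal completion of $\bV(V^\vee)$, this is the Koszul complex
\[
E\to E\otimes V^\vee\to E\otimes\wedge^2V^\vee\to\cdots
\]
with differential given by the action. For $V=F_{X/S}^*T_{X'/S}$ this is exactly \eqref{complex}.

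The main obstacle is the identification of $\mathrm{pr}$ with the action map composed with $\pi_{X/S}$, and checking that this is compatible with the Cartier-duality description. My first attempt would be to verify it in the case $X=\bA^1$, using the explicit Čech description $a_1(x,a,b)=(x,a+V(b))$ from \Cref{rem:p-curv=action}, and then extend by products and étale base change.
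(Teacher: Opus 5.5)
Your proposal is correct and follows essentially the same route as the paper: base change along $F_{X/S}$, identify $(X/S)^{dR}\times_{X'}X\simeq B_{X'}(T^\sharp_{X'/S})\times_{X'}X$ via the point $\pi_{X/S}$ so that the projection becomes $a\circ(\id\times\pi_{X/S})$, then apply \Cref{rem:p-curv=action} and Cartier duality to obtain the Koszul complex of $\psi_E$. The step you flag as the main obstacle holds by construction, since the trivializing map is $(g,x)\mapsto(g\cdot\pi_{X/S}(x),x)$, so no localization or $\bA^1$ check is needed; also, $\nu_{X/S}\circ\pi_{X/S}\simeq F_{X/S}$ comes from $\nu\circ\pi=F_{\bG_a}$ in \Cref{rem:pi_i-of-gadr}, not from \Cref{rem:any-section-factorizes-pi_X}.
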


\begin{remark}\label{rem:higgs-field=action-of-sharp}
    Let $\pi: X \to S$ be smooth. Consider the multiplication $$m: B_{X}(T_{X/S}^{\sharp}) \times_{X} B_{X}(T_{X/S}^{\sharp}) \to B_{X}(T_{X/S}^{\sharp}).$$ Note the left-hand side is the classifying stack of $\pi^* T_{X/S}^\sharp$ on $B_{X}(T_{X/S}^{\sharp})$ where $\pi: B_{X}(T_{X/S}^{\sharp}) \to X$ is the projection. Therefore, its category of quasicoherent sheaves is the $\infty$-category of pairs $(E, \theta_E)$ where $E \in \HIG^\cdot(X/S)$ and $\theta_E: E \to E \otimes \Omega^1_{X/S}$ is a nilpotent map of Higgs modules. Analogously to \Cref{rem:p-curv=action}, we learn that $m^*$ takes a Higgs module $(E, \theta_E)$ to the same pair $(E, \theta_E)$ where $\theta_E$ is emphasized to be a map of Higgs modules and not merely an $\cO_X$-linear map.
\end{remark}

\subsection{Splittings of the relative de Rham gerbe}

It was observed by Bhatt in \cite{bhatt2012padic} that a compatible lift of Frobenii on $X$ and $S$ gives rise to the formality of the de Rham complex which gives rise to a section of $\nu_{X/S}: X' \to (X/S)^{dR}.$
\begin{remark}
    Let $X/S$ be smooth. If we are given $\tilde{X}, \tilde{S}$ with lifts of $F_{X}$ and $F_{S}$, then \cite[Proposition 3.17]{bhatt2012padic} provides an isomorphism $F_{X/S, *}\dR_{X/S} \simeq (\bigoplus \Omega^i_{X'/S}, 0)$ of $\bE_{\infty}$-algebras in $\cD_{qc}(X').$ The proof uses a lift of $F_{X/S}$ to $\tilde{X} \to \tilde{X} \times_{\tilde{S}, \tilde{F}_{S}} \tilde{S}$ to construct a map $L_{X'/S}[-1] \to F_{X/S, *}\dR_{X/S}$ which splits the conjugate filtration in degree $1$. Let us note that the proof only uses a lift of $F_{S}$ through a lift of $F_{X/S}$; i.e., it is enough to have a lift $\tilde{S}$ with flat lifts $\tilde{X}, \tilde{X'}$ and a lift $\tilde{F}_{X/S}: \tilde{X} \to \tilde{X'}.$ In the next lemma, we explain how to split the de Rham gerbe in the absence of a lift of $S$.
\end{remark}
If $X/S$ is a family with a lift $\tilde{X}/W_2(S)$, then the correct replacement of a lift of the relative Frobenius is given in the following definition.
\begin{definition}\label{def:strong-frob-lift}
    Let $X \to S$ be a map over $\bF_p.$ Let $\tilde{X}/W_2(S)$ be a lift. A strong Frobenius lift is a $W_2(S)$-map $W_2(X) \to \tilde{X}$ which reduces to the identity map on $X$, via the natural embeddings $X \mono W_2(X)$ and $X \mono \tilde{X}.$
\end{definition}
\begin{remark}
Let $X/S$ be a family admitting a lift $\widetilde{X}/W_2(S)$. Using the Frobenius on $W_2(S)$ we define
\[
\widetilde{X}' := \widetilde{X}\times_{W_2(S),F_{W_2(S)}} W_2(S).
\]
One can then consider lifts of the relative Frobenius $F_{X/S}\colon X\to X'$ to a $W_2(S)$-morphism 
$\widetilde{F}\colon \widetilde{X}\to \widetilde{X}'$.

We warn the reader that this is weaker than a strong Frobenius lift. Namely, for a local section 
$a$ of $\cO_{\widetilde{X}}$, an arbitrary Frobenius lift satisfies 
$\widetilde{F}(a)=a^p+i_a$ with $i_a\in\Ker(\cO_{\widetilde{X}}\to\cO_X)$, whereas a strong Frobenius lift requires 
$\widetilde{F}(a)=a^p+p\delta_a$ with $\delta_a\in\cO_{X'}$.
\end{remark}

\begin{lemma}\label{affine-splitting-strong-frob}
    Let $R \to A$ be a map of commutative rings. Let $W_2(R) \to \tilde{A}$ a be flat lift of $R \to A$. Assume we are given a strong Frobenius lift, i.e., a map $f: \tilde{A} \to W_2(A)$ such that $\tilde{A} \xrightarrow{f} W_2(A) \to A$ is the natural map $\tilde{A} \to A.$ Then $f$ gives rise to an isomorphism $\dR_{A/R} \simeq \bigoplus_k \Lambda^k L_{A'/R}[-k]$ of derived algebras.
\end{lemma}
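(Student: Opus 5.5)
Throughout, $A' := A\otimes_{R,F_R} R$, and $\dR_{A/R}$ is regarded as an $A'$-algebra via the relative Frobenius; this is the affine shadow of $\nu_{X/S}$ from \Cref{dR-is-affine-relative}. I would follow Bhatt's argument in \cite[Proposition 3.17]{bhatt2012padic}, with the lift of $S$ and of $F_{X/S}$ replaced by the pair $(\tilde{A}, f)$. The conjugate filtration on $\dR_{A/R}$ is an increasing exhaustive filtration of $A'$-algebras. Its graded pieces are given by the Cartier isomorphism: $\gr_k^{\conj}\dR_{A/R} \simeq \Lambda^k L_{A'/R}[-k]$.

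By the universal property of derived symmetric (here divided-power-free, since shifted by $-1$) algebras in derived rings, it is enough to produce a map of $A'$-modules $s\colon L_{A'/R}[-1] \to \dR_{A/R}$ that splits the extension
\[
A' \to \Fil_1^{\conj}\dR_{A/R} \to L_{A'/R}[-1].
\]
Such an $s$ extends to a map of derived $A'$-algebras $\Lambda^\bullet(L_{A'/R}[-1]) = \Sym_{A'}(L_{A'/R}[-1]) \to \dR_{A/R}$. This map is compatible with the filtrations. On graded pieces it induces the Cartier isomorphism, because the Cartier isomorphism is multiplicative. Hence it is a filtered equivalence, and exhaustiveness then gives the claimed isomorphism.

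To build $s$, I would first reduce to polynomial $A$ over $R$. Both sides are left Kan extended from polynomial $R$-algebras, and flat lifts together with strong Frobenius lifts can be handled via animated simplicial resolutions: resolve $\tilde{A}$ by polynomial $W_2(R)$-algebras and lift $f$ termwise using smoothness of polynomial algebras, working with the animated $W_2$. I expect this naturality bookkeeping to be a technical but not conceptual point. The conceptual input is an identification of the extension class of $\Fil_1^{\conj}$ with the class of the square-zero extension $W_2(A') \to A'$ of $W_2(R)$-algebras. Equivalently, the obstruction to splitting is the obstruction to lifting $A'$ to a $W_2(R)$-algebra equipped with a Frobenius-type map to $W_2(A)$. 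The cleanest route is through the stack picture. A point of $(X/S)^{dR}$ over $x'\in X'(T)$ is a map $\Spec(W(T)/p) \to X$ over $S$. The truncation $W(T)/p \to W_2(T)/p$ is compatible with this, and $\Fil_1^{\conj}$ corresponds exactly to the $1$-truncation.

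With this in place, $f$ produces the splitting as follows. Base change $f$ along $F_{W_2(R)}$ to get a map $\tilde{A}' := \tilde{A}\otimes_{W_2(R),F} W_2(R) \to W_2(A)$ over $W_2(R)$ via Frobenius. Composing with the canonical map $W_2(A)\to W_2(A)/p$ and using that $W_2(A)/p$ receives the $W(A)/p=\bG_a^{dR}(A)$-type structure, we obtain a section of the degree-$1$ truncation $\tau^{\le1}$-level gerbe. Concretely, on a local section $a$ one has $f(a) = (a, \delta_a)$. The class $a\mapsto \delta_a$ modulo the Frobenius-twisted derivations defines a $W_2(R)$-linear homotopy that trivializes the Bockstein class. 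This gives $s$ on $da'$ by the formula $s(da') = \delta_a$-type expression $(\tilde a^p$-correction$)/p$, exactly as in Bhatt's "$\tilde{F}(\tilde a) = \tilde a^p + p\delta(\tilde a)$, send $d a'$ to $\delta(a)$" formula, now with $\delta_a\in A'$ read off from the second Witt component.

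The main obstacle is checking that the obstruction class of $\Fil_1^{\conj}$ genuinely equals the class of $W_2(A)\to A$ relative to $W_2(R)$, rather than relative to a lift of $R$. Concretely, one has to verify that the Frobenius on $W_2(R)$ intertwines correctly so that no lift of $R$ to $\bZ/p^2$ is needed. I would first test this on $A=R[x]$: there, $\delta_x$ is forced up to a $W_2(R)$-derivation, and one can compare directly with the classical Cartier operator formula $C^{-1}(dx)=x^{p-1}dx$.
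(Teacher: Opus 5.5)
The gap is in your reduction step. Your overall route is the paper's: resolve $\tilde A$ by polynomial $W_2(R)$-algebras, lift $f$ termwise, use the resulting Frobenius lift to split the conjugate filtration, and conclude by Cartier. But the step everything rests on does not hold.

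A map of $A'$-modules $s\colon L_{A'/R}[-1]\to\dR_{A/R}$ does extend to a map of derived algebras out of the \emph{free} derived $A'$-algebra $\mathrm{LSym}_{A'}(L_{A'/R}[-1])$. In characteristic $p$, however, this free algebra is not $\bigoplus_k\Lambda^kL_{A'/R}[-k]$; the shift by $-1$ does not make it exterior. For smooth $X/S$ it is the ring of functions on $B_{X'}(T_{X'/S})$, not on $B_{X'}(T^\sharp_{X'/S})$. Already $\mathrm{LSym}_{\bF_p}(\bF_p[-1])\simeq R\Gamma(B\bG_a,\cO)$ contains the class of $x^p$ in $H^1$ in weight $p$, while $\Lambda^p(\bF_p)=0$. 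Free $E_\infty$-algebras have the same problem.

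So your map is an isomorphism on $\gr^{\conj}_k$ only for $k<p$, and you recover only the Deligne--Illusie range. This cannot be patched within your setup:
\begin{itemize}
\item By \Cref{rem:splittings-push=lifts-to-witt} and the corollary after it, a module splitting of $\tau^{\le 1}\dR_{X/S}$ exists as soon as $X$ lifts to $W_2(S)$, with no Frobenius lift at all.
\item Your argument would therefore split the de Rham gerbe using $\tilde X$ alone. That is false in general: Petrov gives $W_2$-liftable varieties with non-decomposable $\tau^{\le p}F_*\Omega^\bullet$. This is exactly why the paper only splits $(X/S)^{dR,\gamma}$ from a lift.
\item It is also why, in your write-up, $f$ only picks a point in a torsor that is already nonempty because of $\tilde A$.
\item Likewise, your ``main obstacle'' (identifying the class of $\Fil^{\conj}_1$) is beside the point: that class is just the obstruction to lifting $X$ to $W_2(S)$.
\end{itemize}

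What is missing is to use $f$ at the cochain level in \emph{all} degrees.
\begin{itemize}
\item On the resolution, the map $\tilde P_\bullet\to W_2(P_\bullet)$ gives $\tilde F(x)=x^p+p\delta_x$ with $\delta_x$ canonical. This canonical $p$-divisibility is where the strong Frobenius lift is needed. $\tilde A$ is only flat over $W_2(R)$, and for an arbitrary lift of $F_{X/S}$ one only knows $\tilde F(x)-x^p\in\ker(\tilde A\to A)$, which is in general larger than $p\tilde A$.
\item Hence $\tilde F^*/p^k\colon\Omega^k_{P'_\bullet/R}\to\Omega^k_{P_\bullet/R}$ is well defined. On generators it is $dx'\mapsto x^{p-1}dx+d\delta_x$, not $\delta_x$, which has the wrong degree.
\item These maps land in closed forms, are strictly multiplicative (because $\tilde F^*$ is a map of graded-commutative algebras of forms), and are natural in the simplicial direction.
\item They therefore assemble into a map of simplicial commutative dg algebras $(\Omega^\bullet_{P'_\bullet/R},0)\to\Omega^\bullet_{P_\bullet/R}$, which is a quasi-isomorphism termwise by the Cartier isomorphism.
\item Totalizing gives the isomorphism of derived algebras. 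Here $\bigoplus_k\Lambda^kL_{A'/R}[-k]$ carries the structure induced by these dg algebras, i.e.\ functions on the trivial $T^\sharp_{X'/S}$-gerbe, not the free structure.
\end{itemize}
This is the content of the paper's maps $\Omega^k(\tilde F)/p$ (read $p^k$) and of its ``standard argument''. With it, your stack-theoretic detour through $W(T)/p\to W_2(T)/p$ is unnecessary.
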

\begin{proof}
    The proof is essentially given in \cite[Proposition 3.17]{bhatt2012padic}. Choose a free simplicial $W_2(A)$-algebra resolution $\tilde{P_{\bullet}} \to \tilde{B}$, then we get $\tilde{h}: \tilde{P}_{\bullet} \to W_2(P_\bullet)$ and therefore $\tilde{F}: \tilde{P}_{\bullet} \to \tilde{P}_{\bullet}$ as well as its linearized version $\tilde{F}_{\tilde{P}/W_2(A)}: \tilde{P'}_{\bullet} \to \tilde{P}_{\bullet}$. Note $\tilde{F}(x)$ can be canonically written as $x^p+p\delta_x$ for some $\delta_x \in P_{\bullet}.$\footnote{It is wrong for an arbitrary Frobenius lift to $W_2(R).$ It is crucial to have a map $\tilde{P}_{\bullet} \to W_2(P_{\bullet}).$} This gives rise to a well-defined map $$\frac{\Omega^1(\tilde{F})}{p}: \Omega^1_{\tilde{P'}_{\bullet}/W_2(A)} \to \Omega^1_{\tilde{P}_\bullet/W_2(A)}.$$
    Analogously, one obtains $$\frac{\Omega^k(\tilde{F})}{p}: \Omega^k_{\tilde{P'}_{\bullet}/W_2(A)} \to \Omega^k_{\tilde{P}_{\bullet}/W_2(A)}.$$
    These maps assemble into a map of bicomplexes $\Omega^1_{P_{\bullet}'/A}[-1] \to \Omega^\bullet_{P_\bullet/A}$. This totalizes to a map $L_{A'/R}[-1] \to \dR_{A/R}$ splitting the first term of the conjugate filtration. The standard argument shows it gives rise to the desired isomorphism.
\end{proof}

\begin{corollary}\label{affine-section-of-dR-relative}
     Let $X \to S$ be a quasi-syntomic map of $\bF_p$-schemes. Let $\tilde{X}/W_2(S)$ be a flat lift of $X$ and let $f: W_2(X) \to \tilde{X}$ be a strong Frobenius lift. Then $f$ gives rise to a splitting $s_f: X' \to (X/S)^{dR}$ of the de Rham torsor.
\end{corollary}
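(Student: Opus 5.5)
The plan is to deduce the corollary from \Cref{affine-splitting-strong-frob} together with the affineness statement \Cref{dR-is-affine-relative}, working first Zariski locally and then gluing.

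\emph{Affine case.} Suppose $S=\Spec(R)$, $X=\Spec(A)$ and $\tilde X=\Spec(\tilde A)$, with the strong Frobenius lift given by $f:\tilde A\to W_2(A)$. By \Cref{dR-is-affine-relative}, $(X/S)^{dR}\simeq \underline{\Spec}_{X'}(F_{X/S,*}\dR_{X/S})$. So a section of $\nu_{X/S}$ is the same as a map of derived $\cO_{X'}$-algebras $F_{X/S,*}\dR_{X/S}\to \cO_{X'}$ whose composite with the structure map $\cO_{X'}\to F_{X/S,*}\dR_{X/S}$ is the identity. \Cref{affine-splitting-strong-frob} provides an isomorphism of derived algebras
\[
\dR_{A/R}\simeq \bigoplus_k \Lambda^k L_{A'/R}[-k].
\]
For this to give a section, the isomorphism must be $A'$-linear, i.e. compatible with the conjugate filtration in degree $0$. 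That holds by construction, since the map only modifies the first conjugate-graded piece via $\Omega^1(\tilde F)/p$. Composing with the augmentation $\bigoplus_k \Lambda^k L_{A'/R}[-k]\to \Lambda^0=A'$ then gives the required retraction, and hence $s_f$. Because $X/S$ is quasi-syntomic, $L_{A'/R}$ has Tor-amplitude in $[-1,0]$. I do not expect this to affect the construction, but it ensures that the target is a genuine derived algebra.

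\emph{Globalization.} Both the isomorphism of \Cref{affine-splitting-strong-frob} and the section it induces are functorial in the triple $(\tilde A, A, f)$. The construction uses a functorial simplicial resolution, and the resulting object is independent of that choice up to contractible choices. For an open affine $U\subset X$ over an open affine $V\subset S$, the data restrict: $\tilde X|_U$ is a lift over $W_2(V)$, and $W_2(U)\subset W_2(X)$ is open, so $f$ restricts to a strong Frobenius lift on $U$. Hence the local sections are compatible on overlaps. By Zariski descent for $\cD_{qc}(X')$, and using that $(X/S)^{dR}$ commutes with Zariski localization (Steps 1--2 of the proof of \Cref{dR-is-affine-relative}), the local sections glue to $s_f:X'\to (X/S)^{dR}$.

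\emph{Main obstacle.} I expect the hard step to be the functoriality needed for gluing. One has to check that the splitting map $L_{A'/R}[-1]\to \dR_{A/R}$ obtained from $f$ is natural in the triple and respects localization, as an $\infty$-categorical statement and not just on homotopy. My first approach is to state the construction as a natural transformation of functors on the category of triples $(R\to \tilde A\to W_2(A))$. To do this I would left Kan extend from polynomial algebras, where $\tilde F(x)=x^p+p\delta_x$ is canonical precisely because $f$ lands in $W_2$. The resulting derived-algebra map is then natural, and Zariski descent gives the global section.
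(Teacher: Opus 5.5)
Your proposal is correct and follows the same route as the paper, which states the corollary without proof as a direct consequence of \Cref{affine-splitting-strong-frob}. The section is read off using the affineness $(X/S)^{dR}\simeq\underline{\Spec}_{X'}(F_{X/S,*}\dR_{X/S})$ of \Cref{dR-is-affine-relative}: it is the composite $\dR_{X/S}\simeq\bigoplus_k\Lambda^kL_{X'/S}[-k]\to\cO_{X'}$ (projection to degree zero), exactly as in the remark following the corollary. The paper leaves the Zariski gluing and its $\infty$-categorical naturality implicit, and your functoriality argument is a reasonable way to fill that in.
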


Let $s: X' \to (X/S)^{dR}$ be a section of the de Rham torsor as in \Cref{affine-section-of-dR-relative}. Although when $S = \Spec(\bF_p)$ one obtains a canonical homotopy $s \circ F_{X/S} \simeq \pi_{X/S}$, it is not clear whether the same holds for arbitrary $S$. In the next remark, we observe that in this particular case it does hold.

\begin{remark}
    Let $X/S$ be a smooth map over $\bF_p$. Let $C: (\Omega^{\bullet}_{X'/S}, 0) \simeq \dR_{X/S}$ be the chain quasi-isomorphism from \Cref{affine-splitting-strong-frob}. The splitting $s_f: X' \to (X/S)^{dR}$ is given by the composition $$\dR_{X/S} \xrightarrow{C^{-1}} (\Omega^\bullet_{X'/S}, 0) \to \cO_{X'} $$
    where the second map is the projection onto the first component. Note one has a canonical homotopy $\pi_{X/S} \simeq s \circ F_{X/S}.$ Indeed, it amounts to observe that the chain map $C$ is given by the Frobenius $\cO_{X'} \to F_{X/S, *}\cO_X$ in degree $0$.
\end{remark}

Generalizing \Cref{delta-absolute-splitting}, we obtain a relative version.
\begin{lemma}\label{delta-splitting}
    Let $X/S$ be a smooth family. Let $\tilde{X}/W(S)$ be a $\delta$-lift. It gives rise to a section $X' \to (X/S)^{dR}.$
\end{lemma}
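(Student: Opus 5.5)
The plan is to imitate \Cref{delta-absolute-splitting} in the relative setting. A section $X' \to (X/S)^{dR}$ amounts to the following datum: for every $\Spec(R) \to X'$, a compatible $S$-map $\Spec(W(R)/p) \to X$, natural in $R$ and lifting the given point under $\nu_{X/S}$. Here $\Spec(W(R)/p)$ is regarded over $S$ via the structure map from \Cref{rem:pi_i-of-gadr}. A point of $X'(R)$ consists of $x \in X(R)$ over $F_S \circ s$, where $s \colon \Spec(R) \to S$. Since $\tilde{X} \to W(S)$ is a $\delta$-map of flat $p$-adic formal schemes, the universal property of Witt vectors gives
$$\Map_{\bZ_p}(\Spec(R),\tilde{X}) = \Map_{\bF_p}(\Spec(R),X) = \Map_{\delta}(\Spf W(R),\tilde{X}).$$
So $x$ determines a $\delta$-map $\tilde{x} \colon \Spf W(R) \to \tilde{X}$. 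By $\delta$-compatibility, its composite to $W(S)$ is $W(F_S\circ s) = F_{W(S)}\circ W(s)$.

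The point $\tilde{x}$ lies over $W(S)$ via $F \circ W(s)$, not via $W(s)$. To fix this, I will use the linearized $\delta$-structure. Let $\tilde{X}' := \tilde{X}\times_{W(S),F} W(S)$. The Frobenius lift $\phi_{\tilde{X}}$ factors through a $W(S)$-map $\tilde{F} \colon \tilde{X} \to \tilde{X}'$, which lifts $F_{X/S}$. Reducing mod $p$, we have $\tilde{X}'/p = X'$.

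Accordingly, I will run the Witt-vector construction in the opposite direction. A point $x' \in X'(R)$ over $s$ lifts uniquely to a $\delta$-point of the $p$-completed $\delta$-scheme $\tilde{X}'$; the $\delta$-structure on $\tilde{X}'$ is base-changed from $\tilde{X}$. This gives $\widetilde{x'} \colon \Spf W(R) \to \tilde{X}'$ over $W(s)$. Reducing mod $p$, we obtain a map $\Spec(W(R)/p) \to X'$ over $S$.

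This still lands in $X'$ rather than $X$, so I rethink the construction via the ring stack directly. The cleaner approach is the following. The relative stack is $(X/S)^{dR}(R) = \Map_S(\Spec(W(R)\otimes^L\bF_p), X)$, where $W(R)/p$ maps to $S$ through $\pi_R \circ s$, i.e. $s$ composed with Frobenius-twisting. Note that $\pi_R \colon R \to W(R)/p$ is $r \mapsto [r]^p$ up to homotopy. Its composite to $W(S)$ is therefore $W(s)$ composed with $F$, which matches $\tilde{x}$ above.

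Concretely, I will take the $\delta$-lift $\tilde{x}$ of $x$ (regarded via $X' \to X$ as a point of $X$ lying over $F_S \circ s$). I then reduce mod $p$ to get $\Spec(W(R)/p) \to X$. Its structure map to $S$ is $W(F_S s)/p$, and I identify this with $\pi \circ s$ using naturality and \Cref{rem:dRdRvsdR}. That identification is exactly the required $S$-structure.

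The main obstacle is producing this homotopy coherently in the derived setting: for animated $R$, matching the structure map $\Spec(W(R)/p) \to S$ with the one defining $(X/S)^{dR}$. I would first do it for polynomial $R$, where everything is discrete and the check is the elementwise formula $[r]^p = F[r]$. I would then extend by animation, i.e. left Kan extension along sifted colimits. Since $(X/S)^{dR}$ is a fiber product, the datum is a pair: a point of $X^{dR}$ from \Cref{delta-absolute-splitting}, and a point of $S$, together with the homotopy above. Finally, I check that $\nu_{X/S}$ of the resulting point recovers $x'$. This holds because $\tilde{x}$ reduces to $x$ under $W(R)/p \to R$, and by \Cref{Frob-factors-1-truncated} this pins down the map in $\nu$ uniquely.
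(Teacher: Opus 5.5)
Your proposal is correct and is essentially the paper's argument: regard $x\in X'(R)$ as a point of $X$ over $F_S\circ s$, use the $\delta$-structure and the Witt-vector adjunction to get $\Spec W(R)\to\tilde X$, and reduce mod $p$. This reduction factors through $X$ because the induced structure map $W(\cO_S)\xrightarrow{W(s)\circ F}W(R)/p$ kills $V$ and factors through $\cO_S$ as $\pi_R\circ s$; that is just the quasi-ideal square defining $\pi$, so you do not need \Cref{rem:dRdRvsdR}. The $\tilde X'$ detour is unnecessary, and $\Map_{\bZ_p}(\Spec(R),\tilde X)\neq\Map_{\bF_p}(\Spec(R),X)$ in general (since $\tilde X/p$ lies over $W(S)/p$ rather than $S$), but you only use the map $X(R)\to\tilde X(R)$, so nothing breaks.
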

\begin{proof}
    Let us define a map $s: \tilde{X'}:=\tilde{X} \times_{W(S), F_{W(S)}} W(S) \to \tilde{X}$. Namely, we define
$$\tilde{X}'(R)=\Map_{W(S)}(\Spec(F_*R), \tilde{X}) \to \Map_{W(S)}(\Spec(\bG_a^{dR}(R), \tilde{X}) = (\tilde{X}/W(S))^{dR}(R)$$
as follows. Let $x: \Spec(F_*R) \to \tilde{X}$. Then $\delta$-structure on $\tilde{X}$ gives rise to a map $\Spec(F_*W(R)) \to \tilde{X}$ and the composition $\Spec(W(F_*R)/p) \mono \Spec(W(F_*R)) \to \tilde{X}$ uniquely factors through some map $\Spec(W(F_*R)/p) \to X$ since $\Spec(W(F_*(R))/p)$ has a natural $S$-scheme structure. 
\end{proof}
\begin{remark}
    In \Cref{delta-splitting} we observed that a $\delta$-lift $\tilde{X}/W(S)$ gives rise to a map $s: \tilde{X}' \to (\tilde{X}/W(S))^{dR}$. Let us note that $s \circ \tilde{F} \simeq \pi_{\tilde{X}/W(S)}$ where $\tilde{F}: \tilde{X} \to \tilde{X'}$ is the Frobenius defined by the $\delta$-structure. Indeed, if $S=\Spec(R)$ and $\tilde{X}=\Spec(\tilde{A}),$ then this point $\pi_{\tilde{X}/W(S)} \in (\tilde{X}/W(S))^{dR}$ is given by a map $\tilde{A} \to \bG_a^{dR}(\tilde{A})$. Note this map is homotopic to the composition
\begin{equation}
\label{first-comp}
   \tilde{A} \xrightarrow{w_{\tilde{A}}} W(\tilde{A}) \xrightarrow{F_{W(\tilde{A})}} W(\tilde{A}) \to  W(\tilde{A})/p.
\end{equation}
    Now $s: \tilde{X'} \to (\tilde{X}/W(S))^{dR}$ sends a point $\tilde{F}: \tilde{A} \to \tilde{A}$ to the composition $$\tilde{A} \xrightarrow{\tilde{F}} \tilde{A}  \xrightarrow{w_{\tilde{A}}} W(\tilde{A}) \to W(\tilde{A})/p$$
    which is naturally isomorphic to \eqref{first-comp}. Indeed, the naturality of $w_{\tilde{A}}$ gives $w_{\tilde{A}} \circ \tilde{F} = W(\tilde{F}) \circ w_{\tilde{A}}$. It remains to check that $W(\tilde{F}) \circ w_{\tilde{A}} = F_{W(\tilde{A})} \circ w_{\tilde{A}}$ and it can be seen that the left-hand side is equal to $w_{\tilde{A}} \circ F_{W(\tilde{A})}$ which can be checked on ghost coordinates. Both maps are maps of $\delta$ rings and thus we can check that they are the same after post-composing with $W(\tilde{A}) \to \tilde{A}.$ It is easy to see that the resulting maps are equal to $\tilde{F}$.
\end{remark}
\begin{lemma}\label{lm:all-splitting-same}
    Let $\tilde{X}/W(S)$ be a $\delta$-lift of smooth $X/S$ with $\tilde{F}: \tilde{X} \to \tilde{X'}$ the corresponding Frobenius lift. Let $s_1, s_2$ be the maps $\tilde{X}' \to (\tilde{X}/W(S))^{dR}$ that reduce to sections of the de Rham stack after the base change along $S \to W(S).$ Assume there exist isomorphisms $h_i: s_i \circ \tilde{F} \simeq \pi_{\tilde{X}/W(S)}$ for $i=1, 2$. If $S$ is reduced, then there exists $s_1 \simeq s_2.$
\end{lemma}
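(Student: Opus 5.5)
The plan is to compare the two sections through their difference, which is controlled by the torsor structure of $\nu$. Both $s_1$ and $s_2$ are maps $\tilde{X}' \to (\tilde{X}/W(S))^{dR}$ over $\tilde{X}'$. Modulo $p$ they become sections of the $T^\sharp_{X'/S}$-gerbe $\nu_{X/S}$ from \Cref{torsor-structure}. Their difference is therefore a map $\delta: \tilde{X}' \to B_{\tilde{X}'}(T^\sharp)$ (or, integrally, a torsor for the analogous group stack coming from deformation theory). Concretely, $\delta$ is a $T^\sharp_{X'/S}$-torsor on $X'$, and an identification $s_1\simeq s_2$ amounts to a trivialization of this torsor.

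The key input is the homotopies $h_i$. Precomposing with $\tilde{F}$, the composite $h_2^{-1}\circ h_1$ trivializes $\tilde{F}^*\delta$. So we obtain a torsor on $X'$ whose pullback along Frobenius is trivialized. First, I reduce mod $p$ and use \Cref{rem:any-section-factorizes-pi_X}-type reasoning: $F_{X/S}^*$ of a $T^\sharp$-torsor corresponds to applying $F^*$ to $\bG_a^\sharp$-torsors. Via $F_*\bG_a^\sharp$ and the sequence $0\to \alpha_p^{\sharp}$-type kernel $\to \bG_a^\sharp \xrightarrow{F} \bG_a^\sharp$, I identify which torsors die under Frobenius pullback. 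Second, I compute the ambiguity: the space of homotopies $s_1\simeq s_2$ compatible with the $h_i$ is measured by $\Map(X', T^\sharp)$ and $H^1(X',T^\sharp)$. The trivialization over $\tilde{X}$ descends along $\tilde{F}$ exactly when the descent datum is compatible on $\tilde{X}\times_{\tilde X'}\tilde{X}$.

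The main obstacle is this descent step. $F_{X/S}$ is not flat-descent-friendly for $\sharp$-groups, since sections of $\bG_a^\sharp$ killed by Frobenius are nonzero (divided powers of nilpotents). This is where reducedness of $S$ enters. For smooth $X/S$ with $S$ reduced, $X$ and $X'$ are reduced. A map from a reduced scheme to $\bG_a^\sharp$ is determined by its underlying map to $\bG_a$ (since $\bG_a^\sharp\to\bG_a$ is an isomorphism on reduced test rings, the kernel being supported on nilpotents). Hence $F^*$ is injective on sections, so the two trivializations of $\tilde{F}^*\delta$ over $\tilde{X}\times_{\tilde{X}'}\tilde{X}$ agree, giving the descent datum. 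I would first try to prove the cleaner statement that $\Map(\tilde{X}',B T^\sharp)\to\Map(\tilde{X},BT^\sharp)$ is fully faithful on the relevant fibers, by reducing to $X'$ affine and computing with the Cartier-dual description $\Gamma$ vs.\ $\Sym$. I would then handle the lift to $W(S)$ by a $p$-adic induction on square-zero thickenings $W_n$. At each stage the obstruction and ambiguity lie in groups of the same form, again controlled by injectivity of Frobenius on reduced bases.
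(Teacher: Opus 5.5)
\textbf{There is a genuine gap: the proposal looks for rigidity modulo $p$, where there is none.}

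Modulo $p$ the hypothesis $s_i\circ F\simeq\pi$ carries essentially no information. In the absolute case $S=\Spec\bF_p$ it holds for every section of $\nu_X$, by \Cref{rem:any-section-factorizes-pi_X}. Meanwhile, isomorphism classes of sections form a torsor under $H^1_{\mathrm{fppf}}(X',T^\sharp_{X'/S})$, which is usually nonzero. In characteristic $p$ one has $\bG_a^\sharp\simeq W[F]$, so for $X=\bA^1_{\bF_p}$ this group is $W(\bF_p[x])/F\,W(\bF_p[x])\neq 0$.

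So the statement you try to prove first is false: a $T^\sharp$-torsor on $X'$ with trivialized Frobenius pullback need not be trivial. Your descent step therefore cannot be completed. The justification you give is also wrong on two counts:
\begin{itemize}
\item Over $\bF_p$, $\bG_a^\sharp=\Spec\bF_p[t_0,t_1,\dots]/(t_0^p,t_1^p,\dots)$ is infinitesimal. Hence $\bG_a^\sharp(R)=0$ for reduced $R$, and $\bG_a^\sharp\to\bG_a$ is not an isomorphism on reduced rings.
\item More importantly, the descent datum lives on $X\times_{X'}X$, the Frobenius neighbourhood of the diagonal. This is not reduced; for $\bA^1$ it is $\Spec\bF_p[x,y]/(x-y)^p$. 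So reducedness of $X'$ says nothing about the discrepancy between the two trivializations there.
\end{itemize}
The proposed induction over $W_n(S)$ cannot repair this. The base case is false, and at each finite level the rings involved have $p$-torsion, so divided powers, and hence the ambiguity, are not unique. Note also that integrally there is no map $(\tilde X/W(S))^{dR}\to\tilde X'$ exhibiting a gerbe structure. Your \emph{difference torsor} therefore only makes sense mod $p$, which is exactly where the argument cannot work.

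\textbf{The missing idea: use reducedness of $S$ to get $p$-torsion-freeness, and compare integrally.} Reducedness of $S$ makes $W(S)$ $p$-torsion-free. The flat $W(S)$-scheme $\tilde X\times_{\tilde X'}\tilde X$ is then $\bZ_p$-flat, and $s_1,s_2$ should be compared integrally.
\begin{itemize}
\item The homotopies $h_i$ present each $s_i$ as a map of \v{C}ech nerves $\Cech(\tilde X\to\tilde X')\to\Cech(\tilde X\to(\tilde X/W(S))^{dR})$ that is the identity on $\tilde X$.
\item On $1$-simplices this is a map $\tilde X\times_{\tilde X'}\tilde X\to D_\Delta(\tilde X\times_{W(S)}\tilde X)$ over $\tilde X\times_{W(S)}\tilde X$. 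Equivalently, it is a PD-structure on the ideal of the diagonal of $\tilde X\times_{\tilde X'}\tilde X$.
\item On a $p$-torsion-free ring divided powers are unique, since $\gamma_n(x)=x^n/n!$ is forced. So the two maps of \v{C}ech nerves agree.
\item Descending along the fppf cover $\tilde F$ then gives $s_1\simeq s_2$.
\end{itemize}
In the language of your proposal, the rigidity you need is that $\bG_a^\sharp(R)\to\bG_a(R)$ is injective for $p$-torsion-free $R$. It must be applied on the integral \v{C}ech nerve of $\tilde F$, not as a statement about reduced $\bF_p$-schemes.
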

\begin{proof}
    Note $h_i$ realizes $s_i$ as a map of simplicial objects $$\Cech(\tilde{X} \to \tilde{X'}) \to \Cech(\tilde{X} \to (\tilde{X}/W(S))^{dR})$$
    where the map on $1$-simplices is $D^{\phi}(\tilde{X} \times_{W(S)} \tilde{X}) \to D_{\Delta}(\tilde{X} \times_{W(S)} \tilde{X})$ splitting the natural projection. Such a map is equivalent to endowing $D^{\phi}(\tilde{X} \times_{W(S)} \tilde{X})$ with a PD-structure on the ideal of the diagonal. Since it is flat over $W(S)$, it is also flat over $\bZ_p$ because $S$ is reduced. Thus, this PD-structure is unique.
\end{proof}

\begin{lemma}\label{lm:affine-over-witt-splitting}
    Let $\tilde{X}/W(S)$ be a $\delta$-scheme. There is a natural map  $\tilde{X}' \to (\tilde{X}/W(S))^{dR} $ deforming the section from \Cref{affine-section-of-dR-relative}.
\end{lemma}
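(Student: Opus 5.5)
We plan to construct the map $\tilde{X}' \to (\tilde{X}/W(S))^{dR}$ by mimicking the proof of \Cref{affine-splitting-strong-frob} integrally over $W(S)$, i.e. by producing a map of derived algebras $\dR_{\tilde{X}/W(S)} \to \cO_{\tilde{X}'}$ (relative de Rham cohomology of $\tilde{X}$ over $W(S)$, pushed forward along the linearized Frobenius) that reduces modulo $p$ to the composite $\dR_{X/S} \xrightarrow{C^{-1}} (\Omega^\bullet_{X'/S},0) \to \cO_{X'}$. Affineness of the relative de Rham stack over the Frobenius twist (the analogue of \Cref{dR-is-affine-relative} over $W(S)$, or simply working affine-locally with $S=\Spec(R)$, $\tilde{X}=\Spec(\tilde{A})$ and gluing by Zariski descent as in \Cref{dR-is-affine-relative}) then turns such an algebra map into the desired morphism of stacks.

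Concretely, work locally: choose a free simplicial $W(R)$-algebra resolution $\tilde{P}_\bullet \to \tilde{A}$ which can be endowed with compatible $\delta$-structures (free $\delta$-rings on polynomial generators suffice, since the $\delta$-structure on $\tilde{A}$ lets us lift generators). The $\delta$-structure gives Frobenius lifts $\tilde{F}: \tilde{P}'_\bullet \to \tilde{P}_\bullet$ with $\tilde{F}(x) = x^p + p\delta(x)$, hence well-defined maps $\Omega^k(\tilde F)/p^k$ on differential forms, exactly as in \Cref{affine-splitting-strong-frob}. These assemble into maps of bicomplexes which, after totalization, produce the Cartier-type map relating $(\Omega^\bullet_{\tilde{P}'/W(R)},0)$ and $\dR_{\tilde P/W(R)}$ $p$-adically; projecting to degree $0$ gives $\dR_{\tilde{A}/W(R)} \to \tilde{A}'$. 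Alternatively, and more cleanly, one can use the point-of-view of \Cref{delta-splitting}: a point $x:\Spec(F_*R')\to\tilde X$ lifts via the $\delta$-structure to $\Spec(F_*W(R'))\to\tilde X$, and composing with $\Spec(\bG_a^{dR}(R'))=\Spec(W(R')/p)$ gives the point of $(\tilde X/W(S))^{dR}$; naturality in $R'$ is automatic from functoriality of the Witt-vector lift of $\delta$-maps.

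The remaining step is to check that the reduction of this map along $S \to W(S)$ is the section $s_f$ of \Cref{affine-section-of-dR-relative}, where $f$ is the strong Frobenius lift $W_2(X) \to \tilde{X}\times_{W(S)}W_2(S)$ induced by the $\delta$-structure. By \Cref{lm:all-splitting-same} it suffices (for reduced $S$) to produce homotopies $s\circ \tilde F \simeq \pi_{\tilde X/W(S)}$ for both sections; for the $\delta$-section this is the computation in the remark preceding \Cref{lm:all-splitting-same}, and for $s_f$ it follows since $C$ is Frobenius in degree $0$. For general $S$ I would instead compare directly on the level of simplicial resolutions: the first-degree splitting maps $\Omega^1(\tilde F)/p$ reduce mod $p$ to the ones used for $s_f$, since $\delta$ reduced to $W_2$ is precisely the datum of $f$.

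The main obstacle I expect is this comparison step without reducedness of $S$: one must verify that the two constructions (Witt-vector transmutation versus the explicit conjugate-filtration splitting) agree as maps of derived algebras, not merely on $H^0$. I would try first to reduce to the universal case of a free $\delta$-ring over $W(\bZ_p\text{-free base})$, where all objects are flat and \Cref{lm:all-splitting-same} applies, and then base change.
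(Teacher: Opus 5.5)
Your first route is the right idea and is essentially the paper's argument, but as you state it, it does not work integrally. Over $W(R)$ the maps $F^*/p^k\colon\Omega^k_{\tilde A'/W(R)}\to\Omega^k_{\tilde A/W(R)}$ are not compatible with the zero differential on the source. Already for $\tilde A=W(R)[x]$ with $\delta(x)=0$ one has $d(\tilde F(x'))=px^{p-1}\,dx\neq 0$.

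What these maps actually form is the Deligne--Illusie chain map $C_{\tilde A/W(R)}\colon(\Omega^\bullet_{\tilde A'/W(R)},pd)\to(\Omega^\bullet_{\tilde A/W(R)},d)$, which is natural in $\delta$-$W(R)$-algebras. This map goes \emph{into} the de Rham complex. So ``projecting to degree $0$'' yields a map out of $\dR_{\tilde A/W(R)}$ only once you know that $C_{\tilde A/W(R)}$ is a quasi-isomorphism. That is the Deligne--Illusie input, whose reduction to $S$ is the Cartier isomorphism $(\Omega^\bullet_{A'/R},0)\simeq(\Omega^\bullet_{A/R},d)$. The section is then $\mathrm{pr}_0\circ C_{\tilde A/W(R)}^{-1}$, with $\mathrm{pr}_0\colon(\Omega^\bullet_{\tilde A'/W(R)},pd)\to\tilde A'$, glued by Zariski descent from affine objects.

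This is exactly the paper's proof. With it, the deformation property is built in for arbitrary $S$: the reduction of $C_{\tilde A/W(R)}$ to $S$ is the chain map of \Cref{affine-splitting-strong-frob} for the induced strong Frobenius lift. Neither simplicial $\delta$-resolutions nor any comparison lemma are needed.

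Your ``cleaner'' alternative via \Cref{delta-splitting} constructs a different map, the $\delta$-splitting, and the comparison you propose for it does not go through. \Cref{lm:all-splitting-same} compares two maps that are both defined over $W(S)$. Its proof uses that $D^{\phi}(\tilde X\times_{W(S)}\tilde X)$ is $\bZ_p$-flat, so that a PD-structure on it is unique. The section $s_f$ exists only over $S$, where divided powers are far from unique. Likewise, the homotopy $s_f\circ F_{X/S}\simeq\pi_{X/S}$ you cite is a homotopy over $S$, not the homotopy $s\circ\tilde F\simeq\pi_{\tilde X/W(S)}$ over $W(S)$ that the lemma requires.

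To apply \Cref{lm:all-splitting-same} you would first need a lift of $s_f$ over $W(S)$, which is precisely what the statement asks you to construct. In the paper the logic runs the other way: the present lemma supplies that lift, and only then does the subsequent corollary invoke \Cref{lm:all-splitting-same} (for reduced $S$) to identify it with the $\delta$-splitting. So the non-reduced ``main obstacle'' you identify is an artifact of starting from the wrong construction. Your proposed reduction to a universal flat case is also unsubstantiated: it would require lifting the $\delta$-$W(R)$-algebra over a reduced base and knowing that both sections are compatible with base change.
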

\begin{proof}
    \textit{Step 1.} Consider the category of maps $(Y, W(T), \pi_{Y, T})$ where $\pi_{Y, T}: Y \to W(T)$ is a flat map over the formal scheme $W(T),$ i.e., $\pi_{Y, T}$ is equivalent to a set of compatible flat maps $\pi^n_{Y, T}: Y_n \to W_n(T).$  Formally, it is defined as the fiber product of  $$\Fun^{fl}(\Delta^1, \FSch_{\bZ_p}) \xrightarrow{t} \FSch_{\bZ_p} \xleftarrow{W} \Sch_{\bF_p} $$
    where $\Fun^{fl}(\Delta^1, \FSch)$ is the full subcategory of $\Fun(\Delta^1, \FSch)$ consisting of objects $X \to Y$ where the map is flat. Denote this category to be $\cC.$ The relative de Rham stack restricts to a functor $(-/-)^{dR}: \cC \to \dSt_{\bZ_p}.$ Also, there is a functor $T: \cC \to \Sch_{\bZ_p}$ sending $Y \to W(T)$ to $Y' := Y \times_{W(T), F_{W(T)}} W(T).$ We want to construct a natural transformation $T \to (-/-)^{dR}.$ Both functors are Zariski sheaves. Let $\cC_0$ be a full subcategory of $\cC$ where $T$ and $Y$ are affine formal schemes. Now, if $F, G$ are Zariski sheaves on $\cC$, then any natural transformations of their restriction to $\cC_0$ uniquely extends to a natural transformation between $F$ and $G.$ Thus, we restrict ourselves to $\cC_0.$

    \textit{Step 2.} Assume $Y \to W(T)$ is an object of $\cC_0$ with $T=\Spec(R)$ and $Y=\Spec(A).$ We construct $Y' \to (Y/W(T))^{dR}$ in two steps. First, we use the Deligne-Illusie chain map $(\Omega^\bullet_{A/R}, pd) \to (\Omega_{A/R}^\bullet, d)$ which is in degree $n$ given as follows: the map $F^*: \Omega^n_{A'/W(R)} \to \Omega^n_{A/W(R)}$ is divisible by $p^n$ and thus defines $$F^*/p^n: \Omega^n_{A'/W(R)} \to \Omega^n_{A/W(R)} $$
    and by construction it gives rise to a map of complexes $$C_{A/W(R)}: (\Omega^\bullet_{A/W(R)}, pd) \to (\Omega_{A/W(R)}^\bullet, d)$$
    which is a quasi-isomorphism. The chain map $C_{A/W(R)}$ is natural in pairs $W(R) \to A$ with $A$ being a $\delta-W(R)$-algebra. The second step is to consider the composition $$F^0 \circ C_{A/W(R)}^{-1}:  (\Omega_{A/W(R)}^\bullet, d) \to (\Omega^\bullet_{A'/W(R)}, pd) \to A' $$
    which is natural as a composition of two natural transformations. This gives rise to the desired section $X' \to (X/S)^{dR}.$
\end{proof}
\begin{remark}
    One can observe that in the second step of the proof \Cref{lm:affine-over-witt-splitting} we produce, for a flat $\delta-W(S)$-scheme $\tilde{X}$, an isomorphism of stacks $(\tilde{X}/W(S))^{dR} \simeq (\tilde{X}/W(S))^{dR, p}$ where the latter is defined as the fiber of the Hodge-filtered de Rham stack $$(\tilde{X}/W(S))^{dR, +} \to \bA^1/\bG_m$$ at $t=p.$ To complete the argument, one has to check that for affine $S$ and $\tilde{X}$ the stack $(\tilde{X}/W(S))^{dR, p}$ is affine and represented by $(\Omega^\bullet_{\tilde{X}/W(S)}, pd)$. Since we will not need it, we do not include the details. We note however that the category of quasi-coherent sheaves on $(\tilde{X}/W(S))^{dR, p}$ is identified with the category of (topologically quasi-nilpotent) $p$-connections. In particular, when $S=\Spec(\bZ_p),$ this isomorphism of stacks is closely related to the work of Shiho on the deformation of the local Ogus-Vologodsky equivalence to $\bZ/p^n$, see \cite{MR3408075}.
    
\end{remark}

\begin{corollary}
    Let $X/S$ be a smooth family and $\tilde{X}/W(S)$ its $\delta$-lift. The two maps $s_1, s_2: \tilde{X'} \to (\tilde{X}/W(S))^{dR}$ obtained from \Cref{delta-splitting} and \Cref{lm:affine-over-witt-splitting} are isomorphic as points of $(\tilde{X}/W(S))^{dR}(\tilde{X'})$. Indeed, they are both equal to $\pi_{\tilde{X}/W(S)}$ after pre-composing with $\tilde{F}$ and \Cref{lm:all-splitting-same} concludes the statement. 
\end{corollary}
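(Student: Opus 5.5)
The corollary asserts that the two maps $s_1$ (from \Cref{delta-splitting}) and $s_2$ (from \Cref{lm:affine-over-witt-splitting}) agree, and the plan is to obtain this from \Cref{lm:all-splitting-same}. That lemma needs two inputs for each $i$: that $s_i$ reduces to a section of the de Rham torsor after base change along $S \to W(S)$, and a homotopy $h_i\colon s_i\circ\tilde F \simeq \pi_{\tilde X/W(S)}$. Once both are in place, the lemma gives $s_1 \simeq s_2$ directly, assuming $S$ is reduced as in its hypothesis (I would either add this hypothesis or note that it is implicit).

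For $s_1$, the homotopy $h_1$ is exactly the content of the remark following \Cref{delta-splitting}. That remark identifies $\pi_{\tilde X/W(S)}$, on affines, with $\tilde A \to W(\tilde A)\xrightarrow{F} W(\tilde A)\to W(\tilde A)/p$. It then checks that $W(\tilde F)\circ w_{\tilde A} = F_{W(\tilde A)}\circ w_{\tilde A}$ by comparing $\delta$-maps after projecting to $\tilde A$. The reduction of $s_1$ to a section is built into its construction: the $\delta$-structure lifts an $F_*R$-point to a $W$-point, and composing with $\nu$ gives back the original point.

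For $s_2$, I would first establish reduction to a section. Modulo $p$, the Deligne–Illusie map $C_{A/W(R)}$ becomes the Cartier-type quasi-isomorphism of \Cref{affine-splitting-strong-frob}, and $s_2$ becomes $s_f$ of \Cref{affine-section-of-dR-relative}, which is a section. Next I would construct $h_2$ by checking that the composite $F^0\circ C^{-1}_{A/W(R)}$, precomposed with $\tilde F$, equals the canonical augmentation underlying $\pi$. In degree $0$, $C_{A/W(R)}$ is $\tilde F^*\colon A'\to A$. Precomposing the section with $\tilde F$ therefore yields the map $(\Omega^\bullet_{A/W(R)},d)\to A$ given in degree $0$ by the identity. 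This is the affine-stack description of $\pi_{\tilde X/W(S)}$, by the relative analogue of \Cref{absolute-dr-affine} and \Cref{dR-is-affine-relative} applied over $W(S)$.

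The main obstacle is this last identification. It requires knowing that $\pi_{\tilde X/W(S)}$, viewed as a point of the affine stack represented by the de Rham complex over $W(S)$, corresponds to the degree-$0$ projection. It also requires that this identification is compatible with the Witt-vector description of $\pi$ used in constructing $s_1$. I would verify this on polynomial $\delta$-algebras over $W(R)$, where everything is flat and discrete, and then extend by naturality as in Step 1 of \Cref{lm:affine-over-witt-splitting}.
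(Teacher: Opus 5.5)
Your proposal is correct and follows the paper's argument. You identify each $s_i\circ\tilde F$ with $\pi_{\tilde X/W(S)}$: for $s_1$ via the remark following \Cref{delta-splitting}, and for $s_2$ because the Deligne--Illusie map is $\tilde F$ in degree $0$. You then conclude by \Cref{lm:all-splitting-same}, and your checks that each $s_i$ reduces to a section, and that $\pi_{\tilde X/W(S)}$ corresponds to the degree-$0$ augmentation, fill in details the paper leaves implicit. You are also right that this route needs $S$ reduced, a hypothesis of \Cref{lm:all-splitting-same} that the corollary's statement omits.
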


\subsection{Quasi-coherent sheaves on de Rham stack}
In this subsection, we recall that for smooth $X/\bF_p$ the category $\cD_{qc}(X^{dR})$ is identified with the $\infty$-category $\Crys(X/\bF_p)$ of crystals of quasi-coherent complexes on the $p$-torsion crystalline site $(X/\bF_p)_{\mathrm{crys}}$. For example, this follows from the more general result \cite[Theorem 6.5]{bhatt2022prismatization} by taking the crystalline prism. For the reader’s convenience, we give a (much less general) more elementary proof, which works in the case of a de Rham stack. We also explain that this equivalence holds for an lci $X/S$.
\begin{remark}
    Let $X \to S$ be a smooth map. Then by \Cref{covering-for-smooth} we get an fpqc covering $\pi_{X/S}: X \to (X/S)^{dR}$. Moreover, by \Cref{dR-commutes-colimits} one identifies $n$-th term of the Cech nerve $X^{\times_{(X/S)^{dR}} n}$ with $(X/X^{\times_{S} n})^{dR}.$ Moreover, by \Cref{lm:regular-affine-stack} the stack $(X/X^{\times_S n})^{dR}$ is represented by the PD-envelope $D_{I_{\Delta}} (X^{\times_S n})$ of the diagonal map $X \to X^{\times_S n}$. In particular, one gets $D_{qc}((X/S)^{dR}) \simeq \operatorname{Crys}(X/S)$ since $\Cech(X \to (X/S)^{dR})$ is identified with the simplicial scheme $D_{I_{\Delta}}^{\bullet/S}.$
\end{remark}

\begin{theorem}\label{sheaves-on-dR-lci}
    Let $X \to S$ be an lci map. Then $D_{qc}((X/S)^{dR}) \simeq \operatorname{Crys}(X/S)$.
\end{theorem}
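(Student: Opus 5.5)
The plan is to imitate the smooth case in the preceding remark, replacing the cover $\pi_{X/S}: X \to (X/S)^{dR}$ (which is no longer obviously surjective) by a cover that is surjective in the lci setting. The key input is \Cref{torsor-structure}: for lci $X/S$, $\nu_{X/S}: (X/S)^{dR} \to X'$ is a quasi-syntomic torsor for $L_{X'/S}^{\vee}[1]\otimes_{\bG_a}\bG_a^{\sharp}$. In particular, it admits sections quasi-syntomic locally on $X'$. Since the statement is Zariski local on $X$ and $S$ (both sides satisfy Zariski descent: crystals by definition, and $\cD_{qc}((X/S)^{dR})$ by the open-immersion compatibility in the proof of \Cref{dR-is-affine-relative}), we first reduce to $S = \Spec(R)$ and $X = \Spec(A)$ affine. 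We then factor $X \mono P := \bA^n_S$ as a regular closed immersion into a smooth $S$-scheme, which is possible locally because $X/S$ is lci.

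Next we use the embedding to build a flat cover. By \Cref{p-curv-equivariant-relative} and the transitivity of relative de Rham stacks, we get
\[
(X/S)^{dR} \simeq X^{dR}\times_{P^{dR}} (P/S)^{dR}
\]
(by the remark computing $(X/Y)^{dR}$). Pulling back the fpqc cover $P \to (P/S)^{dR}$ of \Cref{covering-for-smooth} gives a cover
\[
(X/P)^{dR} = X^{dR}\times_{P^{dR}} P \longrightarrow (X/S)^{dR}.
\]
By \Cref{lm:regular-affine-stack}, its source is the PD-envelope $D_X(P)$. Its Čech nerve is computed, using that $(-/-)^{dR}$ commutes with limits (\Cref{dR-commutes-colimits}), as $(X/P^{\times_S (n+1)})^{dR}$. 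Since $X \to P^{\times_S(n+1)}$ is again a regular immersion (diagonal composed with a regular immersion into a smooth scheme), \Cref{lm:regular-affine-stack} identifies this with the PD-envelope $D_X(P^{\times_S (n+1)})$. Flat descent then gives
\[
\cD_{qc}((X/S)^{dR}) \simeq \lim_{\Delta} \cD_{qc}\bigl(D_X(P^{\times_S(\bullet+1)})\bigr).
\]
This is the standard Čech description of crystals on $(X/S)_{\mathrm{crys}}$ via an embedding into a smooth $S$-scheme, which gives $\Crys(X/S)$. Independence of the choice of $P$ and gluing follow from naturality of all identifications.

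The main obstacle is showing that $D_X(P) \to (X/S)^{dR}$ is indeed an fpqc (or quasi-syntomic) surjection, and that descent holds for $\cD_{qc}$ along it. For this I would argue that it is base-changed from $P \to (P/S)^{dR}$, which is surjective by \Cref{covering-for-smooth}, and that flatness of $P \to (P/S)^{dR}$ is preserved under base change. Both stacks are affine over $X'$ (respectively $P'$) by \Cref{dR-is-affine-relative}, so the base change stays affine over $X'$ and faithful flatness can be checked on pushforwards of structure sheaves. A further point to handle is that the identifications with PD-envelopes must be compatible with the simplicial structure maps; I expect this to follow from functoriality of the isomorphism in \Cref{lm:regular-affine-stack} (Bhatt's functorial identification $\dR_{(A/I)/A}\simeq D_I(A)$).
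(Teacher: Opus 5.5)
Your proposal is correct and follows the paper's proof essentially step by step. You reduce Zariski locally to the affine case, base change the cover $P \to (P/S)^{dR}$ to get $D_X(P) \simeq (X/P)^{dR} \to (X/S)^{dR}$, identify the \v{C}ech nerve with $D_X(P^{\times_S(\bullet+1)})$ via \Cref{dR-commutes-colimits} and \Cref{lm:regular-affine-stack}, and compare with the Berthelot--Ogus \v{C}ech description of crystals; the only real difference is that the paper uses the canonical presentation $P = A[x_b \mid b \in B] \to B$, which makes the local identifications automatically functorial in $B$ and so removes the independence-of-embedding check you leave to ``naturality'' (at the cost of an infinite-type polynomial algebra, where your finite embedding into $\bA^n_S$ fits the cited lemmas more literally).
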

\begin{proof}
    Recall from \cite[Th. 6.6]{44026ff5-d9db-3e28-a22f-70968eda7ea4} that for any closed immersion $Y \to Z$ with smooth $Z/S$, the category $\operatorname{Crys}(Y/S)$ is identified with the category of modules over $D_{Y}(Z)$ with an HPD-stratification compatible with the natural stratification on $D_{Y}(Z).$ Moreover, the latter category is identified with the category of sheaves on the Cech nerve of the covering $D_{Y}(Z) \to Y.$ Both $D_{qc}((-/-)^{dR})$ and $\operatorname{Crys}(-/-)$ are functors $\operatorname{Fun}(\Delta^1, \operatorname{Sch}^{op}) \to \Pr^{L}_{st}$. Moreover, they both satisfy Zariski descent and it is enough to prove that their restrictions to $\operatorname{Fun}(\Delta^1, \operatorname{Ring})$ are isomorphic. Assuming $X = \Spec(B)$ and $S = \Spec (A)$ we can use the canonical projection $A[x_b  | b \in B] := P \to B$ to identify $\Crys(B/A) = D_{qc}(\Cech(D \to B))$ where $D := D_{P} (J)$ for $J = \ker(P \to B)$ and the Cech nerve is computed in $(B/A)_{crys}$, i.e. its $n$-term is $D(n) := D_{P \otimes_A \cdots \otimes_{A} P} (J(n))$ for $J(n) := \ker(P \otimes_A \cdots \otimes_A P \to B)$ and both tensor products contain $n+1$ term. Note that $A[B]$ is functorial in $B$, therefore $\Crys(B/A)$ is functorially identified with $D_{qc}(\Cech(D \to B)).$ 
    Let us now check that $D_{qc}((B/A)^{dR})$ is also functorially identified with $D_{qc}(\Cech(D \to B)).$ We have a commutative diagram
\[\begin{tikzcd}
	{D \simeq (B/P)^{dR}} &&& {(B/A)^{dR}} \\
	\\
	\\
	{\Spec (P)} &&& {(P/A)^{dR}}
	\arrow[from=1-1, to=1-4]
	\arrow[from=1-1, to=4-1]
	\arrow[from=1-4, to=4-4]
	\arrow[from=4-1, to=4-4].
\end{tikzcd}\]
Note that the bottom horizontal arrow is a faithfully flat cover by \Cref{covering-for-smooth} since $A \to P$ is smooth, therefore $(B/P)^{dR} \to (B/A)^{dR}$ is a faithfully flat covering. Identify $(B/P)^{dR} \simeq D$ by \Cref{lm:regular-affine-stack}. Moreover, $$(B/P)^{dR} \times_{(B/A)^{dR}} \cdots \times_{(B/A)^{dR}} (B/P)^{dR} \simeq (B \otimes_B \cdots \otimes_B B/P \otimes_A \cdots \otimes_A P)^{dR} \simeq (B/P \otimes_A \cdots \otimes_A P)^{dR}$$
which is isomorphic to $D(n)$ by \Cref{dR-commutes-colimits} and \Cref{lm:regular-affine-stack}; all tensor products contain $n+1$ terms. Using this covering we identify $D_{qc}((B/A)^{dR})$ with $D_{qc}(\Cech(D \to B))$ and this identification is functorial. This completes the proof.
\end{proof}
Let us now recall that for smooth $X/S$ the category $\cD_{qc}((X/S)^{dR})$ is identified with certain $D$-modules. Denote
\[
\hat D_{X/S} := F_{X/S, *} D_{X/S} \otimes_{ST_{X'/S}} \hat ST_{X'/S} \in \QCoh(X').
\]
Let $\Mod^{\mathrm{ln}}(\hat D_{X/S})$ be the full subcategory of $\Mod_{X'}(\hat D_{X/S})$ consisting of those left $\hat D_{X/S}$-modules such that every local section is killed by $S^{\ge n}T_{X'/S} \subset \hat D_{X/S}$ for some $n>0$. Denote by $\cD^{\mathrm{ln}}(\hat D_{X/S})$ the full subcategory of $\cD(\hat D_{X/S})$ consisting of complexes $E$ such that $\cH^i(E) \in \Mod^{\mathrm{ln}}(\hat D_{X/S})$.
\begin{lemma}\label{lm:mod-xdr=d-mod}
    Let $X/S$ be smooth. Then $\cD^{ln}(\hat D_{X/S}) \simeq \cD_{qc}((X/S)^{dR})$.
\end{lemma}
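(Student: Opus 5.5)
We will deduce the statement from the affineness of $\nu_{X/S}$ together with the torsor structure. By \Cref{dR-is-affine-relative}, $\nu_{X/S}\colon (X/S)^{dR}\to X'$ is affine with $\nu_{X/S,*}\cO_{(X/S)^{dR}}\simeq F_{X/S,*}\dR_{X/S}$. Hence pushforward gives a conservative, colimit-preserving, $\cD_{qc}(X')$-linear functor
\[
\cD_{qc}((X/S)^{dR})\simeq \Mod_{F_{X/S,*}\dR_{X/S}}(\cD_{qc}(X')).
\]
This reduces the problem to a Koszul duality statement over $X'$. We cannot pass to endomorphisms of $\cO_X$ directly, because $\pi_{X/S}\colon X\to (X/S)^{dR}$ is only an fpqc cover. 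Instead, we use the fully faithful pullback-type functor to $X$.

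The first step is to build the comparison functor. Pullback along the cover $\pi_{X/S}$ of \Cref{covering-for-smooth} identifies $\cD_{qc}((X/S)^{dR})$ with comodules over the Čech nerve, which is the PD-envelope of the diagonal. Equivalently, by \Cref{sheaves-on-dR-lci}, it identifies $\cD_{qc}((X/S)^{dR})$ with crystals on $(X/S)_{\mathrm{crys}}$. Classically, a quasi-coherent crystal is the same as an $\cO_X$-module with an HPD-stratification. Dually, it is a module over the PD differential operators, on which the usual differential operators $D_{X/S}$ act through the map $D_{X/S}\to D^{PD}_{X/S}$.

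Next, we identify $(D^{PD}_{X/S})$-modules in the discrete abelian setting with $\Mod^{\mathrm{ln}}(\hat D_{X/S})$. The HPD-stratification is the completion in which $\partial^{[p]}$-type elements act nilpotently. Locally, with coordinates $x_1,\dots,x_n$, the stratification dual to $\cO_X\langle \xi_1,\dots,\xi_n\rangle$ requires that for each local section $m$ the sums $\sum \xi^{[k]}\partial^k m$ be finite. Since $\partial_i^p$ corresponds to the central element $\psi(\partial_i')$, this is exactly local nilpotence of $S^{\ge1}T_{X'/S}$. This matches \Cref{rem:p-curv=action}, where the action of $T^\sharp_{X'/S}$, or equivalently the $p$-curvature, must be locally nilpotent by Cartier duality for $\bG_a^\sharp$. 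Such a module is then automatically a module over $\hat D_{X/S}$.

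The remaining step, which we expect to be the main obstacle, is to pass from abelian categories to derived $\infty$-categories. Our plan is as follows. Both sides carry t-structures: the one on $\cD_{qc}((X/S)^{dR})$ is pulled back from $X$ via the flat cover, and the one on $\cD^{ln}$ is defined by cohomology sheaves. The functor is t-exact and induces the equivalence of hearts established above. To upgrade this, we compare Ext groups between compact-ish generators. Working locally on $X'$ (for affine $X$ and $S$), we take the generator $\nu^*$ of the $\hat D$-module $\hat D_{X/S}\otimes_{ST}\cO_{X'}$, which is the Koszul dual object corresponding to $\cO_{X'}$ pushed in along the zero section.

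On the stack side, we compute $\mathrm{RHom}$ using the torsor structure of \Cref{torsor-structure}: fppf-locally the stack is $B_{X'}T^\sharp_{X'/S}$, whose cohomology of $\cO$ is $\Sym$ of $\Omega^1$ in degree $1$, dual to $ST_{X'/S}$ completed. On the $\hat D$ side, the corresponding Ext is computed by Koszul resolution over $\hat ST_{X'/S}$, which is regular. The two answers should match, giving full faithfulness on generators. Essential surjectivity then follows because local nilpotence makes the objects generated under colimits by their restrictions to the zero section. Finally, we glue via Zariski descent on $X'$, since both sides are sheaves of categories over $X'$.
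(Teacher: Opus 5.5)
Your identification of the hearts is essentially correct. It is the classical Berthelot--Ogus description of quasi-coherent crystals as modules with quasi-nilpotent connection. Note that the ring dual to the PD-envelope of the diagonal is $D_{X/S}$ itself, with $\partial^k$ dual to $\xi^{[k]}$, so there is no separate $D^{PD}_{X/S}$.

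The gap is in the step you yourself flag as the main obstacle: you never construct a comparison functor between $\cD^{ln}(\hat D_{X/S})$ and $\cD_{qc}((X/S)^{dR})$ at the derived level.
\begin{itemize}
\item An equivalence of hearts does not supply such a functor. Neither side is a priori the derived category of its heart: $\cD^{ln}(\hat D_{X/S})$ sits inside $\cD(\hat D_{X/S})$, and $\cD_{qc}((X/S)^{dR})$ is defined by descent. Compare \Cref{thm:BG_a-loaclly-small-modules}, where $\cD(\QCoh(B\bG_a))\to\cD_{qc}(B\bG_a)$ is only a left completion.
\item Without a functor, ``the two answers should match'' only gives abstract isomorphisms of Ext sheaves. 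You compute these through fppf-local trivializations of the gerbe, i.e.\ through local formality $\bigoplus_i\Omega^i_{X'/S}[-i]$.
\item Such local identifications cannot be glued by Zariski descent on $X'$. The algebra $F_{X/S,*}\dR_{X/S}$ is not formal in general. The data needed for gluing is precisely what distinguishes the gerbe $(X/S)^{dR}\to X'$ from $B_{X'}(T^{\sharp}_{X'/S})$, and it is invisible at the level of Ext sheaves.
\item Also, $\pi_{X/S}^*$ is conservative but not fully faithful; for instance it does not see $H^1_{dR}$.
\end{itemize}

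The missing idea is exactly the route you discarded. Your reason for discarding it, that $\pi_{X/S}$ is only an fpqc cover, is beside the point, because the endomorphisms are taken in $D$-modules, not on $X$. The Spencer resolution gives three things:
\begin{itemize}
\item $F_{X/S,*}\cO_X$ is a perfect, hence compact, $F_{X/S,*}D_{X/S}$-module.
\item It generates $\cD^{ln}(\hat D_{X/S})$ as a $\cD_{qc}(X')$-linear stable subcategory closed under colimits.
\item It identifies $\End_{D_{X/S}}(F_{X/S,*}\cO_X)\simeq F_{X/S,*}\dR_{X/S}$ as algebras.
\end{itemize}
Hence $\operatorname{RHom}_{D_{X/S}}(F_{X/S,*}\cO_X,-)$ is a canonical, global equivalence $\cD^{ln}(\hat D_{X/S})\simeq\Mod_{F_{X/S,*}\dR_{X/S}}(\cD_{qc}(X'))$. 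Combine this with the affineness of $\nu_{X/S}$ from \Cref{dR-is-affine-relative}. The module category is already left-complete, so the left completion in the description of $\cD_{qc}$ of a relatively affine stack changes nothing, and the lemma follows.

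This is the paper's proof. Once this functor is in hand, the heart comparison, the t-structures and the local Ext computations in your proposal are unnecessary.
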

\begin{proof}
Note the left $F_{X/S, *}D_{X/S}$-module $F_{X/S, *}\cO_X$ is compact. Moreover, the minimal $\cD_{qc}(X')$-linear stable subcategory of $\cD(D_{X/S})$ containing $F_{X/S, *}\cO_X$ coincides with $\cD^{ln}(D_{X/S}).$ Thus, $\cD^{ln}(D_{X/S})$  is identified with the category of $\End_{D_{X/S}}(F_{X/S, *}\cO_X)$-modules in $\cD_{qc}(X')$. Moreover, the Spencer resolution identifies this endomorphism algebra with $F_{X/S, *}\dR_{X/S}.$   Recall from \Cref{dR-is-affine-relative} that $\nu_{X/S}: (X/S)^{dR} \to X'$ is affine, thus $\cD_{qc}((X/S)^{dR})$ is identified with the left completion of $\Mod_{\cD_{qc}(X')}(F_{X/S, *} \dR_{X/S})$ by \cite[Theorem 1.4]{derivedstacks}. It completes the proof since this category is already left-complete.
\end{proof}

\section{Lifts and Cartier transform}
This section is devoted to a proof of \cite[Theorem 2.8]{MR2373230} as well as some variants of it. Note this theorem only uses a flat lift $\tilde{X'}/\tilde{S}$ of $X'/S$ which is not enough to split $(X/S)^{dR} \to X'$. However, one can define a modification of the de Rham gerbe denoted by $(X/S)^{dR, \gamma}$ which is split by the data of a flat lift of $X'/S$ and which can be used to deduce \cite[Theorem 2.8]{MR2373230}. The main result of this section is \Cref{rem:splittings-push=lifts-to-witt} where the gerbe of splittings of $(X/S)^{dR, \gamma}$ is identified with the gerbe of lifts of $X$ to $W_2(S)$. 
\newline
\begin{definition}\label{def:Gadrgamma}
    Denote $\bG_a^{dR, \gamma}$ to be the $\bG_a$-algebra stack $F_*\bG_a \otimes^L_{\bZ_p} \bF_p$ with $\bG_a$-algebra structure coming from a map of quasi-ideals 
\[\begin{tikzcd}
	{F_*W} &&& W \\
	{F_*\bG_a} &&& {F_*\bG_a}
	\arrow["V", from=1-1, to=1-4]
	\arrow["R"', from=1-1, to=2-1]
	\arrow["{F \circ R}", from=1-4, to=2-4]
	\arrow["0", from=2-1, to=2-4].
\end{tikzcd}\]
We denote $\pi_{\bG_a}^{\gamma}: \bG_a \to \bG_a^{dR, \gamma}.$
\end{definition}
\begin{remark}
    If $R$ is an $\bF_p$-algebra, then $\bG_a^{dR, \gamma}(R)$ is also an $\bF_p$-algebra since $\bG_a^{dR, \gamma}$ is a $\bG_a$-algebra stack. Explicitly, $\bG_a^{dR, \gamma}(R) = R \otimes^L_{\bZ_p} \bF_p$ has two natural $\bF_p$-algebra structures and the $\bF_p$-algebra structure coming from the second factor is the one provided by \Cref{def:Gadrgamma}.
\end{remark}
\begin{remark}\label{rem-two-maps}
    It is easy to see $\pi_0(\bG_a^{dR, \gamma}) = F_*\bG_a$ and $\pi_1(\bG_a^{dR, \gamma}) = F_*\bG_a$ as sheaves of $\bG_a$-modules. In particular, we have a map $\nu^{\gamma}: \bG_a^{dR, \gamma} \to F_*\bG_a$. We warn the reader that the section of $\nu^\gamma$ given by a map of quasi-ideals 
\[\begin{tikzcd}
	0 &&& {F_*\bG_a} \\
	{F_*\bG_a} &&& {F_*\bG_a}
	\arrow[from=1-1, to=1-4]
	\arrow[from=1-1, to=2-1]
	\arrow[equals, from=1-4, to=2-4]
	\arrow["0", from=2-1, to=2-4]
\end{tikzcd}\]
is not $\bF_p$-linear but only is $\bZ/p^2$-linear as a simple diagram chasing shows; we refer to \Cref{rem:obvious-splittig-not-R-linear} for this diagram chasing in a more general setting. In particular, $\nu^{\gamma}$ admits a $\bZ/p^2$-linear section and $\bG_a^{dR, \gamma} \simeq \bG_a \oplus B\bG_a$ as a $\bZ/p^2$-algebra stack. 
\end{remark}

\begin{remark}\label{witt-vectors}
    Let $A/\bF_p$ be a commutative algebra. In \Cref{rem-two-maps} we contemplate two maps $c_A, \pi_a^{\gamma}: A \to \bG_a^{\gamma}(A)$ of $\bZ/p^2$-algebras. Let us note that their fiber product $A \times_{\bG_a^{\gamma}(A)} A$ is canonically identified with $W_2(A),$ see \cite[Remark 2.5]{MR4502597}.
\end{remark}

\begin{definition}
    For an $\bF_p$-scheme $X$ we use $X^{dR, \gamma}$ to denote the prestack on $\bF_p$-algebras obtained by the transmutation of an $\bF_p$-algebra stack $\bG_a^{dR, \gamma}$. For $X/S$ denote $(X/S)^{dR, \gamma} = X^{dR, \gamma} \times_{S^{dR, \gamma}} S$ where $\pi_{S}^{\gamma}: S \to S^{dR, \gamma}$ is the transmutation of $\pi^{\gamma}$ from \Cref{def:Gadrgamma}. We denote $\pi_{X/S}^{\gamma}: X \to (X/S)^{dR, \gamma}$ to be the map induced by $\pi_{X}^{\gamma}:X \to X^{dR, \gamma}.$
\end{definition}

\begin{remark}
    Let $S=\Spec(R), X=\Spec(A).$ Then the functor of points of the $R$-stack $(X/S)^{dR, \gamma}$ sends $x: R \to B$ to $(X/S)^{dR, \gamma}(B)=\Map_{R}(A, B \otimes_{\bZ} \bF_p)$ where the $R$-algebra structure on $B \otimes \bF_p$ is given by 
\[\begin{tikzcd}
	{F_*W(R)} &&& {W(R)} \\
	B &&& B
	\arrow["V", from=1-1, to=1-4]
	\arrow["{x \circ R}", from=1-1, to=2-1]
	\arrow["{F(x) \circ R}", from=1-4, to=2-4]
	\arrow["0", from=2-1, to=2-4].
\end{tikzcd}\]
Note it is a map of quasi-ideals, thus defines a map of rings $R \to B \otimes F_p.$ To specify this $R$-algebra structure on $B \otimes \bF_p,$ we denote it by $\bG_a^{dR, \gamma}(B).$
\end{remark}

\begin{remark}\label{rem:obvious-splittig-not-R-linear}
    For $R \to B$, recall the $R$-algebra stack $\bG_a^{dR, \gamma}(B)$. We warn the reader that the canonical map of rings $B \to \bG_a^{dR, \gamma}(B)$ is not $R$-linear but is $W_2(R)$-linear. This can be seen from the following diagram:
\[\begin{tikzcd}
	{F_*^2W(R)} &&& {W(R)} \\
	{F_*W(R)} &&& {W(R)} \\
	B &&& B \\
	0 &&& B
	\arrow["{V^2}", from=1-1, to=1-4]
	\arrow["V", from=1-1, to=2-1]
	\arrow["0"', curve={height=30pt}, from=1-1, to=3-1]
	\arrow[equals, from=1-4, to=2-4]
	\arrow["V", from=2-1, to=2-4]
	\arrow["{x \circ R}", from=2-1, to=3-1]
	\arrow["{F(x) \circ R}", from=2-4, to=3-4]
	\arrow["0", from=3-1, to=3-4]
	\arrow[from=4-1, to=3-1]
	\arrow[from=4-1, to=4-4]
	\arrow[equals, from=4-4, to=3-4].
\end{tikzcd}\]

\end{remark}
\begin{remark}
    If $X/S$ is quasi-syntomic, then $\nu_{X/S}: (X/S)^{dR} \to X'$ is an  fppf $L_{X'/S}^{\vee}[1] \otimes_{\bG_a} \bG_a^{\sharp}$-torsor and its pushout along the map induced by $\bG_a^{\sharp} \to \bG_a$ is identified with $(X/S)^{dR, \gamma}.$ In particular, we get a map $\nu_{X/S}^{\gamma}: (X/S)^{dR, \gamma} \to X'$ exhibiting the source as $L_{X'/S}^{\vee}[1]$-torsor over the target. It also follows from \Cref{torsor-ring-stack} via an argument as in \Cref{for-lci-dR-torsor}.
\end{remark}

\subsection{Partial Crystalline Site}
In this subsection, we introduce a partial crystalline site which plays the same role for $(X/S)^{dR, \gamma}$ as the crystalline site does for $(X/S)^{dR}$. We prove that for lci $X/S$ the category $\cD_{qc}((X/S)^{dR, \gamma})$ is equivalent to the category of partial crystals; see \Cref{cor:partial-crystals-x-dr-gamma}. We also interpret this category as (continuous) modules over a suitable algebra of differential operators; see \Cref{gamma-crystals-are-gamma-d-modules}.

\begin{definition}
    Given a pair $(A, I),$ a partial PD-structure on it is a map $\gamma_p: I \to A$ such that $\gamma_p(x) p! = x^p.$ We denote the category of partial PD-rings by $\operatorname{PDAlg}^{\gamma}$.
\end{definition}

\begin{remark}
    If $(A, I, \gamma_p)$ is a partial PD-ring and $pA = 0,$ then $x^p =0$ for any $x \in I.$
\end{remark}

\begin{definition}
    Given a scheme $X$ with a sheaf of ideals $\cI$ we say that $\gamma_p: \cI \to \cI$ is a partial PD-structure on $\cI$ if for every affine Zariski open $U = \Spec (A) \subset X$ the triple $(A, \cI(U), \gamma_p(U))$ is a partial PD-ring.
\end{definition}

\begin{definition}\label{def:partial-site}
    Let $(S, \cJ, \gamma_p)$ be a partial PD-scheme. Define a big partial crystalline site $\operatorname{CRIS^{\gamma}(X/((S, \cJ, \gamma_p)))}$ for any $X \to V(\cJ)$ on which $p$ acts in a locally nilpotent way. The underlying category consists of partial PD-schemes $(T, \cI, \delta_p)$ over $(S, \cJ, \gamma_p)$ endowed with a map $V(\cI) \to X.$
\end{definition}

\begin{definition}
    For $X$ as in \eqref{def:partial-site} define the category of partial crystals $\operatorname{Crys}^{\gamma}(X/((S, \cJ, \gamma_p)))$ by $\lim_{(T, I, \gamma_p) \in \cC_X} \cD(T)$ where $\cC_X$ is the category of affine objects in $\operatorname{CRIS^{\gamma}(X/((S, \cJ, \gamma_p)))},$ i.e. the category of pairs $(B, J) \in \operatorname{PDAlg}_{/S}^{\gamma}$ and a map $\Spec (B/J) \to X.$
\end{definition}

\begin{remark}
    When $X$ is a scheme over a perfect field $k$ of characteristic $p$, we note that $(W(k), p)$ has a canonical partial PD-structure given by $\gamma_p (p) = p!/p$ and we will use $\operatorname{Crys}^{\gamma}(X/W(k))$ and $\operatorname{CRIS^{\gamma}}(X/W(k))$ instead of $\operatorname{Crys}^{\gamma}(X/((W(k), (p), \operatorname{can}))$ and instead of $\operatorname{CRIS}^{\gamma}(X/((W(k), (p), \operatorname{can})))$ respectively.
\end{remark}
\begin{lemma}\label{pd-envelope-exists}
    Let $(A, I, \gamma_p)$ be a partial PD-ring. Let $(A, I) \to (B, J)$ be a map of pairs, then there exists $(A, I, \gamma_p) \to (D, \bar{J}, \bar{\gamma}_p)$ satisfying $\Hom_{(A, I, \gamma_p)}((D, \bar{J}, \bar{\gamma}_p), (C, K, \delta_p)) = \Hom_{(A, I)}((B, J), (C, K)).$
\end{lemma}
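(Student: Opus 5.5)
We construct the partial PD-envelope by generators and relations, exactly as for ordinary PD-envelopes; the only structure to keep track of is the single operation $\gamma_p$ instead of all divided powers. Given $(A,I,\gamma_p)\to(B,J)$, let $D$ be the $B$-algebra obtained from the polynomial ring $B[y_x \mid x\in J]$ by imposing the following relations:
\begin{enumerate}
\item[(i)] $p!\,y_x = x^p$ for $x\in J$;
\item[(ii)] $y_{f(a)} = f(\gamma_p(a))$ for $a\in I$, where $f\colon A\to B$ is the structure map;
\item[(iii)] the identities a partial PD-structure must satisfy, chosen once and for all:
\begin{itemize}
\item $y_{bx} = b^p y_x$ for $b\in B$;
\item the addition formula $y_{x+x'} = y_x + y_{x'} + \sum_{0<i<p} \frac{x^i x'^{p-i}}{i!(p-i)!}$.
\end{itemize}
\end{enumerate}
The addition formula makes sense integrally because $\frac{1}{i!(p-i)!} = \binom{p}{i}/p!$ and $\binom{p}{i}$ is divisible by $p$ for $0<i<p$; more precisely, it is the identity forced on $\gamma_p$ by multiplying through by $p!$.

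Let $\bar J\subset D$ be the ideal generated by the image of $J$ together with all the $y_x$. The first issue is to define $\bar\gamma_p$ on all of $\bar J$, not merely on the generators. The definition of a partial PD-ring in the paper only demands $p!\,\gamma_p(x)=x^p$, with no compatibility axioms. Two consequences follow.
\begin{itemize}
\item The universal property only needs maps respecting $\gamma_p$.
\item We are free to extend $\bar\gamma_p$ to $\bar J$ by any choice satisfying $p!\,\bar\gamma_p(z)=z^p$.
\end{itemize}
To keep the construction functorial, it is cleaner to iterate instead. Adjoin variables $y_z$ for every $z$ in the current ideal, impose $p!\,y_z=z^p$ together with the relations (ii) coming from $A$ and from the previous stage, enlarge the ideal by the new $y_z$, and repeat countably many times. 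Then let $D$ be the colimit. Each element of the colimit ideal lies in some finite stage, so it acquires a $\bar\gamma_p$ at the next stage. Hence $(D,\bar J,\bar\gamma_p)$ is a partial PD-ring, and the structure map from $(A,I,\gamma_p)$ is a map of partial PD-rings by (ii).

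For the universal property, let $(C,K,\delta_p)$ be a partial PD-ring under $(A,I,\gamma_p)$, and let $g\colon(B,J)\to(C,K)$ be a map of pairs compatible with $A$. Extend $g$ stage by stage, sending each $y_z$ to $\delta_p$ of the image of $z$.
\begin{itemize}
\item The relations $p!\,y_z=z^p$ hold in $C$ because $p!\,\delta_p(w)=w^p$ for $w\in K$.
\item The relations of type (ii) hold because $A\to C$ is a partial PD-map.
\end{itemize}
By construction the extension sends $\bar J$ into $K$ and intertwines $\bar\gamma_p$ with $\delta_p$. It is unique because $D$ is generated over $B$ by the $y_z$, and their images are forced by compatibility with $\gamma_p$. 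Conversely, restricting along $B\to D$ recovers $g$. This gives the claimed bijection
\[
\Hom_{(A,I,\gamma_p)}\bigl((D,\bar J,\bar\gamma_p),(C,K,\delta_p)\bigr)\simeq \Hom_{(A,I)}\bigl((B,J),(C,K)\bigr).
\]

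The main obstacle is making sure that $\bar\gamma_p$ is well defined as a function on $\bar J$. An element of $\bar J$ has many presentations, so the variables $y_z$ must be indexed by elements of the ideal itself, which is what the iterative construction does, not by formal expressions. One must also make sure that the quotient does not collapse compatibilities with $A$ inconsistently. Since no additional axioms are imposed on partial PD-structures, the colimit construction handles this formally. The remaining checks, that the identifications in (ii) at later stages are consistent, are routine.
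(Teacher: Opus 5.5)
Your final construction is correct: the countable iteration that imposes only $p!\,y_z=z^p$, the relations coming from $A$, and the compatibility relations between stages. It is, however, a different route from the paper's. The paper simply defers to Stacks Project, Tag 07H7, where existence of the envelope is obtained \emph{formally} from a representability criterion. There one checks three things:
\begin{itemize}
\item (partial) PD-rings have all limits, computed on underlying rings; equalizers are preserved because morphisms commute with $\delta_p$;
\item the functor $(C,K,\delta_p)\mapsto \Hom_{(A,I)}((B,J),(C,K))$ commutes with limits;
\item every $\varphi\colon B\to C$ factors through a sub-partial-PD-ring of bounded cardinality. With no axioms on $\gamma_p$, this sub-object is obtained by closing $\varphi(B)$, together with the image of $A$, under $\delta_p$ countably many times.
\end{itemize}
So the same countable closure appears in both proofs. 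In the Stacks route it happens inside an honest target ring, where $\delta_p$ is already a function, so there is nothing to check about well-definedness; the price is that one gets no description of $D$. Your route gives an explicit presentation of $D$ as generated over $B$ by iterated $\gamma_p$-values, which is what one uses in practice to compute envelopes such as $D^{\gamma}_Y(X)$. But then well-definedness of $\bar\gamma_p$ on the colimit is the real content, and you should state it rather than call it routine. It is exactly the relations $y_{\iota(w)}=\iota(y_w)$, for $w$ in the previous stage's ideal, that make two representatives of the same element of $\bar J$ receive the same value of $\bar\gamma_p$.

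Three caveats on the write-up.
\begin{itemize}
\item Delete the relations (iii). Under the literal definition they are incompatible with the universal property, since a target $\delta_p$ need not satisfy the addition formula or the homogeneity rule. If you instead adopt the intended axioms, they must be imposed at every stage of the iteration.
\item Delete the remark that one may extend $\bar\gamma_p$ ``by any choice''. An arbitrary choice of $\bar\gamma_p(z)$ need not map to $\delta_p$ of the image of $z$, so it breaks the universal property as well.
\item Putting the $y_z$ into $\bar J$ requires $\delta_p(K)\subset K$ in the target. That matches the paper's scheme-level definition ($\gamma_p\colon\cI\to\cI$) but not its pair-level one ($\gamma_p\colon I\to A$). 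Under the latter reading, take $\bar J=JD$ instead; the same iteration works.
\end{itemize}
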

\begin{proof}
    The proof is the same as in \cite[\href{https://stacks.math.columbia.edu/tag/07H7}{Tag 07H7}]{Stacks}.
\end{proof}
\begin{definition}
    Let $(A, I, \gamma_p)$ be a partial PD-ring and $(A, I) \to (B, J)$ a map of pairs. The partial PD-algebra over $(A, I, \gamma_p)$ provided by \Cref{pd-envelope-exists} will be denoted by $D^{\gamma}_{B}(J)$ and referred to as the partial PD-envelope of $J$ (or $B/J$) in $B.$
\end{definition}

\begin{corollary}\label{cor:partial_crystals_cech_pd}
    Let $X \to Y$ be a closed embedding of $S$-schemes with $Y/S$ being smooth. Then $\operatorname{Crys}^{\gamma}(X/S)$ identifies with modules over a cosimplicial scheme $\Cech(D^{\gamma}_X (Y) \to X).$
\end{corollary}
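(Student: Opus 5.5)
This is the partial-PD analogue of Berthelot's theorem recalled in the proof of \Cref{sheaves-on-dR-lci}, and I would prove it the same way: show that $D^{\gamma}_X(Y)$ is a covering object in the partial crystalline site, and that its self-products are again partial PD-envelopes. Throughout, $D := D^{\gamma}_X(Y)$ is the partial PD-envelope of the ideal of $X$ in $Y$ from \Cref{pd-envelope-exists}, formed compatibly with the partial PD-structure on $S$. Similarly $D(n) := D^{\gamma}_X(Y^{\times_S (n+1)})$ is the partial PD-envelope of the diagonal embedding $X \to Y^{\times_S(n+1)}$.

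\emph{Step 1 (the Čech nerve).} I would first identify the Čech nerve of $D \to X$ in $\operatorname{CRIS}^{\gamma}(X/S)$. Let $(T, \cI, \delta_p)$ be an affine object with a map $V(\cI) \to X$. By the universal property in \Cref{pd-envelope-exists}, maps from $D(n)$ to $T$ over $X$ correspond to compatible maps $T \to Y^{\times_S(n+1)}$ that extend the composite $V(\cI) \to X \to Y$ on each factor. Hence $D(n)$ is the $(n+1)$-fold product of $D$ over the final object, and $\Cech(D \to X) = D(\bullet)$.

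\emph{Step 2 (covering property).} Next I would show that every affine object $(T,\cI,\delta_p)$ admits, Zariski locally on $T$, a morphism to $D$. Since $Y/S$ is smooth and $V(\cI) \to T$ is a nilpotent thickening, the map $V(\cI) \to X \to Y$ lifts locally to $T \to Y$. The thickening is nilpotent because $p$ is locally nilpotent and, by the remark after the definition, $x^p = p!\,\gamma_p(x)$; so each $x \in \cI$ is nilpotent, at least locally after finitely generating. The lift is compatible with the partial PD-structures, so by the universal property it factors through $D$.

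\emph{Step 3 (descent).} $\Crys^{\gamma}(X/S) = \lim_{\cC_X} \cD(T)$ is a limit over the site. By Step 2, the full subcategory of objects over $D$ is cofinal after Zariski descent. This gives the equivalence with $\lim_{\Delta} \cD(D(n))$, i.e.\ with modules over the cosimplicial scheme $\Cech(D \to X)$. To make this formal, I would use that $\cD(-)$ is a Zariski sheaf and follow the standard argument comparing the limit over a site with the limit over the Čech nerve of a weakly final object, as in \cite[Th.~6.6]{44026ff5-d9db-3e28-a22f-70968eda7ea4}.

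The main obstacle is Step 2. It requires local existence of lifts from possibly non-Noetherian partial PD-thickenings, where nilpotence of $\cI$ holds only locally on elements and not for the ideal as a whole. My first attempt would be to lift $\Spec$ of the relevant algebra étale-locally using formal smoothness along each finitely generated nilpotent subideal and pass to the colimit. If that fails, I would instead restrict $\cC_X$ to objects with nilpotent $\cI$, as Berthelot does, and check that the two limits agree.
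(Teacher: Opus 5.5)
Your proposal is correct and follows the same route as the paper. The paper's proof just observes that $D^{\gamma}_X(Y)$ is weakly final, so that $D^{\gamma}_X(Y)\to *$ is a local epimorphism onto the final object whose \v{C}ech nerve computes $\Crys^{\gamma}(X/S)$; your Steps 1--3 spell this out, including the identification of the self-products with the partial PD-envelopes $D^{\gamma}_X(Y^{\times_S(n+1)})$ via the universal property. The obstacle you flag in Step 2 goes away: Zariski locally on $T$ you may take $Y$ affine and of finite presentation over an affine $S$, so the map $\Spec(A/I)\to Y$ factors through $\Spec(A/I_0)$ for a finitely generated, hence nilpotent, subideal $I_0\subset I$, and formal smoothness then lifts it along $A\to A/I_0$ (alternatively, use that $(A,I)$ is a henselian pair when $I$ is nil).
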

\begin{proof}
    Indeed, $D^{\gamma}_X (Y)$ is weakly final in $\operatorname{CRIS^{\gamma}(X/((S, \cJ, \gamma_p)))}$. After enlarging it we can assume it has a final object $*$ by \cite[TAG 03CI]{Stacks}. Thus, since $D_{X}^{\gamma} (Y) \to *$ is a surjective map of presheaves, it can be used to compute $\operatorname{Crys}^{\gamma}(X/S)$ which gives the result.
\end{proof}
\begin{lemma}\label{cech-nerve-comp-of-partial}
    Given a closed immersion $Y \to X$, the stack $(Y/X)^{dR, \gamma} $ is represented by $D_{Y}^{\gamma} (X)$ the partial PD-envelope of $Y$ in $X$.
\end{lemma}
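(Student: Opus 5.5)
We prove \Cref{cech-nerve-comp-of-partial} by computing the functor of points of $(Y/X)^{dR,\gamma} = Y^{dR,\gamma}\times_{X^{dR,\gamma}} X$ directly and matching it with the universal property of the partial PD-envelope from \Cref{pd-envelope-exists}. Both sides are Zariski sheaves on $X$: for $(Y/X)^{dR,\gamma}$ this follows as in Step 2 of \Cref{dR-is-affine-relative}, and for $D^\gamma_Y(X)$ because partial PD-envelopes localize. So we reduce to $X=\Spec(A)$ and $Y=\Spec(A/I)$. The functor of points can then be written out as in the remark describing $(X/S)^{dR,\gamma}$. A $B$-point of $(Y/X)^{dR,\gamma}$ consists of
\begin{itemize}
\item[(a)] a ring map $x\colon A\to B$;
\item[(b)] an $A$-algebra map $A/I \to \bG_a^{dR,\gamma}(B)$, where the $A$-structure on $\bG_a^{dR,\gamma}(B)=B\otimes^L_{\bZ}\bF_p$ is the one induced by $\pi^\gamma$.
\end{itemize}
Equivalently, (b) is a nullhomotopy of the composite $I\to A\xrightarrow{\pi^\gamma_x} \bG_a^{dR,\gamma}(B)$, compatible with the ring structure.

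\textit{Unwinding $\pi^\gamma$.} The map $\pi^\gamma_x$ is induced by the map of quasi-ideals in \Cref{def:Gadrgamma}. On $\pi_0$ it sends $a\mapsto x(a)^p$. The quasi-ideal $F_*W\xrightarrow{V}W$ mapping to $B\xrightarrow{0}B$ records that the class of $a$ has the chosen representative $[x(a)]$, and that $V$-multiples are sent through $R$. Note that $\bG_a^{dR,\gamma}(B)$ has $\pi_0 = B/p$ and $\pi_1 = B[p]$; for test algebras over $\bF_p$ these are $B$ and $B$. A nullhomotopy of $\pi^\gamma_x(f)$ for $f\in I$ therefore amounts to the following two pieces of data.
\begin{itemize}
\item[(1)] The vanishing $x(f)^p=0$.
\item[(2)] A choice of element $\gamma_p(f)\in B$ trivializing the resulting $\pi_1$-class. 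Since $p=0$ in $B$, the relation $p!\gamma_p(f)=x(f)^p$ is vacuous at this level, and the real content is a coherent choice of $\gamma_p(f)$.
\end{itemize}
Compatibility with addition and multiplication in $I$ should translate into the partial PD-axioms:
\begin{itemize}
\item[(i)] $\gamma_p(af)=a^p\gamma_p(f)$;
\item[(ii)] $\gamma_p(f+g)=\gamma_p(f)+\gamma_p(g)+\sum_{0<i<p}\frac{f^ig^{p-i}}{i!(p-i)!}$.
\end{itemize}
These follow from the Witt-vector formulas $[f]+[g]=[f+g]+V(\cdots)$ and $[a][f]=[af]$, read through the square defining the quasi-ideal map. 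This identifies $(Y/X)^{dR,\gamma}(B)$ with the set of pairs $(x, \gamma_p)$ where $\gamma_p$ is a partial PD-structure on $x(I)B$. By \Cref{pd-envelope-exists} this is exactly $\Hom(D^\gamma_Y(X),B)$.

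The main obstacle is the bookkeeping in the previous step. The claim that the space of nullhomotopies is discrete and given by exactly the divided $p$-th power data, with no hidden higher coherences, needs care. So does producing the polynomial in (ii) correctly from the Witt addition law through the map $V$ and the quasi-ideal square.

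To handle this, one should first establish the universal case $A=\bF_p[t]$, $I=(t)$, where $(Y/X)^{dR,\gamma}$ is the fiber of $\pi^\gamma\colon \bG_a\to\bG_a^{dR,\gamma}$. Using the two maps $c$ and $\pi^\gamma$ of \Cref{witt-vectors}, together with the identification $A\times_{\bG_a^{dR,\gamma}(A)}A\simeq W_2(A)$, this fiber can be computed explicitly as $\Spec\bF_p[t,y]/(t^p)$. That ring is the partial PD-envelope of $(t)$, with $y=\gamma_p(t)$. In this case both sides are $1$-truncated and computable, exactly as in \Cref{lm:closed-immersion-action} for $\bG_a^\sharp$.

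For general $X$ we then argue by base change. If $I$ is generated by $f_1,\dots,f_n$, then $(Y/X)^{dR,\gamma}$ is the pullback along $X\to\bA^n$ of the universal case, using that transmutation commutes with limits (\Cref{rem:transmutations-commute-limits}). The partial PD-envelope is likewise given by base change of the universal envelope, modulo the relations imposed by (i) and (ii).

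Some caution is needed in the non-regular case, since derived fiber products appear. If necessary, restrict to the regular closed immersions actually used later, as in \Cref{lm:regular-affine-stack}, where the Tor-independence makes the base change classical.
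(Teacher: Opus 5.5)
Your argument is correct for regular closed immersions, which is also the generality in which the paper's own proof works. However, it takes a genuinely different route.

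\textbf{The paper's route.} It argues in one line via torsors. $(Y/X)^{dR,\gamma}$ is the pushout of the $\mathbf{N}^{\sharp}$-torsor $(Y/X)^{dR}\simeq D_Y(X)\to Y^{\phi}$ along $\mathbf{N}^{\sharp}\to\mathbf{N}$. This step already uses \Cref{lm:regular-affine-stack} and the torsor structure, hence regularity. The canonical map $D_Y(X)\to D^{\gamma}_Y(X)$ is equivariant, so it identifies the pushout with $D^{\gamma}_Y(X)$.

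\textbf{Your route.} You compute directly from the ring stack. First you treat the universal case $\{0\}\subset\bA^1$ via the $W_2$-description of \Cref{witt-vectors}. Your answer $\{w\in W_2 : w_0^p=0\}=\Spec\bF_p[t,y]/(t^p)$ is right. Moreover, $[a](w_0,w_1)=(aw_0,a^pw_1)$ and the Witt addition polynomial $S_1(a,b)\equiv\sum_{0<i<p}\frac{a^ib^{p-i}}{i!(p-i)!}\pmod p$ are exactly your axioms (i) and (ii). You then base change along $X\to\bA^n$, using limit preservation and Tor-independence.

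\textbf{What each buys.} Your route is more elementary and self-contained. It needs neither Bhatt's identification of derived de Rham cohomology with the PD-envelope nor the torsor structure, and it shows concretely where $\gamma_p$ comes from. The paper's map is canonical and therefore glues for free. Your local identification a priori depends on the chosen generators, so you need (i) and (ii) to check compatibility on overlaps.

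\textbf{Two points to tighten.}
\begin{enumerate}
\item For a regular sequence, $A/I\simeq A/\!\!/(f_1,\dots,f_n)$ is the derived quotient. So a $B$-point is just $x$ together with independent nullhomotopies of the $\pi^{\gamma}_x(f_i)$. Each of these ranges over a torsor under $\pi_1=B$, which the $W_2$-description trivializes. This removes your worry about hidden higher coherences.
\item Ring maps $D^{\gamma}_Y(X)\to B$ are not partial PD-structures on the ideal $x(I)B$. They are values in $B$ for elements of $I$ satisfying the axioms along $x$. The input you actually need, and only gesture at with ``modulo the relations imposed by (i) and (ii)'', is that $D^{\gamma}_I(A)\cong A[y_1,\dots,y_n]/(f_1^p,\dots,f_n^p)$ for a regular sequence. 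In other words, partial PD-envelopes of regular ideals are base-changed from the universal one. The paper uses the same fact implicitly, since its argument needs $D^{\gamma}_Y(X)\to Y^{\phi}$ to be an $\mathbf{N}$-torsor.
\end{enumerate}
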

\begin{proof}
    By definition $(Y/X)^{dR, \gamma}$ is the quotient of $(Y/X)^{dR} \times_{Y^{\phi}} \bN_{Y^{\phi}} (X) \simeq D_{Y} (X) \times_{Y^{\phi}} \bN_{Y^{\phi}} (X)$ by the anti-diagonal action of $\bN_{Y^{\phi}} (X)^{\sharp}$. Consider the map $D_{Y} (X) \to D_{Y}^{\gamma} (X),$ it is enough to observe that it is equivariant for $\bN_{Y^{\phi}}(X)^{\sharp} \to \bN_{Y^{\phi}} (X)$.
\end{proof}
\begin{corollary}\label{cor:dqc-dr-gamma-cech}
    For smooth $X \to S$ the Cech nerve of $X \to (X/S)^{dR, \gamma}$ is represented by a simplicial scheme $D_{I_n} (X^{\bullet/S})$ where $I_n$ is the ideal of the diagonal $X \to X^{\times n/S}.$ In particular, the category $\cD_{qc} ((X/S)^{dR, \gamma})$ is identified with the $\infty$-category of crystals in quasi-coherent complexes on $(X/S)_{crys}^{\gamma}$.
\end{corollary}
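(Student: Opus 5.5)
The plan mirrors the treatment of $(X/S)^{dR}$ in \Cref{sheaves-on-dR-lci} and the remark preceding it, with \Cref{cech-nerve-comp-of-partial} replacing \Cref{lm:regular-affine-stack}. There are two things to prove. First, $\pi^{\gamma}_{X/S}\colon X \to (X/S)^{dR,\gamma}$ is an fpqc covering. Second, its Čech nerve is the simplicial scheme of partial PD-envelopes of the diagonals. The identification of $\cD_{qc}$ with partial crystals then follows from descent together with \Cref{cor:partial_crystals_cech_pd}.

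\emph{Covering.} By the remark on pushouts, $(X/S)^{dR,\gamma}$ is the pushout of the $T_{X'/S}^{\sharp}$-gerbe $(X/S)^{dR}$ along $T_{X'/S}^{\sharp}\to T_{X'/S}$. In particular there is a map $(X/S)^{dR}\to (X/S)^{dR,\gamma}$ of gerbes over $X'$, equivariant along a surjection of group stacks, and hence fppf surjective. Also $\pi^{\gamma}_{X/S}$ factors as $X\xrightarrow{\pi_{X/S}}(X/S)^{dR}\to(X/S)^{dR,\gamma}$. To see this factorization, I use the map of quasi-ideals from the diagram in \Cref{rem:fp-str-two-frobs-the-same} to \Cref{def:Gadrgamma}, given by $R$ on $F_*W$ and by $F\circ R$ on $W$. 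This gives a map of ring stacks $\bG_a^{dR}\to\bG_a^{dR,\gamma}$ compatible with $\pi$ and $\pi^\gamma$, and transmuting it gives the factorization. Since $\pi_{X/S}$ is fpqc surjective by \Cref{covering-for-smooth}, the composite is surjective as well.

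\emph{Čech nerve.} The $n$-th term $X\times_{(X/S)^{dR,\gamma}}\cdots\times_{(X/S)^{dR,\gamma}}X$ with $n+1$ factors should be identified with $(X/X^{\times_S (n+1)})^{dR,\gamma}$. The argument is the same as for \Cref{dR-commutes-colimits}. Transmutation commutes with limits by \Cref{rem:transmutations-commute-limits}, so $(-/-)^{dR,\gamma}$ commutes with limits in the arrow category. Writing the iterated fiber product as a limit of the arrows $X\to S$, one rewrites it as the relative stack of the diagonal $X\to X^{\times_S(n+1)}$, exactly as in the remark before \Cref{sheaves-on-dR-lci}. Because $X/S$ is smooth, the diagonal is a regular closed immersion. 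Then \Cref{cech-nerve-comp-of-partial} identifies this stack with the partial PD-envelope $D^{\gamma}_{I_n}(X^{\times_S(n+1)})$, and the simplicial structure maps match by functoriality.

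\emph{Conclusion and main obstacle.} Fpqc descent for $\cD_{qc}$ gives $\cD_{qc}((X/S)^{dR,\gamma})\simeq\lim\cD_{qc}(D^{\gamma}_{I_\bullet}(X^{\bullet/S}))$. Applying \Cref{cor:partial_crystals_cech_pd} with $Y=X$ and the diagonal embedding identifies this limit with partial crystals. That corollary is stated for a single closed embedding into a smooth $Y$, so I would apply it to $X\to X^{\times_S n}$, or equivalently note that the Čech nerve of $D^{\gamma}_X(X\times_S X)\to X$ on the site is the same simplicial object. The step I expect to be most delicate is the identification of the Čech nerve terms with the relative $dR,\gamma$-stacks of the diagonals. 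This requires checking that the base-change homotopies used for $(-/-)^{dR}$ carry over to $\pi^{\gamma}$, which uses only its naturality as a transmuted map. It also requires checking that \Cref{cech-nerve-comp-of-partial} applies compatibly with the simplicial maps. If that is awkward, I would instead pull everything back from the known Čech nerve of $X\to(X/S)^{dR}$ and take the pushout along $\bN^{\sharp}\to\bN$ levelwise, as in the proof of \Cref{cech-nerve-comp-of-partial}.
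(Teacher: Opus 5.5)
Your argument is essentially the paper's: $\pi^{\gamma}_{X/S}$ is a cover because it factors through $\pi_{X/S}$ followed by the map of gerbes $(X/S)^{dR}\to(X/S)^{dR,\gamma}$. The \v{C}ech terms are $(X/X^{\times_S(n+1)})^{dR,\gamma}$ because $(-/-)^{dR,\gamma}$ commutes with limits in the arrow category, and \Cref{cech-nerve-comp-of-partial} turns these into partial PD-envelopes of the regular diagonals, after which descent and \Cref{cor:partial_crystals_cech_pd} finish the proof.

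There are two small slips. First, the map $\bG_a^{dR}\to\bG_a^{dR,\gamma}$ is given by $R$ on both copies of $F_*W$; the pair $(R,F\circ R)$ you wrote is the data of $\pi^{\gamma}$ itself, i.e.\ the composite with $\pi$. Second, in the last step you should simply take $Y=X$ with the identity embedding, whose \v{C}ech nerve in the partial crystalline site is exactly $D^{\gamma}_X(X^{\times_S(\bullet+1)})$.
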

\begin{corollary}\label{cor:partial-crystals-x-dr-gamma}
    Combining \Cref{cor:dqc-dr-gamma-cech} and \Cref{cech-nerve-comp-of-partial} one observes that for an lci morphism $X \to S$ we obtain the equivalence $\cD_{qc}((X/S)^{dR, \gamma}) \simeq \operatorname{Crys}^{\gamma}(X/S)$ by the same argument as in \Cref{sheaves-on-dR-lci}.
\end{corollary}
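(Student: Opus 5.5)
The plan is to copy the argument of \Cref{sheaves-on-dR-lci} verbatim, replacing the de Rham stack by $(-/-)^{dR,\gamma}$ and crystals by partial crystals.

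\textbf{Functoriality and reduction to the affine case.} Both $\cD_{qc}((-/-)^{dR,\gamma})$ and $\operatorname{Crys}^{\gamma}(-/-)$ are functors $\operatorname{Fun}(\Delta^1,\Sch^{op}) \to \Pr^L_{st}$ satisfying Zariski descent. For the source, Zariski descent follows from the affine-over-$X'$ description. Indeed, $(X/S)^{dR,\gamma}$ is the pushout of the torsor $(X/S)^{dR}$ along $\bG_a^\sharp\to\bG_a$, so open immersions $U\to X$ give cartesian squares exactly as in \Cref{dR-is-affine-relative}. So it suffices to produce a natural equivalence for $X=\Spec(B)$, $S=\Spec(A)$.

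\textbf{Choice of presentation.} I take the functorial presentation $P=A[x_b\mid b\in B]\to B$ with kernel $J$. On the crystal side, \Cref{cor:partial_crystals_cech_pd} identifies $\operatorname{Crys}^{\gamma}(B/A)$ with modules over the Čech nerve whose $n$-th term is $D^{\gamma}(n):=D^{\gamma}_{P^{\otimes_A (n+1)}}(J(n))$. Since $P$ depends functorially on $B$, this identification is functorial. On the stack side, I use the square with top arrow $(B/P)^{dR,\gamma}\to(B/A)^{dR,\gamma}$ and bottom arrow $\Spec(P)\to(P/A)^{dR,\gamma}$.

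\textbf{The covering.} The bottom arrow is an fpqc covering. This is \Cref{cor:dqc-dr-gamma-cech} for the smooth map $P/A$, or alternatively the pushforward along $(P/A)^{dR}\to(P/A)^{dR,\gamma}$ of the covering from \Cref{covering-for-smooth}, since the latter map is itself surjective as a map of torsors. The square is cartesian because $(-/-)^{dR,\gamma}$ is defined by fiber products of transmutations and so commutes with limits, as in \Cref{dR-commutes-colimits}. Hence the top arrow is a covering.

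\textbf{Identifying the Čech terms.} The iterated fiber products are
\[
(B/P)^{dR,\gamma}\times_{(B/A)^{dR,\gamma}}\cdots \simeq (B/P^{\otimes_A(n+1)})^{dR,\gamma}.
\]
Since $B\to P^{\otimes(n+1)}$ is a closed immersion, \Cref{cech-nerve-comp-of-partial} identifies each term with $D^\gamma(n)$. Descent along this covering then gives $\cD_{qc}((B/A)^{dR,\gamma})\simeq\cD_{qc}(\Cech(D^\gamma\to B))$, functorially in $(A\to B)$.

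\textbf{Main obstacle.} The delicate point is applying \Cref{cech-nerve-comp-of-partial} to the closed immersions $B\leftarrow P^{\otimes(n+1)}$, which are not regular, since $P$ is a huge polynomial ring. The proof of that lemma used $(Y/X)^{dR}\simeq D_Y(X)$ and the torsor description, both of which are available only for regular immersions. To handle this, I would use that $B/A$ is lci, so $J$ is quasi-regular: $L_{B/P}=J/J^2[1]$ with $J/J^2$ flat. I would first reduce via filtered colimits to finitely generated polynomial subalgebras $P_0\subset P$ surjecting onto $B$, where $J_0$ is Koszul-regular and \Cref{lm:regular-affine-stack} applies. Since both sides commute with filtered colimits in $P$, this passes to $P$ itself. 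Alternatively, I could replace $P$ by such a $P_0$ and check that the resulting Čech identification is independent of the choice. Either way, this is the step where the most care is needed.
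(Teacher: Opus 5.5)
Your proposal is correct and follows the paper's route. The paper proves this by rerunning the proof of \Cref{sheaves-on-dR-lci} in the $\gamma$-setting: the functorial presentation $P=A[x_b]$ gives the fpqc covering $(B/P)^{dR,\gamma}\to(B/A)^{dR,\gamma}$, and \Cref{cech-nerve-comp-of-partial} identifies the \v{C}ech terms with the partial PD-envelopes $D^{\gamma}(n)$. Your additional filtered-colimit reduction to finitely generated $P_0$ is a sound way to justify applying \Cref{cech-nerve-comp-of-partial}, whose proof relies on the regularity hypothesis of \Cref{lm:regular-affine-stack}, to this infinite-type presentation; the paper leaves that point implicit.
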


    Recall from \cite[\S 2.3]{MR2373230} the category $\MIC^{\cdot}_{\gamma}(X/S)$ which is defined as the full subcategory of the abelian category of $F_{X/S, *}D_{X/S} \otimes_{S T_{X'/S}} \hat \Gamma T_{X'/S}$-modules, consisting of locally nilpotent objects.


\begin{theorem}\label{gamma-crystals-are-gamma-d-modules}
    Let $X$ be a smooth scheme over $S$. Then $\MIC_{\gamma}^{\cdot}(X/S)$ is equivalent to the category of partial crystals in quasi-coherent sheaves on $X$ relative to $S.$
\end{theorem}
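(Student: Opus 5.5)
The plan is to pass through the stack $(X/S)^{dR,\gamma}$. By \Cref{cor:partial-crystals-x-dr-gamma}, partial crystals are identified with $\cD_{qc}((X/S)^{dR,\gamma})$, so it suffices to identify the heart of the latter with $\MIC^{\cdot}_{\gamma}(X/S)$. Write
\[
\cA := F_{X/S,*}D_{X/S} \otimes_{ST_{X'/S}} \hat\Gamma T_{X'/S} .
\]
We follow the argument of \Cref{lm:mod-xdr=d-mod}, with $\hat D_{X/S}$ replaced by $\cA$ and $(X/S)^{dR}$ replaced by $(X/S)^{dR,\gamma}$.

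\textbf{Step 1: affineness and the pushforward of the structure sheaf.} The map $\nu^{\gamma}_{X/S}: (X/S)^{dR,\gamma} \to X'$ is a $T_{X'/S}$-gerbe. It is obtained from the $T^{\sharp}_{X'/S}$-gerbe $\nu_{X/S}$ by pushing out along $T^\sharp_{X'/S}\to T_{X'/S}$. Zariski locally it is a pullback of $X' \to B_{X'}T_{X'/S}$, and $B_{X'}T_{X'/S}$ is not affine over $X'$. So instead of using affineness, we work with the fpqc cover $\pi^\gamma_{X/S}: X \to (X/S)^{dR,\gamma}$ directly.

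By \Cref{cor:dqc-dr-gamma-cech}, its Čech nerve is the simplicial scheme of partial PD-envelopes $D^\gamma_{I_n}(X^{\times_S n})$. Hence quasi-coherent sheaves on $(X/S)^{dR,\gamma}$ are $\cO_X$-modules $E$ with a descent datum over the partial PD-envelope $D^\gamma_\Delta(X\times_S X)$, which is a partial HPD-stratification. This reduces the statement to a Grothendieck-style dictionary between partial HPD-stratifications and modules over the dual algebra.

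\textbf{Step 2: the dual algebra.} We compute $\cO_{D^\gamma_\Delta(X\times_S X)}$ in local coordinates $x_1,\dots,x_d$ on $X/S$, with $\xi_i = 1\otimes x_i - x_i\otimes 1$. In the full PD-envelope the algebra is generated by all $\xi^{[n]}$. In the partial one it is generated by $\xi_i$ and $\gamma_p(\xi_i) = \xi_i^{[p]}$ only. So, as a left $\cO_X$-module, it is
\[
\cO_X[\xi_i]/(\xi_i^p)\otimes \cO_{X'}[\eta_i], \qquad \eta_i = \xi_i^{[p]} \text{ free polynomial}.
\]
Its continuous $\cO_X$-dual has basis $\partial^{a}\cdot \eta^{[b]*}$ with $0\le a_i<p$ and $b$ arbitrary. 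Here the dual of the polynomial part $\cO_{X'}[\eta]$ is the divided power algebra on the duals of $\eta_i$, and these duals correspond to $\partial_i^p = \psi(\partial_i')$, the $p$-curvature central elements. This matches $\cA$: $F_*D_{X/S}$ is free over $ST_{X'/S}$ on the $\partial^a$ with $a_i<p$, and base change to $\hat\Gamma T_{X'/S}$ replaces $\operatorname{Sym}$ in the central variables by completed divided powers.

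We must also check that the groupoid composition on $D^\gamma_\Delta(X\times_SX)$ dualizes to the multiplication on $\cA$. This can be verified on a generating set, comparing with the corresponding statement for the full PD-envelope and $\hat D_{X/S}$ via the map $D_\Delta\to D^\gamma_\Delta$ and the equivariance noted in \Cref{cech-nerve-comp-of-partial}.

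\textbf{Step 3: nilpotence.} A stratification over the completed (formal) partial envelope corresponds exactly to a module on which the dual topology acts continuously. Concretely, each section is killed by $\Gamma^{\ge N}$, which is the local nilpotence condition defining $\MIC^\cdot_\gamma(X/S)$. Gluing the local identifications by Zariski descent, via their functoriality in coordinates, gives the equivalence. Equivalently, one can Cartier-dualize the $T_{X'/S}$-gerbe structure: locally $B T_{X'/S}$ has sheaves equal to $\hat\Gamma$-nilpotent modules.

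\textbf{Main obstacle.} The hard step is Step 2: identifying the dual of the partial PD-envelope with $\cA$ as an algebra, not merely as a module. In particular one must show that the dual of $\eta_i$ is the central element $\partial_i^p - \partial_i^{[p]}$, and that the completed divided-power structure arises correctly. The first thing to try is the one-variable case $X=\bA^1_S$ followed by base change, in the style of \Cref{lm:closed-immersion-action}, using \Cref{rem:p-curv=action} to recognize $\partial^p$ as the action of the central element.
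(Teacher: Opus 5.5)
Your route is genuinely different from the paper's. The paper never computes the dual of $\cO_{D^\gamma_\Delta}$. Instead it covers $(X/S)^{dR,\gamma}$ by $C_1=(X/S)^{dR}\times_{X'}B_{X'}(T_{X'/S})$, i.e.\ it presents $(X/S)^{dR,\gamma}$ as the quotient of $C_1$ by the antidiagonal $B_{X'}(T^\sharp_{X'/S})$-action. By \Cref{lm:mod-xdr=d-mod} and Cartier duality, a sheaf on $C_1$ is a nilpotent $D$-module $\cE$ together with a horizontal, locally nilpotent action $\psi^\gamma$ of $\Gamma_{\cO_X}(F^*_{X/S}T_{X'/S})$. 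On $C_1\times_{C_0}C_1\simeq B_{X'}(T_{X'/S})\times_{X'}B_{X'}(T^\sharp_{X'/S})\times_{X'}(X/S)^{dR}$ the two pullbacks differ only in the extra symmetric-algebra action: one gives the restriction of $\psi^\gamma$, the other the $p$-curvature, by \Cref{rem:p-curv=action}. So the descent datum is exactly the condition that $\psi^\gamma$ extends the $p$-curvature. In this way $\cA=F_{X/S,*}D_{X/S}\otimes_{ST_{X'/S}}\hat\Gamma T_{X'/S}$ falls out of the descent condition with no dual algebra ever computed, and the partial-crystal side is \Cref{cor:partial-crystals-x-dr-gamma}. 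Your Grothendieck--Berthelot route (stratifications on $D^\gamma_\Delta(X\times_SX)$ versus modules over its $\cO_X$-dual) is more explicit: it gives a concrete basis and could even bypass the stack via \Cref{cor:partial_crystals_cech_pd}. The price is that all the content sits in Step 2.

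As you sketch it, Step 2 does not close. In one variable, the algebra map $\hat D_{X/S}\to(\cO_{D^\gamma_\Delta})^\vee$ dual to $D_\Delta\to D^\gamma_\Delta$ (given by $\eta\mapsto\xi^{[p]}$) sends $\partial^{pb+a}$ to $a!\,b!\,(\xi^a\eta^b)^*$. Hence it kills $(\partial^p)^p$, and its image misses $(\eta^b)^*=(\partial')^{[b]}$ for every $b\ge p$; this matches the fact that $ST\to\Gamma T$ kills $t^p$. Comparing with the full PD-envelope therefore says nothing about products involving $(\partial')^{[p^k]}$ for $k\ge1$, and these elements belong to any generating set of $\cA$.

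To close the step you need two further inputs:
\begin{itemize}
\item A second algebra map $\hat\Gamma T_{X'/S}\to(\cO_{D^\gamma_\Delta})^\vee$, dual to the inclusion of the inertia $\bV(F^*_{X/S}T_{X'/S})\subset D^\gamma_\Delta(X\times_SX)$. Its image is central, because the gerbe is banded by a group pulled back from $X'$.
\item Agreement of the two maps on $\hat S T_{X'/S}$. This follows from the square of inertias $\bV(F^*_{X/S}T_{X'/S})^\sharp\subset D_\Delta$ and $\bV(F^*_{X/S}T_{X'/S})\subset D^\gamma_\Delta$ (the equivariance of \Cref{cech-nerve-comp-of-partial}), combined with \Cref{lm:closed-immersion-action} and \Cref{rem:p-curv=action}, which say that the $\sharp$-inertia maps to $\hat D_{X/S}$ via the $p$-curvature.
\end{itemize}
Together these give an intrinsic algebra map $\cA\to(\cO_{D^\gamma_\Delta})^\vee$ sending $\partial^a(\partial')^{[b]}$ to $a!\,(\xi^a\eta^b)^*$. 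Your basis count then shows it is an isomorphism, and because the map is intrinsic, the Zariski gluing is automatic rather than resting on functoriality in coordinates. Note that this compatibility on $\hat S T_{X'/S}$ is precisely the input the paper's proof uses.

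There is also a slip in Step 3. Local nilpotence comes from the stratification living on the \emph{non-completed} envelope $\cO_X[\xi]/(\xi^p)[\eta]$, so that $e\mapsto\sum\phi_{a,b}(e)\otimes\xi^a\eta^b$ is a finite sum; this non-completed envelope is what \Cref{cor:dqc-dr-gamma-cech} gives you. Over an $\eta$-adically completed (formal) envelope you would instead obtain all $F_{X/S,*}D_{X/S}\otimes_{ST_{X'/S}}\Gamma T_{X'/S}$-modules, i.e.\ the decompleted category attached to $B_{X'}(\hat T_{X'/S})$, not $\MIC^{\cdot}_\gamma(X/S)$.
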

\begin{proof}
  
Consider an fppf-covering $$C_1 := (X/S)^{dR} \times_{X'} B_{X'}(T_{X'/S}) \to (X/S)^{dR, \gamma} =:C_0.$$ A quasi-coherent sheaf on $(X/S)^{dR, \gamma}$ amounts to $E \in \QCoh(B_{(X/S)^{dR}}(F_{X/S}^*T_{X'/S}))$ with $p_1^* E \simeq p_2^*E$ satisfying the cocycle condition, where $p_1, p_2: C_1 \times_{C_0} C_1 \rightrightarrows C_1$ are the projections. By Cartier duality $E$ amounts to a quasicoherent sheaf $\cE \in \QCoh((X/S)^{dR})$ together with a continuous\footnote{With respect to the PD-filtration on the source.} horizontal map $\psi^{\gamma}: \Gamma_{\cO_X} (F_{X/S}^* T_{X'/S}) \to \cE n d(\cE)$, i.e., every local section of $\cE$ is annihilated by $\Gamma^{n}_{\cO_{X}}(F_{X/S}^*T_{X'/S})$ for some $n.$ Note 
\[\begin{tikzcd}
	{C_1 \times_{C_0} C_1 } & {B_{X'}T_{X'/S} \times_{X'} B_{X'} T_{X'/S}^{\sharp} \times_{X'} (X/S)^{dR}} && {B_{X'}(T_{X'/S}) \times_{X'}(X/S)^{dR}}
	\arrow["\mathrel{\simeq}", from=1-1, to=1-2]
	\arrow["{(id, a)}"', shift right, from=1-2, to=1-4]
	\arrow["{(a', \id)}", shift left, from=1-2, to=1-4]
\end{tikzcd}\]
where $a$ is the action map $B_{X'}(T_{X'/S}^{\sharp}) \times_{X'} (X/S)^{dR} \to (X/S)^{dR}$ and similarly $a'$ is the action of $B_{X'}(T_{X'/S}^{\sharp})$ on $B_{X'}(T_{X'/S}).$ Moreover, these maps $(a', \id)$ and $(\id, a)$ correspond to the projections $p_1, p_2: C_1 \times_{C_0} C_1 \to C_1$ under this identification. Note the category of quasi-coherent sheaves on $C_1 \times_{C_0} C_1$ is the category $(\cM, \xi^{\gamma}, \xi)$ where $\cM$ is a crystal, $\xi^{\gamma}: \Gamma_{\cO_X}(F_{X/S}^*T_{X'/S}) \to \cE nd(\cM)$ and $\xi: S_{\cO_X}(F_{X/S}^*T_{X'/S}) \to \cE nd(\cM) $ are horizontal maps. Under this identification, the object $(a', \id)^*(\cE, \psi^{\gamma})$ corresponds to $(\cE, \psi^{\gamma}, \psi)$ where $$\psi: S_{\cO_X}(F_{X/S}^*T_{X'/S}) \to \Gamma_{\cO_X}(F_{X/S}^*T_{X'/S}) \xrightarrow{\psi^{\gamma}} \cE nd(\cM).$$
Similarly, the object $(\id, a)^*(\cE, \psi^{\gamma}) = (\cE, \psi^{\gamma}, \psi_{\cE})$ where $\psi_{\cE}$ is the $p$-curvature of $\cM.$ In particular, supplying an isomorphism $\alpha: p_1^*(\cE, \psi^{\gamma}) \simeq p_2^*(\cE, \psi^{\gamma})$ amounts to a $D$-module automorphism $\alpha_0$ of $\cE$ commuting with two actions of $S_{\cO_{X}}(F_{X/S}^*T_{X'/S}).$ In particular, the two actions agree $\psi=\psi_{\cE}$ and $\alpha_0$ gives rise to an isomorphism of this descent datum to the one with the identity isomorphism between two pullbacks, which completes the proof. 
\end{proof}

\subsection{Gerbe of liftings}
Recall \cite[Theorem 2.8]{MR2373230} uses a lift of $X'/S$ in order to construct the global Cartier transform. Let us first show that a lift of $X'$ to $\bZ/p^2$ is enough.
\begin{corollary}\label{strengh-of-OV-only-lift-X'}
    For $X/S$ any flat lift of $X'$ to $\bZ/p^2$ produces a section of $(X/S)^{dR, \gamma} \to X'.$ 
\end{corollary}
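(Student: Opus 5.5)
The plan is to produce the section directly from the functor of points, using the two $\bZ/p^2$-algebra maps $c_B,\pi^{\gamma}_B\colon B\to \bG_a^{dR,\gamma}(B)$ of \Cref{rem-two-maps} and \Cref{witt-vectors}. Here $c_B$ is the $\bZ/p^2$-linear splitting of $\nu^\gamma$ and $\pi^\gamma_B$ is the structure map. Write $\tilde{X}'$ for the given flat lift of $X'$ to $\bZ/p^2$. Since $\tilde X'$ is flat, $\tilde X'\otimes^L_{\bZ/p^2}\bF_p\simeq X'$ is classical. Hence for any animated $\bF_p$-algebra $C$ (regarded as a $\bZ/p^2$-algebra through $\bF_p$) adjunction gives
\[
\Map_{\bZ/p^2}(\Spec C,\tilde X')\simeq \Map_{\bF_p}(\Spec C,X').
\]
Throughout we work Zariski locally with $S=\Spec R$ and $X=\Spec A$, and glue at the end using the Zariski descent of both sides.

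Step 1. Let $x\colon \Spec B\to X'$ be a point over $S$. Compose it with $X'\mono \tilde X'$ and with $\Spec(\bG_a^{dR,\gamma}(B))\xrightarrow{c_B}\Spec B$, which is a map of $\bZ/p^2$-schemes. This gives a $\bZ/p^2$-point of $\tilde X'$ with values in $\bG_a^{dR,\gamma}(B)$. Now regard $\bG_a^{dR,\gamma}(B)$ as an $\bF_p$-algebra via the \emph{other} structure map $\pi^\gamma$. The point is that a $\bZ/p^2$-map out of $\Spec C$ into $\tilde X'$ depends only on $C$ as a $\bZ/p^2$-algebra, while the adjunction above requires the $\bZ/p^2$-structure to factor through $\bF_p$. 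By \Cref{rem:obvious-splittig-not-R-linear} the $\bZ/p^2$-structures induced by $c_B$ and $\pi^\gamma_B$ agree, since both are $W_2$-linear. The adjunction then turns our point into an $\bF_p$-map $\Spec(\bG_a^{dR,\gamma}(B))\to X'$.

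Step 2. Untwist this into a point of $(X/S)^{dR,\gamma}(B)$, that is, an $R$-map $\Spec(\bG_a^{dR,\gamma}(B))\to X$ where $\bG_a^{dR,\gamma}(B)$ carries the $R$-structure given by the quasi-ideal diagram. The tool is the Frobenius homotopy of \Cref{Frob-factors-1-truncated}: $\bG_a^{dR,\gamma}(B)$ is $1$-truncated, and $\pi_0(\bG_a^{dR,\gamma})=F_*\bG_a$. So maps $A'=A\otimes_{R,F}R\to \bG_a^{dR,\gamma}(B)$ should correspond to maps $A\to \bG_a^{dR,\gamma}(B)$ over the Frobenius-twisted $R$-structure, and that twisted structure is exactly the one in the definition of $(X/S)^{dR,\gamma}$.

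Step 3. Check that the resulting map $X'\to (X/S)^{dR,\gamma}$ is a section of $\nu^\gamma_{X/S}$. Applying $\nu^\gamma$ kills $\pi_1$. By construction $\nu^\gamma\circ c_B=\id$ on $\pi_0$, so on underlying $\pi_0$-points we recover $x$.

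The main obstacle is Step 2 together with the $S$-compatibility in Step 1. One must verify that the $R$-algebra structure on $\bG_a^{dR,\gamma}(B)$ coming from the quasi-ideal $F_*W(R)\xrightarrow{V}W(R)$ really matches the one produced by the lift. This is where $S$ enters without being lifted, and it needs the diagram chase of \Cref{rem:obvious-splittig-not-R-linear} relating $V^2$ to $0$. If this direct route fails, the fallback is the torsor picture. Both the gerbe of splittings of $\nu^\gamma_{X/S}$ (a $T_{X'/S}$-gerbe, since $(X/S)^{dR,\gamma}$ is an $L^\vee_{X'/S}[1]$-torsor) and the gerbe of $\bZ/p^2$-lifts of $X'$ receive compatible maps from the class of the square-zero extension $B\otimes^L\bF_p\to B$. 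One would then show that the construction above is equivariant for $T_{X'}\to T_{X'/S}$, which is $\Map(L_{X'},-)\to\Map(L_{X'/S},-)$, and check non-emptiness locally using \Cref{torsor-structure}.
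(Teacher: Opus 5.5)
Your Steps 1--3 are essentially the paper's proof: Step~1 is the section $X'\to X'^{dR,\gamma}$ supplied by the $\bZ/p^2$-lift of $X'$, and Step~2 is the paper's map $q\colon X'^{dR,\gamma}\simeq (X/S)^{dR,\gamma}\times_S S^{dR,\gamma}\to (X/S)^{dR,\gamma}$ over $X'$, obtained by factoring the Frobenius of $S^{dR,\gamma}$ through $S$ via \Cref{Frob-factors-1-truncated} (with $F'=\pi^\gamma$, since $\nu^\gamma\circ\pi^\gamma=F$). The obstacle you flag is not real, and it needs neither the general-$R$ chase of \Cref{rem:obvious-splittig-not-R-linear} nor the fallback: write your Step~1 point as $(\alpha\colon A\to C,\ \beta\colon R\to C)$ with $C=\bG_a^{dR,\gamma}(B)$; then $\alpha|_R\simeq\beta\circ F_R\simeq F_C\circ\beta\simeq\pi^\gamma_B\circ(\nu^\gamma\circ\beta)$, and your Step~3 shows that $\nu^\gamma\circ\beta$ is the $S$-component of $x$, which is exactly the $R$-structure required in the definition of $(X/S)^{dR,\gamma}$.
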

\begin{proof}
By definition, $X'^{dR, \gamma} = X^{dR, \gamma} \times_{S^{dR, \gamma}, F_{S^{dR, \gamma}}} S^{dR, \gamma}$ and by \Cref{Frob-factors-1-truncated} we get a factorization of $F_{S^{dR, \gamma}}: S^{dR, \gamma} \to S \xrightarrow{F_{S^{dR, \gamma}}'} S^{dR, \gamma}$ which gives $X'^{dR, \gamma} \simeq (X/S)^{dR, \gamma} \times_{S} S^{dR, \gamma}.$ In particular, we obtain a map $q: X^{dR, \gamma} \to (X/S)^{dR, \gamma}$. Moreover, by the construction this map is over $X'.$ In particular, since a lift of $X'$ is equivalent to giving a section $s$ of $X'^{dR, \gamma} \to X'$, the map $q \circ s$ defines a section of $(X/S)^{dR, \gamma}$.
\end{proof}
\begin{remark}
    The proof of \Cref{strengh-of-OV-only-lift-X'} shows that there is a map of groupoids $$\{\text{Flat lifts of $X'$ to $\bZ/p^2$ }\} \to \{\text{Splittings of $(X/S)^{dR, \gamma}$}\}.$$
Let us explain how to see this map using elementary methods.\footnote{If $X=\Spec(B), S=\Spec(A)$ are smooth, then $f^*T_S \to T_X$ sends an $A$-derivation $\partial$ to a derivation of $B \otimes_A F_*A$ defined by $\partial(b \otimes a) = b \otimes \partial a$. This gives the splitting $T_{X'} = T_{X'/S} \oplus f^*T_S.$}     The transitivity triangle for $f': X'\to S \to \Spec(\bF_p)$ gives $f'^*L_S \to L_{X'} \to L_{X/S}$. Let us check that the connecting map $o: L_{X'/S} \to f'^*L_{S}[1]$ is canonically homotopic to $0.$ Note $L_{X'/S}=\nu^* L_{X/S}$ and by adjunction $o$ corresponds to a map $L_{X/S} \to \nu_*f'^*L_{X/S}.$ Since $\nu_*f'^*L_{S}=f^*F_{S, *}L_S$, where $\nu: X' \to X$ is the canonical map, we learn that $o$ corresponds to $L_{X/S} \to f^*F_{S, *}L_S[1].$ By functoriality this map factors as $L_{X/S} \to f^*L_{S}[1] \to f^*F_{S, *}L_S[1],$ where the first map is the connecting homomorphism in the transitivity triangle for $f: X \to S$ and the second map is obtained by applying $f^*$ to $dF_S[1]: L_S[1] \to F_{S, *}L_S[1]$, which is canonically $0.$ Therefore, we obtain a section of $L_{X'} \to L_{X'/S}.$ 
\end{remark}

\begin{remark}\label{rem:lift-to-witt-is-stronger}
    Given a flat lift $\tilde{X}/W_2(S)$ of $X/S$, any flat lift of $S$ provides a splitting of $(X/S)^{dR, \gamma}$. Indeed, for $\tilde{S}/\bZ/p^2$ there is a natural map $\tilde{S} \to W_2(S)$ and pullback $\tilde{X} \times_{W_2(S)} \tilde{S}$ provides a flat lift of $X'/S.$
\end{remark}
Let $S/\bF_p$ be a scheme. Recall the closed immersion $S \to W_2(S)$ is a square-zero deformation with an ideal $F_*\cO_{S}.$ Therefore, for $f: X\to S$ the groupoid of flat lifts to $W_2(S)$ is a quasi-torsor for $\Map_{\cO_{X}}(L_{X/S}, f^*F_{S, *}\cO_{S}[1]) =\Map_{\cO_{X}}(L_{X/S}, \nu_*\cO_{X'}[1])$ where $\nu: X'=X \times_{S, F_S} S \to X$ is the projection map. This space is $\Map_{\cO_{X'}}(L_{X'/S}, \cO_{X'}[1])$ by adjunction and base change for cotangent complex. In particular, the obstruction class $\ob_{X/W_2(S)}$ of deforming $X$ to $W_2(S)$ defines a point in $\Map_{\cO_{X'}}(L_{X'/S}, \cO_{X'}[2]).$
\begin{remark}
    Let $X/S$ be lci. Consider the Zariski sheaf on $X$ that assigns to $U \subset X$ a groupoid of lifts $\tilde{U} \to W_2(S)$ of $U/S$ to $W_2(S).$ Recall the obstruction $\ob_{U/W_2(S)}: L_{U/S} \to f^*L_{S/W_2(S)}[1] \simeq \nu_* \cO_{U'}[2]$ where $\nu: U' \to U$ is the canonical map. Assume $U$ is affine, then $\ob_{U/W_2(S)}=0$. Indeed, $U'$ is affine and $\cH^i(T_{U'/S})=0$ for $i>1$, thus $H^2(U', T_{U'/S})=0.$
\end{remark}
\begin{lemma}
    For any flat lift $\tilde{S} \to \bZ/p^2$ of $S$, the obstruction class $\ob_{X'/\tilde{S}}$ is equal to $\ob_{X/W_2(S)}$.
\end{lemma}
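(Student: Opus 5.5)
The plan is to identify both lifting problems with one square-zero problem. Then the obstruction classes agree by functoriality of the cotangent-complex obstruction theory.

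First, both classes live in the same space. By flat base change, $\ob_{X'/\tilde{S}}$ is a point of $\Map_{\cO_{X'}}(L_{X'/S}, \cO_{X'}[2])$, because the ideal of $S \mono \tilde{S}$ is $p\cO_{\tilde S}\simeq \cO_S$, which pulls back to $\cO_{X'}$. The class $\ob_{X/W_2(S)}$ lies in the same space via the adjunction and base-change identification recalled just before the statement.

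Next, relate the two thickenings. Since $\tilde{S}$ is flat over $\bZ/p^2$ there is a canonical map $c:\tilde{S}\to W_2(S)$. For a $\bZ/p^2$-algebra $\tilde R$ with $R=\tilde R/p$, the map $W_2(R)\to\tilde R$ sends $(a_0,a_1)\mapsto \tilde a_0^p+p\tilde a_1$. Modulo $p$ this map is $F_S$, and on ideals it is the identity of $F_*\cO_S\simeq p\tilde{\cO}_S$ once we identify $p\tilde{\cO}_S\simeq\cO_S$. So $c$ is a morphism of square-zero thickenings
\[(S\mono \tilde S)\to (S\mono W_2(S))\]
lying over $F_S:S\to S$, and it induces an isomorphism on ideals in the appropriate sense: $F_S^*(F_{S,*}\cO_S)\to\cO_S$ is adjoint to the identity. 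Base change therefore gives a map of groupoids
\[\{\text{lifts of }X/W_2(S)\}\to\{\text{lifts of }X'/\tilde S\},\qquad \tilde X\mapsto \tilde X\times_{W_2(S)}\tilde S,\]
which is exactly \Cref{rem:lift-to-witt-is-stronger}.

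The key step is functoriality of Illusie's obstruction class, which I expect to be the main obstacle. For a square-zero extension $S\mono \tilde T$ with ideal $I$ and flat $X\to S$, the obstruction is the composite
\[L_{X/S}\to f^*L_{S/\tilde T}[1]\to f^*I[2].\]
This composite is natural in maps of thickenings and compatible with base change $X'=X\times_{S,F_S}S$. Applied to $c$, this says that $\ob_{X'/\tilde S}$ is the image of $\ob_{X/W_2(S)}$ under the composite
\[\Map(L_{X/S},\nu_*\cO_{X'}[2])\simeq\Map(L_{X'/S},\cO_{X'}[2])\to\Map(L_{X'/S},\cO_{X'}[2]),\]
where the last map is induced by the map on ideals. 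The work is checking that this last map is the identity. That reduces to the claim that $L_{S/\tilde S}\to F_S^*L_{S/W_2(S)}$ induces the identity on $\tau_{\le 1}$, i.e. on $I/I^2$, under the identifications above. This is the explicit formula: $V(a)=(0,a)\mapsto pa$. The remaining bookkeeping concerns the transitivity triangles, as in the preceding remarks. If it is cleaner, I would argue universally: reduce Zariski-locally to a class of affine $X$ and compare at the level of the transitivity triangles $f^*L_{S}\to L_X\to L_{X/S}$ relative to each base, using the fact, noted above, that $dF_S=0$ kills the cross terms.
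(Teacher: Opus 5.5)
Your proposal is correct and is essentially the paper's argument: you use functoriality of the obstruction class along the morphism of thickenings $\tilde S\to W_2(S)$ lying over $F_S$, and reduce to the local computation $V(a)\mapsto pa$, which shows that $dF\colon L_{S/W_2(S)}\to F_{S,*}L_{S/\tilde S}$ is an isomorphism on $\cH^{-1}$. One small slip: the induced map of cotangent complexes goes $F_S^*L_{S/W_2(S)}\to L_{S/\tilde S}$ (the adjoint of the paper's $dF$), not in the direction you wrote, and on $\cH^{-1}$ it is only adjoint to the identity rather than equal to it, as you correctly said earlier.
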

\begin{proof}
    Recall obstruction class $\ob_{X'/\tilde{S}}: L_{X'/S} \to f'^*L_{S/\tilde{S}}[1]$, which corresponds to $L_{X'/S} \to \nu_*f'^*L_{S/\tilde{S}}[1] \simeq f^*F_{S, *}L_{S/\tilde{S}}[1]$ by adjunction for $\nu: X' \to X.$ Note the canonical map $\tilde{S} \to W_2(S)$ induces $dF_{S, \tilde{S}/W_2(S)}:L_{S/W_2(S)} \to F_{S, *} L_{S/\tilde{S}}$, the local computation shows it is an isomorphism on $\cH^{-1}.$ Therefore, composing $\ob_{X/W_2(S)}: L_{X/S} \to f^*L_{S/W_2(S)}[1]$ with $dF_{S, \tilde{S}/W_2(S)}[1]$ we obtain $\ob_{X'/S}$. 
\end{proof}

The main result of this subsection is the following.
\begin{lemma}\label{rem:splittings-push=lifts-to-witt}
    Let $X/S$ be a representable quasi-syntomic map of algebraic stacks over $\bF_p$. The fppf $T_{X'/S}[1]$-torsor $\nu_{X/S}^{\gamma}: (X/S)^{dR, \gamma} \to X'$ is equivalent to the torsor of liftings of $X$ to $W_2(S).$
\end{lemma}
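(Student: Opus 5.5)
The plan is to write down an explicit map of $X'$-stacks from the stack of liftings to $(X/S)^{dR,\gamma}$, check that it is equivariant for the two $T_{X'/S}[1]$-actions, and then use the standard fact that an equivariant map of fppf torsors under the same group is an equivalence. Let $\mathcal{L}$ be the sheaf on $X'$ (more precisely, on $X'_{\mathrm{qsyn}}$) of lifts of $X$ to $W_2(S)$. The preliminary discussion shows it is a quasi-torsor for $\Map_{\cO_{X'}}(L_{X'/S}, \cO_{X'}[1])$, that is, for $T_{X'/S}[1]$. The remark on local vanishing of $\ob_{U/W_2(S)}$ shows it is locally nonempty, at least for schemes. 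By \Cref{torsor-for-stacks} and the pushout remark, $(X/S)^{dR,\gamma}\to X'$ is an fppf torsor for the same group.

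\emph{Construction of the map.} First I reduce to affines $S=\Spec(R)$, $X=\Spec(A)$, using Zariski/smooth descent as in \Cref{torsor-for-stacks} to pass from schemes to representable maps of stacks. Fix a test ring $x\colon R\to B$ together with a point $y\colon F_R^*A\to B$ of $X'$. A point of $(X/S)^{dR,\gamma}$ over $y$ is an $R$-algebra map $A\to\bG_a^{dR,\gamma}(B)$ lifting $y$. By \Cref{witt-vectors}, we have $W_2(B)\simeq B\times_{\bG_a^{dR,\gamma}(B)}B$, where the two maps are $c_B$ and $\pi_B^\gamma$. By \Cref{rem:obvious-splittig-not-R-linear}, the map $c_B$ is $W_2(R)$-linear but not $R$-linear. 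So I would use this fiber square to translate the datum. An $R$-map $A\to\bG_a^{dR,\gamma}(B)$ over $y$ should correspond to a flat $W_2(R)\otimes B$-type lift of $A\otimes_{R,x}B$: concretely, to a flat lift $\tilde A_B$ of $A_B$ over the square-zero thickening $B\times_{\bG_a^{dR,\gamma}(B)}(\ldots)$ obtained by pulling $W_2(R)\to R$ back along $x$. The cleanest route is to use deformation theory directly. The square-zero extension $\bG_a^{dR,\gamma}(B)\to F_*B$ with fiber $F_*B[1]$ gives the torsor structure. I would then identify the resulting classifying map $X'\to B_{X'}(T_{X'/S}[1])$ with the deformation class of $X$ along $S\hookrightarrow W_2(S)$, whose ideal is $F_*\cO_S$ and whose class $\ob_{X/W_2(S)}$ lies in $\Map(L_{X'/S},\cO_{X'}[2])$.

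\emph{Identifying the classes.} Both torsors are classified by points of $\Map(L_{X'/S},\cO_{X'}[2])$, so it suffices to match these classes functorially. For $(X/S)^{dR,\gamma}$, the class is the obstruction to lifting $y$ through the square-zero extension $\bG_a^{dR,\gamma}(-)\to F_*(-)$ as $R$-algebras. Via \Cref{witt-vectors}, the $R$-algebra structure on $\bG_a^{dR,\gamma}(B)$ is the pushout of $W_2(R)\to R$ along the quasi-ideal map of \Cref{def:Gadrgamma}. Hence the extension class of $R\to\bG_a^{dR,\gamma}$ is the image of the class of $W_2(R)\to R$, namely $L_{R/W_2(R)}\simeq F_*R[1]$. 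Naturality of obstruction classes under pushout of square-zero extensions, combined with the base change $L_{X'/S}=\rho^*L_{X/S}$ and the adjunction used in the preliminary remark, then gives equality with $\ob_{X/W_2(S)}$. Upgrading from equality of classes to an equivalence of torsors, rather than just an isomorphism, requires doing this at the level of the spaces of null-homotopies. So I would phrase both sides as fibers of the same map $X'\to \Map(L_{X'/S},\cO[2])$.

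\emph{Main obstacle.} The hardest step is the compatibility in the previous paragraph. One has to verify that the $R$-algebra structure on $\bG_a^{dR,\gamma}(B)$ from \Cref{def:Gadrgamma} really is the pushout of $W_2(R)\to R$ in a way that matches the $T_{X'/S}[1]$-actions, including the Frobenius twist that turns $X$-data into $X'$-data. I would check this first in the universal case $S=\bA^1$, $X=\bA^n$ (or $R=\bF_p[t]$), where $W_2$ and all maps are explicit. I would then propagate to general quasi-syntomic $X/S$ by functoriality and quasi-syntomic descent, using that both constructions commute with Tor-independent limits as in \Cref{dR-commutes-colimits}.
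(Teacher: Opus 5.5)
Your argument is correct in outline, but it takes a different route from the paper. The paper never computes a classifying class. It uses that $c$ is $W_2(R)$-linear (\Cref{rem:obvious-splittig-not-R-linear}): for a test ring $B$, $\Spec(\bG_a^{dR,\gamma}(B))$ is then, as a $W_2(S)$-scheme, the split square-zero thickening of $\Spec(F_*B)$ by $F_*B[1]$, while it remains an $S$-scheme via $\pi^\gamma$. Flatness of $\tilde X$ gives $\Map_S(\Spec(\bG_a^{dR,\gamma}(B)),X)\simeq\Map_{W_2(S)}(\Spec(\bG_a^{dR,\gamma}(B)),\tilde X)$. Composing a point of $X'$ with the retraction onto $\Spec(F_*B)$ then defines the section $s_{\tilde X}$ attached to $\tilde X$.

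You instead identify both torsors as spaces of null-homotopies of classes in $\Map(L_{X'/S},\cO_{X'}[2])$, and match the classes via the transitivity triangle for $W_2(R)\to R\to A$. The obstruction to lifting $y$ is $L_{A/R}\to L_{R/W_2(R)}\otimes_R A[1]\to F_*R\otimes_R A[2]\to F_*B[2]$, which is $y$ applied to $\ob_{X/W_2(S)}$.

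Both routes rest on the same input. The quasi-ideal map of \Cref{def:Gadrgamma} exhibits $\bG_a^{dR,\gamma}(B)$ as a split $W_2(R)$-algebra whose $R$-structure is the null-homotopy $x\colon F_*R\to F_*B$ of the ideal $V(F_*R)$. This is a direct check for all $R\to B$, so your ``main obstacle'' needs no reduction to $\bA^n/\bA^1$. Your route makes the compatibility of the two $T_{X'/S}[1]$-actions automatic; the paper only constructs the map and leaves its equivariance implicit. The paper's route produces an explicit section for each lift, which is exactly what is used afterwards in \Cref{lm:torsor=lifts-of-frob} and \Cref{lem:connection-of-global-stacky}. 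That section is hard to read off from a comparison of obstruction classes.

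A few things to clean up.

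\emph{The ``construction of the map'' paragraph.} Drop it. The equivalence is one of torsors over $X'$, so lifts correspond to sections over \'etale opens of $X'$. A point over an arbitrary $y\colon\Spec B\to X'$ is not a lift of $A\otimes_{R,x}B$ to any thickening.

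\emph{The site for the stack of lifts.} It lives on the \'etale site of $X$ (equivalently of $X'$), not on $X'_{\mathrm{qsyn}}$. You compare it with the fppf torsor using that quasi-coherent cohomology does not depend on the topology.

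\emph{The cotangent complex $L_{R/W_2(R)}$.} The identification $L_{R/W_2(R)}\simeq F_*R[1]$ is false in general; for example, $L_{\bF_p/(\bZ/p^2)}\simeq\bF_p[1]\oplus\bF_p[2]$. You only need $\tau_{\le 1}L_{R/W_2(R)}\simeq F_*R[1]$. This suffices because the relevant target $F_*B[1]$ is $1$-truncated.

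\emph{Propagation from $\bA^n$.} Propagating by Tor-independent limits would not work on the lifting side, since lifts of $X$ are not assembled from lifts of the pieces. The step is unnecessary anyway: once the ring-level statement is known, naturality of the transitivity triangle handles every quasi-syntomic $X$.
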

\begin{proof}
  Given $\tilde{X}/W_2(S),$ our goal is to produce $$X'(R)=\Map_{S}(\Spec (F_*R), X) \xrightarrow{s_{\tilde{X}}} \Map_S(\Spec(\bG_a^{\gamma}(R)), X) = (X/S)^{dR, \gamma}(R).$$ Note $\Map_S(\Spec(\bG_a^{\gamma}(R)), X) = \Map_{W_2(S)}(\Spec (\bG_a^{\gamma}(R)), \tilde{X})$ and $\Spec(\bG_a^{\gamma}(R))$, when viewed as $W_2(S)$-scheme, is the split square-zero deformation of $\Spec(F_*R)$ with the ideal $F_*R[1].$ Thus, $\tilde{X}/W_2(S)$ identifies the space $(X/S)^{dR}(R)$ with $\Map_{S}(\Spec (F_*R \oplus F_*R[1]), X)$ and we declare $s_{\tilde{X}}$ to be the map sending $x: \Spec(F_*R) \to X$ to $\pi \circ x$ where $\pi: \Spec (F_*R \oplus F_*R[1]) \to \Spec(F_*R)$ is the canonical map. It is clearly the section since $F_*R \xrightarrow{\pi} F_*R \oplus F_*R [1] \to F_*R$ is the identity.
\end{proof}

\begin{corollary}
    Let $X/S$ be a smooth morphism of algebraic stacks over $\bF_p.$ Splitting of $\cO_{X} \to \tau^{\le 1}\dR_{X/S} \to \Omega^1_{X'/S}[-1]$ in $\cD_{qc}(X')$ is equivalent to lifting $X/S$ to $W_2(S).$
\end{corollary}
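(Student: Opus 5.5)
We want: for smooth $X/S$, splittings of the truncation $\tau^{\le 1}F_{X/S,*}\dR_{X/S}$, i.e.\ of the extension $\cO_{X'} \to \tau^{\le 1}F_{X/S,*}\dR_{X/S} \to \Omega^1_{X'/S}[-1]$ (via the Cartier isomorphism $\cH^1 \simeq \Omega^1_{X'/S}$), are equivalent to lifts of $X/S$ to $W_2(S)$. The plan is to factor through \Cref{rem:splittings-push=lifts-to-witt}: lifts of $X$ to $W_2(S)$ are equivalent to sections of the $T_{X'/S}[1]$-torsor $\nu^{\gamma}_{X/S}\colon (X/S)^{dR,\gamma}\to X'$. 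It then remains to identify sections of $\nu^\gamma_{X/S}$ with splittings of the extension above.

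First I compute the affine model of $(X/S)^{dR,\gamma}$. By \Cref{dR-is-affine-relative}, $(X/S)^{dR}=\underline{\Spec}_{X'}(F_{X/S,*}\dR_{X/S})$. By the remark before \Cref{rem:splittings-push=lifts-to-witt}, $(X/S)^{dR,\gamma}$ is the pushout of this $T^\sharp_{X'/S}[1]$-torsor along $T^\sharp_{X'/S}\to T_{X'/S}$. Since $T_{X'/S}[1]$ has no PD-powers, the pushout should be relatively affine with functions $\tau^{\le 1}F_{X/S,*}\dR_{X/S}$, regarded as a square-zero extension of $\cO_{X'}$ by $\Omega^1_{X'/S}[-1]$. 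To check this I compare the universal cases. For the trivial torsors, $\cO(B_{X'}T^\sharp)=\bigoplus\Omega^i[-i]$ with its divided-power algebra structure, while $\cO(B_{X'}T_{X'/S})=\cO_{X'}\oplus\Omega^1_{X'/S}[-1]$, a trivial square-zero extension; the latter is exactly the degree $\le 1$ truncation of the former. Because $\nu_{X/S}$ is fppf-locally trivial (\Cref{torsor-structure}) and the truncation commutes with flat base change, I expect $\nu^\gamma_{X/S,*}\cO\simeq \tau^{\le 1}F_{X/S,*}\dR_{X/S}$. Here $\tau^{\le1}$ carries the derived-algebra structure of the square-zero extension classified by the connecting map $\Omega^1_{X'/S}[-1]\to\cO_{X'}[1]$.

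Next I compare sections with splittings. A section of an affine $X'$-stack $\underline{\Spec}(\cA)$ is an augmentation $\cA\to\cO_{X'}$ of derived algebras. For a square-zero extension $\cA=\cO_{X'}\oplus_\eta M$, augmentations are equivalent to null-homotopies of the classifying map $\eta$, which are the same as splittings of $\cO_{X'}\to\cA\to M$ as $\cO_{X'}$-modules. I would verify this by noting that both sides form torsors under $\Map(\Omega^1_{X'/S}[-1],\cO_{X'})=\Map(L_{X'/S},\cO_{X'}[1])$, compatibly with the $T_{X'/S}[1]$-action. The sections and the splittings are then two torsors over the same group with an equivariant map between them, hence equivalent. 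Composing with \Cref{rem:splittings-push=lifts-to-witt} gives the corollary, first for schemes; for smooth representable algebraic stacks I would descend along a smooth cover as in \Cref{torsor-for-stacks}.

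The main obstacle is the first step, identifying $\cO_{(X/S)^{dR,\gamma}}$ with $\tau^{\le1}F_{X/S,*}\dR_{X/S}$ as derived algebras. Specifically, one must show that the pushout along $T^\sharp\to T$ corresponds to truncation rather than to a different square-zero structure. I would prove this first on $B_{X'}T^\sharp\to B_{X'}T$ by Cartier duality and then transport the identification along the torsor action. Only the module-level splitting statement is needed, which keeps the algebra-structure bookkeeping limited to degree $\le 1$.
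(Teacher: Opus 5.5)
\textbf{There is a gap in your first step, and the rest of your argument depends on it.} The identification $\nu^{\gamma}_{X/S,*}\cO\simeq\tau^{\le 1}F_{X/S,*}\dR_{X/S}$ is false in characteristic $p$. It already fails for the split torsor: $R\Gamma(B\bG_a,\cO)$ over $\bF_p$ is not $\bF_p\oplus\bF_p[-1]$.

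\begin{itemize}
\item Its $H^1=\Hom(\bG_a,\bG_a)$ is spanned by the additive characters $t,t^p,t^{p^2},\dots$.
\item It has nonzero cohomology in every degree. Via \Cref{thm:BG_a-loaclly-small-modules} it is $\operatorname{Ext}$ of the trivial module over $\bF_p[x_1,x_2,\dots]/(x_i^p)$.
\end{itemize}

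Your slogan ``no divided powers, so only degree $\le 1$ survives'' holds only in characteristic zero. Under $B\bG_a^\sharp\to B\bG_a$ the classes $t^{p^i}$ with $i\ge 1$ pull back to $0$, since $t^p=p!\,\gamma_p(t)$. So pushing out along $T^\sharp_{X'/S}\to T_{X'/S}$ does not correspond to truncating the ring of functions. Consequently, sections of $\nu^\gamma_{X/S}$ are not augmentations of $\tau^{\le 1}F_{X/S,*}\dR_{X/S}$.

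The algebra structure you posit is also incoherent on its own terms. A square-zero extension of $\cO_{X'}$ by $\Omega^1_{X'/S}[-1]$ in $\cO_{X'}$-algebras comes with an $\cO_{X'}$-algebra map to $\cO_{X'}$. That map already splits the unit $\cO_{X'}\to\tau^{\le1}F_{X/S,*}\dR_{X/S}$, which is exactly what fails when $X$ has no lift to $W_2(S)$. So your second step has no equivariant map between the two torsors to work with.

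\textbf{Your second step is the paper's argument.} Lifts to $W_2(S)$ (equivalently, sections of $\nu^\gamma_{X/S}$, by \Cref{rem:splittings-push=lifts-to-witt}) and splittings of $\tau^{\le1}F_{X/S,*}\dR_{X/S}$ are both torsors under $T_{X'/S}[1]$. Hence an equivariant map between them is an equivalence. The paper simply asserts that a section of $\nu^\gamma_{X/S}$ yields a splitting $\Omega^1_{X'/S}[-1]\to\tau^{\le 1}F_{X/S,*}\dR_{X/S}$, without computing $\nu^\gamma_{X/S,*}\cO$.

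Here is one way to produce that map.
\begin{enumerate}
\item A section gives $(X/S)^{dR,\gamma}\simeq B_{X'}T_{X'/S}$, hence a map $(X/S)^{dR}\to B_{X'}T_{X'/S}$.
\item Let $\rho\colon B_{X'}T_{X'/S}\to X'$ be the projection. Pull back along the map from (1) the tautological degree-one class $\Omega^1_{X'/S}[-1]\to\rho_*\cO_{B_{X'}T_{X'/S}}$. This gives a map $\Omega^1_{X'/S}[-1]\to F_{X/S,*}\dR_{X/S}$.
\item This map factors uniquely through $\tau^{\le 1}$, because maps from an object concentrated in degree $1$ to $\tau^{\ge 2}$ vanish.
\item It is a splitting. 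This can be checked locally, where the map is $B_{X'}T^\sharp_{X'/S}\to B_{X'}T_{X'/S}$, and the linear class pulls back to the linear class generating $\cH^1\simeq\Omega^1_{X'/S}$.
\item Changing the section by a $T_{X'/S}$-torsor changes the splitting by the class of that torsor, which gives equivariance.
\end{enumerate}
With this construction replacing your first step, your argument goes through.
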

\begin{proof}
    Indeed, from \Cref{rem:splittings-push=lifts-to-witt} we learn that a groupoid of lifts of $X/S$ to $W_2(S)$ is equivalent to the groupoid of splitting of $(X/S)^{dR} \to X'.$ The groupoid of splittings of $\tau^{\le 1}\dR_{X/S}$, when non-empty, is a torsor for $T_{X'/S}$, any splitting of $\nu_{X/S}^\gamma$ gives rise to a splitting $\Omega^1_{X'/S}[-1] \to \tau^{\le 1}\dR_{X/S}$ and thus gives rise to an equivalence of groupoids.
\end{proof}

Now \Cref{rem:splittings-push=lifts-to-witt} allows us to construct an $F_{X/S}^*T_{X'/S}$-torsor on $X$ with a flat connection, which plays a central role.
\begin{construction}\label{cons:torsor}
    Let $X/S$ be an lci family and $\tilde{X}/W_2(S)$ a lift of $X/S.$ Then $$X \xrightarrow{\pi_{X/S}^{\gamma}} (X/S)^{dR, \gamma} \simeq B_{X'}(T_{X'/S})$$ defines a $F_{X/S}^*T_{X'/S}$-torsor on $X.$ We will denote it by $\cL_{\tilde{X}}.$ Note it comes equipped with a flat connection since $\pi_{X/S}^{\gamma}$ naturally factors as $X \xrightarrow{\pi_{X/S}} (X/S)^{dR} \to (X/S)^{dR, \gamma}.$ We denote by $\cL^c_{\tilde{X}}$ the corresponding $F_{X/S}^*T_{X'/S}$-torsor on $(X/S)^{dR}.$ 
\end{construction}

First, we identify the underlying torsor of $\cL_{\tilde{X}}$ with a torsor of strong Frobenius lifts.

\begin{remark}\label{lifts-of-id}
    Let $f: X \to S$ be an lci family. A lift $\tilde{X}/W_2(S)$ is equivalent to a map $c_{\tilde{X}}: L_{X/W_2(S)} \to \nu_*\cO_{X'}[1]$ where $\nu: X \to X'$ is the canonical map. Then $F_{X/S}: X \to X'$ induces $\nu_*\cO_{X'} \to F_{X, *}\cO_{X}$ and we get $L_{X/W_2(X)} \xrightarrow{c_{\tilde{X}}} \nu_*\cO_{X'}[1] \to F_{X, *}\cO_{X}[1]$. Note the square-zero extension $X \to W_2(X)$ has an ideal $F_{X, *}\cO_{X}$ and thus defines $c_{W_2(X)}: L_{X/W_2(S)} \to F_{X, *}\cO_{X}[1].$ Note $c_{\tilde{X}} - c_{W_2(X)}: L_{X/W_2(S)} \to F_{X, *}\cO_X[1]$ naturally factors through $L_{X/W_2(S)} \to L_{X/S}.$ Thus, we obtain $F_{X/S}^*T_{X'/S}$-torsor which measures the difference between two square zero extensions. In other words, it is the torsor of deformations of the identity map on $X$ to a $W_2(S)$-linear map $W_2(X) \to \tilde{X}$.
\end{remark}

\begin{lemma}\label{lm:torsor=lifts-of-frob}
    Let $X/S$ be an lci family of $\bF_p$-schemes. A lift $\tilde{X}/W_2(S)$ defines a section $s_{\tilde{X}}: X' \to (X/S)^{dR, \gamma}$ which defines a $F_{X/S}^*T_{X'/S}$-torsor on $X$. This torsor is identified with the torsor from \Cref{lifts-of-id}.
\end{lemma}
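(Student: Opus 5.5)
The plan is to compute the torsor $\cL_{\tilde{X}}$ pointwise. Recall it is classified by $X \xrightarrow{\pi^{\gamma}_{X/S}} (X/S)^{dR,\gamma} \simeq B_{X'}(T_{X'/S})$, where the equivalence uses the section $s_{\tilde{X}}$ of \Cref{rem:splittings-push=lifts-to-witt}. So a trivialization of $\cL_{\tilde{X}}$ over $x\colon \Spec(R)\to X$ is the same as a homotopy between two points of $(X/S)^{dR,\gamma}(R)$. The first point is $\pi^{\gamma}_{X/S}(x)$. The second is $s_{\tilde{X}}(F_{X/S}\circ x)$. I will show that both points live in $\Map_{W_2(S)}(\Spec(\bG_a^{dR,\gamma}(R)),\tilde{X})$ and that the space of homotopies between them is the space of $W_2(S)$-lifts $\Spec(W_2(R))\to\tilde{X}$ of $x$. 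Equivalently, these are the local strong Frobenius lifts, which is the torsor of \Cref{lifts-of-id}.

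\textbf{Step 1: rewrite the fiber product.} By \Cref{witt-vectors}, $W_2(R)\simeq R\times_{\bG_a^{dR,\gamma}(R)} R$. The two maps are $\pi^{\gamma}_R$ and the canonical $W_2$-linear map $c_R$ of \Cref{rem:obvious-splittig-not-R-linear}. I will make this $W_2(S)$-linear, with $R$ viewed over $W_2(S)$ in the two relevant ways: once via $x$, and once via the Frobenius twist $F_{X/S}\circ x$ pushed along $F_*$.

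Applying $\Map_{W_2(S)}(\Spec(-),\tilde{X})$ gives a pullback square. The space of $W_2(S)$-maps $\Spec(W_2(R))\to\tilde{X}$ equals the fiber product of $\Map(\Spec(R),\tilde X)$ with itself over $\Map(\Spec(\bG_a^{dR,\gamma}(R)),\tilde{X})$. This uses that $\tilde X$ is a (derived) stack and that $\Spec$ turns this fiber product of rings into a pushout of affine schemes; the pushout is preserved since it is along closed immersions, so one may check it by deformation theory.

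\textbf{Step 2: identify the two images.} Restricting along one leg gives the image of $x$ under $\pi^{\gamma}$, using the identification $\Map_S(-,X)=\Map_{W_2(S)}(-,\tilde X)$ for $\bF_p$-schemes over $S$. Restricting along the other leg gives $s_{\tilde{X}}(x')$, where $x'=F_{X/S}\circ x$. This holds because $s_{\tilde X}$ was defined by precomposing with the retraction $\Spec(F_*R\oplus F_*R[1])\to\Spec(F_*R)$, and $c_R$ is exactly that split square-zero structure.

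Taking the fiber over $x$ and $x'$, the space of homotopies between $\pi^\gamma(x)$ and $s_{\tilde X}(x')$ becomes the space of maps $W_2(R)\to\tilde X$ restricting to $x$ on $R$. Here I use the natural embedding $\Spec(R)\hookrightarrow \Spec(W_2(R))$, and I use that the condition on the second leg is automatic once the first is imposed. This is precisely the set of local deformations of $\mathrm{id}$ in \Cref{lifts-of-id}.

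\textbf{Step 3: match torsor structures.} Both group actions come from deformation theory of the square-zero extension $\Spec(R)\to\Spec(W_2(R))$ with ideal $F_*R$. By \Cref{p-curv-equivariant-relative}-type functoriality, both reduce to the $T_{X'/S}$-action through $\Map(L_{X/S},F_*\cO[1])$. So the identification is equivariant, and naturality in $R$ globalizes it.

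The main obstacle is Step 2. There I must check that the $W_2(S)$-algebra structures on $\bG_a^{dR,\gamma}(R)$ coming from the two legs really agree, using the quasi-ideal diagram in \Cref{rem:obvious-splittig-not-R-linear}. I must also check that the second leg lands on the $X'$-point $F_{X/S}\circ x$ rather than on $x$. I would verify both on free $\bF_p$-algebras, where everything is discrete, by direct diagram chasing.
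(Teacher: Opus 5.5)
Your proposal is correct and is essentially the paper's argument. The paper also identifies trivializations of $\cL_{\tilde X}$ with homotopies $\pi^{\gamma}_{X/S}\simeq s_{\tilde X}\circ F_{X/S}$, converts these into maps out of $W_2$ via \Cref{witt-vectors}, identifies the second leg using $c$ and the factorization of $F_A$ through $A'$, and concludes with $\Map_{W_2(S)}(-,\tilde X)\simeq\Map_S(-,X)$ on $S$-schemes. The differences are cosmetic: the paper works at the universal affine point $\tilde A\to W_2(A)$ rather than at test points $\Spec(W_2(R))\to\tilde X$ (the Greenberg-transform picture it records right after the lemma), and it leaves the equivariance of your Step 3 implicit. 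One small correction to Step 1: only the leg $c_R$ is a closed immersion, since $\pi^{\gamma}_R$ is Frobenius on $\pi_0$; that still suffices, or you can simply reduce to affine $\tilde X$, since all four spectra have the same underlying space.
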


\begin{proof}
    Recall from \Cref{cons:torsor} the $F_{X/S}^*T_{X'/S}$-torsor $\cL_{\tilde{X}} \simeq X \times_{(X/S)^{dR, \gamma}} X'$. In particular, the space of splittings is identified with the path space between $\pi_{X/S}^{\gamma}$ and $s_{\tilde{X}} \circ F_{X/S}$ in $(X/S)^{dR}(X).$ Let us now identify it with the set of maps $\tilde{X} \to W_2(X)$ deforming the identity. Let $S=\Spec(R), X=\Spec(A).$ Let $s: X' \to (X/S)^{dR, \gamma}$ be a section. Explicitly, $(X/S)^{dR}(X')=\Map_{R}(A, \bG_a^{\gamma}(A')),$ i.e., $s$ corresponds to a map $A \to \bG_a^{\gamma}(A')$ of $R$-algebras whose post-composition with $\bG_a^{\gamma}(A') \to F_*A'$ is the canonical map $A \to F_*A'.$ Recall from \Cref{witt-vectors} that a map $\tilde{A} \to W_2(A)$ is the same as a homotopy filling in the square
\[\begin{tikzcd}
	& A && {\bG_a^{\gamma}(A)} \\
	{\tilde{A}} && A
	\arrow["{\pi_{A}^{\gamma}}", from=1-2, to=1-4]
	\arrow["{\operatorname{pr}}", from=2-1, to=1-2]
	\arrow["{F_A \circ \operatorname{pr}}"', from=2-1, to=2-3]
	\arrow["{c_A}"', from=2-3, to=1-4].
\end{tikzcd}\]
Using the factorization $F_A: A \xrightarrow{\nu_A} A' \xrightarrow{F_{A/R}} A,$ we obtain $c_A \circ F_A \circ \operatorname{pr} \simeq \bG_a^{\gamma}(F_{A/R}) \circ c_{A'} \circ \nu_A \circ \operatorname{pr}$. Recall $\tilde{A}$ comes equipped with a canonical homotopy $s \circ \operatorname{pr} \simeq c_{A'} \circ \nu_{A} \circ \operatorname{pr}$. Thus, we learn that giving a map $\tilde{A} \to W_2(A)$ is equivalent to giving a homotopy $\pi_{A}^{\gamma} \circ \operatorname{pr} \simeq \bG_a^{\gamma}(F_{A/R}) \circ s \circ \operatorname{pr}$. Since $\Map_{W_2(R)}(\tilde{A}, \bG_a^{\gamma}(A)) \simeq \Map_{R}(A, \bG_a^{\gamma}(A)),$ the claim follows.
\end{proof}
\begin{remark}
    Another way to see the torsor of strong Frobenius lifts is the following. Given $\tilde{X}/W_2(S),$ one defines its Greenberg transform $\tilde{X}^{W_2}$ which is an $S$-scheme sending $T \to S$ to $\Map_{W_2(S)}(W_2(T), \tilde{X}).$ There is a natural map $\tilde{X}^{W_2} \to X$ and, by deformation theory, it exhibits the source as a $F_{X/S}^*T_{X'/S}$-torsor over the target. Note if $s: X \to \tilde{X}^{W_2}$ is a section, then it gives rise to a point in $\tilde{X}^{W_2}(X),$ i.e., to a map $W_2(X) \to \tilde{X}$ which is automatically a strong Frobenius lift.
\end{remark}

We would like to compare the torsor from \Cref{cons:torsor} with the torsor of Ogus–Vologodsky constructed in \cite[Theorem 1.1]{MR2373230}, denoted by $\cL_{\cX}$. First, they depend on different data: $\cL_{\tilde{X}}$ only depends on a lift $\tilde{X}$ of $X$ to $W_2(S)$. The torsor $\cL_{\cX}$ depends on a lift $\tilde{S}$ of $S$ to $\bZ/p^2$ and a further lift of $X'$ to $\tilde{S}$. However, any such lift $\tilde{S}$ identifies the groupoids
\[
\begin{tikzcd}
	{\{\text{Lifts of } X \text{ to } W_2(S)\}} &&& {\{\text{Lifts of } X' \text{ to } \tilde{S}\}}
	\arrow[dashed, tail reversed, from=1-1, to=1-4]
\end{tikzcd}
\]
since both are torsors for $F_{X/S}^*T_{X'/S}$, and there is a natural map which sends a lift $\tilde{X}/W_2(S)$ to $\tilde{X'} := \tilde{X} \times_{\tilde{S}} W_2(S)$, where $\tilde{S} \to W_2(S)$ is the natural map.

Thus, one may ask whether the torsors $\cL_{\tilde{X}}$ and $\cL_{\cX}$ are isomorphic. The next remark explains that the underlying torsors are isomorphic. Later, we will explain that they are isomorphic as torsors with flat connections; see \Cref{lem:connection-of-global-stacky}.

\begin{remark}\label{iso-of-torsor-deformation}
    Let $X/S$ be a representable quasi-syntomic map of algebraic $\bF_p$-stacks. Let $\tilde{X}/W_2(S)$ be a lift of $X.$ In particular, we obtain a $F_{X/S}^*T_{X'/S}$-torsor on $(X/S)^{dR}$ and we denote its underlying torsor by $\cL_{\tilde{X}}$. Assume we are given $\tilde{S} \to \bZ/p^2$ a lift of $S$. This defines $\tilde{X}':= \tilde{X} \times_{W_2(S)} \tilde{S}$ a lift of $X'/S$ to $\tilde{S}.$ Let $\cL_{\cX}$ be the $F_{X/S}^*T_{X'/S}$-torsor associated with $\tilde{X'}/\tilde{S}$ as in \cite[Theorem 1.1]{MR2373230}. Assume we are given a lift $\tilde{X}_S \to \tilde{S}$ of $X$. Then $\cL_{\cX}$ is identified with the torsor of lifts of $F_{X/S}: X \to X'$ to a map $\tilde{X}_S \to \tilde{X}' := \tilde{X} \times_{W_2(S)} \tilde{S}$, see \cite[Theorem 1.1]{MR2373230}. Note there is a canonical map $\cL_{\tilde{X}} \to \cL_{\cX}$ of $F_{X/S}^*T_{X'/S}$-torsors. In particular, any such lift $\tilde{S}$ identifies $\cL_{\tilde{X}}$ with $\cL_{\cX}.$
\end{remark}

First, we compute the $p$-curvature of the torsor from \Cref{cons:torsor}. We need the following auxiliary lemma.

\begin{lemma}\label{lm:vertical-vector-fields-p-class-stack}
    Let $s: X \to B_X(E)$ be a map classifying an $E$-torsor $q: Y \to X.$ Denote $t: B_X(E^{\sharp}) \to B_X(E)$ to be the map induced by the canonical map $E^{\sharp} \to E.$ Then $t^*s_* \cO_X \in \cD_{qc}(B_X(E^\sharp))$ is identified with the pair\footnote{We recall that $\cD_{qc}(B_XE^{\sharp})$ is identified with the $\infty$-category of pairs: $(\cM, \theta)$ where $\cM \in \cD_{qc}(X)$ and $\theta: \cM \to \cM \otimes E^{\vee}$ a locally nilpotent map.} $(f_*\cO_Y, d)$ where $d: f_*\cO_Y \to f_*\cO_Y \otimes E^{\vee}$ is the de Rham differential under $f_*\cO_Y \otimes E^{\vee} \simeq f_* \Omega^1_{Y/X}.$ 
\end{lemma}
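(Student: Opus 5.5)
We will reduce to the universal case and then compute there. First, the statement is local on $X$ and compatible with base change along $X_1 \to X$ (pullback along $B_{X_1}(E^\sharp) \to B_X(E^\sharp)$ and flat base change for $s_*$ and $f_*$). So we may assume $X$ is affine and $E$ is trivial of rank $n$. Since $B_X(E) = B_X(\bV(E))$ and the classifying map $s$ corresponds to $Y$, we have the cartesian square
\[
\begin{tikzcd}
	Y & X \\
	X & B_X(E)
	\arrow["q", from=1-1, to=1-2]
	\arrow["q"', from=1-1, to=2-1]
	\arrow["s", from=1-2, to=2-2]
	\arrow["s"', from=2-1, to=2-2]
\end{tikzcd}
\]
Thus $s^*s_*\cO_X \simeq q_*\cO_Y$, and the underlying object of $t^*s_*\cO_X$, namely its pullback along $X \to B_X(E^\sharp)$, is $q_*\cO_Y$ with its natural $E$-equivariant structure. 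It remains to identify the Higgs field $\theta$.

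By Cartier duality, $\theta$ on $t^*\cM$ for $\cM \in \cD_{qc}(B_X(E))$ is obtained by restricting the $E$-action on the fiber to $E^\sharp$. Concretely, the $E$-comodule structure $\cM \to \cM \otimes \cO(E)$ is composed with $\cO(E) \to \cO(E^\sharp)$, and $\theta$ is the component of the coaction along the degree-one dual generators $E^\vee \subset \cO(E^\sharp)$. Equivalently, $\theta$ is the derivative of the action at the identity, i.e. the infinitesimal action of $\mathrm{Lie}(E^\sharp) = E$; note $E^\sharp \to E$ is an isomorphism on Lie algebras.

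For $\cM = q_*\cO_Y$, the $E$-action is given by translation along the fibers of $Y \to X$. Its infinitesimal version sends $v \in E$ to the vertical vector field $\partial_v$ on $Y$, through the canonical identification $T_{Y/X} \simeq q^*E$ for an $E$-torsor. Hence $\theta(g) = \sum_i \partial_{e_i}(g) \otimes e_i^\vee$, which is $dg$ under $\Omega^1_{Y/X} \simeq q^*E^\vee$, i.e. $f_*\cO_Y \otimes E^\vee \simeq f_*\Omega^1_{Y/X}$. To make this rigorous, we will verify it directly in the trivialized case $Y = X \times \bA^n$, using the standard coordinate computation of the type in \Cref{lm:closed-immersion-action}. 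There the coaction is $g(y) \mapsto g(y+a)$, and the linear term in the divided-power coordinates of $E^\sharp$ is $\sum_i \partial_{y_i} g \, a_i$. Finally, local nilpotence of $d$ on $q_*\cO_Y$, which is polynomial in the fibers, confirms that the pair lies in the stated category.

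The main obstacle is making sure the identification "Higgs field $=$ Lie algebra action of $E^\sharp$" from the Cartier-duality description of $\cD_{qc}(B_XE^\sharp)$ holds with the correct normalization and is compatible with gluing in a non-split torsor. We will first check this normalization on $B\bG_a^\sharp \to B\bG_a$ with $\cM = \cO(\bG_a)$, mirroring \Cref{rem:p-curv=action}. Gluing then follows because the formula $\theta = d$ is intrinsic and invariant under translations of $Y$.
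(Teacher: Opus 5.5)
Your route works once one step is repaired, and it differs from the paper's.

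\textbf{The error.} The step that carries the global content is wrong as written. The square with $s$ on both legs is not cartesian unless $Y$ is trivial. Indeed $X\times_{s,B_X(E),s}X$ is $\underline{\mathrm{Aut}}(Y)\simeq\bV(E)$, which has a section, so that square computes $s^*s_*\cO_X\simeq\Sym(E^\vee)$, not $q_*\cO_Y$.

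It is also not the pullback you need. The underlying object of $t^*\cM$ is its pullback along $X\xrightarrow{e^\sharp}B_X(E^\sharp)\xrightarrow{t}B_X(E)$, and $t\circ e^\sharp=e$ is the zero section, not $s$.

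\textbf{The fix.} The correct square is $Y\simeq X\times_{e,B_X(E),s}X$. Since $s$ is affine, base change gives $e^*s_*\cO_X\simeq q_*\cO_Y$ canonically. The descent datum along $e$ is the translation coaction of $\bV(E)$ on $Y$; equivalently, $s$ is the map $[Y/\bV(E)]\to[X/\bV(E)]$ induced by the equivariant map $q$.

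Your reduction to affine $X$ masks the slip, because there $s\simeq e$ only after choosing a trivialization, and that choice is exactly what your gluing worry is about. With the correct square the worry disappears:
\begin{itemize}
\item the underlying sheaf and its coaction are identified globally with no choices;
\item $q_*\cO_Y$ is discrete;
\item $\theta=d$ is then an equality of two maps $q_*\cO_Y\to q_*\cO_Y\otimes E^\vee$ of quasi-coherent sheaves, which your Taylor-expansion computation checks Zariski-locally.
\end{itemize}

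\textbf{A false side remark.} Drop the claim that $E^\sharp\to E$ is an isomorphism on Lie algebras. In characteristic $p$ the classes of $\gamma_{p^k}(v)$, $v\in E^\vee$, survive in $I/I^2$ for the augmentation ideal $I\subset\Gamma(E^\vee)$. So $\mathrm{Lie}(\bV(E)^\sharp)$ is not $E$; it is not even of finite rank. What you actually use is true: the linear dual of the coalgebra $\Gamma(E^\vee)$ is the completed symmetric algebra on $E$. Hence a comodule is determined by its $\Gamma^1(E^\vee)=E^\vee$-component, and $\Sym^1(E^\vee)\to\Gamma^1(E^\vee)$ is the identity.

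\textbf{Comparison with the paper.} The paper does not describe $s_*\cO_X$ as an equivariant sheaf.
\begin{itemize}
\item It computes the fiber product $X\times_{s,B_X(E),t}B_X(E^\sharp)$ as the relative de Rham stack $(Y/X)^{dR}$. This uses that $(-/S)^{dR}$ preserves limits, together with \Cref{rel-dR-of-BE}, giving $t^*s_*\cO_X\simeq g_*\cO_{(Y/X)^{dR}}$ for $g\colon(Y/X)^{dR}\to B_X(E^\sharp)$.
\item It then identifies $(Y/X)^{dR}\simeq[Y/\bV(E)^\sharp]$ via the PD-envelope $D_\Delta(Y\times_XY)\simeq Y\times\bV(E)^\sharp$ of the diagonal.
\item Finally it reads off the Higgs field as the infinitesimal $\bV(E)^\sharp$-translation.
\end{itemize}
Your argument uses only equivariant sheaves and Cartier duality, and it makes explicit the final coordinate check that the paper leaves implicit. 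The paper's version instead exhibits $X\times_{B_X(E)}B_X(E^\sharp)$ geometrically as $(Y/X)^{dR}$, in the same style as the de Rham-stack fiber-product computations it does next (compare \Cref{cor:obtain-E-sharp-torsor}).
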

\begin{proof}
Consider the following diagram
\[\begin{tikzcd}
	{(Y/X)^{dR}} && {(Y/S)^{dR}} && (X/S)^{dR} \\
	\\
	X && {(X/S)^{dR}} && {(B_X(E)/S)^{dR}} \\
	&& {B_X(E)}
	\arrow[from=1-1, to=1-3]
	\arrow[from=1-1, to=3-1]
	\arrow["\ulcorner"{anchor=center, pos=0.125}, draw=none, from=1-1, to=3-3]
	\arrow[from=1-3, to=1-5]
	\arrow["{f^{dR}}", from=1-3, to=3-3]
	\arrow["\ulcorner"{anchor=center, pos=0.125}, draw=none, from=1-3, to=3-5]
	\arrow["0"', from=1-5, to=3-5]
	\arrow["{\pi_{X/S}}"', from=3-1, to=3-3]
	\arrow["s"', from=3-1, to=4-3]
	\arrow["{s^{dR}}", from=3-3, to=3-5]
	\arrow["{\pi_{BE}}"', from=4-3, to=3-5]
\end{tikzcd}\]
where both squares are cartesian. Namely, the right square is cartesian because $(-/S)^{dR}$ preserves limits and the left square is cartesian by definition. Using the factorization of the bottom composition, we learn that
\[
(Y/X)^{dR} \simeq X \times_{B_X(E)} B_X(E^{\sharp}),
\]
where the maps are $s$ and $t$. Indeed, the fiber product of $\pi_{BE}$ and $0$ is $B_X(E^{\sharp})$, with the map $B_X(E^{\sharp}) \to B_X(E)$ being the canonical map $t$. In particular, $t^*s_*\cO_X$ is identified with $g_* \cO_{(Y/X)^{dR}}$ for $g \colon (Y/X)^{dR} \to B_X(E^{\sharp})$. Note that the $\bV(E^{\sharp})$-torsor classified by $g$ is $\pi_{Y/X} \colon Y \to (Y/X)^{dR}$. In particular, it follows that the underlying $\cO_X$-module of $g_* \cO_Y$ is $f_* \cO_X$. By descent along $X \to B_X(E^{\sharp})$, the locally nilpotent endomorphism on $f_*\cO_X$ comes from the $\bV(E)^{\sharp}$-action on $Y$, and it is the canonical action coming from the $\bV(E)$-action. Since $D_{\Delta}(Y \times_X Y) \simeq Y \times \bV(E)^{\sharp}$, exchanging the groupoid action on the left with the canonical action on the right, we obtain $[Y/\bV(E)^{\sharp}] \simeq (Y/X)^{dR}$, which is also the desired result.
\end{proof}

A crucial input to the theory of Ogus--Vologodsky is the fact that the crystal of Frobenius lifts has a rich $p$-curvature, see \cite[Proposition 1.5]{MR2373230}. It corresponds to the fact that the map $(X/S)^{dR} \to B_{X'}(T_{X'/S})$, which is obtained from the lift $\tilde{X}/W_2(S)$, is equivariant with respect to the action of $B_{X'}(T_{X'/S}^{\sharp})$, which we make precise in the following lemma.
\begin{lemma}\label{p-curvature-rich}
    Let $X/S$ be a representable map of $\bF_p$-stacks. Let $\tilde{X}/W_2(S)$ be a lift. It gives rise to a $F_{X/S}^*T_{X'/S}$-torsor with a flat connection $\cL_{\tilde{X}}^c \to (X/S)^{dR}$ coming from $(X/S)^{dR} \to (X/S)^{dR, \gamma} \simeq B_{X'}(T_{X'/S}) $. The $p$-curvature  of $q_* \cO_{\cL_{\tilde{X}}^c}$ is given by the formula  $q_*\cO_{\cL_{\tilde{X}}/X} \xrightarrow{d} q_*\Omega^1_{\cL_{\tilde{X}}/X} \simeq q_*q^*F_{X'/S}^*\Omega^1_{X'/S} \simeq q_* \cO_{\cL_{\tilde{X}}} \otimes F_{X/S}^*\Omega^1_{X'/S}.$
\end{lemma}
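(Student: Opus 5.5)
The plan is to read the $p$-curvature off the action of $B_{X'}(T_{X'/S}^{\sharp})$ on $(X/S)^{dR}$, as in \Cref{rem:p-curv=action}, and then to compute that action using \Cref{lm:vertical-vector-fields-p-class-stack}. Write $E := F_{X/S}^*T_{X'/S}$ on $(X/S)^{dR}$ and let $g: (X/S)^{dR} \to B_{X'}(T_{X'/S})$ be the composite of $(X/S)^{dR} \to (X/S)^{dR,\gamma}$ with the splitting provided by \Cref{rem:splittings-push=lifts-to-witt}. The torsor $\cL^c_{\tilde{X}}$ is the pullback of the universal torsor along $g$, so its functions are $q_*\cO_{\cL^c_{\tilde{X}}} = g^*\sigma_*\cO_{X'}$, where $\sigma: X' \to B_{X'}(T_{X'/S})$ is the base point. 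By \Cref{rem:p-curv=action}, computing the $p$-curvature of this crystal amounts to computing $a^*g^*\sigma_*\cO_{X'}$, where
\[
a: B_{X'}(T_{X'/S}^{\sharp}) \times_{X'} (X/S)^{dR} \to (X/S)^{dR}
\]
is the action map.

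\textbf{Step 1 (equivariance).} I would show that $g$ is equivariant for the action of $B_{X'}(T^{\sharp}_{X'/S})$, acting on the target through $t: T^\sharp_{X'/S} \to T_{X'/S}$ and translation. The map $(X/S)^{dR} \to (X/S)^{dR,\gamma}$ is by definition the pushout along $\bG_a^\sharp \to \bG_a$, so it is equivariant for the action of $B T^\sharp$ through $t$. The splitting $(X/S)^{dR,\gamma} \simeq B_{X'}(T_{X'/S})$ is an isomorphism of $B_{X'}(T_{X'/S})$-torsors. Composing these gives a commutative square
\[
g \circ a \simeq m \circ (Bt \times g),
\]
where $m$ is the multiplication on $B_{X'}(T_{X'/S})$. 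It follows that $a^*g^*\sigma_*\cO \simeq (Bt\times g)^* m^*\sigma_*\cO$.

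\textbf{Step 2 (local computation).} Base changing to $(X/S)^{dR}$, the object $(Bt\times g)^*m^*\sigma_*\cO$ is $t^* s_*\cO$ on $B_{(X/S)^{dR}}(E^\sharp)$, where $s$ classifies the $E$-torsor $\cL^c_{\tilde X}$. \Cref{lm:vertical-vector-fields-p-class-stack} applies after one further pullback to $X$, or descent along $\pi_{X/S}$, which is compatible since everything is a crystal. It identifies $t^*s_*\cO$ with the pair $(q_*\cO_{\cL_{\tilde{X}}}, d)$, where $d$ is the relative de Rham differential into $q_*\Omega^1_{\cL_{\tilde X}/X} \simeq q_*\cO\otimes E^\vee$. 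Via Cartier duality for $B(E^\sharp)$, this nilpotent operator is exactly the $p$-curvature, which gives the claimed formula.

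\textbf{Main obstacle.} The hard part is Step 1, specifically that the splitting coming from $\tilde{X}$ is equivariant through $t$, with the correct sign and normalization. I would check this directly from the construction in \Cref{rem:splittings-push=lifts-to-witt}. That construction identifies $\Spec(\bG_a^\gamma(R))$ with the split square-zero extension of $\Spec(F_*R)$ by $F_*R[1]$. The action of $\Map(L_{X'/S}, \bG_a^\sharp[1])$, which comes from deformation theory as in \Cref{torsor-ring-stack}, becomes the translation action after pushing along $\bG_a^\sharp \to \bG_a$. This reduces to the compatibility of the map of quasi-ideals $\bG_a^{dR} \to \bG_a^{dR,\gamma}$ with $\pi_1$, i.e.\ $F_*\bG_a^\sharp \to F_*\bG_a$. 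I would verify that compatibility on $X=\bA^1$, $S=\Spec\bF_p$, as in \Cref{lm:closed-immersion-action}, and then conclude in general by naturality.
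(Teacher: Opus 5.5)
Your proposal is correct and follows the paper's own proof. Like the paper, you reduce via \Cref{rem:p-curv=action} to computing $a^*q_*\cO_{\cL^c_{\tilde X}}$, use that $(X/S)^{dR}\to(X/S)^{dR,\gamma}\simeq B_{X'}(T_{X'/S})$ is $B_{X'}(T^\sharp_{X'/S})$-equivariant through the canonical map $T^\sharp_{X'/S}\to T_{X'/S}$, and conclude with \Cref{lm:vertical-vector-fields-p-class-stack}. The paper treats the equivariance you single out as the main obstacle as immediate: it comes from the pushout description of $(X/S)^{dR,\gamma}$, and a section of a torsor automatically induces an equivariant trivialization. So your planned verification on $\bA^1$ is harmless but not needed.
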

\begin{proof}
    By \Cref{rem:p-curv=action} it is enough to compute $a^* q_* \cO_{\cL_{\tilde{X}^c}}$ where $a: (X/S)^{dR} \times B_{X'}(T_{X'/S}^{\sharp}) \to (X/S)^{dR}$ is the action morphism.
    Consider the composition $$B_{(X/S)^{dR}}(F_{X'/S}^*T_{X'/S}^{\sharp}) \simeq (X/S)^{dR} \times_{X'} B_{X'}(T_{X'/S}^{\sharp}) \xrightarrow{a} (X/S)^{dR} \to (X/S)^{dR, \gamma} \simeq B_{X'}(T_{X'/S}) $$
    and note the resulting map $B_{(X/S)^{dR}}(F_{X'/S}^*T_{X'/S}^{\sharp}) \to B_{X'}(T_{X'/S})$ lives over $\nu_{X/S}: (X/S)^{dR} \to X'$ and the map $F_{X/S}^*T_{X'/S}^{\sharp} \to F_{X/S}^* T_{X'/S}$ is the canonical map since the map $(X/S)^{dR} \to B_{X'}(T_{X'/S})$ is equivariant for $B_{X'}(T_{X'/S}^\sharp)$. The claim follows from \Cref{lm:vertical-vector-fields-p-class-stack}.
\end{proof}
Now we will compute the connection of the torsor from \Cref{cons:torsor}. First, we recall what it means.

\begin{remark}\label{rem:torsor-on-XdR}
    Let $E \in \Vect((X/S)^{dR})$ and $f: (X/S)^{dR} \to B_{(X/S)^{dR}}E$ an $E$-torsor on $(X/S)^{dR}.$ Specifying $f$ is equivalent to specifying a map $\cO_{(X/S)^{dR}} \to E[1]$ in $\cD_{qc}((X/S)^{dR}) \simeq \Crys(X/S)$. Discussion in \cite[\S 1.1, (1.0.9)]{MR2373230} explains that this extension is equivalent to giving an $E$-torsor $\cL$ over $X$ together with $\nabla_{\cL}: \cL \to E \otimes \Omega^1_{X/S}$ such that $\nabla_{\cL}(l+e) = \nabla_{\cL}(l) + \nabla_{E}(e)$ for all local sections $l \in \cL, e \in E$ and $\nabla_{E} \circ \nabla_{\cL} = 0.$ 
\end{remark}

\begin{remark}\label{rem:interpret-torsor-on-XdR}
    Let $E$ be a locally free sheaf on $(X/S)^{dR}.$ Let $(X/S)^{dR} \xrightarrow{f} B_{(X/S)^{dR}} (E)$ be an $E$-torsor on $X$ with a flat connection. This is equivalent to giving an $E$-torsor $\cL$ over $X$ together with $\nabla_{\cL}: \cL \to E \otimes \Omega^1_{X/S}$ satisfying some properties. In this remark we explain how to compute $\nabla_{\cL}(s)$ for a local section $s \in \cL(X).$ Note that $s$ provides a commutativity datum for the diagram 
\[\begin{tikzcd}
	{(X/S)^{dR}} &&& {B_{(X/S)^{dR}}(E)} \\
	\\
	X &&& X
	\arrow["f", from=1-1, to=1-4]
	\arrow["{\pi_{X/S}}", from=3-1, to=1-1]
	\arrow[equals, from=3-1, to=3-4]
	\arrow["0"', from=3-4, to=1-4]
\end{tikzcd}\]
which gives rise to a map of Čech nerves $\Cech(X \to (X/S)^{dR}) \to \Cech(X \to B_{(X/S)^{dR}}(E))$. The map on $1$-simplices is some map $f: D_{\Delta}(X^2) \to \bV(E)$. By construction, this map restricts to $X \xrightarrow{\operatorname{Id}} X$ with $X \mono D_{\Delta}(X^2)$ and $X \mono \bV(E)$. Equivalently, the map $E^\vee \to \cO(D_{\Delta}(X^{\times_S 2}))$ lands in the ideal of the diagonal $I_{\Delta}$. In particular, we obtain a map $E^\vee \to I_{\Delta}/I_{\Delta}^2 \simeq \Omega^1_{X/S}$, which is the same as a section of $E \otimes \Omega^1_{X/S}$, and by construction it is $\nabla_{\cL}(s)$.

\end{remark}

Now we are ready to compute the connection on $\cL_{\tilde{X}}.$

\begin{lemma}\label{lem:connection-of-global-stacky}
    Let $X/S$ be a representable map of $\bF_p$-stacks. Let $\tilde{X}/W_2(S)$ be a flat lift. One obtains a $F_{X/S}^*T_{X'/S}$-torsor on $X$ with a flat connection corresponding to $(X/S)^{dR} \to (X/S)^{dR, \gamma} \simeq B_{X'}(T_{X'/S}).$ The corresponding connection $\cL_{\tilde{X}} \to F_{X/S}^*T_{X'/S} \otimes_{\cO_X} \Omega^1_{X/S}$ sends a local section $f: W_2(X) \to \tilde{X}$ to $d\tilde{F}/p \in \cH om(F_{X/S}^*\Omega^1_{X'/S}, \Omega^1_{X/S})$ where $\tilde{F}$ is the Frobenius lift defined by $f.$
\end{lemma}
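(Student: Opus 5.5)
The plan is to compute the connection directly with the recipe of \Cref{rem:interpret-torsor-on-XdR}, working locally. The statement is Zariski local on $X$ and $S$, and both sides are compatible with restriction to opens. I would therefore take $S=\Spec(R)$ and $X=\Spec(A)$ affine and smooth over $S$, with $\tilde{X}=\Spec(\tilde{A})$ flat over $W_2(R)$. The representable stacky case would then follow by descent along a smooth cover, using the equivariance of \Cref{p-curv-equivariant-relative}. Fix a local section of $\cL_{\tilde{X}}$. By \Cref{lm:torsor=lifts-of-frob} this is a strong Frobenius lift $f\colon \tilde{A}\to W_2(A)$, equivalently a homotopy $h_f\colon \pi^{\gamma}_{X/S}\simeq s_{\tilde{X}}\circ F_{X/S}$ in $(X/S)^{dR,\gamma}(X)$.

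By \Cref{rem:interpret-torsor-on-XdR}, the connection evaluated on $f$ is obtained from the $1$-simplices of the induced map of Čech nerves
\[
\Cech\bigl(X\to (X/S)^{dR}\bigr)\to \Cech\bigl(X\to B_{X'}(T_{X'/S})\times_{X'}(X/S)^{dR}\bigr).
\]
Concretely, let $D=D_{\Delta}(X\times_S X)$ with its two projections $p_1,p_2\colon D\to X$. Then $p_1^*h_f$ and $p_2^*h_f$ are two homotopies between the same pair of $D$-points of $(X/S)^{dR,\gamma}$. Here one uses that $\pi^{\gamma}_{X/S}\circ p_1\simeq\pi^{\gamma}_{X/S}\circ p_2$ canonically, because $D$ is the first term of the Čech nerve of $\pi_{X/S}$. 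The difference of these homotopies is a section of $F^*_{X/S}T_{X'/S}\otimes\cO_D$ that vanishes on the diagonal. Reducing modulo $I_\Delta^2$ gives the class in $F^*_{X/S}T_{X'/S}\otimes\Omega^1_{X/S}$ that we want.

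To compute this difference, I would translate \Cref{witt-vectors} as follows: the homotopy $h_f$ is the map $f\colon \tilde{A}\to W_2(A)=A\times_{\bG_a^{\gamma}(A)}A$. A $D$-point of the fiber product then corresponds to a map $\tilde{A}\to W_2(\cO_D)$, so the two pulled-back homotopies are $W_2(p_i)\circ f\colon \tilde{A}\to W_2(\cO_D)$. Their difference lies in the kernel $V(F_*\cO_D)$, which vanishes on the diagonal, and it should be a derivation-like map $\tilde{A}\to F_*\cO_D$. To evaluate it on $a\in\tilde{A}$, write $f(a)=(\bar a,\delta_a)$ in Witt components, so that the associated Frobenius lift is $\tilde F(a)=a^p+p\delta_a$. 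Then
\[
W_2(p_2)f(a)-W_2(p_1)f(a)=\bigl(0,\;p_2^*\delta_a-p_1^*\delta_a+c(\bar a)\bigr),
\]
where $c(\bar a)$ is the Witt carry term coming from $(p_2^*\bar a,0)-(p_1^*\bar a,0)$. The carry is given by the polynomial $\bigl(x^p-y^p-(x-y)^p\bigr)/p$ in $x=p_2^*\bar a$ and $y=p_1^*\bar a$, and it vanishes modulo $I_\Delta^2$ when $p>2$; the divided powers on $D$ handle the remaining terms. So modulo $I_\Delta^2$ the difference is $d\delta_a$. Since $d(a^p)=0$ in characteristic $p$ after dividing appropriately, this is exactly $d\tilde F(a)/p$. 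Restricting to $a$ coming from $\cO_{X'}$ then gives the element of $\cH om(F^*_{X/S}\Omega^1_{X'/S},\Omega^1_{X/S})$ claimed in the statement.

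I expect the main obstacle to be making this Witt-vector model of the homotopies rigorous. One has to check that the identification of $(X/S)^{dR,\gamma}$ with $B_{X'}(T_{X'/S})$ given by \Cref{rem:splittings-push=lifts-to-witt}, which is built from the split square-zero structure of $\bG_a^{\gamma}$, matches the Witt fiber-product description. This includes the sign and normalization conventions and the $W_2(R)$- versus $R$-linearity issue of \Cref{rem:obvious-splittig-not-R-linear}. My first approach would be to verify the compatibility in the universal case of a polynomial algebra $A=R[x_1,\dots,x_n]$ with its standard lift, and then deduce the general case by functoriality.
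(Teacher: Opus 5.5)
Your framework is the paper's: read the connection off the $1$-simplices of the Čech nerves. Concretely, compare $p_1^*h_f$ and $p_2^*h_f$ on $D=D_\Delta(X\times_SX)$ through the PD-homotopy $\pi^\gamma_{X/S}\circ p_1\simeq\pi^\gamma_{X/S}\circ p_2$, obtain a loop in $F_{X/S}^*T_{X'/S}\otimes\cO_D$, and reduce it modulo $I_\Delta^{[2]}$. But the explicit computation, which is the entire content of the lemma, is wrong in three places. You reach the stated formula only because two of the errors cancel.

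Put $x=p_2^*\bar a$, $y=p_1^*\bar a$ and $\xi=x-y\in I_\Delta$. Then
\[
W_2(p_2)f(a)-W_2(p_1)f(a)=\bigl(\xi,\ p_2^*\delta_a-p_1^*\delta_a+c\bigr),\qquad c=\tfrac{1}{p}\bigl(x^p-y^p-\xi^p\bigr).
\]
\textbf{First error.} The first component is $\xi\neq0$, so the difference does not lie in $V(F_*\cO_D)$. To get a loop you must subtract the element $u\in W_2(\cO_D)$ coming from $\bG_a^\sharp(\cO_D)\simeq W(\cO_D)[F]$ that lifts $\xi$; its ghost components are $(\xi,0,\dots)$, so $u=(\xi,\gamma_p(\xi))$ up to your sign conventions. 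This $u$ is exactly the Witt-vector form of the PD-homotopy you invoke in your setup but then drop.

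\textbf{Second error.} The carry $c=\sum_{k=1}^{p-1}\frac1p\binom pk y^{p-k}\xi^k$ does not vanish modulo $I_\Delta^2$ for any $p$. Its $k=1$ term is $y^{p-1}\xi$, i.e.\ $\bar a^{p-1}d\bar a$; for instance, for $p=3$ one has $c=xy(x-y)$.

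\textbf{Third error.} You use $d(\tilde a^p)/p=0$, but in fact $d(\tilde a^p)/p=\tilde a^{p-1}d\tilde a$. Hence $d\tilde F(\tilde a)/p=\bar a^{p-1}d\bar a+d\delta_a$, and your claimed identity $d\delta_a=d\tilde F(a)/p$ is false. Take $\bA^1$ with $f(x)=[x]$, so $\tilde F(x)=x^p$ and $\delta_x=0$. Your computation as written gives $0$, while the correct value, and the one in the statement, is $x^{p-1}\,dx$.

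With the corrections, the loop is $p_2^*\delta_a-p_1^*\delta_a+c-\gamma_p(\xi)$. Since $(p-1)!\equiv-1$, this equals $\bigl(1\otimes\tilde F(\tilde a)-\tilde F(\tilde a)\otimes1\bigr)/p$, computed in $D_\Delta(\tilde A^{\otimes_{W_2(R)}2})$ and reduced mod $p$. That is precisely the paper's $\theta_f(a)$. Modulo $I_\Delta^{[2]}$ it gives $d\delta_a+\bar a^{p-1}d\bar a=d\tilde F(\tilde a)/p$.

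The paper avoids the Witt-vector bookkeeping that you flagged as your main obstacle. It describes $h_f$ directly in $\Cone(F_*\tilde A\xrightarrow{p}F_*\tilde A)$: the two points send $\tilde a$ to $\tilde a^p$ and to $\tilde a^p+p\delta_{\tilde a}$, and $\delta_{\tilde a}$ is the homotopy. It then divides by $p$ in the PD envelope, so the term $\bar a^{p-1}d\bar a$ that your argument loses appears automatically from $(1\otimes\tilde a^p-\tilde a^p\otimes1)/p$.
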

\begin{proof}
    The claim is Zariski local. We assume $X=\Spec(A),$ $S=\Spec(R)$, and $f \in \cL_{\tilde{X}}(X)$ a local section. Recall $f$ is equivalent to a homotopy $h_f: \pi_{X/S}^{\gamma} \simeq  s_{\tilde{X}} \circ F_{X/S}$. In particular, $h_f$ can be used to represent a map $(X/S)^{dR} \to (X/S)^{dR, \gamma}$ using their Cech nerves along $\pi_{X/S}: X \to (X/S)^{dR}$ and $s_{\tilde{X}}: X' \to (X/S)^{dR, \gamma.}$ The map on $1$-simplices is $$r_2: D_{\Delta}(X \times_S X) \simeq X \times_{(X/S)^{dR}}  X \to X' \times_{(X/S)^{dR, \gamma}} X' \simeq \bT_{X'/S}.$$
It is a map of $X'$-schemes and corresponds to a Frobenius-linear derivation $A \to D_{\Delta}(A^{\otimes_R 2})$. We claim it is given by the formula $$\theta_f(a) := \dfrac{1 \otimes \tilde{F}(\tilde{a})-\tilde{F}(\tilde{a}) \otimes 1}{p} \mod p$$
where $\tilde{a}$ is any lift of $a$ along $\tilde{A} \to A$. 

\textit{Step 1. Identifying the derivation.}
Let $(x, y, h_{\gamma})$ be a $B$-point of $X \times_{(X/S)^{dR}} X$ where $B$ is a test $R$-algebra; here $x, y: A \to B$ and $h_{\gamma}$ is a homotopy between two compositions in $\Map_{R}(A, \bG_a^{dR}(B)).$ Then $X \times_{(X/S)^{dR}} X \to X' \times_{(X/S)^{dR, \gamma}} X'$ sends this $B$-point to $(F(x), F(y), h)$. Here, $h$ is a homotopy obtained from $h_{\gamma}$ by using $h_{f}: s_{\tilde{X}} \circ F_{X/S} \simeq \pi_{X/S}^{\gamma}.$ Let us make $h_{f}$ explicit to compute the map.  Note the points $s_{\tilde{X}} \circ F_{X/S}, \pi_{X/S}^{\gamma} \in (X/S)^{dR, \gamma}(X)=\Map_{R}(A, F_*A/p)$ are represented by 
\[\begin{tikzcd}
	{F_*A'} & {\tilde{A}} && {F_*A} & {W_2(A)} \\
	{F_*A'} & {F_*A'} && {F_*A} & {F_*A} \\
	{F_*A} & {F_*A}
	\arrow["i", from=1-1, to=1-2]
	\arrow[from=1-1, to=2-1]
	\arrow["{\nu \circ pr}", from=1-2, to=2-2]
	\arrow["V", from=1-4, to=1-5]
	\arrow[equals, from=1-4, to=2-4]
	\arrow["F", from=1-5, to=2-5]
	\arrow["0", from=2-1, to=2-2]
	\arrow["{F_{A/R}}"', from=2-1, to=3-1]
	\arrow["{F_{A/R}}", from=2-2, to=3-2]
	\arrow["0"', from=2-4, to=2-5]
	\arrow[from=3-1, to=3-2]
\end{tikzcd}\]
respectively.
 The first diagram defines a map $A \to F_*A/p$ which is also obtained from $$\tilde{A} \xrightarrow{f} W_2(A) \to F_*\tilde{A} \to \Cone(F_*\tilde{A} \xrightarrow{p} F_*\tilde{A})$$
via the base change along $W_2(R) \to R.$ Similarly, the second diagram is obtained from a similar map $\Cone(F_*\tilde{A} \xrightarrow{V} W_2(A)) \to \Cone(F_*\tilde{A} \xrightarrow{p} F_*\tilde{A}).$ Now we note that $f$ gives rise to a homotopy between these two points in $$(\tilde{X}/W_2(S))^{dR, \gamma}(\tilde{X}) \simeq \Map_{W_2(R)}(\tilde{A}, \Cone(F_*\tilde{A} \xrightarrow{p} F_*\tilde{A})).$$
Explicitly, under $\pi_{\tilde{X}/W_2(S)}$ an elements $\tilde{a}$ is sent to $\tilde{a}^p.$ Under the other map, an element $\tilde{a}$ is sent to $\tilde{a}^p+p\delta_{\tilde{a}}.$ By construction, these two elements are canonically homotopic and $\delta_{\tilde{a}}$ is the desired homotopy.

 \textit{Step 2.} Take $B=D_{\Delta}(A^{\otimes_R 2})$ and $(x, y, h_{\gamma})$ to be the canonical $B$-point of $X \times_{(X/S)^{dR}} X,$ i.e., $x, y: A \to D_{\Delta}(A^{\otimes_R 2})$ the two canonical maps and $h_{\gamma}$ is the homotopy between two compositions $$A \to D_{\Delta}(A^{\otimes_R 2}) \to \bG_a^{dR}(D_{\Delta}(A^{\otimes_R 2})) $$
 coming from divided powers on the ideal $\Ker(D_{\Delta}(A^{\otimes_R 2}) \to A).$ The map $X \times_{(X/S)^{dR}} X \to X' \times_{(X/S)^{dR, \gamma}} X'$ sends this point to some map $\Sym \Omega^1_{A'/R} \to D_{\Delta}(A^{\otimes_R 2})$ which we can compute using the first step. Namely, it is the homotopy between two compositions $$A \to D_{\Delta}(A^{\otimes_R 2}) \xrightarrow{F} F_* D_{\Delta}(A^{\otimes_R 2}) \xrightarrow{\pi} \bG_a^{dR, \gamma}(D_{\Delta}(A^{\otimes_R 2}))$$
 which comes from a homotopy between two compositions 
 $$\tilde{A} \to D_{\Delta}(\tilde{A}^{\otimes_{W_2(R)}2}) \xrightarrow{\tilde{F}} \tilde{F}_* D_{\Delta}(\tilde{A}^{\otimes_{W_2(R)}2})  \xrightarrow{s} \bG_a^{dR, \gamma}(D_{\Delta}(\tilde{A}^{\otimes_{W_2(R)}2}) ).$$
 Explicitly, for any $\tilde{a} \in \tilde{A}$, this homotopy provides a path between $1 \otimes \tilde{F}(a)$ and $\tilde{F}(a) \otimes 1$ in $\bG_a^{dR, \gamma}(D_{\Delta}(\tilde{A}^{\otimes_{W_2(R)}2}) )$. Thus, the resulting $D_{\Delta}(A^{\otimes_R 2})$-derivation of $A$ is given by the desired formula.

\textit{Step 3.} Note the image of the map $\theta_f: \Omega^1_{A'} \to D(A^{\otimes_R 2})$ lands in the ideal of the diagonal $I_{\Delta}$ by construction. Therefore, one obtains $\Omega^1_{A'/R} \to I_{\Delta}/I_{\Delta}^2 \simeq \Omega^1_{A/R}$ which is equal to $\zeta_{\tilde{F}} = p^{-1}d\tilde{F}$ from the explicit formula of $\theta_f.$
\end{proof}

\begin{corollary}
    Let $X/S$ be a smooth family of $\bF_p$-schemes. Let $\tilde{X}/W_2(S)$ be a flat lift of $X$ and let $\tilde{S} \to \bZ/p^2$ be a flat lift of $S.$ Denote $\tilde{X}' := \tilde{X} \times_{W_2(S)} \tilde{S}$ be the corresponding lift of $X'$ to $\tilde{S}$ induced by $\tilde{X}.$ The torsor $\cL_{\tilde{X}}$ associated with $\tilde{X}$ is isomorphic to the torsor $\cL_{\cX}$ studied by Ogus and Vologodsky associated to $\tilde{X}'/\tilde{S}$ compatibly with the connections.
\end{corollary}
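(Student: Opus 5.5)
To compare $\cL_{\tilde{X}}$ with $\cL_{\cX}$, I would check two things separately: first the underlying $F_{X/S}^*T_{X'/S}$-torsors on $X$, then the flat connections. The problem is Zariski local on $X$, so we may assume $X=\Spec(A)$ and $S=\Spec(R)$ are affine. In particular we may assume a lift $\tilde{X}_S$ of $X$ to $\tilde{S}$ exists. Smoothness kills the obstruction, and any two such lifts are isomorphic locally. All identifications below must be canonical, so that they glue.

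\textbf{Underlying torsors.} By \Cref{lm:torsor=lifts-of-frob}, $\cL_{\tilde{X}}$ is the torsor of strong Frobenius lifts $f\colon W_2(X)\to\tilde{X}$. By \cite[Theorem 1.1]{MR2373230}, $\cL_{\cX}$ is the torsor of lifts $\tilde{F}\colon\tilde{X}_S\to\tilde{X}'$ of $F_{X/S}$. The comparison map is the one of \Cref{iso-of-torsor-deformation}:
\[
f\longmapsto \tilde{F}_f\colon \tilde{X}_S\xrightarrow{\ \rho\ } W_2(X)\times_{W_2(S)}\tilde{S}\xrightarrow{\ f\times\id\ }\tilde{X}'.
\]
Here $\rho$ is the canonical map induced by the ghost/Witt comparison $\tilde{X}_S\to W_2(X)$ over $\tilde{S}\to W_2(S)$; it is the unique $\delta$-type map lifting $\id_X$ up to Frobenius. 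One checks two points. First, $f\mapsto\tilde{F}_f$ is equivariant for $F_{X/S}^*T_{X'/S}$. Both sides are deformation torsors for the same ideal $F_{X,*}\cO_X\simeq F_{X/S}^*\cO_{X'}$, so equivariance follows from functoriality of deformation theory as in \Cref{lifts-of-id}. Second, a map of torsors under the same group is automatically an isomorphism, so no further check is needed there.

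\textbf{Connections.} Fix a local section $f$. By \Cref{lem:connection-of-global-stacky}, $\nabla_{\cL_{\tilde{X}}}(f)=p^{-1}d\tilde{F}$, where $\tilde{F}$ is the Frobenius lift determined by $f$, that is, $\tilde{F}(\tilde a)=\tilde a^p+p\delta_{\tilde a}$. By \cite[Theorem 1.1]{MR2373230}, the Ogus--Vologodsky connection sends a section $\tilde{F}'\in\cL_{\cX}$ to
\[
\zeta_{\tilde{F}'}=p^{-1}d\tilde{F}'\colon F_{X/S}^*\Omega^1_{X'/S}\to\Omega^1_{X/S}.
\]
So it suffices to check that the two maps $F_{X/S}^*\Omega^1_{X'/S}\to\Omega^1_{X/S}$ coincide for $\tilde{F}'=\tilde{F}_f$. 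Concretely, for $a\in\cO_{X'}$ with a lift $\tilde a\in\cO_{\tilde{X}'}$, we need
\[
\tilde{F}_f^*(\tilde a)\equiv \tilde a^p+p\,\delta_{\tilde a}
\]
in $\cO_{\tilde{X}_S}$, up to a term whose differential vanishes mod $p$. This reduces to unwinding the definition of $\rho$ on Witt components: the map $W_2(X)\to\tilde X_S$-side sends $(a_0,a_1)$ to $\tilde a_0^p+p\tilde a_1$ modulo choices. The choices of lifts change the result by $p$ times a $p$-th power, whose differential is $0$ mod $p$.

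\textbf{Main obstacle.} The delicate step is making the formula for $\rho$ and the identification $\tilde X'=\tilde X\times_{W_2(S)}\tilde S$ precise enough that the two divided differentials visibly agree, including the sign and normalization conventions of $\zeta$. If the direct comparison is messy, I would fall back on rigidity. Both connections are integrable, have the same underlying torsor, and have the same $p$-curvature: ours by \Cref{p-curvature-rich}, theirs by \cite[Proposition 1.5]{MR2373230}. Their difference is therefore a closed $1$-form with values in $F_{X/S}^*T_{X'/S}$ and zero $p$-curvature. I would then verify that this difference vanishes on the universal case $X=\bA^n_S$ with a chosen strong Frobenius lift, and conclude by functoriality in $X$.
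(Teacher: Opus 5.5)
Your proposal is correct and follows the paper's own argument. The underlying torsors are matched by the canonical map of \Cref{iso-of-torsor-deformation}, which you make explicit via $\theta\colon(a_0,a_1)\mapsto\tilde a_0^p+p\tilde a_1$. The connections are matched by comparing \Cref{lem:connection-of-global-stacky} with the Ogus--Vologodsky formula $\zeta_{\tilde F}=p^{-1}d\tilde F$: both send $d(a\otimes 1)$ to $a^{p-1}da+d\delta_a$, where $\delta_a$ is the second Witt component of $f(\tilde a)$.

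The direct check is actually clean, since $\theta$ does not depend on the chosen lifts, so you do not need the rigidity fallback. On its own, that fallback would only show the difference of the two connections is exact, not zero.
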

We need the following auxiliary remark in order to compare the splitting of $F_{X/S}D_{X/S}$ we obtain from the splitting of the de Rham gerbe with the splitting of Ogus-Vologodsky.

\begin{remark}\label{rem:splitting-module}
    Let $s: X \to (X/S)^{dR, \gamma}$ be a section of $\pi_{X/S}^{\gamma}: (X/S)^{dR, \gamma} \to X'.$ It extends to an isomorphism $B_{X'}T_{X'/S} \simeq (X/S)^{dR, \gamma}$ of $T_{X'/S}$-gerbes. In particular, one obtains 
\begin{equation}\label{splitting-module}
    \cE nd_{(X/S)^{dR, \gamma}}(s_*\cO_{X'}, s_* \cO_{X'}) \simeq \cE nd_{(B_{X'}(T_{X'/S}))} (i_*\cO_{X'}, i_*\cO_{X'}) \simeq \hat \Gamma_{X'} T_{X'/S}
\end{equation}
where $i: X' \to B_X'(T_{X'/S})$ is the zero-section.
Note $s_* \cO_{X'}$, as a $D_{X/S}^{\gamma}$-module, is $q_*\cO_{\cL_{\tilde{X}}}$ endowed with its $D_{X/S}^{\gamma}$-module structure. Under the equivalence $\QCoh((X/S)^{dR, \gamma}) \simeq \MIC^{\cdot}(X/S)$, the left term in \eqref{splitting-module} corresponds to $\cE nd_{D_{X/S}^{\gamma}}(s_*\cO_{X'}^*, s_*\cO_{X'}^*)$ where $s_*\cO_{X'}^*$ is the $\cO_{X}$-linear dual of $q_*\cO_{\cL_{\tilde{X}}}$ with its flat connection. In particular, the equivalence induced by $s$ is the same as the one induced by the splitting of the Azumaya algebra $D_{X/S}^{\gamma}$ given by the $D_{X/S}^{\gamma}$-module $\cH om_{\cO_X}(q_*\cO_{\cL_{\tilde{X}}}, \cO_X).$

\end{remark}

Now we are ready to recover the Cartier transform.

\begin{theorem}\label{main:global-ov}
    Let $X/S$ be a representable quasi-syntomic morphism of algebraic $\bF_p$-stacks. Any lift $\tilde{X}/W_2(S)$ gives rise to a symmetric monoidal equivalence $C^{-1}_{\tilde{X}}:  \HIG_{\gamma}^{\cdot}(X'/S) \to \MIC_{\gamma}^{\cdot}(X/S)$. Moreover, if $X/S$ is a smooth map of $\bF_p$-schemes and we are given a lift $\tilde{S} \to \bZ/p^2$ of $S$, then $C^{-1}_{\tilde{X}}$ is isomorphic to the global Cartier transform from \cite[Theorem 2.8]{MR2373230} applied to the data of a flat lift $\tilde{X} \times_{W_2(S)} \tilde{S} \to \tilde{S}$ of $X'/S.$
\end{theorem}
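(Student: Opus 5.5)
The plan has two parts: construct $C^{-1}_{\tilde{X}}$ geometrically, then compare it with Ogus–Vologodsky by identifying splitting modules.

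\textbf{Construction.} By \Cref{rem:splittings-push=lifts-to-witt}, the lift $\tilde{X}/W_2(S)$ gives a section $s_{\tilde{X}}\colon X' \to (X/S)^{dR,\gamma}$ of the $T_{X'/S}$-gerbe $\nu^{\gamma}_{X/S}$. Any section of a gerbe trivializes it, so we get an isomorphism of $X'$-stacks
\[
\Phi_{\tilde{X}}\colon B_{X'}(T_{X'/S}) \simeq (X/S)^{dR,\gamma}.
\]
Pullback along $\Phi_{\tilde{X}}$ is a symmetric monoidal equivalence $\QCoh((X/S)^{dR,\gamma}) \simeq \QCoh(B_{X'}T_{X'/S})$. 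We then identify the two sides:
\begin{itemize}
\item Cartier duality gives $\QCoh(B_{X'}T_{X'/S}) \simeq \QCoh(\widehat{\bT^{*,\gamma}_{X'/S}}) = \HIG^{\cdot}_{\gamma}(X'/S)$. The tensor product on $B_{X'}T_{X'/S}$ corresponds to the tensor product of Higgs modules, since it is pullback along the diagonal of the classifying stack.
\item \Cref{gamma-crystals-are-gamma-d-modules} together with \Cref{cor:partial-crystals-x-dr-gamma} gives $\QCoh((X/S)^{dR,\gamma}) \simeq \MIC^{\cdot}_{\gamma}(X/S)$, and this is monoidal because pullback along $X \to (X/S)^{dR,\gamma}$ is.
\end{itemize}

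For a general representable quasi-syntomic map of stacks we take $\MIC^{\cdot}_{\gamma}(X/S)$ to mean $\QCoh((X/S)^{dR,\gamma})$. \Cref{torsor-for-stacks} and the $\gamma$-pushout ensure the gerbe statement still holds in that setting. Composing these equivalences defines $C^{-1}_{\tilde{X}}$.

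\textbf{Comparison.} Now let $X/S$ be a smooth map of schemes with a lift $\tilde{S}$. The equivalence of Ogus–Vologodsky is the Morita equivalence for the Azumaya algebra $D^{\gamma}_{X/S}$ over $\widehat{\bT^{*,\gamma}_{X'/S}}$. It is given by the splitting module $\cH om_{\cO_X}(q_*\cO_{\cL_{\cX}},\cO_X)$, where $\cL_{\cX}$ is their torsor attached to $\tilde{X}' = \tilde{X}\times_{W_2(S)}\tilde{S}$.

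By \Cref{rem:splitting-module}, our equivalence is the Morita equivalence given by $\cH om_{\cO_X}(q_*\cO_{\cL_{\tilde{X}}},\cO_X)$, with its $D^{\gamma}_{X/S}$-structure coming from $s_*\cO_{X'}$. Equivalently, this is the identification $\cE nd(s_*\cO_{X'}) \simeq \hat\Gamma T_{X'/S}$. So it suffices to produce an isomorphism of $D^{\gamma}_{X/S}$-modules
\[
q_*\cO_{\cL_{\tilde{X}}} \simeq q_*\cO_{\cL_{\cX}}
\]
that is compatible with the $\hat\Gamma T_{X'/S}$-actions.

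A $D^{\gamma}_{X/S}$-module structure on $q_*\cO_{\cL}$ has three pieces: the torsor, the connection, and the $\hat\Gamma T_{X'/S}$-action extending the $p$-curvature. We match them in turn.
\begin{enumerate}
\item \emph{Torsor.} \Cref{iso-of-torsor-deformation} gives the isomorphism of underlying $F^*_{X/S}T_{X'/S}$-torsors.
\item \emph{Connection.} \Cref{lem:connection-of-global-stacky} shows the connection sends a local Frobenius lift to $p^{-1}d\tilde{F}$. This is exactly the formula of \cite[Theorem 1.1]{MR2373230}, so the isomorphism respects connections.
\item \emph{$p$-curvature.} By \Cref{p-curvature-rich}, the $p$-curvature of $q_*\cO_{\cL_{\tilde{X}}}$ is the de Rham differential along the fibres. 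This matches \cite[Proposition 1.5]{MR2373230}.
\end{enumerate}

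\textbf{Main obstacle.} The hardest step is checking that the divided-power extension of the $p$-curvature agrees on both sides. On our side it comes from the $T_{X'/S}$-action on $B_{X'}T_{X'/S}$, via \Cref{lm:vertical-vector-fields-p-class-stack}. On the Ogus–Vologodsky side, $\hat\Gamma T_{X'/S}$ acts on $\cH om(q_*\cO_{\cL},\cO_X)$ by duals of divided differential operators along the fibres.

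To compare them, I would work locally with a trivialization of $\cL$. There both actions become the standard action of $\Gamma T$ on the dual of $\Sym$, namely translation on $\bV(F^*T)$ dualized. Since the underlying action of $\Gamma T_{X'/S}$ is determined by the $T_{X'/S}$-torsor structure, which we have already matched, the two agree. The isomorphism is canonical, hence independent of the trivialization, so the local identifications glue. Finally, compatibility of the resulting equivalences with the symmetric monoidal structures follows because both are induced by the same tensor-compatible splitting, as in \Cref{rem:splitting-module}.
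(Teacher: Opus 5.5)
Your proposal is correct and follows essentially the same route as the paper. The construction goes through the splitting of \Cref{rem:splittings-push=lifts-to-witt}, and the comparison is reduced via \Cref{rem:splitting-module} to matching the torsors with flat connection, which is done using \Cref{iso-of-torsor-deformation} and \Cref{lem:connection-of-global-stacky}. Your extra $p$-curvature and $\hat\Gamma T_{X'/S}$-compatibility checks are harmless but redundant: as you yourself observe, on both sides the $\hat\Gamma T_{X'/S}$-action comes from the $F_{X/S}^*T_{X'/S}$-torsor structure, so an isomorphism of torsors compatible with the connections already suffices.
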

\begin{proof}
\textit{Step 0.} A lift $\tilde{X}/W_2(S)$ provides a splitting of $\nu_{X/S}^{\gamma}: (X/S)^{dR, \gamma} \to X'$ by \Cref{rem:splittings-push=lifts-to-witt}. Therefore, it defines a symmetric monoidal equivalence of categories $\MIC^{\cdot}_{\gamma}(X/S) \simeq \HIG^{\cdot}_{\gamma}(X'/S).$

 \textit{Step 1.} Now assume $X \to S$ is a smooth map of $\bF_p$-schemes. Let us compare the equivalences in the presence of $\tilde{S}$. Since $\tilde{X'} := \tilde{X} \times_{W_2(S)} \tilde{S}$ is a flat lift of $X'/S$ to $\tilde{S},$ thus \cite[Theorem 2.8]{MR2373230} provides us with a symmetric monoidal equivalence $C^{-1}_{\cX}: \MIC^{\cdot}_{\gamma}(X/S) \to \HIG^{\cdot}_{\gamma}(X'/S)$. Let us prove that it is isomorphic to $C^{-1}_{\tilde{X}}.$ The composition $(X/S)^{dR} \to (X/S)^{dR, \gamma} \to B_{X'} (T_{X'/S})$ defines a $F_{X/S}^* T_{X'/S}$-torsor on $X$ with a flat connection. By \Cref{rem:splitting-module} it is enough to prove that this torsor is isomorphic to the torsor from \cite[Theorem 1.1]{MR2373230}. Recall the torsor $\cL_{\cX}$ from \cite[Theorem 2.8]{MR2373230} is the torsor of liftings of $F_{X/S}$ to $X'/S$ with the connection as in \cite[Theorem 1.1]{MR2373230}.  By \Cref{iso-of-torsor-deformation} the underlying torsor of $ (X/S)^{dR} \to (X/S)^{dR, \gamma} \simeq B_{X'}(T_{X'/S})$ is isomorphic to $\cL_{\cX}.$ Thus, it is enough to compare connections. We recall that the connection of Ogus-Vologodsky sends a local section $\tilde{F}$ to a map $d\tilde{F}/p: F_{X/S}^*\Omega^1_{X'/S} \to \Omega^1_{X/S}.$ Then \Cref{lem:connection-of-global-stacky} identifies the two connections. This finishes the proof.
\end{proof}

\section{Lifts of Frobenii and Local Cartier transform}
\begin{definition}\label{def:GadRf}
    Denote $\bG_a^{dR, F}$ to be a $W$-algebra stack given by $\Cone(F_*\bG_a \xrightarrow{p} F_*W_2).$
\end{definition}
\begin{remark}\label{rem:Ga-alg-on-alpha-p-version}
    Note that $W$-algebra structure of $\bG_a^{dR, F}$ refines to a $\bG_a$-algebra structure via 
\[\begin{tikzcd}
	{F_*\bG_a} &&& {W_2} \\
	{F_*\bG_a} &&& {F_*W_2}
	\arrow["V", from=1-1, to=1-4]
	\arrow[equals, from=1-1, to=2-1]
	\arrow["F", from=1-4, to=2-4]
	\arrow["p", from=2-1, to=2-4].
\end{tikzcd}\]
\end{remark}
\begin{remark}\label{rem:pi-of-alphap-version}
Note $\pi_1(\bG_a^{dR, F})=\alpha_p$ and $\pi_0(\bG_a^{dR, F})=F_*\bG_a$ as sheaves of $\bG_a$-modules. Note \Cref{rem:Ga-alg-on-alpha-p-version} gives a map $\pi^{F}: \bG_a \to \bG_a^{dR, F}$ whose post-composition with $\nu^F: \bG_a^{dR, F} \to \pi_0(\bG_a)^{dR, F}) = F_*\bG_a$ is the Frobenius. This is also the map guaranteed by \Cref{Frob-factors-1-truncated}.
\end{remark}
\begin{definition}\label{dR1-def}
    For $X$ denote $X^{dR, F}$ to be the transmutation of the $\bG_a$-algebra stack $\bG_a^{dR, F}$ from \Cref{rem:Ga-alg-on-alpha-p-version}. For $X/S$ denote $(X/S)^{dR, F} = X^{dR, F} \times_{S^{dR, F}} S$ where $\pi_{S}^F: S \to S^{dR, F}$ is the transmutation of $\pi^F$ from \Cref{rem:pi-of-alphap-version}.
\end{definition}
\begin{remark}
    Note for $X/S$ there is a map $(X/S)^{dR, F} \to X'$ as in \Cref{for-lci-dR-torsor} which we denote by $\nu^F_{X/S}$ and the map $\pi_{X}^F$ induces $X \to (X/S)^{dR, F}$ which we denote by $\pi_{X/S}^F.$
\end{remark}
\begin{remark}
    If $X/S$ is lci, then the pushout of the $L_{X'/S}^{\vee}[1] \otimes_{\bG_a} \bG_a^{\sharp}$-torsor $\nu_{X/S}: (X/S)^{dR} \to X'$ along the map induced by $\bG_a^{\sharp} \to \alpha_p$ is identified with $(X/S)^{dR, F}.$ In particular, we get a $L_{X'/S}^{\vee}[1] \otimes_{\bG_a} \alpha_p$-torsor $\nu_{X/S}^F: (X/S)^{dR} \to X'.$ This also follows from \Cref{torsor-ring-stack} and \Cref{rem:pi-of-alphap-version}.
\end{remark}

Recall from \Cref{dR1-def}
the stack $(X/S)^{dR, F} := X^{dR, F} \times_{S^{dR, F}} S$.
\begin{remark}
    Recall that $(X/S)^{dR, F}$ is the pushout of the quasi-torsor $(X/S)^{dR} \to X'$ along $T_{X'/S}^{\sharp} \to T_{X'/S} \otimes_{\bG_a} \alpha_p$. In particular, if $X \to S$ is smooth, then this map exhibits the source as an fppf $B_{X'}(T_{X'/S} \otimes_{\bG_a} \alpha_p)$-torsor over the target. Consider $\varepsilon_{X/S}: (X/S)_{\text{fppf}} \to (X/S)_{\text{Zar}},$ then $R\varepsilon_{X/S, *} \alpha_p = \fib(\cO_{X'} \to F_{X', *}\cO_{X'}) =: \cB_{X'}$. Thus, we get a torsor for $T_{X'/S} \otimes_{\cO_{X'}} \cB_{X'}[1].$ Equivalently, this Picard stack can be written as $\Cone(T_{X'/S} \to F_{X', *}F_{X'}^*T_{X'/S}).$
\end{remark}

\begin{remark}\label{frob-lifts-zariski}
    Let $X/S$ be smooth. Consider the Zariski torsor which assigns to $U \to X$ a groupoid consisting of flat $\tilde{U}/W_2(S)$ together with a lift $\tilde{F}_{U/S}: \tilde{U} \to \tilde{U'}.$ Note it is a torsor for $\Cone(T_{X'/S} \to F_{X', *}F_{X'}^* T_{X'/S}).$ 
\end{remark}

\begin{remark}\label{dR-F-refines-to-frob}
Let $X/S$ be smooth. \Cref{frob-lifts-zariski} provides us with $\delta_{F_{X/S}}: L_{X'/S} \to \cB_{X'}[1]$ in $\cD_{qc}(X').$ Recall $\cB_{X'}=\cofib(\cO_{X'} \to F_{X', *} \cO_{X}),$ thus it comes equipped with $\cB_{X'} \to \cO_{X'}[1].$ Note the composition $L_{X'/S} \xrightarrow{\delta_{F_{X/S}}} \cB_{X'}[1] \to \cO_{X'}[2]$ is the obstruction of lifting $X$ to $W_2(S).$ Thus, given a lift $\tilde{X}/W_2(S)$, the map $\delta_{F_{X/S}}$ refines to a map $L_{X'/S} \to F_{X', *}\cO_{X'}[1].$ Equivalently, to a map $F_{X'}^*L_{X'/S} \to \cO_{X'}[1].$ Next lemma shows that the resulting $F_{X'}^*T_{X'/S}$-torsor on $X'$ is the torsor of lifts of $F_{X/S}$ to $\tilde{X}.$
\end{remark}

\begin{lemma}\label{A1-split-scheme-lift-frob}
    An fppf-torsor $(X/S)^{dR, F}$ is equivalent to the gerbe of lifts of $F_{X/S}$ to $W_2(S)$.
\end{lemma}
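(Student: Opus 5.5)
I would follow the proof of \Cref{rem:splittings-push=lifts-to-witt} and \Cref{lm:torsor=lifts-of-frob}, with $\bG_a^{dR, F}$ in place of $\bG_a^{dR, \gamma}$. The statement should be read as follows: sections of $\nu^F_{X/S}\colon (X/S)^{dR, F} \to X'$ are equivalent to pairs $(\tilde{X}/W_2(S), \tilde{F}_{X/S}\colon \tilde{X} \to \tilde{X}')$, compatibly with the identification of bands in \Cref{frob-lifts-zariski}. Both sides are torsors for $\Cone(T_{X'/S} \to F_{X', *}F_{X'}^*T_{X'/S}) \simeq T_{X'/S} \otimes \cB_{X'}[1]$. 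It therefore suffices to construct a map of torsors equivariant for the identity on bands, since such a map is automatically an equivalence.

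\textbf{Constructing the map.} Take $S = \Spec(R)$ and $X = \Spec(A)$, and use Zariski descent, since both sides are Zariski sheaves on $X$.
\begin{itemize}
\item[(1)] Unwind $(X/S)^{dR, F}(B)$. It is $\Map_R(A, \Cone(F_*B \xrightarrow{p} F_*W_2(B)))$, with the $R$-structure given by the quasi-ideal map of \Cref{rem:Ga-alg-on-alpha-p-version}.
\item[(2)] As in \Cref{rem:obvious-splittig-not-R-linear}, check that this ring is a $W_2(R)$-algebra. Then, in analogy with \Cref{witt-vectors}, identify $\Cone(F_*B \xrightarrow{p} F_*W_2(B))$ as a $W_2(R)$-algebra with the pushout of $W_2(F_*B)$ collapsing $p$ onto the image of $V$ in an explicit way. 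In particular, its $\pi_0$ is $F_*B$ and its $\pi_1$ is $\alpha_p(F_*B)$.
\item[(3)] Given $(\tilde{A}, \tilde{F})$, a point $x\colon A' \to F_*B$ of $X'$ goes to the composite
\[
\tilde{A} \xrightarrow{\tilde F} \tilde{A}' \xrightarrow{\tilde x} \cdots \to \Cone(F_*B \xrightarrow{p} F_*W_2(B)).
\]
\item[(4)] To define this composite, use the lift $\tilde{A}'$ of $A'$ together with the $W_2(R)$-linear map $W_2(F_*B) \to \Cone(\cdots)$. The point is that after composing with $\tilde{F}$, the value of $\tilde{a}$ is $\tilde{F}(\tilde a)$, which is congruent to $\tilde a^p$ modulo $p$. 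This makes the map factor through $A$, because $p$ times the ambiguity is killed by the cone.
\item[(5)] Conversely, a section $s$ gives a point of $(X/S)^{dR, F}(X')$, and hence, via the fiber product $X' \times_{(X/S)^{dR, F}} X$ as in \Cref{lm:torsor=lifts-of-frob}, a homotopy $s \circ F_{X/S} \simeq \pi^F_{X/S}$. The lift $\tilde{X}$ is recovered by pushing out to $(X/S)^{dR, \gamma}$: the quotient map $F_*W_2 \to F_*\bG_a$ induces $\bG_a^{dR, F} \to \bG_a^{dR, \gamma}$, and we then apply \Cref{rem:splittings-push=lifts-to-witt}. The Frobenius lift is recovered from the refinement in \Cref{dR-F-refines-to-frob}.
\end{itemize}

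\textbf{Equivariance.} Two checks are needed.
\begin{itemize}
\item[(1)] The image of a pair under $\bG_a^{dR,F} \to \bG_a^{dR,\gamma}$ is the section $s_{\tilde X}$ of \Cref{rem:splittings-push=lifts-to-witt}. This is compatible with the map of bands $T_{X'/S}\otimes\cB_{X'}[1] \to T_{X'/S}[1]$ and with the obstruction map $\cB_{X'}[1] \to \cO_{X'}[2]$ of \Cref{dR-F-refines-to-frob}.
\item[(2)] On fibers over a fixed $\tilde{X}$, the choices of $\tilde{F}$ form an $F_{X'}^*T_{X'/S}$-torsor. The differences of sections of $\nu^F$ lying over $s_{\tilde X}$ are governed by the $\alpha_p$-part, that is, by $F_{X'}^*T_{X'/S}$, via $\pi_1 = \alpha_p$ and deformation theory as in \Cref{torsor-ring-stack}. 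These must be matched: changing $\tilde F$ by $p\theta$ changes the point by $\theta$ in $\pi_1$.
\end{itemize}
It then suffices to verify the equality of these actions in the universal case $X = \bA^1_S$, by an explicit computation of $\tilde a \mapsto \tilde a^p + p\delta$.

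\textbf{Main obstacle.} I expect step (4) of the construction to be the hardest: showing that the composite is well defined and $R$-linear, i.e. genuinely factors through $A$ and the $R$-structure of \Cref{rem:Ga-alg-on-alpha-p-version}. As footnote remarks earlier indicate, an arbitrary Frobenius lift only gives $\tilde{F}(a) = a^p + i_a$. One must check that the cone $\Cone(F_*\bG_a \xrightarrow{p} F_*W_2)$ precisely absorbs the term $i_a \in \ker(\tilde{A}' \to A')$. I would handle this as in \Cref{lm:torsor=lifts-of-frob}: write both points as explicit maps of quasi-ideals, and produce the homotopy by diagram chasing with $V$ and $F$ on $W_2$.
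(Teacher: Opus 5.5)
Your overall architecture is sound: you build a map from pairs $(\tilde X,\tilde F_{X/S})$ to sections of $\nu^F_{X/S}$, and then use that a band-compatible map of torsors is an equivalence. Your equivariance checks actually go beyond what the paper writes down, which is only the construction of the section.

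The gap is in the one step that carries the content, the construction in (3)--(4). There are three problems.
\begin{itemize}
\item A lift $\tilde F_{X/S}\colon \tilde X \to \tilde X'$ is a ring map $\tilde A' \to \tilde A$, so there is no arrow $\tilde A \xrightarrow{\tilde F} \tilde A'$.
\item $\tilde x$ is not defined. A flat lift $\tilde A'$ of $A'$ does not come with any preferred $W_2(R)$-map $\tilde A' \to W_2(F_*B)$ lifting $x\colon A' \to F_*B$. Such lifts form a genuine deformation problem, with an obstruction and a torsor of choices, so your composite never reaches Witt vectors.
\item The mechanism you give for factoring through $A$, namely $\tilde F(\tilde a) \equiv \tilde a^p \bmod p$, holds only for strong Frobenius lifts. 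An arbitrary lift only gives $\tilde F(\tilde a) = \tilde a^p + i_a$ with $i_a \in V(R)\tilde A$, which is generally not contained in $p\tilde A$. Your ``main obstacle'' paragraph flags exactly this, but the proposal does not resolve it.
\end{itemize}

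The missing idea is \Cref{frob-lifts-append}: $\tilde F$ is equivalent to a $W_2(R)$-semilinear map $g\colon \tilde A \to W_2(A')$ deforming $\nu\colon A \to A'$. Explicitly, $g(\tilde a) = (\nu(a), \delta_{\tilde a})$, where $\tilde F(\tilde a) = \tilde a^p + i(\delta_{\tilde a})$.

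The section is then $\tilde A \xrightarrow{g} W_2(A') \xrightarrow{W_2(x)} W_2(B) \to \Cone(B \xrightarrow{p} W_2(B))$. Semilinearity is precisely what matches the $R$-structure coming from $F$ in \Cref{rem:Ga-alg-on-alpha-p-version}.

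This map factors through $A$, but not because of any congruence satisfied by $\tilde F$. The kernel is $\ker(\tilde A \to A) = V(R)\tilde A$, being the base change of $F_*R \to W_2(R) \to R$. By semilinearity, $g(V[r]\tilde b) = F(V[r])\,g(\tilde b) = p[r]\,g(\tilde b)$. This lands in $pW_2(B)$, the image of the quasi-ideal $p$, and so is canonically null in the cone. That is how the term $i_a$ is absorbed.

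Equivalently, you can repair your own composite as follows. Take the first arrow to be the canonical semilinear map $\tilde a \mapsto \tilde a \otimes 1$ from $\tilde A$ to $\tilde A'$. Set $\tilde x := W_2(x)\circ g'$, where $g'\colon \tilde A' \to W_2(A')$ is the linearization of $g$. Then your ``$p$ times the ambiguity'' is literally correct, since $V[r]\tilde b \otimes 1 = p\,(\tilde b \otimes [r])$.

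Finally, step (5) is unnecessary once you have an equivariant map of torsors. As written it also presupposes the statement. \Cref{dR-F-refines-to-frob} only concerns the class of the Frobenius-lift torsor, and relating that class to $(X/S)^{dR,F}$ is exactly what this lemma proves.
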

\begin{proof}
Given $\tilde{X}/W_2(S)$ with $\tilde{F}_{X/S}: \tilde{X} \to \tilde{X}',$ let us construct a section $X' \to (X/S)^{dR, F}.$ Supplying a lift of $\tilde{F}_{X/S}$ is equivalent to giving $W_2(X') \to \tilde{X}$ which deforms $\nu: X' \to X$, see \Cref{frob-lifts-append}. Define $s_{\tilde{F}}: X' \to (X/S)^{dR, F}$ as follows: for $x \in X'(R),$ i.e., $\Spec(R) \to X',$ we canonically obtain from $\Spec(W_2(R)) \xrightarrow{W_2(x)} W_2(X') \xrightarrow{g} \tilde{X}$ a point $\Spec(\bG_a^{dR, F}(R)) \to X.$ Indeed, to show that \[\Spec(\bG_a^{dR, F}(R)) \mono \Spec(W_2(R)) \xrightarrow{W_2(x)} W_2(X') \to \tilde{X}\]
factors through $X \mono \tilde{X}$, it is enough to assume $S=\Spec(R), X=\Spec(A)$. In this case, let us note that the composition $\tilde{A} \to W_2(A') \to \Cone(R \xrightarrow{p} W_2(R))$ admits a canonical factorisation $\tilde{A} \to A \to \Cone(R \xrightarrow{p} W_2(R))$. Recall the square-zero extension $A' \to \tilde{A} \to A$ is obtained from $F_*R \to W_2(R) \to R$ by applying $- \otimes_{W_2(R)} \tilde{A}.$ Consider the composition $F_R \otimes_{W_2(R)} \tilde{A} \to \tilde{A} \xrightarrow{f} W_2(A').$ It sends $V[r] \tilde{a}$ to $F(V([r])\tilde{a}=p[r]\tilde{a}$ since $f$ is $W_2(R)$-semilinear, which completes the proof.
\end{proof}

\begin{remark}
    Let $X/S$ be smooth with reduced $S.$ Then a lift $\tilde{X}/W_2(S)$ gives rise to two torsors.
    \begin{itemize}
        \item[(1)] The composition $X \to (X/S)^{dR} \to (X/S)^{dR, \gamma} \simeq B_{X'} T_{X'/S}$ gives a $F_{X/S}^*T_{X'/S}$-torsor on $X.$
        \item[(2)] An fppf $T_{X'/S} \otimes_{\bG_a} \alpha_p$-gerbe $(X/S)^{dR, F} \to X'$ corresponds to a Zariski $T_{X'/S} \otimes_{\cO_{X'}} F_{X', *}\cO_{X'}/\cO_{X'}$-torsor which can be refined to a $F_{X'}^*T_{X'/S}$-torsor using $\tilde{X}/W_2(S)$ and \Cref{dR-F-refines-to-frob}.
    \end{itemize}
\end{remark}
Note the first torsor is sent to the second one under the map induced by the natural map $F_{X/S}^*T_{X'/S} \to F_{X'}^*T_{X'/S}$. This follows from description of the corresponding obstructions, see \Cref{iso-of-torsor-deformation} and \Cref{dR-F-refines-to-frob}. Roughly, the second torsor splits if and only if $F_{X/S}$ lifts to $\tilde{X},$ while the first splits if and only if there exists a strong Frobenius lift. We note that when $S=\Spec(\bF_p),$ or, more generally, any perfect scheme, there is no difference between two notions.

\subsection{Local Cartier Equivalence}

\begin{remark}\label{connection-preserves-pd-envelope}
    Let $E$ be a locally free sheaf on $(X/S)^{dR}$ and $\cY \to (X/S)^{dR}$ an $E$-torsor. Assume we are given a section $s: X \to Y := \cY \times_{(X/S)^{dR}} X$ of the underlying $E$-torsor on $X.$ The connection on $\cY$ might not preserve the section but it preserves the PD-envelope of this section $D_{X}(Y)$. This defines an $E^{\sharp}$-torsor on $X$ with a flat connection, see \cite[Remark 2.4]{MR2373230}.
\end{remark}

To give a stacky interpretation of \Cref{connection-preserves-pd-envelope} the following observation is crucial.

\begin{lemma}\label{dR-torsor-on-dR}
    Let $f: (X/S)^{dR} \to B_{(X/S)^{dR}}(E)$ be an $E$-torsor on $X$ with a flat connection. Denote $\bar{f}: X \to B(E)$ to be $f \circ \pi_{X/S}$ the underlying $E$-torsor on $X.$ 
 The composition $$(X/S)^{dR} \xrightarrow{f} B_{(X/S)^{dR}}(E) \xrightarrow{\pi_{B E}} B_{(X/S)^{dR}}(E/E^{\sharp}) $$
    is naturally homotopic to $\bar{f}^{dR}: (X/S)^{dR} \to (B_X(E)/S)^{dR} \simeq B_{(X/S)^{dR}}(E/E^{\sharp}).$  
\end{lemma}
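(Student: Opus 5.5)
We will prove the homotopy by comparing both maps through the naturality of $\pi$ with respect to $f$, after replacing $f$ by its underlying map of relative de Rham stacks. First we record the identification $(B_X(E)/S)^{dR} \simeq B_{(X/S)^{dR}}(E/E^{\sharp})$. Since $(-/S)^{dR}$ commutes with limits (\Cref{dR-commutes-colimits}), it commutes with the bar construction computing $B_X(E)$. It therefore sends the group $\bV(E)$ over $X$ to the group $(\bV(E)/S)^{dR}$ over $(X/S)^{dR}$. Relative to $(X/S)^{dR}$, this group is the relative de Rham stack of the vector bundle $\bV(E)\to X$ after descent. By \Cref{rem:sharp-of-vb-and-dR} it is identified with $\Cone(\bV(E)^\sharp\to\bV(E)) = E/E^\sharp$, which gives the claimed identification. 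Here $E$ is regarded as a vector bundle on $(X/S)^{dR}$, i.e. as a crystal.

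Next we make $\bar f^{dR}$ explicit. Write $\bar f = f\circ\pi_{X/S}$. Functoriality of $(-/S)^{dR}$ gives
\[\bar f^{dR} = f^{dR}\circ \pi_{X/S}^{dR} : (X/S)^{dR}\to ((X/S)^{dR}/S)^{dR} \to (B_{(X/S)^{dR}}(E)/S)^{dR}.\]
The key input is \Cref{DRDR-geometric}, i.e. \Cref{rem:dRdRvsdR}, in its relative form. It provides a natural homotopy $\pi_{(X/S)^{dR}}\simeq (\pi_{X/S})^{dR}$ as maps $(X/S)^{dR}\to ((X/S)^{dR}/S)^{dR}$. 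To get the relative version, we deduce it from the absolute one by taking fiber products over $S^{dR}$ and $S$. For this we use that the two maps $S^{dR}\to (S^{dR})^{dR}$ agree and are compatible with $\pi_S$.

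Using this homotopy, we obtain $\bar f^{dR}\simeq f^{dR}\circ \pi_{(X/S)^{dR}}$. By naturality of the transformation $\pi$ applied to the morphism $f$, this is in turn homotopic to $\pi_{B(E)}\circ f$. Here $\pi_{B(E)} : B_{(X/S)^{dR}}(E)\to (B_{(X/S)^{dR}}(E)/S)^{dR}$ is the relative $\pi$-map, taken relative to the base $(X/S)^{dR}$ through its projection to $S$. It remains to check that, under the identification of the first step, $\pi_{B(E)}$ is the map induced on classifying stacks by the quotient $E\to E/E^\sharp$. This can be verified on the bar construction: on the group it is $\pi_{\bV(E)/X}$, which is exactly the quotient map in \Cref{rem:sharp-of-vb-and-dR}. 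For this one uses \Cref{rel-dR-of-BE} and the analogous statement for $B$.

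The main obstacle is the middle step. There one must produce the relative homotopy $\pi_{(X/S)^{dR}}\simeq(\pi_{X/S})^{dR}$ coherently and verify that it is compatible with the identification of $(B(E)/S)^{dR}$, which uses the base $(X/S)^{dR}$ rather than $X$. We would first try to reduce this by fpqc descent along $\pi_{X/S}$ to the case where $E$ comes from $X$. There everything is an explicit computation with $W(R)/p$, exactly as in \Cref{rem:dRdRvsdR}.
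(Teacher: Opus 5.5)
Your argument is the paper's. The naturality square $\pi_{BE}\circ f\simeq f^{dR}\circ\pi_{(X/S)^{dR}}$, combined with the homotopy $\pi_{(X/S)^{dR}}\simeq(\pi_{X/S})^{dR}$ from \Cref{rem:dRdRvsdR}, gives $\pi_{BE}\circ f\simeq f^{dR}\circ\pi_{X/S}^{dR}=\bar f^{dR}$; the coherence points you flag (the relative form of that homotopy and its compatibility with the identification of the target) are left implicit there. The one slip is in your first step, which the statement takes as given anyway: $B_X(E)$ is a geometric realization, so limit-preservation of $(-/S)^{dR}$ alone does not compute $(B_X(E)/S)^{dR}$---you also need that the cover $X\to B_X(E)$ is carried to an effective epimorphism, after which limit-preservation handles the \v{C}ech nerve.
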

\begin{proof}
Using naturality of $\pi_{-/-}$ we obtain the following diagram
\[\begin{tikzcd}
	{(X/S)^{dR}} &&& {B_{(X/S)^{dR}}(E)} \\
	\\
	{((X/S)^{dR}/S)^{dR}} &&& {B_{(X/S)^{dR}}(E/E^\sharp)}
	\arrow["f", from=1-1, to=1-4]
	\arrow["{\pi_{(X/S)^{dR}}}"', from=1-1, to=3-1]
	\arrow["{\pi_{BE}}", from=1-4, to=3-4]
	\arrow["{f^{dR}}"', from=3-1, to=3-4]
\end{tikzcd}\]
where the left vertical arrow is homotopic to $\pi_{X/S}^{dR}$ by \Cref{rem:dRdRvsdR}. In particular, we learn that the composition $(X/S)^{dR} \to B_{(X/S)^{dR}}(E/E^\sharp)$ is homotopic to $(f \circ \pi_{X/S})^{dR} = \bar{f}^{dR} $ as desired.
\end{proof}

\begin{corollary}\label{cor:obtain-E-sharp-torsor}
    Let $f: (X/S)^{dR} \to B_{(X/S)^{dR}}(E)$ be an $E$-torsor on $X$ with a flat connection. Any trivialization of the composition $X \to (X/S)^{dR} \to B_{(X/S)^{dR}}(E)$ gives rise to a map $f^\sharp: (X/S)^{dR} \to B_{(X/S)^{dR}}(E^\sharp)$ refining $f.$ Moreover, the underlying $E^\sharp$-torsor on $X$ is given by the PD-envelope $D_{X}(Y)$ of $s: X \to Y$. 
\end{corollary}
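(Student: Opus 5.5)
The plan has two steps: first a fiber-sequence argument at the level of classifying stacks, then an explicit identification of the underlying torsor on $X$.

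\textbf{Step 1: constructing $f^\sharp$.} By \Cref{dR-torsor-on-dR}, the composition $\pi_{BE}\circ f$ is homotopic to $\bar{f}^{dR}$. Here $\pi_{BE}\colon B_{(X/S)^{dR}}(E)\to B_{(X/S)^{dR}}(E/E^\sharp)$ is the map induced by $E\to E/E^\sharp$. A trivialization of $\bar f$ is a homotopy $\bar f\simeq 0$, where $0$ classifies the trivial torsor. Applying $(-/S)^{dR}$, which is functorial in homotopies, gives a homotopy $\bar f^{dR}\simeq 0^{dR}$, and $0^{dR}$ is the zero section of $B_{(X/S)^{dR}}(E/E^\sharp)$ under the identification of \Cref{rem:sharp-of-vb-and-dR}. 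Composing with the homotopy of \Cref{dR-torsor-on-dR} trivializes $\pi_{BE}\circ f$. Since
\[
B(E^\sharp)\to B(E)\to B(E/E^\sharp)
\]
is a fiber sequence of group stacks over $(X/S)^{dR}$ (it is $B$ of the cofiber sequence $E^\sharp\to E\to E/E^\sharp$), a null-homotopy of $\pi_{BE}\circ f$ is the same as a lift $f^\sharp\colon (X/S)^{dR}\to B_{(X/S)^{dR}}(E^\sharp)$ of $f$. This proves the first claim.

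\textbf{Step 2: identifying the underlying $E^\sharp$-torsor on $X$.} The underlying torsor $f^\sharp\circ\pi_{X/S}$ is the fiber product
\[
X\times_{B(E)}\fib\bigl(B(E)\to B(E/E^\sharp)\bigr)
\]
taken along $\bar f$ and the chosen null-homotopy. I would compute this with the relative de Rham stack of $Y\to X$, in the style of \Cref{lm:vertical-vector-fields-p-class-stack}. Pull the fiber sequence back along $\pi_{X/S}$; by \Cref{dR-commutes-colimits} the relevant fibers are computed by $(-/X)^{dR}$. Write $Y=\cY\times_{(X/S)^{dR}}X$, so $Y\to X$ is the $E$-torsor classified by $\bar f$. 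The $E^\sharp$-torsor over $X$ obtained from $f^\sharp$ should then be identified with $X\times_{Y}(Y/X)^{dR}$, where the map $X\to Y$ is the section $s$.

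Equivalently, it is $(X/Y)^{dR}$, using the description of $(X/Y)^{dR}$ as $(X/S)^{dR}\times_{(Y/S)^{dR}}Y$ from the remark on $(X/Y)^{dR}$. Since $Y\to X$ is smooth with section $s$, the map $s$ is a regular closed immersion. \Cref{lm:regular-affine-stack} then gives $(X/Y)^{dR}\simeq D_X(Y)$.

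It remains to check that the $E^\sharp$-action agrees with the one on the PD-envelope. This follows from \Cref{rem:sharp-of-vb-and-dR} together with the fact that $D_{X}(Y)\simeq X\times \bV(E)^\sharp$ locally.

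\textbf{Main obstacle.} I expect the hardest part to be the bookkeeping of homotopies in Step 2. One has to show that the null-homotopy built in Step 1 — from the trivialization $s$ together with \Cref{dR-torsor-on-dR} — is exactly the one realizing the fiber product as $(X/Y)^{dR}$, and not that identification twisted by some automorphism. My first approach would be to reduce to the universal case $Y=X\times\bV(E)$ with $s$ the zero section, after Zariski-localizing so that $E$ is trivial. In that case both constructions are visibly the canonical ones, and naturality in $(Y,s)$ then concludes.
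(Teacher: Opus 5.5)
Your proposal is correct and is essentially the paper's argument. Step~1 is exactly how the paper obtains $f^\sharp$ from \Cref{dR-torsor-on-dR}, and in Step~2 both arguments identify the underlying $E^\sharp$-torsor with $(X/Y)^{dR}\simeq D_X(Y)$ via \Cref{lm:regular-affine-stack}, differing only in bookkeeping: the paper uses $B_X(E^\sharp)\simeq (X/B_X(E))^{dR}$ (\Cref{rel-dR-of-BE}) and limit-preservation of $(-/-)^{dR}$ on arrows to compute the fiber product directly as $(X\times_X X/X\times_{B_X(E)}X)^{dR}=(X\xrightarrow{s}Y)^{dR}$, whereas you factor through $(Y/X)^{dR}\simeq X\times_{B_X(E)}B_X(E^\sharp)$ as in \Cref{lm:vertical-vector-fields-p-class-stack} (your ``$X\times_Y(Y/X)^{dR}$'' should read $X\times_{(Y/X)^{dR}}Y$), and the homotopy compatibility you single out as the main obstacle is simply asserted in the paper.
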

\begin{proof}
    The first assertion follows from \Cref{dR-torsor-on-dR}. To compute the underlying $E^\sharp$-torsor on $X$, we have to compute the fiber product of $\bar{f}^\sharp: X \xrightarrow{\pi_{X/S}} (X/S)^{dR} \xrightarrow{f} B_{X}(E^\sharp)$ with the zero map $X \to B_X(E^\sharp)$. For this, recall from \Cref{rel-dR-of-BE} that $B_{X}(E^\sharp) \simeq (X/B_{X}(E))^{dR}$. Under this identification, the map $\bar{f}^\sharp: (X/X)^{dR} \to (X/B_{X}(E))^{dR}$ is given by a map of pairs 
\[\begin{tikzcd}
	X &&& X \\
	X &&& {B_{X}(E)}
	\arrow["{\operatorname{Id}}", from=1-1, to=1-4]
	\arrow[equals, from=1-1, to=2-1]
	\arrow["{\bar{f}}", from=1-4, to=2-4]
	\arrow["0", from=2-1, to=2-4]
\end{tikzcd}\]
with the commutativity datum provided by the trivialization of $\bar{f}.$ The zero map $X \to B_{X}(E^\sharp)$ corresponds to a similar map of pairs where the map with the right vertical map being $0$. Recall from \Cref{dR-commutes-colimits} that $(-/-)^{dR}$ preserves products. Thus, we compute $$(X/X)^{dR} \times_{(X/B_{X}(E))^{dR}} (X/X)^{dR} \simeq (X/Y)^{dR} $$
where the map $X \to Y$ is equal to $s.$
\end{proof}

Now, we will prove that the connection on the obtained $E^\sharp$-torsor is the same one as in \Cref{connection-preserves-pd-envelope}.
\begin{lemma}
    Let $E \in \Vect((X/S)^{dR})$ and $\cY \to (X/S)^{dR}$ an $E$-torsor. Recall that it amounts to an $E$-torsor $q: Y \to X$  together with a flat connection. Let $s: X \to Y$ be a section of the underlying torsor on $X$. By \Cref{cor:obtain-E-sharp-torsor} one obtains an $E^\sharp$-torsor on $X$ with a flat connection. This torsor with flat connection is isomorphic to the one from \Cref{connection-preserves-pd-envelope}.
\end{lemma}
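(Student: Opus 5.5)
We have to compare two flat connections on the same $E^\sharp$-torsor $D_X(Y)\to X$. By \Cref{cor:obtain-E-sharp-torsor} the underlying torsors already agree, so only the connections remain. We will compute both on Čech nerves, in the spirit of \Cref{rem:interpret-torsor-on-XdR}. Two facts make this possible. A flat connection on an $E^\sharp$-torsor is the same as a descent datum along $X\to (X/S)^{dR}$, that is, a map of simplicial objects
\[
\Cech\bigl(X\to (X/S)^{dR}\bigr)\to \Cech\bigl(X\to B_{(X/S)^{dR}}(E^\sharp)\bigr).
\]
Moreover, such a descent datum is determined by its component on $1$-simplices,
\[
D_\Delta(X\times_S X)\to \bV(E)^\sharp ,
\]
together with the cocycle condition. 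The plan is therefore to show that both constructions induce the same map on $1$-simplices.

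\textit{Step 1: the stacky side.} By construction, $f^\sharp$ is the map to
\[
\fib\bigl(B(E)\to B(E/E^\sharp)\bigr)\simeq B(E^\sharp),
\]
assembled from the homotopy of \Cref{dR-torsor-on-dR} and the trivialization $s$. On $1$-simplices, $f$ gives the map $D_\Delta(X^2)\to \bV(E)$ of \Cref{rem:interpret-torsor-on-XdR}, i.e.\ the stratification of $Y$ evaluated on the section $s$. Refining to $B(E^\sharp)$ amounts to lifting this map through $\bV(E)^\sharp\to\bV(E)$. Because the diagonal ideal of $D_\Delta(X^2)$ already carries divided powers, the lift is the unique PD-morphism extending $E^\vee\to I_\Delta$. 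Concretely, we will write
\[
\fib\bigl(B(E)\to B(E/E^\sharp)\bigr)
\]
via \Cref{rel-dR-of-BE} as $(X/B_X(E))^{dR}$ and use that $(-/-)^{dR}$ commutes with limits (\Cref{dR-commutes-colimits}). The $1$-simplices then become $(X/Y\times_X Y)^{dR}$-type PD-envelopes by \Cref{lm:regular-affine-stack}. This identifies the map on $1$-simplices with
\[
D_\Delta(X^2)\to D_X(Y)\times_X D_X(Y)\simeq \bV(E)^\sharp ,
\]
obtained by applying the PD-envelope functor to the stratification isomorphism $p_1^*Y\simeq p_2^*Y$ restricted along $s$.

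\textit{Step 2: the Ogus--Vologodsky side.} In \cite[Remark 2.4]{MR2373230}, the connection on $D_X(Y)$ comes from the HPD-stratification of $Y$: the isomorphism $p_1^*Y\simeq p_2^*Y$ over $D_\Delta(X^2)$ carries $p_1^*D_X(Y)$ to $p_2^*D_X(Y)$, since a PD-morphism preserves PD-envelopes. Its $1$-simplex component is exactly the PD-envelope of the stratification, as in Step 1. Both sides are thus given by the same map on $1$-simplices, hence agree. Working Zariski-locally, one can check equality by reducing mod $I_\Delta^{[2]}$ and using the formula of \Cref{rem:interpret-torsor-on-XdR}: the connection form is $\nabla_{\cL}(s)$ in both cases, and an $E^\sharp$-connection is determined by its first-order part.

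\textit{Main obstacle.} The hard part is making Step 1 precise: tracking the homotopy of \Cref{dR-torsor-on-dR} through the Čech nerve and checking that the induced map on $1$-simplices really is the PD-envelope of the stratification, rather than something differing by an automorphism. I would first reduce to the trivial torsor $Y=\bV(E)$ and $X$ affine, where everything is explicit and the claim becomes a divided-power computation. I would then handle the general case by twisting, using compatibility with the $\bV(E)$-action.
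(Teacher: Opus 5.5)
Your strategy matches the paper's. You compare the two descent data on $1$-simplices $D_\Delta(X^2)\to\bV(E)^\sharp$. On the Ogus--Vologodsky side this is the PD-morphism $\Gamma(E^\vee)\to D$, $e^{[n]}\mapsto d(e)^{[n]}$, equivalently $\nabla(f^{[n]})=f^{[n-1]}\nabla(f)$. You then aim to show that the stacky lift is the same PD-morphism.

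\textbf{The gap.} That last point is asserted, not proved, and neither justification you offer works.

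\emph{Step 1.} ``Because the diagonal ideal already carries divided powers, the lift is the unique PD-morphism'' does not follow. Lifts through $\bV(E)^\sharp\to\bV(E)$ are very far from unique, and uniqueness holds only among PD-morphisms. You must show that the lift produced by \Cref{cor:obtain-E-sharp-torsor} is one.

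\emph{The fallback in Step 2 is false.} The connection form $\nabla_{\cL}(s)$ only records the image in $E\otimes\Omega^1_{X/S}$, and a flat $E^\sharp$-torsor is not determined by its pushout to $E$. Concretely, take $X=\bA^1$ over $\bF_p$ and $E=\cO$.
\begin{itemize}
\item Let $c\colon D_\Delta(X^2)\simeq X\times\bG_a^\sharp\to\bG_a^\sharp$ be the tautological cocycle, $x^{[n]}\mapsto\xi^{[n]}$.
\item Let $\phi$ be the endomorphism of $\bG_a^\sharp$ Cartier dual to Frobenius on $\widehat{\bG}_a$, so $\phi^*x^{[pn]}=x^{[n]}$ and $\phi^*x^{[m]}=0$ for $p\nmid m$.
\item Then $c$ and $c+\phi\circ c$ are both cocycles with the same image in $\bG_a$.
\item However $x^{[p]}\mapsto\xi^{[p]}$ versus $\xi^{[p]}+\xi$, and since $\bG_a^\sharp(\bA^1)=0$ they define non-isomorphic flat $\bG_a^\sharp$-torsors.
\end{itemize}
Modulo $I_\Delta^{[2]}$ the components on $e^{[p^i]}$ can be nonzero. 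So a correct ``first-order part'' must include them, and comparing those is the original problem.

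\emph{The ``Concretely'' paragraph of Step 1.} This does not fill the gap either. The limit argument of \Cref{cor:obtain-E-sharp-torsor} computes the underlying torsor on $X$. But $f$ is not induced by a map of pairs, and the stratification exists only over $D_\Delta(X^2)$. So \Cref{lm:regular-affine-stack} yields no map out of $D_\Delta(X^2)$. (Also, $D_X(Y)\times_X D_X(Y)$ is not $\bV(E)^\sharp$.)

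\textbf{What closes the argument, as in the paper,} is a direct look at the homotopy.
\begin{itemize}
\item By \Cref{rem:sharp-of-vb-and-dR}, $\bV(E)^\sharp=\fib(\bV(E)\to(\bV(E)/X)^{dR})$.
\item So a lift of $f_1$ is the same as a null-homotopy of $g\circ f_1\in(\bV(E)/X)^{dR}(D)=\Map_A(E^\vee,\bG_a^{dR}(D))$, which is the composite $E^\vee\xrightarrow{d}I_\Delta\subset D\to\bG_a^{dR}(D)$.
\item One then identifies the null-homotopy supplied by $s$ (through \Cref{dR-torsor-on-dR} and the commutativity data of the squares) with the one coming from the canonical divided powers on $I_\Delta$. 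Those divided powers are exactly what the identification $X\times_{(X/S)^{dR}}X\simeq D_\Delta(X^2)$ is built from.
\item A null-homotopy of this form is precisely the PD-morphism $e^{[n]}\mapsto d(e)^{[n]}$, which matches the connection of \Cref{connection-preserves-pd-envelope}.
\end{itemize}
Reducing to affine $X$ is a fine setting for this check, but the check is the actual content of the lemma. You flagged it as the main obstacle and then bypassed it with an invalid shortcut.
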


\begin{proof}
By \Cref{cor:obtain-E-sharp-torsor} the underlying $E^\sharp$-torsors are isomorphic and given by the PD-envelope of $s: X \to Y.$ Thus, it is enough to compare the connections.
    Note $s$ identifies $q_* \cO_Y \simeq S^{\bullet}(E^\vee).$ The connection from \Cref{connection-preserves-pd-envelope} is determined by $\nabla(f^{[n]})=f^{[n-1]}\nabla(f)$. Let us compute the second connection. 
    Note we have a diagram
\[\begin{tikzcd}
	{(X/S)^{dR}} && {B_{(X/S)^{dR}}(E)} && {(B_{(X/S)^{dR}}(E)/X)^{dR}} \\
	X && X && X \\
	{D_{\Delta}(X^{\times_S 2})} && {\bV(E)} && {(\bV(E)/X)^{dR}}
	\arrow["f", from=1-1, to=1-3]
	\arrow["{\pi_{BE}}", from=1-3, to=1-5]
	\arrow[from=2-1, to=1-1]
	\arrow[equals, from=2-1, to=2-3]
	\arrow[from=2-3, to=1-3]
	\arrow[equals, from=2-3, to=2-5]
	\arrow[from=2-5, to=1-5]
	\arrow[shift right, from=3-1, to=2-1]
	\arrow[shift left, from=3-1, to=2-1]
	\arrow["{f_1}", from=3-1, to=3-3]
	\arrow[shift right, from=3-3, to=2-3]
	\arrow[shift left, from=3-3, to=2-3]
	\arrow["g", from=3-3, to=3-5]
	\arrow[from=3-5, to=2-5]
	\arrow[shift left=2, from=3-5, to=2-5]
\end{tikzcd}\]
where commutativity data of two upper squares are induced by $s$ and lower row is the first level of the map on Cech nerves.
Say $X=\Spec(A)$ and $S=\Spec(R).$ Denote $D := D_{\Delta}(A^{\otimes_ R2})$ and $M$ is an $A$-module corresponding to $E$. Then $f_1$ corresponds to a map $d: E^\vee \to D$ whose image lands in $I_{\Delta}.$ The map $g$ sends this $D$-point of $\bV(E)$ to a point of $(\bV(E)/X)^{dR}(D)=\Map_A(E^\vee, \bG_a^{dR}(D))$ which is given by $E^\vee \xrightarrow{d} D \to \bG_a^{dR}(D).$ Recall the section $s$ provides a null-homotopy of this map. Explicitly, this null-homotopy comes from the fact that elements of $I_\Delta$ have canonical divided powers in $D$ and the image of $E^\vee \xrightarrow{d} D$ lands in $I_\Delta.$ We note that this descent data gives rise to the desired connection.
\end{proof}

\begin{theorem}\label{construction-of-triv}
    Let $X/S$ be a representable quasi-syntomic map of $\bF_p$-algebraic stacks. Let $\tilde{X}/W_2(S)$ be a lift. Then any strong Frobenius lift to $\tilde{X}$ gives rise to a splitting of the de Rham gerbe.
\end{theorem}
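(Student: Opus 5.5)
The plan follows the strategy sketched in the introduction. First, by \Cref{rem:splittings-push=lifts-to-witt}, the lift $\tilde{X}$ gives a splitting $(X/S)^{dR,\gamma}\simeq B_{X'}(T_{X'/S})$. Composing with $(X/S)^{dR}\to(X/S)^{dR,\gamma}$ produces the map
\[
f\colon (X/S)^{dR}\to B_{X'}(T_{X'/S}).
\]
Pulling back to $(X/S)^{dR}$, this is an $E:=F_{X/S}^*T_{X'/S}$-torsor with flat connection $\cL^c_{\tilde X}$ (\Cref{cons:torsor}). By \Cref{lm:torsor=lifts-of-frob} and \Cref{lifts-of-id}, the underlying torsor $\cL_{\tilde X}$ on $X$ is the torsor of strong Frobenius lifts. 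Hence a strong Frobenius lift $W_2(X)\to\tilde X$ is exactly a trivialization of $\bar f=f\circ\pi_{X/S}$.

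Next, \Cref{cor:obtain-E-sharp-torsor} (via \Cref{dR-torsor-on-dR}) applies to this trivialization and yields a refinement
\[
f^\sharp\colon (X/S)^{dR}\to B_{X'}(T^\sharp_{X'/S}),
\]
a map over $X'$ lifting $f$. If $X/S$ is only a map of stacks, I first reduce to schemes. The statement is fppf local on $X'$, and all constructions are functorial in the pair $(\tilde X, f)$ for the smooth and quasi-syntomic covers used in \Cref{torsor-for-stacks}, so it suffices to treat schemes.

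The main step is to show that $f^\sharp$ is a map of $B_{X'}(T^\sharp_{X'/S})$-torsors over $X'$. Since both sides are torsors for the same group over $X'$, equivariance forces $f^\sharp$ to be an isomorphism, and its inverse applied to the zero section gives the desired section $X'\to(X/S)^{dR}$. To check equivariance I use \Cref{p-curv-equivariant-relative} together with \Cref{p-curvature-rich}. The latter says that $f$ is equivariant for $B_{X'}(T^\sharp)\to B_{X'}(T)$, with the action on the torsor's functions given by the vertical de Rham differential (\Cref{lm:vertical-vector-fields-p-class-stack}).

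The equivariance data for $f^\sharp$ should then come from the fact that the PD-envelope $D_X(Y)$ of the section carries the $\bV(E)^\sharp$-action extending the $\bV(E)$-action on $Y$. In concrete terms, I would compute $a^*(f^\sharp)$, where $a$ is the action map, through Čech nerves as in \Cref{rem:interpret-torsor-on-XdR}, and compare it with the projection.

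I expect this equivariance check to be the main obstacle. Equivariance for $f$ is a property and is automatic, but for $f^\sharp$ one has to verify that the refinement, which depends on the chosen trivialization, is compatible with the action up to coherent homotopy. Since all the stacks involved are $1$-truncated, I would try to settle it locally. Zariski locally on smooth $X$ I can take $X=\bA^n_S$ and reduce to $\bA^1$, and there use the explicit formula $a_1(x,a,b)=(x,a+V(b))$ from \Cref{rem:p-curv=action}. It then remains to check that the induced map on PD-envelopes intertwines the two $\bG_a^\sharp$-actions.

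Alternatively, one could avoid the equivariance check. Instead, directly verify that the composite $X'\to(X/S)^{dR}$, defined via affineness as in \Cref{affine-section-of-dR-relative}, is a section of $f^\sharp$, which again gives an isomorphism of torsors.
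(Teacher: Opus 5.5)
Your setup matches the paper's. Your $f$ and $f^\sharp$ are its $\alpha^r_c$ and $\alpha^\sharp$ from Step 2 of the proof. The gap is in the step that carries all the content: showing that $f^\sharp$ is an isomorphism of $B_{X'}(T^\sharp_{X'/S})$-torsors, or at least that the de Rham class is trivialized. You flag the equivariance of $f^\sharp$ as the expected obstacle and only sketch a local computation, and the reductions in that sketch do not work as stated.
\begin{itemize}
\item The refinement $f^\sharp$ depends on the chosen strong Frobenius lift. An arbitrary strong Frobenius lift on $\tilde X$ is not pulled back from the standard one on $\bA^n_{W_2(S)}$; moreover, Zariski locally a smooth $X$ is only \'etale over $\bA^n_S$. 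So reducing to $\bA^1$ would require knowing how $f^\sharp$ transforms under change of section and under products, which you do not address.
\item The smooth and quasi-syntomic covers used in \Cref{torsor-for-stacks} are covers of $X$. Strong Frobenius lifts do not restrict along them, and $(U/S)^{dR}\simeq (X/S)^{dR}\times_{X'}U'$ fails for them.
\item Your alternative via \Cref{affine-section-of-dR-relative} exists only for schemes, where the conclusion already is that corollary. Checking a compatibility of that section with $f^\sharp$ would not by itself make $f^\sharp$ invertible.
\end{itemize}

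The paper avoids checking equivariance of $f^\sharp$. It applies $(-/S)^{dR}$ to the strong Frobenius lift itself, viewed as a section $s\colon X\to\cL_{\tilde X}$. The resulting section $s^{dR}\colon (X/S)^{dR}\to(\cL_{\tilde X}/S)^{dR}$ is $B_{X'}(T^\sharp_{X'/S})$-equivariant automatically, by naturality of the gerbe structure (\Cref{p-curv-equivariant-relative}).

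Next, the de Rham class factors as $X'\xrightarrow{\alpha_1}B_{X'}((T_{X'/S}/X')^{dR})\to B^2_{X'}(T^\sharp_{X'/S})$, where $\alpha_1$ is the lift through $\fib(B^2T^\sharp\to B^2T)$ supplied by $\tilde X$. The torsor $\cL^c_{\tilde X}\to X'$ classified by $\alpha_1$ pulls back along $\nu_{X/S}$ to $(\cL_{\tilde X}/S)^{dR}\to(X/S)^{dR}$. Hence $s^{dR}$ descends to a section $X'\to\cL^c_{\tilde X}$, which trivializes $\alpha_1$ and therefore the de Rham gerbe.

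Alternatively, for smooth $X/S$ your own route can be completed without any computation by looking at bands.
\begin{itemize}
\item $f^\sharp$ induces on automorphism sheaves a homomorphism $\phi\colon T^\sharp_{X'/S}\to T^\sharp_{X'/S}$.
\item Since $f^\sharp$ refines $f$, and $f$ induces the canonical map $T^\sharp\to T$ (\Cref{p-curvature-rich}), one has $\mathrm{can}\circ\phi=\mathrm{can}$.
\item Cartier dually, $\phi$ is an endomorphism of the formal group $\widehat{\bT^*_{X'/S}}$ that is the identity on its Frobenius kernel. It is therefore the identity on Lie algebras, hence invertible.
\item A morphism of gerbes inducing an isomorphism on bands is an isomorphism.
\end{itemize}
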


\begin{proof}
Let $\alpha_{dR}: X' \to B^2_{X'}(T_{X'/S}^{\sharp})$ the map classifying the de Rham torsor. Recall its composition with $B^2_{X'}(T_{X'/S}^{\sharp}) \to B^2_{X'}(T_{X'/S})$ classifies the torsor of lifts of $X/S$ to $W_2(S).$ Then $\tilde{X}/W_2(S)$ gives rise to a map $$X' \to \fib(B^2_{X'}(T_{X'/S}^{\sharp}) \to B^2_{X'}(T_{X'/S})) \simeq B_{X'}((T_{X'/S}/X')^{dR})$$
which we denote by $\alpha_1.$ Note $(X/S)^{dR} \xrightarrow{\nu_{X/S}} X' \xrightarrow{\alpha_1} B_{X'}((T_{X'/S}/X')^{dR})$ can be canonically refined to a map $(X/S)^{dR} \to B_{X'}(T_{X'/S}).$ Indeed, the composition $$(X/S)^{dR} \xrightarrow{\alpha_1 \circ \nu_{X/S}} B_{X'}((T_{X'/S}/X')^{dR}) \to B^2_{X'}(T_{X'/S}^{\sharp})$$ is canonically trivialized and we denote the corresponding map $(X/S)^{dR} \to B_{X'}(T_{X'/S})$ by $\alpha_{dR}^r.$ This is summarized in the following diagram.
\begin{equation}
\begin{tikzcd}
	X && {(X/S)^{dR}} && {X'} \\
	\\
	{B_{X'}(T_{X'/S})} && {(B_{X'}(T_{X'/S})/X')^{dR}} && {B^2_{X'}(T_{X'/S}^{\sharp})} && {B^2_{X'}(T_{X'/S})}
	\arrow["{\pi_{X/S}}", from=1-1, to=1-3]
	\arrow["\widetilde{\alpha_{c}^r}", from=1-1, to=3-1]
	\arrow["{\nu_{X/S}}", from=1-3, to=1-5]
	\arrow["{\alpha_{c}^r}", dotted, from=1-3, to=3-1]
	\arrow["\widetilde{\alpha_1}", from=1-3, to=3-3]
	\arrow["{\alpha_1}", dotted, from=1-5, to=3-3]
	\arrow["{\alpha_{dR}}"', from=1-5, to=3-5]
	\arrow["{\alpha_{dR}^{\gamma}}", from=1-5, to=3-7]
	\arrow["", from=3-1, to=3-3]
	\arrow[from=3-3, to=3-5]
	\arrow[from=3-5, to=3-7]
\end{tikzcd}
\label{diagram}
\end{equation}

\textit{Step 2.}
Applying \Cref{cor:obtain-E-sharp-torsor} to $\alpha_c^r$ with the trivialization of the underlying $F_{X/S}^*T_{X'/S}$-torsor on $X$  provided by the lift $\tilde{X}/W_2(S)$, we obtain $\alpha^\sharp: (X/S)^{dR} \to B_{X'}(T_{X'/S}^\sharp)$ refining $\alpha_c^r.$ Denote $q: \cL_{\tilde{X}} \to X$ to be the $F_{X/S}^* T_{X'/S}$-torsor on $X$ classified by $\widetilde{\alpha_c^r}.$ Then \Cref{cor:obtain-E-sharp-torsor} also implies $q^{dR}: (\cL_{\tilde{X}}/S)^{dR} \to (X/S)^{dR}$ is the $(T_{X'/S}/S)^{dR}$-torsor on $(X/S)^{dR}$ classified by $\widetilde{\alpha_1}.$

 \textit{Step 3. The map $\alpha^{\sharp}$ is an isomorphism.} Denote $\cL_{\tilde{X}}^c \to (X/S)^{dR}$ to be the $F_{X/S}^*T_{X'/S}$-torsor classified by $\alpha_c^r$. Explicitly, we have a diagram
\[\begin{tikzcd}
	{\cL_{\tilde{X}}} && {\cL_{\tilde{X}}^c} && {X'} \\
	\\
	X && {(X/S)^{dR}} && {(X/S)^{dR, \gamma}}
	\arrow[from=1-1, to=1-3]
	\arrow["q", from=1-1, to=3-1]
	\arrow[from=1-3, to=1-5]
	\arrow["{q^c}"', from=1-3, to=3-3]
	\arrow["s_{\tilde{X}}"', from=1-5, to=3-5]
	\arrow["{\pi_{X/S}}", from=3-1, to=3-3]
	\arrow[from=3-3, to=3-5]
\end{tikzcd}\]
where both squares are cartesian. Now let $s$ be a section of $\cL_{\tilde{X}} \to X$, i.e., a strong Frobenius lift to $\tilde{X}.$ Then $s$ gives rise to a section $s^{dR}: (X/S)^{dR} \to (\cL_{\tilde{X}}/S)^{dR}$ of the torsor classified by $\widetilde\alpha_1$. Moreover, by \Cref{p-curv-equivariant-relative} the map $s^{dR}$ is equivariant for the action of $B_{X'}(T_{X'/S}^\sharp)$. In particular, $s^{dR}$ descends to a map $X' \to [(\cL_{\tilde{X}}/S)^{dR}/B_{X'}(T_{X'/S}^\sharp)]$ on quotients. We will identify the right-hand side with $\cL_{\tilde{X}}^c.$ First, note the canonical map $\cL_{\tilde{X}}^c \to X'$ is the $T_{X'/S}^{dR}$-torsor classified by the map $\alpha_1$ in \eqref{diagram}. Since $\alpha_1 \circ \nu_{X/S} \simeq \widetilde{\alpha_1}$, we obtain a cartesian square
\[\begin{tikzcd}
	{(\cL_{\tilde{X}}/S)^{dR}} && {\cL_{\tilde{X}}^c} \\
	\\
	{(X/S)^{dR}} && {X'}
	\arrow[from=1-1, to=1-3]
	\arrow[from=1-1, to=3-1]
	\arrow[from=1-3, to=3-3]
	\arrow["{\nu_{X/S}}", from=3-1, to=3-3]
\end{tikzcd}\]
which realizes the top horizontal arrow as an $B_{X'}(T_{X'/S}^\sharp)$-torsor. Therefore, the map $s^{dR}$ descends to a map $X' \to \cL_{\tilde{X}}^c$ giving rise to a trivialization of $\alpha_1,$ and, in particular, to a trivialization of the de Rham gerbe.

\end{proof}

\begin{remark}
    During the proof of \Cref{construction-of-triv} we observed that a section $f \in  \cL_{\tilde{X}}(X)$ gives rise to a trivialization $(X/S)^{dR} \xrightarrow{f^{dR}} (\cL_{\tilde{X}}/S)^{dR}$ and upon reducing by $B_{X'}(T_{X'/S}^\sharp)$ it gives $s': X' \to \cL_{\tilde{X}}^c.$ In particular, the homotopy $f \circ \pi_{X/S} \simeq \pi_{\cL_{\tilde{X}/S}}\circ f$ gives rise to a homotopy $s' \circ F_{X/S} \simeq g \circ f$ where $g: \cL_{\tilde{X}} \to \cL_{\tilde{X}}^c$ is the canonical map. Since the composition $X \xrightarrow{f} \cL_{\tilde{X}} \xrightarrow{g} \cL_{\tilde{X}}^c \xrightarrow{} (X/S)^{dR}$ is naturally homotopic to $\pi_{X/S},$ we obtain a homotopy $\pi_{X/S} \simeq s \circ F_{X/S}$ where $s$ is the composition $X' \xrightarrow{s'} \cL_{\tilde{X}}^c \to (X/S)^{dR}.$
\end{remark}

\begin{remark}
    Let $X/S$ be a family with a lift $\tilde{X}/W_2(S)$ together with a strong Frobenius lift. The splitting of the de Rham gerbe from \Cref{construction-of-triv} gives rise to a map $X' \to B_{X'}T_{X'/S}^{\sharp}$. Indeed, this is because it gives a null-homotopy of the composition $$X' \xrightarrow{\alpha_1} (B_{X'}(T_{X'/S})/X')^{dR} \to B^2T_{X'/S}^\sharp $$
    where $\alpha_1$ is defined in \eqref{diagram}. We note it provides a Frobenius descent of the $F_{X/S}^*T_{X'/S}$-torsor on $X$ obtained from $(X/S)^{dR, \gamma} \simeq B_{X'}(T_{X'/S})$. We also note it is necessarily trivial. Indeed, there exists a (relative to $S$) Frobenius splitting of $\cO_{X'} \to F_{X/S, *}\cO_{X}.$ Therefore, for any $E$ on $X'$ the map $H^i(X', E) \to H^i(X, F_{X/S}^*E)$ is injective. 
\end{remark}


\begin{corollary}\label{local-OV}
    Let $X/S$ be a representable quasi-syntomic map of $\bF_p$-stacks. Given a lift $\tilde{X}/W_2(S)$ with a strong Frobenius lift, one obtains a symmetric monoidal equivalence $C_{f}: \MIC^{\cdot}(X/S)  \simeq \HIG^{\cdot}(X'/S).$ Moreover, given a lift $\tilde{S} \to \bZ/p^2$ of $S$, any lift $\tilde{X}_S$ of $X/S$ to $\tilde{S}$ provides an equivalence $C_f \simeq C_{\tilde{F}}$ where $C_{\tilde{F}}$ is from \cite[Theorem 2.11]{MR2373230} applied to $\tilde{F}: \tilde{X}_S \to \tilde{X} \times_{\tilde{S}} W_2(S)$ provided by \Cref{iso-of-torsor-deformation}.
\end{corollary}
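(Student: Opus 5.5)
The plan is to deduce the first assertion directly from \Cref{construction-of-triv} and then carry out the comparison with Ogus--Vologodsky through the explicit description of the splitting.

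\textbf{Step 1: the equivalence $C_f$.} By \Cref{construction-of-triv}, the strong Frobenius lift $f$ yields a section of $\nu_{X/S}: (X/S)^{dR} \to X'$. This section extends to an isomorphism of $T_{X'/S}^\sharp$-gerbes $(X/S)^{dR} \simeq B_{X'}(T_{X'/S}^\sharp)$ over $X'$, and that isomorphism induces a symmetric monoidal equivalence on quasi-coherent sheaves. On the source side, \Cref{lm:mod-xdr=d-mod} (or its stacky version obtained by descent) identifies the relevant heart with $\MIC^{\cdot}(X/S)$. On the target side, Cartier duality identifies $\QCoh(B_{X'}T_{X'/S}^\sharp)$ with $\QCoh(\widehat{\bT^*_{X'/S}}) = \HIG^{\cdot}(X'/S)$, symmetric monoidally. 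For stacks $X/S$ we use \Cref{torsor-for-stacks} and the affineness remark, so that everything descends along a smooth cover.

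\textbf{Step 2: the splitting module.} In the scheme case with $\tilde{S}$ and $\tilde{X}_S$ given, we follow \Cref{rem:splitting-module}, replacing $\gamma$ by $\sharp$. The equivalence $C_f$ is induced by the splitting module $\cH om_{\cO_X}(\cO_{D}, \cO_X)$ of $\hat D_{X/S}$. Here $D$ is the $F_{X/S}^*T_{X'/S}^\sharp$-torsor with flat connection on $X$ classified by $\alpha^\sharp: (X/S)^{dR} \to B_{X'}(T_{X'/S}^\sharp)$. By \Cref{cor:obtain-E-sharp-torsor} and the lemma following it, this torsor is the PD-envelope $D_X(\cL_{\tilde{X}})$ of the section $f$ of $\cL_{\tilde{X}}$. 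Its connection is the one induced from $\cL_{\tilde{X}}$ as in \Cref{connection-preserves-pd-envelope}.

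\textbf{Step 3: comparison with Ogus--Vologodsky.} The corollary following \Cref{lem:connection-of-global-stacky} identifies $\cL_{\tilde{X}}$, together with its connection, with $\cL_{\cX}$ for $\tilde{X}' = \tilde{X}\times_{W_2(S)}\tilde{S}$. Under \Cref{iso-of-torsor-deformation}, the section $f$ corresponds to the Frobenius lift $\tilde{F}: \tilde{X}_S \to \tilde{X}'$. Ogus--Vologodsky's $C_{\tilde F}$ is defined by the splitting module built from the PD-envelope of the section $\tilde F$ of $\cL_{\cX}$, with its induced connection (\cite[Remark 2.4]{MR2373230}). So the two splitting modules agree, and hence the equivalences agree.

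I expect the main obstacle to be checking that the equivalence induced by a gerbe splitting is really the one given by that dual splitting module, compatibly with the Cartier-duality conventions for $\sharp$ versus completed symmetric algebras. I would handle it exactly as in \Cref{rem:splitting-module}: compute $\cE nd(s_*\cO_{X'}) \simeq \hat S T_{X'/S}$ via the zero section of $B_{X'}T_{X'/S}^\sharp$, and match its action with the $p$-curvature using \Cref{p-curvature-rich}.
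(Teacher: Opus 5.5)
Your proposal is correct and is essentially the argument the paper intends; the paper gives no separate proof of this corollary. The equivalence comes from the gerbe splitting of \Cref{construction-of-triv} together with Cartier duality. The comparison with Ogus--Vologodsky then proceeds exactly as you describe: identify the resulting $F_{X/S}^*T_{X'/S}^\sharp$-torsor with the PD-envelope of the section $f$ of $\cL_{\tilde{X}}$ (\Cref{cor:obtain-E-sharp-torsor} and the lemma after it), match $\cL_{\tilde{X}}$ with $\cL_{\cX}$ compatibly with connections, and compare splitting modules as in \Cref{rem:splitting-module} and \Cref{main:global-ov}. One point that both you and the paper leave implicit, and on which your Step 2 relies, is that the section built in Step 3 of the proof of \Cref{construction-of-triv} is the one whose associated torsor is classified by $\alpha^\sharp$.
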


\begin{remark}
    Let us list some properties of the equivalence $C_f: \MIC^{\cdot}(X/S) \simeq \HIG^{\cdot}(X'/S)$ from \Cref{local-OV}. 
    \begin{itemize}
        \item[1.] Recall the equivalence $\MIC^{\cdot}(X/S) \simeq \HIG^{\cdot}(X'/S)$ is obtained from the isomorphism of $X'$-stacks $(X/S)^{dR} \simeq B_{X'}(T_{X'/S}^{\sharp})$. In particular, for any $\cE \in \HIG^{\cdot}(X'/S)$ we obtain $\Hig(\cE) \simeq \dR(C_f(\cE))$ in $\cD_{qc}(X').$ 
        \item[2.] Recall that one has $s_f \circ F_{X/S} \simeq \pi_{X/S}$ where $s_f$ is a section of $\nu_{X/S}: (X/S)^{dR} \to X'$ defined by $f.$ In particular, for $\cM \in \HIG^{\cdot}(X'/S)$, the underlying $\cO_X$-module of $C^{-1}_f(\cM)$ is $F_{X/S}^* \cM.$
        \item[3.] For $\cM \in \MIC^{\cdot}(X/S)$, the Higgs field of $C_{f}(M)$ is the image of the $p$-curvature of $\cM.$ Indeed, the isomorphism $C: B_{X'}(T_{X'/S}^{\sharp}) \simeq (X/S)^{dR}$ is $B_{X'}(T_{X'/S}^{\sharp})$-equivariant, i.e., one has a commutative square 
\[\begin{tikzcd}
	{(X/S)^{dR} \times_{X'} B_{X'}(T_{X'/S}^{\sharp})} &&&& {(X/S)^{dR}} \\
	{B_{X'}(T_{X'/S}^{\sharp}) \times_{X'} B_{X'}(T_{X'/S}^{\sharp})} &&&& {B_{X'}(T_{X'/S}^{\sharp})}
	\arrow["a", from=1-1, to=1-5]
	\arrow["{C \times \id}", from=2-1, to=1-1]
	\arrow["m"', from=2-1, to=2-5]
	\arrow["C"', from=2-5, to=1-5].
\end{tikzcd}\]
Thus, combining \Cref{rem:p-curv=action} and \Cref{rem:higgs-field=action-of-sharp}, the claim follows.
    \end{itemize}
\end{remark}

\subsection{Logarithmic Local Cartier equivalence}
For this subsection let $X$ be a smooth scheme over $S/\bF_p$ and $D$ a relative simple normal crossing divisor with irreducible components $D_1, \cdots, D_n.$ This gives rise to a map $X \to (\bA_S^1/\bG_m)^n =: S_n.$ The goal of this subsection is to apply the Cartier transform to the family $X \to S_n$. 
\begin{remark}
    Assume $(X, D)$ is $F$-liftable in the sense of \cite[\S 3]{MR4269423}. Such a lift gives rise to a lift of $X \to S_n.$ Moreover, compatibility of Frobenii on $X$ and $S_n$ is equivalent to $\tilde{F}^* (\tilde{D})= p \tilde{D}$ where $\tilde{F}: \tilde{X} \to \tilde{X}$ is a lift of the absolute Frobenius. Therefore, one can apply \Cref{local-OV}.
\end{remark}
Assuming $(X, D)$ is $F$-liftable, we obtain $\MIC^{\cdot}(X/S_n) \simeq \HIG^{\cdot}(X'/S_n).$ Let us interpret both categories in terms of the pair $(X, D).$ Let us first note that $X'$ is the $p$-th multiroot stack of $(X, D).$

\begin{remark}
   The category $ \cD_{qc}((X/S_n)^{dR})$ is identified with the derived category of logarithmic connections, i.e., its objects are $\cE \in \cD_{qc}(X)$ with a connection $\cE \to \cE \otimes \Omega^1_X(\log D)$ such that $\cH^i(\cE)$ has nilpotent $p$-curvature, see \cite[Theorem 5.11]{barz2025logarithmicrhamstacksnonabelian}. We denote this category by $\MIC^{\cdot}(X, D).$
\end{remark}
Now let us interpret the category $\HIG^\cdot(X'/(\bA^1/\bG_m)).$ First, we recall that the category $\QCoh(X')$ has a concrete interpretation in terms of (multi)-$1/p$-parabolic sheaves on $X$ with respect to $D$, see \cite{MR2504408}, or \cite[Theorem 6.1]{MR2964607}. For example, if $X$ has only one smooth irreducible component, then a $1/p$-parabolic sheaf structure on a vector bundle $E$ is a filtration $$E(-D)= E_1 \mono E_{(p-1)/p} \mono \cdots \mono E_{1/p} \mono E_0 = E.$$
We refer to \cite[Example 5.14]{barz2025logarithmicrhamstacksnonabelian} which explains how one can see the necessity of parabolic structures for the logarithmic Cartier descent.

\begin{remark}
    Since $X' = X \times_{S_n, F_{S_n}} S_n$ is the $p$-multiroot stack of $(X, D)$, the category $\QCoh(X')$ is identified with the category of $1/p$-parabolic quasi-coherent sheaves on $X.$ Therefore, $\HIG(X'/(\bA^1/\bG_m))$ is the category consisting of pairs $(E, \theta_E)$ where $E$ is a $1/p$-parabolic sheaf on $X$ with respect to $D$ and $\theta_E: E \to E \otimes_{\cO_{X}} \Omega^1_{X/S}(\log(D))$ a map of $1/p$-parabolic sheaves such that $\theta_E \wedge \theta_E=0$; here $\Omega^1_{X/S}(\log(D))$ is endowed with the trivial structure of a $1/p$-parabolic sheaf. Indeed, note $\Omega^1_{X'/(\bA^1/\bG_m)} = \nu^* \Omega^1_{X/(\bA^1/\bG_m)}$ where $\nu: X' \to X$ is the canonical projection. Recall $\Omega^1_{X/(\bA^1/\bG_m)} \simeq \Omega^1_{X}(\log D)$ and, under the equivalence of $\QCoh(X')$ with $1/p$-parabolic sheaves, $\nu^*$ endows a quasicoherent sheaf with the trivial $1/p$-parabolic structure. 
    We denote $\operatorname{ParHIG}^{1/p, \cdot} (X, D)$ to be the category of nilpotent Higgs modules on $X'/S_n.$
\end{remark}
\begin{corollary}\label{log-F-lift}
        Let $(X, D)$ be an snc pair with smooth $X$. Any $F$-lift in the sense of \cite[\S 3]{MR4269423} gives rise to a symmetric monoidal equivalence $\MIC^{\cdot}(X, D) \simeq \operatorname{ParHIG}^{1/p, \cdot} (X, D)$ between the category of logarithmic connections with nilpotent $p$-curvature and the category of Higgs bundles in $1/p$-parabolic sheaves with nilpotent Higgs field.
\end{corollary}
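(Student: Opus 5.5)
The plan is to apply \Cref{local-OV} to the family $f\colon X \to S_n = (\bA^1_S/\bG_m)^n$ classifying $D$, and then translate both sides using the two remarks above. First I would check the hypotheses of \Cref{local-OV}. The map $X \to S_n$ is representable: it is locally a base change of the smooth atlas $\bA^n_S \to S_n$, and it is smooth because $X \to S$ is smooth and $D$ is relative snc. In particular it is quasi-syntomic. Its relative cotangent complex is $\Omega^1_{X/S}(\log D)$, which computes $L_{X/S_n}$; this is the identification used in the remark on $\HIG(X'/(\bA^1/\bG_m))$.

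Next I need a flat lift $\tilde{X}/W_2(S_n)$ together with a strong Frobenius lift $W_2(X) \to \tilde{X}$. An $F$-lift of $(X,D)$ in the sense of \cite[\S 3]{MR4269423} consists of a flat lift $(\tilde{X},\tilde{D})$ over $\bZ/p^2$ and a Frobenius lift $\tilde{F}$ with $\tilde{F}^*\tilde{D} = p\tilde{D}$. This gives a map $\tilde{X} \to (\bA^1_{\bZ/p^2}/\bG_m)^n$ lifting $f$, compatible with the standard Frobenius lift $t \mapsto t^p$ on $\bA^1/\bG_m$.

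\begin{itemize}
\item To get a lift over $W_2(S_n)$, I would use the $\delta$-structure on $\bA^1/\bG_m$ with $\delta(t)=0$. This gives a map $W_2(S_n) \to (\bA^1_{W_2(S)}/\bG_m)^n$, and comparing with $\tilde{X}$ produces $\tilde{X}/W_2(S_n)$; by \Cref{rem:lift-to-witt-is-stronger} this is the direction that works.
\item The compatible pair $(\tilde{F}_X, \tilde{F}_{S_n})$ then defines a $W_2(S_n)$-map $W_2(X) \to \tilde{X}$. Concretely, $\tilde{F}_X$ determines a $\delta$-type map $\tilde{X} \to W_2(X)$ in the manner of \Cref{delta-splitting}. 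The condition $\tilde{F}^*\tilde{D} = p\tilde{D}$ is exactly what makes this map linear over $W_2(S_n)$, because $\tilde{D}$ is the pullback of the coordinate $t$.
\item This map is a strong Frobenius lift because $\tilde{F}(a) = a^p + p\delta_a$ holds for an absolute Frobenius lift.
\end{itemize}

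I expect this construction to be the main obstacle. The base $S_n$ is a stack, so I would verify it smooth-locally on $S_n$, over the atlas $\bA^n_S$, where $\tilde{D}$ corresponds to coordinates $t_i$ with $\tilde{F}(t_i) = t_i^p u_i$ for units $u_i$. The $\bG_m$-equivariance has to absorb the units $u_i$.

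With these data, \Cref{local-OV} gives a symmetric monoidal equivalence $\MIC^{\cdot}(X/S_n) \simeq \HIG^{\cdot}(X'/S_n)$. I would then rewrite each side:
\begin{itemize}
\item By \Cref{sheaves-on-dR-lci} and the cited identification \cite[Theorem 5.11]{barz2025logarithmicrhamstacksnonabelian}, $\cD_{qc}((X/S_n)^{dR})$ is the category of logarithmic connections with nilpotent $p$-curvature. Restricting to the heart, as in \Cref{lm:mod-xdr=d-mod}, gives $\MIC^{\cdot}(X,D)$.
\item $X'$ is the $p$-th multiroot stack of $(X,D)$, and $T_{X'/S_n} = \nu^* T_{X/S}(-\log D)$. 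So $\HIG^{\cdot}(X'/S_n)$ is nilpotent Higgs modules in $1/p$-parabolic sheaves with values in $\Omega^1_{X/S}(\log D)$, carrying the trivial parabolic structure. This is $\operatorname{ParHIG}^{1/p,\cdot}(X,D)$ by the preceding remark.
\end{itemize}

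Symmetric monoidality is inherited from the isomorphism of stacks $(X/S_n)^{dR} \simeq B_{X'}(T^\sharp_{X'/S_n})$.
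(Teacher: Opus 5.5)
Your proposal is correct and follows the paper's route: regard $D$ as a representable smooth map $X \to S_n$, use the $F$-lift to split the relative de Rham gerbe (\Cref{construction-of-triv}), apply \Cref{local-OV}, and then translate both sides using the remarks identifying $\cD_{qc}((X/S_n)^{dR})$ with log connections and $\QCoh(X')$ with $1/p$-parabolic sheaves. You actually spell out more than the paper about how the $F$-lift produces $\tilde{X}/W_2(S_n)$ and a strong Frobenius lift: you pull back along the map $W_2(S_n) \to (\bA^1_{\bZ/p^2}/\bG_m)^n$ induced by the Frobenius lift $t \mapsto t^p$, with the units $u_i$ giving the needed $2$-isomorphism, which is the right construction; note that \Cref{rem:lift-to-witt-is-stronger} is not what justifies it, since that remark goes in the opposite direction, from $W_2(S)$-lifts of $X$ to $\tilde{S}$-lifts of $X'$.
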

\begin{proof}
    If $D$ has $n$ smooth components $D_i,$ we obtain a representable smooth map $X \to (\bA^1/\bG_m)^n$. By \Cref{torsor-for-stacks}, its relative de Rham stack is a gerbe over $X'$ which is the $p$-multiroot of $D$. The $F$-lift gives rise to a splitting of this gerbe by \Cref{construction-of-triv}. Then \Cref{local-OV} gives rise to the desired equivalence.
\end{proof}

\begin{corollary}\label{log-just-lift}
    Let $(X, D)$ be an snc pair. Then any snc lift $(\tilde{X}, \tilde{D})$ to $\bZ/p^2$ gives rise to an equivalence $\MIC_{\le p-1} (X, D) \simeq \operatorname{ParHIG}^{1/p, \cdot}_{\le p-1} (X, D)$.
\end{corollary}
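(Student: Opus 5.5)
The plan is to mimic the proof of \Cref{log-F-lift}, using the global (gamma) version instead of the local one. Write $D=\sum D_i$. The divisor gives a representable smooth map $X \to S_n=(\bA^1/\bG_m)^n$. By \Cref{torsor-for-stacks}, $\nu^{\gamma}_{X/S_n}\colon (X/S_n)^{dR,\gamma}\to X'$ is a $T_{X'/S_n}$-gerbe, where $X'$ is the $p$-multiroot stack of $(X,D)$. By \Cref{rem:splittings-push=lifts-to-witt}, splittings of this gerbe are the same as lifts of $X\to S_n$ to $W_2(S_n)$. Once such a lift is available, Step 0 of \Cref{main:global-ov} gives a symmetric monoidal equivalence $\MIC^{\cdot}_{\gamma}(X/S_n)\simeq \HIG^{\cdot}_{\gamma}(X'/S_n)$. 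It then remains to restrict to the pieces of level $\le p-1$ and to translate both sides into logarithmic and parabolic terms, as in the remarks preceding \Cref{log-F-lift}.

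The first step, producing the lift, is the one I expect to be the main obstacle. By \Cref{rem:lift-to-witt-is-stronger}, it suffices to give a flat lift of $S_n$ to $\bZ/p^2$ together with a lift of $X\to S_n$ over it. The base-change construction there is insensitive to whether the base is a scheme or a stack, so it can then be transported to $W_2(S_n)$. The natural choice is $\tilde S_n=(\bA^1_{\bZ/p^2}/\bG_{m,\bZ/p^2})^n$, which maps to $W_2(S_n)$ via Teichmüller coordinates. The snc lift $(\tilde X,\tilde D)$ defines $\tilde X\to \tilde S_n$ by the line bundles $\cO(\tilde D_i)$ with their canonical sections. This map is flat and lifts $X\to S_n$, because the components $\tilde D_i$ are relative Cartier divisors meeting transversally.

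The point needing care is that \Cref{rem:lift-to-witt-is-stronger} is phrased for schemes. Alternatively, one can argue directly at the level of \Cref{strengh-of-OV-only-lift-X'}: a lift of $X'/S_n$ to $\bZ/p^2$ is obtained as $\tilde X\times_{\tilde S_n,F}\tilde S_n$, where $F$ is the $p$-th power map on $\bA^1/\bG_m$. This map is a genuine lift of Frobenius on $\tilde S_n$, so no Frobenius lift on $X$ is required. Its output is the $p$-root stack of $(\tilde X,\tilde D)$.

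Finally, I would restrict to the level-$\le p-1$ pieces. Under Cartier duality, $\QCoh(B_{X'}T_{X'/S_n})\simeq\QCoh(\widehat{\bT^{*,\gamma}_{X'/S_n}})$. Since divided powers and symmetric powers agree in degrees $<p$, modules supported on the $(p-1)$-st neighbourhood of the zero section embed fully faithfully on both sides. Exactly as in \Cref{global-cartier-from-W}, this yields $\MIC_{\le p-1}(X/S_n)\simeq\HIG_{\le p-1}(X'/S_n)$. For the final identification, $\cD_{qc}((X/S_n)^{dR})$ is the category of log connections by \cite[Theorem 5.11]{barz2025logarithmicrhamstacksnonabelian}, and its level-$\le p-1$ part is $\MIC_{\le p-1}(X,D)$. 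On the other side, $\QCoh(X')$ consists of $1/p$-parabolic sheaves and $\Omega^1_{X'/S_n}=\nu^*\Omega^1_X(\log D)$, which identifies $\HIG_{\le p-1}(X'/S_n)$ with $\operatorname{ParHIG}^{1/p,\cdot}_{\le p-1}(X,D)$.
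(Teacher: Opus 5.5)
Your proof works, but only along the route you label ``alternatively''. The first route rests on a misreading of \Cref{rem:lift-to-witt-is-stronger} and should be dropped.

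That remark base changes along the canonical map $\tilde S\to W_2(S)$, given on rings by $(a_0,a_1)\mapsto \tilde a_0^p+p\tilde a_1$. On special fibres this map is $F_S$ followed by $S\hookrightarrow W_2(S)$. So it turns a lift of $X$ over $W_2(S)$ into a lift of $X'$, not of $X$, over $\tilde S$. It cannot be run in reverse: base change along this map does not transport a lift of $X$ over $\tilde S_n$ to $W_2(S_n)$.

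What actually makes the corollary true is the Frobenius lift $\tilde F$ ($t\mapsto t^p$) on $\tilde S_n$, and your first route never uses it. Your phrase ``maps to $W_2(S_n)$ via Teichm\"uller coordinates'' conflates two different maps. If you want a genuine lift of $X$ to $W_2(S_n)$, in order to invoke \Cref{rem:splittings-push=lifts-to-witt} as in your first paragraph, pull $\tilde X$ back along the map $W_2(S_n)\to\tilde S_n$ given on rings by $x\mapsto(\bar x,\overline{\delta(x)})$, i.e.\ $t\mapsto[t]$. This map restricts to the identity on $S_n$. The composite $\tilde S_n\to W_2(S_n)\to\tilde S_n$ of the map of \Cref{rem:lift-to-witt-is-stronger} with this one is $\tilde F$. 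Hence the lift of $X'$ it produces is exactly your $\tilde X\times_{\tilde S_n,\tilde F}\tilde S_n$, so once repaired the two routes agree.

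The paper states this corollary without proof, and your second route is the argument it evidently intends:
\begin{enumerate}
\item The Frobenius-twisted lift $\tilde X\times_{\tilde S_n,\tilde F}\tilde S_n$ of the root stack $X'$ gives a section of $\nu^{\gamma}_{X/S_n}$ by \Cref{strengh-of-OV-only-lift-X'}. Only the direction ``lift $\Rightarrow$ section'' is needed there. That construction is pointwise, via the $\bZ/p^2$-linear splitting of $\bG_a^{dR,\gamma}$, so it is unaffected by $X'\to\Spec\bF_p$ not being representable.
\item Step~0 of \Cref{main:global-ov} then gives the $\gamma$-equivalence.
\item Finally, restrict to level $\le p-1$ as in \Cref{global-cartier-from-W} and translate into the logarithmic/parabolic language, exactly as you do.
\end{enumerate}
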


\subsection{Equivariant Cartier Equivalence}
For this subsection let $X$ be a smooth scheme over $\bF_p$ equipped with an action of an affine group scheme $G$. In this case, there are two $D$-modules categories of interest: the category $\MIC_G(X)$ of \textit{weakly} $G$-equivariant $D$-modules and the category $\MIC(X/G)$ of \textit{strongly} $G$-equivariant $D$-modules. The former category is identified with quasi-coherent sheaves on $X^{dR}/G$ and the latter with quasi-coherent sheaves on $X^{dR}/G^{dR}.$
\begin{remark}
    Recall from \Cref{lem:dR-preserves-et-covers} that $(X/G)^{dR}=X^{dR}/G^{dR}$ as \'etale sheaves. In particular, the category of weakly $G$-equivariant $D$-modules is the category of $D$-modules on $X/G,$ i.e., the category $\cD_{qc}((X/G)^{dR}).$
\end{remark}
We note that the category of strongly $G$-equivariant $D$-modules is identified with the category of quasi-coherent sheaves on the relative de Rham stack of $X/G \to BG.$ This follows from the next lemma.
\begin{lemma}
    Consider the natural map $f: X/G \to BG.$ Then there is a natural identification $(X/G \xrightarrow{f} BG)^{dR} \simeq X^{dR}/G.$
\end{lemma}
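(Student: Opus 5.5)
Since $(X/BG)^{dR} = X^{dR}\times_{BG^{dR}} BG$ with the maps $f^{dR}$ and $\pi_{BG}$, the plan is to compute both sides after pulling back along the cover $\Spec(\bF_p)\to BG$. By the lemma identifying $(B_{\et}G)^{dR}\simeq B_{\et}G^{dR}$, compatibly with $\pi_{BG}\simeq B(\pi_G)$, the map $\pi_{BG}\colon BG\to BG^{dR}$ is the map of classifying stacks induced by $\pi_G\colon G\to G^{dR}$. Likewise, by the identification $(X/G)^{dR}\simeq X^{dR}/G^{dR}$ (from \Cref{lem:dR-preserves-et-covers} and the fact that $(-)^{dR}$ preserves products and hence Čech nerves), $f^{dR}$ is the map $X^{dR}/G^{dR}\to BG^{dR}$ induced by $X^{dR}\to *$. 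Everything is therefore expressed by simplicial objects built from $G$, $G^{dR}$ and $X^{dR}$.

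First I will show that $X^{dR}/G^{dR}\to BG^{dR}$ is the associated fibration with fiber $X^{dR}$, i.e. $X^{dR}\simeq X^{dR}/G^{dR}\times_{BG^{dR}}*$. Pulling back along $BG\to BG^{dR}$ then gives
\[
(X/BG)^{dR}\simeq (X^{dR}/G^{dR})\times_{BG^{dR}} BG \simeq \bigl(X^{dR}\times_{BG^{dR}}\ast\bigr)\big/ \ldots,
\]
and concretely I will compute the fiber product levelwise on Čech nerves over $\Spec(\bF_p)\to BG$. The $n$-th term is $X^{dR}\times (G^{dR})^n\times_{(G^{dR})^n} G^n \simeq X^{dR}\times G^n$. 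The face maps are the action maps, in which $G$ acts on $X^{dR}$ through $\pi_G\colon G\to G^{dR}$ and the $G^{dR}$-action, or equivalently through the functorial action $G\times X^{dR}\to X^{dR}$ obtained by applying $(-)^{dR}$ to $G\times X\to X$ and precomposing with $\pi_G\times\id$. The geometric realization is then $X^{dR}/G$.

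An alternative that I would use as a cross-check is to reduce to the statement that for $Y\to Z$ and a cover $Z'\to Z$ one has $(Y\times_Z Z'/Z')^{dR}\simeq (Y/Z)^{dR}\times_Z Z'$. This follows from \Cref{dR-commutes-colimits} together with $(Z'/Z')^{dR}\simeq Z'$. Applying it to $\Spec(\bF_p)\to BG$ gives that $(X/BG)^{dR}\times_{BG}\Spec(\bF_p)\simeq X^{dR}$, with descent data given by the $G$-action on $X^{dR}$, which is exactly $X^{dR}/G$. For this I need that $(X/BG)^{dR}\to BG$ is a stack, so that descent along the étale/fppf cover is legitimate; fiber products of stacks are stacks, so this is fine.

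The main obstacle is checking that the descent datum, or the simplicial face maps, is the expected $G$-action on $X^{dR}$, i.e. the action $X^{dR}\times G\to X^{dR}$ given by $a^{dR}\circ(\id\times\pi_G)$. Concretely, this requires the compatibility $\pi_{BG}\simeq B(\pi_G)$ from the previous lemma, together with naturality of $\pi$ to match the two actions coherently. I expect this to reduce to unwinding that lemma's proof, but keeping track of the higher coherences of the homotopies is where care is needed.
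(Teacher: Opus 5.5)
Your main route is the paper's own proof. You identify $(BG)^{dR}\simeq BG^{dR}$ with $\pi_{BG}\simeq B(\pi_G)$, use $(X/G)^{dR}\simeq X^{dR}/G^{dR}$, and read off the fiber product as $X^{dR}/G$ with $G$ acting through $\pi_G$; your levelwise \v{C}ech-nerve computation, justified because $X^{dR}\times (G^{dR})^{\bullet}\to (G^{dR})^{\bullet}$ is a cartesian transformation, only spells out what the paper leaves implicit. Your cross-check via $(Y\times_Z Z'/Z')^{dR}\simeq (Y/Z)^{dR}\times_Z Z'$ along $\Spec(\bF_p)\to BG$ is also sound, and it has the small advantage of not needing either identification of de Rham stacks of quotients.
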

\begin{proof}
    Recall the relative de Rham stack of $f$ is the fiber product of 
\[\begin{tikzcd}
	&&& {(X/G)^{dR}} \\
	\\
	BG &&& {(BG)^{dR}}
	\arrow["f", from=1-4, to=3-4]
	\arrow["{\pi_{BG}}", from=3-1, to=3-4].
\end{tikzcd}\]
Moreover, by \Cref{lem:dR-preserves-et-covers} one has $(BG)^{dR} \simeq BG^{dR}$ and $\pi_{BG} \simeq B\pi_{G}.$  Thus, the fiber product is identified with the quotient of $X^{dR}$ by $G$ with the action defined along the map $\pi_{G}: G \to G^{dR}$, which completes the proof.
\end{proof}
\begin{remark}
     Note the sheaf $\Omega^1_{f}$ of relative K\"ahler differentials of $f: X/G \to BG$ is given by $\Omega^1_{X}$ with its natural $G$-equivariant structure. In particular, a relative Higgs module for $X/G \to BG$ amounts to a weakly $G$-equivariant sheaf $E$ on $X$ with an $\cO_X$-linear map $E \to E \otimes \Omega^1_{X}$ of $G$-equivariant sheaves on $X$ satisfying $\theta_E \wedge \theta_E=0.$ We denote this category by $\HIG^\cdot_{G}(X).$
\end{remark}
As above, let $G$ be a group acting on $X.$ Consider the twisted action given by $$G \times X \xrightarrow{F_G \times \operatorname{Id}_X} G \times X \xrightarrow{a} X $$
and denote $\HIG^\cdot_{\phi(G)}(X)$ to be the category of nilpotent Higgs modules on the family $X/G \to BG$ for this twisted action.
\begin{corollary}\label{cor:G-equiv-local}
    Assume both $X$ and $BG$ are $F$-liftable. Moreover, assume the action of $G$ on $X$ also lifts to $\bZ/p^2.$ Then one obtains a symmetric monoidal equivalence $\MIC^{\cdot}_G(X) \simeq \HIG^{\cdot}_{\phi(G)}(X).$
\end{corollary}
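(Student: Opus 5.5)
The plan is to apply \Cref{local-OV} to the representable smooth family $f\colon X/G \to BG$ with the twisted action, and then identify the two sides. By the lemma just proved, $(X/G \xrightarrow{f} BG)^{dR} \simeq X^{dR}/G$. By the remark on weak and strong equivariance, the category of nilpotent objects on this stack is $\MIC^{\cdot}_G(X)$; I will read the statement with ``weakly equivariant'' meaning quasi-coherent sheaves on $X^{dR}/G$.

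On the Higgs side, I first compute the Frobenius twist $(X/G)' = (X/G) \times_{BG, F_{BG}} BG$. Since $F_{BG} = B(F_G)$, this twist is the quotient $X/G$ for the action through $F_G$, i.e.\ the twisted action $a \circ (F_G \times \id_X)$. Its relative cotangent sheaf over $BG$ is $\Omega^1_X$ with the corresponding equivariant structure, so by the remark on relative Higgs modules, $\HIG^{\cdot}((X/G)'/BG) = \HIG^{\cdot}_{\phi(G)}(X)$. One point needs care here. The stacky twist uses $F_{BG}$ on the base only, so the underlying scheme of the twist is $X$ itself rather than $X'$; over $\bF_p$, $S = \Spec \bF_p$ being perfect, this is harmless.

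The remaining work is to produce the input of \Cref{local-OV}: a lift of $X/G \to BG$ to $W_2(BG)$ together with a strong Frobenius lift $W_2(X/G) \to \widetilde{X/G}$ over $W_2(BG)$.
\begin{itemize}
\item The lifts of $X$, $G$ and the action to $\bZ/p^2$ give $\tilde{X}/\tilde{G} \to B\tilde{G}$.
\item The $F$-liftability of $BG$, meaning a Frobenius lift on $\tilde G$, together with $W_2(\bF_p) = \bZ/p^2$, identifies $B\tilde G$ with (a map to) $W_2(BG)$ via the Greenberg-type map $W_2(BG) \to B\tilde G$ induced by the $\delta$-like lift. This gives the required lift over $W_2(BG)$.
\item The $F$-lift of $X$, compatible with the lifted action, gives maps $W_2(X) \to \tilde X$ that are $W_2(G)$-equivariant. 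These descend to a strong Frobenius lift $W_2(X/G) \simeq W_2(X)/W_2(G) \to \tilde X/\tilde G$ over $W_2(BG)$, reducing to the identity.
\end{itemize}
\Cref{local-OV} then gives the symmetric monoidal equivalence $\MIC^{\cdot}_G(X) \simeq \HIG^{\cdot}_{\phi(G)}(X)$.

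I expect the main obstacle to be this last step. The difficulty is that ``$F$-liftable'' is a statement about the absolute Frobenius, whereas the strong Frobenius lift is defined relative to $W_2(BG)$. I would first try to treat $BG$ and $X/G$ through their Čech nerves $G^{\bullet}$ and $G^{\bullet}\times X$, where everything is schemes. There I would check that Frobenius lifts compatible with the lifted action and group law give strong lifts levelwise. I would then use that $W_2(-)$ commutes with the étale/fppf quotients in question, in the style of \Cref{lem:dR-preserves-et-covers}. If equivariance of the Frobenius lift of $X$ is not automatic from the hypotheses, I would add it as an implicit compatibility assumption.
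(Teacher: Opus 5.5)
Your proposal follows the paper's proof, which uses \Cref{torsor-for-stacks} to see the relative de Rham stack of $X/G \to BG$ as a gerbe over the twist $X/\phi(G)$, asserts that the hypotheses give a section by \Cref{construction-of-triv}, and concludes by \Cref{local-OV}. Your attempt to verify the input to \Cref{construction-of-triv}, and your observation that the Frobenius lift of $\tilde X$ must be compatible with the lifted action, go beyond what the paper writes.

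One correction to that extra step. $W_2$ does not commute with products: the nonzero element $V(x)\otimes V(y)$ of $W_2(\bF_p[x])\otimes_{\bZ/p^2} W_2(\bF_p[y])$ maps to $V(x)V(y)=0$ in $W_2(\bF_p[x,y])$. So $W_2(G)$ is not a group scheme and ``$W_2(X)/W_2(G)$'' has no meaning. Your \v{C}ech-nerve idea still works if you descend the splittings rather than $W_2$ itself. After base change along $\Spec(\bF_p)\to BG$ the gerbe becomes $\nu_X\colon X^{dR}\to X$. A section over $X/\phi(G)$ is therefore a section of $\nu_X$ that is equivariant for $G$ acting on $X^{dR}$ through $\pi_G$ and on $X$ through $F_G$. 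Compatible Frobenius lifts on $\tilde X$ and $\tilde G$ should provide such a section, via the functorial splitting of \Cref{delta-absolute-splitting} together with $s_G\circ F_G\simeq \pi_G$ from \Cref{rem:any-section-factorizes-pi_X}.
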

\begin{proof}
    By \Cref{torsor-for-stacks} the relative de Rham stack of $X/G \to BG$ is a gerbe over $X'$ which is identified with the quotient $X/\phi(G)$. The assumptions imply it admits a section by \Cref{construction-of-triv}. Thus, the statement follows from \Cref{local-OV}.
\end{proof}
\begin{corollary}\label{cor:G-equiv-global}
    Assume the action of $G$ on $X$ lifts to $\bZ/p^2$. Then one obtains the equivalence $\MIC_{G, \le p-1}(X) \simeq \HIG_{\phi(G), \le p-1}(X).$
\end{corollary}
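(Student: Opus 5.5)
The plan is to deduce the statement from \Cref{main:global-ov}, applied to the representable smooth map $f\colon X/G \to BG$. Only the ``$\le p-1$'' part of that theorem is needed here. Two things then have to be checked. First, a lift of the action of $G$ on $X$ to $\bZ/p^2$ must give a flat lift of $X/G$ to $W_2(BG)$. Second, the resulting categories must be identified with $\MIC_{G,\le p-1}(X)$ and $\HIG_{\phi(G),\le p-1}(X)$.

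\textit{Step 1: a lift over $W_2(BG)$.} The lifted action gives a flat $\bZ/p^2$-group $\tilde G$ acting on a flat lift $\tilde X$. Hence we get a lift $\tilde S := B\tilde G$ of $S := BG$ to $\bZ/p^2$, together with a lift $\tilde X/\tilde G \to B\tilde G$ of $f$. I do not expect to produce a lift of $X'/\tilde S$ directly, since that would seem to require a lift of $F_G$. Instead I will push the deformation forward along the canonical map $\tilde S \to W_2(S)$ used in \Cref{rem:lift-to-witt-is-stronger}. Both $S \mono \tilde S$ and $S \mono W_2(S)$ are square-zero thickenings, with ideals $\cO_S$ and $F_{S,*}\cO_S$ respectively. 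The map $\tilde S \to W_2(S)$ induces the map of ideals $\cO_S \to F_{S,*}\cO_S$, which is the Frobenius. Deformation theory is functorial in the extension, so I will form the pushout of square-zero extensions of $\cO_{X/G}$ along this map of ideals. This should produce a flat lift of $X/G$ over $W_2(BG)$. Equivalently, the obstruction class $\ob_{X/W_2(S)}$ is the image of $\ob_{X/\tilde S}=0$ under $f^*L_{S/\tilde S}[1] \to f^*L_{S/W_2(S)}[1]$, so it vanishes, and a chosen lift of $X/G$ to $\tilde S$ transports to one over $W_2(S)$. The argument is smooth-local on $BG$ and compatible with descent, so it works for the algebraic stack $BG$.

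\textit{Step 2: applying the global Cartier transform.} By \Cref{rem:splittings-push=lifts-to-witt}, this lift splits the $T_{X'/S}$-gerbe $(X/G \,/\, BG)^{dR,\gamma} \to X'$. \Cref{main:global-ov} then gives $\MIC^\cdot_\gamma \simeq \HIG^\cdot_\gamma$ for the family. On the Higgs side, objects killed by $S^{\ge p}$ only see the part of $\hat\Gamma T$ below degree $p$, where $\Gamma$ and $S$ agree. On the de Rham side, the analogous statement holds for $F_*D\otimes \hat\Gamma T$. So the equivalence restricts to the full subcategories of modules killed by $S^{\ge p}T_{X'/S}$, exactly as in \Cref{global-cartier-from-W}.

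\textit{Step 3: identifying the categories.} By the lemma identifying $(X/G\to BG)^{dR}\simeq X^{dR}/G$, quasi-coherent sheaves on the relative de Rham stack are the $D$-modules appearing in $\MIC_G(X)$. The twist $X' = X\times_{BG,F_{BG}}BG$ is $X/\phi(G)$, and $\Omega^1_f$ is $\Omega^1_X$ with its equivariant structure. By the subsequent remark, Higgs modules on $X'/BG$ form $\HIG_{\phi(G)}(X)$, and the truncations by degree $\le p-1$ match.

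I expect Step 1 to be the main obstacle. The issue is making the pushout of deformations along $\tilde S \to W_2(S)$ rigorous for the stack $BG$. This means checking that the ideal map is indeed $F$, so that the pushed-out sheaf of algebras is flat over $W_2(BG)$. If that is awkward, I would instead verify the vanishing of $\ob_{X/W_2(S)}$ through the comparison lemma relating $\ob_{X'/\tilde S}$ and $\ob_{X/W_2(S)}$, and produce a lift by choosing a trivialization.
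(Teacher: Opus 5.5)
\textbf{Step 1 is a genuine gap, and it cannot be filled under the stated hypothesis.} Your Steps 2 and 3 are fine, and applying \Cref{main:global-ov} to $f\colon X/G\to BG$ is the evident intended route; the paper gives no separate argument for this corollary. The problem in Step 1 is not a technicality about stacks.

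\textbf{Why Step 1 fails.} Locally on functions, the canonical map $\tilde S\to W_2(S)$ is $(a_0,a_1)\mapsto \tilde a_0^p+p\tilde a_1$. Its restriction to $S$ is therefore $F_S$, not $\mathrm{id}_S$. This is exactly why pulling back along it in \Cref{rem:lift-to-witt-is-stronger} produces a lift of $X'$ rather than of $X$. Consequences:
\begin{itemize}
\item The map is not a morphism of square-zero thickenings of $S$, so there is no pushout of $\tilde X/\tilde S$ along a map of ideals giving a flat lift of $X/S$ over $W_2(S)$.
\item There is no map $f^*L_{S/\tilde S}\to f^*L_{S/W_2(S)}$ carrying $\ob_{X/\tilde S}$ to $\ob_{X/W_2(S)}$. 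The natural map goes $L_{S/W_2(S)}\to F_{S,*}L_{S/\tilde S}$, and it compares $\ob_{X/W_2(S)}$ with $\ob_{X'/\tilde S}$. That is your fallback, and it needs the lift of $X'$ you correctly say you do not have.
\item Moving a lift of $X$ from $\tilde S$ to $W_2(S)$ would need a map $W_2(S)\to\tilde S$ restricting to $\mathrm{id}_S$. That amounts to a Frobenius lift on $B\tilde G$ (cf.\ \Cref{frob-lifts-append}).
\item In general, after pulling back to $X'$, $\ob_{X/W_2(S)}-\nu^*\ob_{X/\tilde S}$ is the obstruction to lifting $F_S$ to $\tilde S$ composed with the Kodaira--Spencer class of $X/S$. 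For $S=BG$ the latter is the infinitesimal action, and the product need not vanish.
\end{itemize}

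\textbf{A counterexample to the statement itself.} Take $G=\bG_a$ acting on $X=\bA^1$ by translation; this action obviously lifts to $\bZ/p^2$. Then:
\begin{itemize}
\item $X/G=\Spec(\bF_p)$ and $(X/G\to BG)^{dR}\simeq \bG_a^{dR}/\bG_a\simeq B\bG_a^\sharp$.
\item $X'=X/\phi(G)\simeq B\alpha_p$, and $\nu_{X/S}$ is induced by the canonical surjection $\bG_a^\sharp\to\alpha_p$, whose kernel is $V(F_*\bG_a^\sharp)$ (cf.\ \Cref{lm:closed-immersion-action}).
\item The $\gamma$-gerbe is $BE\to B\alpha_p$, where $E$ is the pushout of this extension along $\bG_a^\sharp\to\bG_a$. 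Using the set-theoretic section $a\mapsto (\gamma_n=a^n/n!)_{n<p}$, its class is the $2$-cocycle $\sum_{0<i<p}a^ib^{p-i}/(i!\,(p-i)!)$ on $\alpha_p$.
\item This cocycle is homogeneous of degree $p$ and nonzero in $\bF_p[a,b]/(a^p,b^p)$. Coboundaries of functions on $\alpha_p$ only have components of degree $<p$, so it is not a coboundary.
\end{itemize}
Hence the gerbe is nonsplit, and by \Cref{rem:splittings-push=lifts-to-witt}, $X/G$ has no lift to $W_2(BG)$.

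The conclusion fails too:
\begin{itemize}
\item A translation-equivariant connection is $\nabla=d+A\,dx$ with $A$ constant, and its $p$-curvature is $A^p$. So $\MIC_{G,\le p-1}(X)$ is the category of $\bF_p[A]/(A^{p^2})$-modules.
\item $\HIG_{\phi(G),\le p-1}(X)$ is the category of $\bF_p[\theta,\psi]/(\theta^p,\psi^p)$-modules, with $\theta$ coming from $\QCoh(B\alpha_p)$ and $\psi$ the Higgs field.
\item A commutative algebra is recovered from its module category as the centre. These two algebras are non-isomorphic (embedding dimension $1$ versus $2$), so the categories are not equivalent.
\end{itemize}

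\textbf{What would fix it.} You need an extra hypothesis, for instance a lift of $F_G$ to an endomorphism of $\tilde G$. Twisting the lifted action by it gives a flat lift of $X'=X/\phi(G)$ to $\bZ/p^2$. Then \Cref{strengh-of-OV-only-lift-X'} together with Step 0 of \Cref{main:global-ov} yields the equivalence.
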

\begin{remark}
The appearance of Higgs modules with respect to the Frobenius-twisted action of $G$ in \Cref{cor:G-equiv-local} may be explained by the following observation.
   Let $a: G \times X \to X$ be the action map and denote $a^F: G \times X \xrightarrow{F_G \times \id} G \times X \xrightarrow{a} X$ to be the Frobenius-twisted action. Let $E$ be a vector bundle endowed with the $G$-equivariant structure for $a^F.$ That is, we have $(a^F)^* E \simeq \pr^*E.$ This gives rise to the canonical $G$-equivariant structure on $F_{X}^*E$ for the action $a.$ Indeed, this follows from $F_X \circ a = a \circ (F_G \times F_X) = a^F \circ (\id_G \times F_X).$ In particular, in the assumptions of \Cref{cor:G-equiv-local}, to define a strongly $G$-equivariant nilpotent $D$-module, it is enough to have a nilpotent Higgs module which only has a $\phi(G)$-equivariant structure. Namely, the functor sends such $(E, \theta_E)$ to $(F^*E, \nabla_{can}+\zeta_{\tilde{F}}(\theta_E))$ where $F^*E$ is endowed with its natural $G$-equivariant structure.
\end{remark}

\subsection{Decompleted Cartier transform}

\begin{definition}
    Denote $\bG_a^{dR, n}$ to be the $\bG_a$-algebra stack obtained from the square-zero extension $F_*\bG_a^{\sharp}[1] \to \bG_a^{dR} \to F_*\bG_a$ by pushing out along the map $\bG_a^{\sharp} \to \alpha_{p^n}.$
\end{definition}
\begin{definition}
\begin{itemize}
    \item[(1)] For $X/\bF_p$ denote $X^{dR, n}$ to be the $\bF_p$-stack obtained by the transmutation of $\bG_a^{dR, n},$ i.e., $X^{dR, n}(R)=X(\bG_a^{dR, n}(R)).$
    \item[(2)] For $X/S$ over $\bF_p$ denote $(X/S)^{dR, n} = X^{dR, n} \times_{S^{dR, n}} S$ in the category of derived stacks where $S \to S^{dR, n}$ is obtained by transmutation of $\bG_a \xrightarrow{\pi} \bG_a^{dR} \to \bG_a^{dR, n}.$
\end{itemize}
\end{definition}
Let us give an explicit quasi-ideal model for $\bG_a^{dR, n}.$
\begin{remark}
    Define $A_n=\Cone(F_*\bG_a \xrightarrow{VF^n} F_*^{n}W_2)$ a $W$-algebra stack. Note $\pi_0(A_n)=F_*^n\bG_a$ and $\pi_1(A_n)=F_*\alpha_{p^n}$ as sheaves of $W$-modules. Note there is a map $f_{n}: A_n \to F^{n-1}_*\bG_a^{dR, F}$ given by a map of quasi-ideals:
\[\begin{tikzcd}
	{F_*\bG_a} &&& {F_*^nW_2} \\
	\\
	{F^n_*\bG_a} &&& {F^n_*W_2}
	\arrow["{VF^n}", from=1-1, to=1-4]
	\arrow["{F^{n-1}}"', from=1-1, to=3-1]
	\arrow[equals, from=1-4, to=3-4]
	\arrow["p"', from=3-1, to=3-4]
\end{tikzcd}\]
which induces an isomorphism on $\pi_0$ and on $\pi_1$ is $F^{n-1}: F_*\alpha_{p^n} \to F_*\alpha_p$. 
    
\end{remark}

\begin{definition}
Let $\bG_a^{dR, n,*}$ be the fiber product of $f_n: A_n \to F_*\bG_a^n$ and $F^{n-1}: F_*\bG_a \to F^n_*\bG_a.$
\end{definition}
\begin{lemma}
    The $\bG_a$-algebra stacks $\bG_a^{dR, n}$ and $\bG_a^{dR, n, *}$ are isomorphic.
\end{lemma}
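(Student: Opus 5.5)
The idea is to build a map of $\bG_a$-algebra stacks $\bG_a^{dR} \to \bG_a^{dR, n, *}$ that lives over $F_*\bG_a$ and induces the canonical map $F_*\bG_a^\sharp \to F_*\alpha_{p^n}$ on $\pi_1$. The universal property of the pushout then gives a map $\bG_a^{dR, n} \to \bG_a^{dR, n, *}$ of square-zero extensions of $F_*\bG_a$ by $F_*\alpha_{p^n}[1]$. Both stacks are $1$-truncated, and this map is the identity on $\pi_0$ and on $\pi_1$, so it is an isomorphism.

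\textit{Step 1: the map $\bG_a^{dR} \to A_n$.} Write $\bG_a^{dR} = \Cone(F_*W \xrightarrow{p} F_*W)$ as a quasi-ideal. I would define a map of quasi-ideals
$$(F_*W \xrightarrow{p} F_*W) \longrightarrow (F_*\bG_a \xrightarrow{VF^n} F^n_*W_2)$$
with the following two components.
\begin{itemize}
\item On rings, take $F^{n-1}: F_*W \to F^n_*W$ followed by restriction to $W_2$.
\item On modules, take the projection $R: F_*W \to F_*\bG_a$ to the zeroth Witt component.
\end{itemize}
Commutativity of the square comes from the identities $FV = VF = p$ on Witt vectors of $\bF_p$-algebras together with the fact that $F$ and $V$ commute. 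These give $F^{n-1}(pw) = VF^n(w)$, and in $W_2$ the element $VF^n(w)$ depends only on $R(w)$. One also checks the module-linearity condition over the ring map; this is a routine check on Witt components.

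\textit{Step 2: landing in $\bG_a^{dR,n,*}$ and checking the $\bG_a$-algebra structure.} On $\pi_0$ the map of Step 1 is $F^{n-1}: F_*\bG_a \to F^n_*\bG_a$. Together with $\nu: \bG_a^{dR} \to F_*\bG_a$, it therefore gives a map into the fiber product $\bG_a^{dR, n, *}$ covering the identity of $F_*\bG_a$. Next I would check that this map is compatible with the $\bG_a$-algebra structures. Concretely, precomposing with $\pi: \bG_a \to \bG_a^{dR}$, built from the quasi-ideal map of \Cref{rem:fp-str-two-frobs-the-same}, should recover the structure map of $\bG_a^{dR,n,*}$. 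This is again a comparison of two explicit maps of quasi-ideals.

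\textit{Step 3: $\pi_1$ and conclusion.} On $\pi_1$, Step 1 gives the map $\ker(p \text{ on } F_*W) = F_*\bG_a^\sharp \to \ker(VF^n \text{ on } F_*\bG_a) = F_*\alpha_{p^n}$ induced by $R$. This should be the canonical map, since it sends a divided-power Witt vector to its ghost-zeroth component. Given that, the map factors uniquely through the pushout $\bG_a^{dR,n}$, compatibly with the square-zero extension structures. The induced map is then an isomorphism on $\pi_0$ and $\pi_1$, hence an equivalence.

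I expect the main obstacle to be the bookkeeping rather than any single idea. The delicate points are making sure the Frobenius twists $F_*, F^n_*$ line up so that the ring-level map is genuinely a map of $\bG_a$-algebras, and identifying the $\pi_1$-map with the specific map $\bG_a^\sharp \to \alpha_{p^n}$ used to define $\bG_a^{dR,n}$. If the direct identification on $\pi_1$ is awkward, I would fall back on checking it on the universal case $\bF_p[x]$, where everything is discrete and can be verified on elements.
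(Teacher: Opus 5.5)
Your proposal is essentially the paper's proof. It uses the same map of quasi-ideals $\bG_a^{dR}\to A_n$ with components $R$ and $F^{n-1}\circ R$, the induced map into $A_n\times_{F^n_*\bG_a}F_*\bG_a$ over the identity of $F_*\bG_a$, the identification of the $\pi_1$-map with the canonical $F_*\bG_a^\sharp\to F_*\alpha_{p^n}$, and the factorization through the pushout, which is then an isomorphism on $\pi_0$ and $\pi_1$.

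Like the paper, you take for granted that $A_n$ is a genuine quasi-ideal. Your ``routine'' linearity check needs an $F^n_*W_2$-module structure on $F_*\bG_a$ making $VF^n$ linear. For $n\ge 2$ this is not automatic: when $R$ is not perfect, the image $V(R^{p^n})$ need not be stable under multiplication by $W_2(R)$.
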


\begin{proof}
    Consider the map $\bG_a^{dR} \to A_n$ of $\bG_a$-algebra stacks given by 
\[\begin{tikzcd}
	{F_*W} &&& {F_*W} \\
	\\
	{F_*\bG_a} &&& {F_*^nW_2}
	\arrow["p", from=1-1, to=1-4]
	\arrow["R"', from=1-1, to=3-1]
	\arrow["{F^{n-1}\circ R}", from=1-4, to=3-4]
	\arrow["{VF^{n}}"', from=3-1, to=3-4]
\end{tikzcd}\]
and note that on $\pi_0$ it induces $F^{n-1}:F_*\bG_a \to F_*^n\bG_a$. Therefore, it gives rise to a map $f:\bG_a^{dR} \to \bG_a^{dR, n, *} = A_n \times_{F^n_*\bG_a} F_*\bG_a$ where the map $\bG_a^{dR} \to F_*\bG_a$ comes from $\pi_0(\bG_a^{dR})=F_*\bG_a.$ Moreover, the map $\pi_1(\bG_a^{dR}) \to \pi_1(\bG_a^{dR, n, *})$ is identified with the canonical map $F_*\bG_a^\sharp \to F_*\alpha_{p^n}$ since it is true for $\bG_a^{dR} \to A_n$ and $\pi_1(\bG_a^{dR, n, *}) = \pi_1(A_n)$ under the natural map. Therefore, $f$ factors through the map $\bG_a^{dR, n} \to \bG_a^{dR, n, *}$ which is an isomorphism from the previous discussion.
\end{proof}

\begin{remark}
    If $X/S$ is lci, then $(X/S)^{dR, n}$ is the pushout of $\bG_a^{\sharp} \otimes_{\bG_a} L_{X'/S}^{\vee}[1]$-torsor on $X'$ along the map induced by $\bG_a^{\sharp} \to \alpha_{p^n}.$ In particular, there exists a map $\nu_{X/S}^{dR, n}: (X/S)^{dR, n} \to X'$ exhibiting the source as an fppf-$ L_{X'/S}^{\vee}[1] \otimes_{\bG_a} \alpha_{p^n}$-torsor over the target.
\end{remark}
\begin{remark}\label{XdRn-vs-frobenius-power}
    Let $X/S$ be smooth and $S$ is reduced. Then pushforward of $\alpha_{p^n}$ from the fppf site to Zariski is given by $F_{X', *}^{n}\cO_{X'}/\cO_{X'}[-1]$. Thus, $(X/S)^{dR, n} \to X'$ defines a Zariski torsor for $T_{X'/S} \otimes F_{X', *}^n \cO_{X'}/\cO_{X'}.$ Under the canonical map $T_{X'/S} \otimes F_{X', *}^n \cO_{X'}/\cO_{X'} \to T_{X'/S}[1]$ this torsor is sent to the gerbe of lifts of $X$ to $W_2(S)$. In particular, any lift $\tilde{X}/W_2(S)$ gives rise to a torsor for $T_{X'/S} \otimes_{\cO_{X'}} F_{X', *}^n \cO_{X'}$-torsor on $X'$ refining $(X/S)^{dR, n} \to X'.$
\end{remark}
\begin{remark}\label{torsor-of-lifts-of-power-of-frob}
    Let $f:X \to S$ be a representable map of smooth algebraic stacks over $\bF_p.$ Let $\tilde{X}/W_2(S)$ be a lift. Deforming $F_{X/S}^n: X \to X^{(n)}$ to a $W_2(X)$-linear map $\tilde{X} \to \tilde{X}^{(n)}$ is governed by a map $F_{X/S}^{n, *}L_{X^{(n)}/S} \to f^*F_{S, *}\cO_{S}[1] \simeq \nu_* \cO_{X'}[1],$ see \Cref{def:thry}. Note $\nu^*F_{X/S}^{n ,*}L_{X^{(n)}/S} \simeq F_{X'}^{n ,*} L_{X'/S}.$ Therefore, we obtain a torsor for $T_{X'/S} \otimes_{\cO_{X'}} F^n_{X', *}\cO_{X'}.$
\end{remark}
\begin{lemma}
    Let $X/S$ be smooth with reduced $S.$ Let $\tilde{X}/W_2(S)$ be a flat lift of $X.$ Then Zariski $T_{X'/S} \otimes_{\cO_{X'}} F_{X', *}^n \cO_{X'}$-torsor on $X'$ from \Cref{XdRn-vs-frobenius-power} is isomorphic to the torsor of lifts of $F_{X/S}^n$ to $\tilde{X}/W_2(S)$ from \Cref{torsor-of-lifts-of-power-of-frob}. 
\end{lemma}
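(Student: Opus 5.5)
The plan is to imitate the proof of \Cref{A1-split-scheme-lift-frob}, where the case $n=1$ is handled. There, a lift of $F_{X/S}$ was turned into an explicit section of $(X/S)^{dR, F}$. The point is that both torsors are torsors for the same Zariski sheaf $T_{X'/S} \otimes_{\cO_{X'}} F^n_{X', *}\cO_{X'}$ on $X'$. So it suffices to construct a map of torsors, equivariant for this group. We first build a natural map
\[
\{\text{lifts of } F^n_{X/S}\ \text{to}\ \tilde{X} \to \tilde{X}^{(n)}\} \to \{\text{splittings of } (X/S)^{dR,n} \to X' \text{ refined by } \tilde{X}\},
\]
and then check equivariance.

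\textbf{Step 1: the map.} Generalizing \Cref{frob-lifts-append}, a lift of $F^n_{X/S}$ is the same as a $W_2(S)$-linear map $g\colon W_2(X') \to \tilde{X}$ deforming the appropriate $n$-fold Frobenius twist $X' \to X$. Alternatively, it is a $W_2$-point of $\tilde{X}$ twisted by $F^{n-1}$. Given such a $g$ and $x \in X'(R)$, we form
\[
\Spec(A_n(R)) \mono \Spec(F^n_*W_2(R)) \xrightarrow{W_2(x)} W_2(X') \xrightarrow{g} \tilde{X}.
\]
We then check, as in \Cref{A1-split-scheme-lift-frob}, that this factors canonically through $X \mono \tilde{X}$. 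The verification is the computation that $V[r]\tilde a \mapsto VF^n([r])\tilde a$, which uses the semilinearity of $g$. Pairing with $x$ and using the description $\bG_a^{dR,n} \simeq \bG_a^{dR,n,*} = A_n \times_{F^n_*\bG_a} F_*\bG_a$ gives a point of $(X/S)^{dR,n}(R)$ over $x$. This is a section $s_g$.

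\textbf{Step 2: compatibility with the refinement.} We check that $s_g$ lands in the refinement from \Cref{XdRn-vs-frobenius-power}, and not merely in the $T_{X'/S}\otimes F^n_{X',*}\cO_{X'}/\cO_{X'}$-torsor. This holds because the image of $s_g$ under the map to $T_{X'/S}[1]$ is the splitting attached to $\tilde{X}$ by \Cref{rem:splittings-push=lifts-to-witt}. To see this, compose $A_n \to \bG_a^{dR,\gamma}$-type quotients. The composite $W_2(X') \to \tilde{X}$ then restricts to the square-zero data of $\tilde{X}$ itself.

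\textbf{Step 3: equivariance.} This is the main obstacle. Two lifts $g, g'$ differ by a derivation $F^{n,*}_{X'}\Omega^1_{X'/S} \to \cO_{X'}$, as in \Cref{torsor-of-lifts-of-power-of-frob}. We must show that $s_g$ and $s_{g'}$ differ by the image of this derivation under the fppf-to-Zariski comparison $R\varepsilon_*\alpha_{p^n} \simeq F^n_{X',*}\cO_{X'}/\cO_{X'}[-1]$.

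First, we reduce to $S=\Spec R$ reduced and $X=\bA^d_S$. This is possible since both sides are torsors for the same group and commute with étale base change. It then suffices to check one coordinate, so we take $X=\bA^1_S$. There the action of $\alpha_{p^n}$ on $A_n$ is explicit: it is translation in the $F^n_*W_2$ term, with $\pi_1 = F_*\alpha_{p^n}$. A perturbation $g' = g + V(\text{derivation})$ moves the point by exactly the corresponding element of $\pi_1$. We verify this by a direct Witt vector computation, tracking the boundary map of the Kummer-type sequence $0\to\alpha_{p^n}\to\bG_a\xrightarrow{F^n}\bG_a\to 0$. Reducedness of $S$ is used so that this pushforward computation holds.

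An equivariant map of torsors is an isomorphism, which concludes the proof.
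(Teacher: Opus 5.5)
Your route is genuinely different from the paper's. The paper never constructs sections for $n\ge 2$. Instead it compares classes in $H^1(X',F^{n,*}_{X'}T_{X'/S})$ and shows both are $F^{n-1}_{X'}$-pullbacks of the $n=1$ classes, then invokes \Cref{A1-split-scheme-lift-frob}:
\begin{itemize}
\item for Frobenius lifts, via the obstruction for $F^n_{X/S}=F^{n-1}_{X'/S}\circ F_{X/S}$ together with $dF_{X/S}=0$;
\item for $(X/S)^{dR,n}$, because it is pushed out from $(X/S)^{dR,F}$ along $\alpha_p\subset\alpha_{p^n}$, which on the Zariski side is induced by $F^{n-1}_{X'}\colon F_{X',*}\cO_{X'}\to F^n_{X',*}\cO_{X'}$.
\end{itemize}
A direct construction would give a more explicit isomorphism. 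Your Step 1 is fine once stated precisely: $g$ is $F^n$-semilinear over $W_2(S)$, not linear, and it deforms $\nu\circ F^{n-1}_{X'}$. The latter is exactly why $F^n(V[r])\,g(\tilde a)=V\bigl(r^{p^n}g(\tilde a)_0^{\,p}\bigr)$ lands in the image of $VF^n$: $g(\tilde a)_0$ is a $p^{n-1}$-st power. The problems are in Step 3.

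First, the reductions fail as stated. A lift of $F^n_{X/S}$ on $\tilde{\bA}^d$ sends $x_i\mapsto x_i^{p^n}+\varepsilon_i$, where $\varepsilon_i\in\ker\bigl(W_2(R)[x_1,\dots,x_d]\to R[x_1,\dots,x_d]\bigr)$ depends on all variables. The difference of two lifts is an arbitrary section of $F^{n,*}_{X'}T_{X'/S}$, so nothing splits into one-variable factors. Similarly, an \'etale map $X\to\bA^d_S$ lets you pull lifts and derivations back to $X$, but not push arbitrary ones on $X$ down. So equivariance for all pairs $(g,D)$ on $X$ does not follow from $\bA^d$. You need no reduction at all. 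For affine $X=\Spec B$ and $g'=g+V\circ D$, the two composites $\tilde B\to A_n(R)$ differ by $b\mapsto V(x(Db))$, whose Kummer class is $x(Db)$ modulo $p^n$-th powers; this is a coordinate-free computation.

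Second, and more seriously, you only verify equivariance along the quotient $T_{X'/S}\otimes F^n_{X',*}\cO_{X'}\to T_{X'/S}\otimes F^n_{X',*}\cO_{X'}/\cO_{X'}$. The statement, however, is about torsors under $T_{X'/S}\otimes F^n_{X',*}\cO_{X'}$. A point of the refined torsor is a pair $(s,h)$, where $h$ identifies the image of $s$ in $(X/S)^{dR,\gamma}$ with $s_{\tilde X}$. Suppose $D$ comes from $\theta\in T_{X'/S}$, i.e.\ $D(b)=\theta(\nu b)^{p^n}$. Then $V(x(Db))=VF^n\bigl(x(\theta(\nu b))\bigr)$ is canonically null-homotopic, so $s_g\simeq s_{g'}$. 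The entire content of equivariance is then that $h_{g'}$ differs from $h_g$ by $\theta$.

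Your Step 2 only asserts that some $h_g$ exists. Replacing $h_g$ by $h_g+\lambda(g)$, for any $T_{X'/S}$-valued function $\lambda$, passes all your checks but destroys equivariance. You must define $h_g$ explicitly, for instance from the null-homotopy of $g$ on $\ker(\tilde B\to B)$ pushed to $\bG_a^{dR,\gamma}$, and check that it shifts by $\theta$. Only then does ``an equivariant map of torsors is an isomorphism'' apply.

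In the paper's class-level argument this issue is confined to $n=1$. For $n\ge 2$ the refinement is transported automatically, because the map of extensions from $\cO_{X'}\to F_{X',*}\cO_{X'}$ to $\cO_{X'}\to F^n_{X',*}\cO_{X'}$ restricts to the identity on $\cO_{X'}$.
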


\begin{proof}
    The case $n=1$ is \Cref{A1-split-scheme-lift-frob}. The general case follows since both classes in $H^1(X', F_{X'}^{n, *}T_{X'/S})$ are obtained from the corresponding classes for $n=1$ via the pullback along $F_{X'}^{n-1}.$ Let us check this for both classes.

    \textit{Step 1.} Denote $\ob_n$ to be the obstruction class of deforming $F_{X/S}^n: X \to X^{(n)}$ to $\tilde{X}.$ Explicitly, it is a map $$F_{X/S}^{n, *} L_{X^{(n)}/S} \to \pi^*F_{S,*}\cO_S[1] \simeq \nu_* \cO_{X'}[1]$$ where $\nu: X' \to X$ is the canonical projection. By adjunction, it corresponds to a map $$F_{X/S}^{n-1, *}L_{X^{(n)}/S} \to F_{X', *}\cO_{X'}[1] $$
    and we note that the left-hand side is canonically identified with $F_{X'}^{n-1, *}L_{X^{(n-1)}/S}.$ Thus, $\ob_n$ defines a class in $H^1(X', F_{X'}^{n, *}T_{X'/S}).$ Moreover, it is equal to $F_{X/S}^* \ob_{n-1}'$ where $\ob_{n-1}' \in H^1(X'', F_{X''}^*T_{X''/S})$ is the obstruction of deforming $F_{X/S}^{n-1}: X' \to X^{(n)}$ to $\tilde{X}'.$ Indeed, this follows from functoriality of obstruction classes and $dF_{X/S}=0.$ Therefore, we learn that $F_{X'}^{n-1, *}\ob_1 = \ob_n.$

    \textit{Step 2.} Denote $\widehat \ob_n \in H^1(X', F_{X'}^{n, *}T_{X'/S})$ to be the class obtained from the $B_{X'}(T_{X'/S} \otimes_{\bG_a} \alpha_{p^n})$ as in \Cref{XdRn-vs-frobenius-power}. By construction, one has $F_{X'}^{n-1, *}(\widehat \ob_{1})=\widehat \ob_{n}.$ Indeed, pushing forward the map $\alpha_p \to \alpha_{p^{n+1}}$ of fppf sheaves to the Zariski site one obtains the map $$F_{X', *}\cO_{X'}/\cO_{X'}[-1] \to F^n_{X', *}\cO_{X'}/\cO_{X'}[-1]$$
    induced by $F_{X'}^{n-1}: F_{X', *}\cO_{X'} \to F^n_{X', *}\cO_{X'}.$ Since $(X/S)^{dR, n}$ is obtained from $(X/S)^{dR, 1}$ by pushing forward along the map induced by $\alpha_{p} \to \alpha_{p^n},$ the claim follows.

\end{proof}

Now we will identify the category of quasi-coherent sheaves on $\widehat{(X/S)^{dR}}$ with modules over the algebra $D_{X/S}^{\gamma}$ defined as  $$D_{X/S}^{\gamma} := F_{X/S, *} D_{X/S} \otimes_{S T_{X'/S}} \Gamma T_{X'/S}.$$
We denote the category of modules over $D_{X/S}^{\gamma}$ by $\MIC^{\gamma}(X/S).$
In particular, the category $\MIC_{\gamma}(X/S)$ may be viewed as the decompleted version of $\MIC_{\gamma}^{\cdot}(X/S).$
\begin{lemma}
    For smooth $X/S$ the category $\QCoh(\widehat{(X/S)^{dR}})$ is identified with $\MIC_{\gamma}(X/S)$, the category of $D_{X/S}^{\gamma}$-modules.
\end{lemma}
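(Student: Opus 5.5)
The plan is to imitate the proof of \Cref{gamma-crystals-are-gamma-d-modules}, replacing the $T_{X'/S}$-gerbe $(X/S)^{dR,\gamma}$ by the $\hat T_{X'/S}$-gerbe $\widehat{(X/S)^{dR}}$, i.e.\ the pushout of $(X/S)^{dR}\to X'$ along $T_{X'/S}^\sharp\to \hat T_{X'/S}$. First I will produce the fppf covering
$$C_1:=(X/S)^{dR}\times_{X'}B_{X'}(\hat T_{X'/S})\to \widehat{(X/S)^{dR}}=:C_0,$$
which exists because the target is the quotient of $C_1$ by the antidiagonal action of $B_{X'}(T^\sharp_{X'/S})$. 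Its Čech nerve is then identified, exactly as before, with $C_1\times_{C_0}C_1\simeq B_{X'}\hat T_{X'/S}\times_{X'}B_{X'}T^\sharp_{X'/S}\times_{X'}(X/S)^{dR}$. The two projections become $(a',\id)$ and $(\id,a)$, where $a$ is the action on $(X/S)^{dR}$ and $a'$ is the action on $B\hat T$ through $T^\sharp\to\hat T$.

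Next I will apply Cartier duality. The Cartier dual of $\hat T_{X'/S}$ (the formal completion of $\bV(T_{X'/S})$ at zero) is the full divided power group $\bV(\Omega^1)^\sharp$, so $\QCoh(B\hat T_{X'/S})$ is the category of $\Gamma T_{X'/S}$-modules, and no nilpotence condition is imposed. Consequently a quasi-coherent sheaf on $C_1$ is a crystal $\cE$ on $(X/S)^{dR}$, that is a $D$-module, by \Cref{lm:mod-xdr=d-mod}, together with a horizontal action $\psi^\gamma:\Gamma_{\cO_X}(F_{X/S}^*T_{X'/S})\to \cE nd(\cE)$. One point needs care here: \Cref{lm:mod-xdr=d-mod} gives only locally nilpotent $\hat D$-modules on $(X/S)^{dR}$. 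In the descent, however, the $p$-curvature action is forced to equal the restriction of $\psi^\gamma$ along $S\to\Gamma$, so the nilpotence is replaced by the $\Gamma$-action, and the full category of modules over $F_*D\otimes_{ST}\Gamma T$ should appear. I expect this step to be the main obstacle.

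To get around it, I would instead argue directly with an affine-stack and compact-generator argument, as in the proof of \Cref{lm:mod-xdr=d-mod}. The map $\widehat{(X/S)^{dR}}\to X'$ is a $\hat T$-gerbe, hence locally $B\hat T$, and its pushforward of $\cO$ does not obviously generate. So I would take as generator the pushforward $G$ of $\cO_X$ along $\pi:X\to\widehat{(X/S)^{dR}}$, which is affine over $X'$. I then want to show that $G$ is a compact generator and compute $\End(G)\simeq D^\gamma_{X/S}$, with the noncommutative algebra identified locally as $F_*D\otimes_{ST}\Gamma T$. This computation would be done by base changing to $X\to X'$ and using the Čech description $X\times_{C_0}X$: a $\hat T$-torsor over $D_\Delta(X\times_SX)$ whose functions are $\Gamma$-completed along the diagonal. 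After that the proof proceeds exactly as in \Cref{gamma-crystals-are-gamma-d-modules}. Pulling back along $(a',\id)$ gives $(\cE,\psi^\gamma,\psi^\gamma|_{S})$, and pulling back along $(\id,a)$ gives $(\cE,\psi^\gamma,\psi_\cE)$ by \Cref{rem:p-curv=action}. Descent data therefore amount to the compatibility $\psi^\gamma|_{ST}=\psi_\cE$, which is exactly the structure of a $D^\gamma_{X/S}$-module. Finally, the cocycle condition is handled as before.
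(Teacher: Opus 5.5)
Your main line is exactly the paper's: the fppf cover $C_1\to C_0$, Cartier duality giving horizontal $\Gamma_{\cO_X}(F_{X/S}^*T_{X'/S})$-actions with no nilpotence condition, and comparison of the pullbacks along $(a',\id)$ and $(\id,a)$. The paper simply says the proof of \Cref{gamma-crystals-are-gamma-d-modules} goes through verbatim. The gap is the point you single out yourself, which you leave unresolved. Descent only produces $D^\gamma_{X/S}$-modules whose underlying $D$-module lies in $\MIC^{\cdot}(X/S)=\QCoh((X/S)^{dR})$, and you never show that these are all of $\MIC_\gamma(X/S)$.

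The missing observation is elementary. In characteristic $p$ the map $ST_{X'/S}\to\Gamma T_{X'/S}$ sends $\xi^p$ to $p!\,\xi^{[p]}=0$. So locally its image is $\cO_{X'}[\xi_1,\dots,\xi_r]/(\xi_1^p,\dots,\xi_r^p)$, and $S^{\geq r(p-1)+1}T_{X'/S}$ maps to zero, where $r$ is the relative dimension. Consequently:
\begin{itemize}
\item the $p$-curvature of every $D^\gamma_{X/S}$-module is nilpotent;
\item its underlying $D$-module is therefore automatically a crystal on $(X/S)^{dR}$;
\item the descent data $(\cE,\psi^\gamma)$ with $\psi^\gamma|_{ST}=\psi_\cE$ are precisely arbitrary $D^\gamma_{X/S}$-modules.
\end{itemize}
With this remark your first route is complete and nothing else is needed.

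The compact-generator detour you propose instead does not work as stated. Take the local model where the gerbe is split and $\pi$ factors as $X\xrightarrow{F_{X/S}}X'\xrightarrow{0}B_{X'}\hat T_{X'/S}$ (e.g.\ splitting by a strong Frobenius lift, so that $\pi_{X/S}\simeq s\circ F_{X/S}$). Cartier duality turns $0^*$ into the forgetful functor on $\Gamma T_{X'/S}$-modules, so $0_*$ is coinduction $\cH om_{\cO_{X'}}(\Gamma T_{X'/S},-)$.

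Already for $B\hat{\bG}_a$ over a point, let $\Gamma$ be the divided power algebra in one variable. Pushing forward $\cO$ from the point gives $\Hom_{\bF_p}(\Gamma,\bF_p)\simeq\bF_p[[t]]$. This module is not finitely generated over $\Gamma$, hence not compact. By the coinduction adjunction, its endomorphism ring is the double dual $\Hom_{\bF_p}(\Gamma^\vee,\bF_p)$ rather than $\Gamma$.

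So $\pi_*\cO_X$ is neither compact nor has endomorphism algebra $D^\gamma_{X/S}$; you would get at best a completion of it. The analogy with \Cref{lm:mod-xdr=d-mod} does not transfer. That proof rests on affineness of $\nu_{X/S}$ and on compactness of $F_{X/S,*}\cO_X$ in $\cD(F_{X/S,*}D_{X/S})$, and neither has an evident counterpart here.
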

\begin{proof}
The proof is analogous to \Cref{gamma-crystals-are-gamma-d-modules} with the only exception: a quasicoherent sheaf on $B_{(X/S)^{dR}}(F_{X/S}^*\hat T_{X'/S})$ is also a pair $\Gamma_{\cO_X} (F_{X/S}^* T_{X'/S}) \to \cE n d(\cE)$ but there is no nilpotence condition on the map. Indeed, the Cartier dual to $B_{X'}( \hat T_{X'/S})$ is $\bT^{*, \gamma}_{X'/S}$. The rest of the proof is verbatim.
\end{proof}

    Denote $\HIG_{\gamma}(X'/S)$ to be the category of PD-Higgs modules, i.e., to be $\QCoh(B_{X'}(\hat T_{X'/S})).$ By Cartier duality, as an abelian category it is equivalent to $\QCoh(\bT^{*, \gamma}_{X'/S})$. That is, giving an object of $\MIC_{\gamma}(X'/S)$ amounts to giving $\cE \in \QCoh(X')$ with an action of the divided power algebra $\Gamma T_{X'/S}$. The monoidal structure on $\HIG_{\gamma}(X'/S)$ corresponds to the convolution monoidal structure on $\bT_{X'/S}^{*, \gamma}$ which is explicitly given in \cite[\S 2.3]{MR2373230}. Namely, if $\xi$ is a local section of $T_{X'/S},$ then $\cE$ inherits an action by operators $\psi_{\xi^{[n]}}$. Then, for $\cE_1, \cE_2 \in \MIC_{\gamma}(X'/S)$ the action of $\psi_{\xi^{[n]}}$ on  $\cE_1 \otimes \cE_2$ is given by $\sum_{i=0}^n \psi_{\xi^{[i]}} \otimes \psi_{\xi^{[n-i]}}$.

\begin{corollary}\label{cor:rel-dR-n-splits-provided-lift-of-nth-power}
    For a quasi-syntomic $X/S$ we have a $B\hat T_{X'/S}$-torsor $\hat \nu_{X/S}:\widehat{(X/S)^{dR}} \to X'$. By construction, one has an $X'$-linear map $(X/S)^{dR, n} \to \widehat{(X/S)}^{dR}$ for any $n.$ In particular, any splitting of $(X/S)^{dR, n}$ gives rise to a splitting of $\widehat{(X/S)}^{dR}.$ Thus, any flat lift of $\tilde{X}/W_2(S)$ of $X/S$ together with some power of $F_{X/S}$ induces a symmetric monoidal equivalence $\MIC^{\gamma}(X/S) \simeq \HIG^{\gamma}(X'/S).$
\end{corollary}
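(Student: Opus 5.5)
The corollary has three parts. First, $\widehat{(X/S)^{dR}}$ is a $B\hat T_{X'/S}$-torsor over $X'$. Second, there is an $X'$-linear map $(X/S)^{dR,n}\to\widehat{(X/S)^{dR}}$. Third, a lift of a Frobenius power yields the equivalence. I will establish them in that order.

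\textbf{The torsor structure and the comparison map.} By \Cref{torsor-structure} (or \Cref{torsor-for-stacks} for stacks), $\nu_{X/S}$ is a quasi-syntomic torsor for $L_{X'/S}^{\vee}[1]\otimes_{\bG_a}\bG_a^\sharp$. Define $\widehat{(X/S)^{dR}}$ as its pushout along $\bG_a^\sharp\to\hat\bG_a$; pushouts of torsors along group maps are torsors for the target group. This gives the first assertion. For the map, note that $\bG_a^\sharp\to\hat\bG_a$ factors as
\[\bG_a^\sharp\to\alpha_{p^n}\to\hat\bG_a,\]
since $\alpha_{p^n}\subset\hat\bG_a$ and $\bG_a^\sharp\to\bG_a$ lands in the formal completion. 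By the remark following the definition of $(X/S)^{dR,n}$, that stack is the pushout along the first arrow. Pushing it further along the inclusion $\alpha_{p^n}\to\hat\bG_a$ gives an $X'$-linear, equivariant map $(X/S)^{dR,n}\to\widehat{(X/S)^{dR}}$. Any map of torsors over $X'$ carries sections to sections, which settles the splitting statement.

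\textbf{From a Frobenius-power lift to a splitting.} Suppose we are given $\tilde X/W_2(S)$ and a lift of $F^n_{X/S}$. By the preceding lemma, identifying the $T_{X'/S}\otimes F^n_{X',*}\cO_{X'}$-torsor of \Cref{XdRn-vs-frobenius-power} with the torsor of lifts of $F^n_{X/S}$ (\Cref{torsor-of-lifts-of-power-of-frob}), this data trivializes the refinement of $(X/S)^{dR,n}\to X'$. Hence it trivializes $(X/S)^{dR,n}$ itself, since the class of the latter is the image of the former.

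This is the step I expect to be the main obstacle. The identification in the preceding lemma was proved only for smooth $X/S$ with $S$ reduced, where the fppf-to-Zariski comparison for $\alpha_{p^n}$ holds. In the quasi-syntomic generality stated, I would therefore construct the section directly, generalizing \Cref{A1-split-scheme-lift-frob}. A lift of $F^n_{X/S}$ corresponds to a map $W_2(X^{(n)})\to\tilde X$ deforming the projection. One then checks, using the quasi-ideal model $A_n=\Cone(F_*\bG_a\xrightarrow{VF^n}F^n_*W_2)$ together with the isomorphism $\bG_a^{dR,n}\simeq\bG_a^{dR,n,*}$, that composing with $\Spec(\bG_a^{dR,n}(R))\to\Spec(W_2(R))$ factors through $X\hookrightarrow\tilde X$. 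This factorization should follow from the same semilinearity computation as in \Cref{A1-split-scheme-lift-frob}, namely $V[r]\tilde a\mapsto p[r]\tilde a$.

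\textbf{The equivalence.} A splitting gives an isomorphism of $X'$-stacks $\widehat{(X/S)^{dR}}\simeq B_{X'}(\hat T_{X'/S})$. Taking quasi-coherent sheaves and using the preceding lemma, $\QCoh(\widehat{(X/S)^{dR}})\simeq\MIC_\gamma(X/S)$, while $\QCoh(B_{X'}\hat T_{X'/S})=\HIG_\gamma(X'/S)$ by definition. Pullback along an isomorphism of stacks is symmetric monoidal for the tensor product. On the Higgs side, the tensor product on $\QCoh(B_{X'}\hat T_{X'/S})$ corresponds under Cartier duality to the convolution structure described before the corollary. This yields the symmetric monoidal equivalence $\MIC^\gamma(X/S)\simeq\HIG^\gamma(X'/S)$.
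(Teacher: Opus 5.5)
Your proposal is correct and is essentially the paper's own argument: push the de Rham torsor out along $\bG_a^\sharp\to\hat{\bG}_a$, factor this through $\alpha_{p^n}$ to get the $X'$-map $(X/S)^{dR,n}\to\widehat{(X/S)^{dR}}$ (the factorization holds because the canonical map already lands in $\alpha_p$), split $(X/S)^{dR,n}$ using the preceding lemma that identifies its refinement with the torsor of lifts of $F^n_{X/S}$, and conclude with $\QCoh(\widehat{(X/S)^{dR}})\simeq\MIC_\gamma(X/S)$ and Cartier duality. Your caveat is apt, since the paper's splitting step also rests on that lemma, which is proved only for smooth $X/S$ over reduced $S$; your quasi-syntomic fallback therefore goes beyond what the paper proves, and as sketched it is slightly off: writing $\tilde F(\tilde a)=\tilde a^{p^n}+i(\delta_{\tilde a})$, a lift of $F^n_{X/S}$ gives $\tilde a\mapsto(\nu(\bar a)^{p^{n-1}},\delta_{\tilde a})$, i.e.\ a map $W_2(X')\to\tilde X$ deforming $\nu\circ F^{n-1}_{X'}$, not a map $W_2(X^{(n)})\to\tilde X$ deforming the projection.
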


Although $F$-liftability is extremely rare, there are more examples of $X/S$ for which $F_{X/S}^n$ admits a lift for some $n$. For example, any abelian variety $X/\mathbf{F}_p$ admits a lift to $\mathbf{Z}/p^2$ together with a lift of some power $F_A^n$. More generally, we have the following result.

\begin{lemma}\label{ab-var-lifts}
    Given an abelian scheme $\cA/S$. Assume $S$ is regular, $R^1f_*\cO_{\cA}$ is a trivial vector bundle on $S$, $H^i(S, \cO_S)$ is finite-dimensional for $i=0$ and $i=1$, and $H^2(S, \cO_S)=0.$ There exists a flat lift $\tilde{\cA}/W_2(S)$ such that $F_{\cA/S}^k$ lifts to $\tilde{\cA}$ for some $k > 0.$
\end{lemma}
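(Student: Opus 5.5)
Here is the plan. First I build a lift of $\cA/S$ to $W_2(S)$, and then I find a power of the relative Frobenius that lifts to it. Both steps are governed by the deformation theory recalled above. The obstruction to lifting $\cA/S$ to $W_2(S)$ lies in $H^2(\cA', T_{\cA'/S})$; lifts form a torsor under $H^1(\cA', T_{\cA'/S})$. Since $\cA'/S$ is an abelian scheme, $T_{\cA'/S}\simeq f'^*\mathrm{Lie}(\cA'/S)$, and $Rf'_*\cO_{\cA'}$ is computed by $\Lambda^\bullet R^1f'_*\cO_{\cA'}[-\bullet]$. The hypothesis that $R^1f_*\cO_{\cA}$ is trivial then makes $Rf'_*T_{\cA'/S}$ a sum of shifts of $\mathrm{Lie}(\cA'/S)$. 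Rather than attack the obstruction class directly, I would lift the whole structure. I would choose a lift of $S$ only implicitly, by using that abelian schemes deform: the deformation functor of an abelian scheme together with its group structure is unobstructed. More precisely, the obstruction for the pair (group scheme) lies in $H^2(S,\mathrm{Lie}(\cA')\otimes \mathrm{Lie}(\cA'^\vee))$, and this vanishes because $\mathrm{Lie}$ of the dual is $R^1f_*\cO$, which is trivial, and $H^2(S,\cO_S)=0$ (twisting by $F_S$ is harmless for a trivial bundle). This gives a lift $\tilde\cA/W_2(S)$ that is itself an abelian scheme.

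Next I reduce lifting $F^k_{\cA/S}$ to a cohomology computation on $S$. By \Cref{torsor-of-lifts-of-power-of-frob}, lifts of $F^k_{\cA/S}$ to $\tilde\cA$ are governed by a class $\ob_k\in H^1(\cA', F^{k,*}_{\cA'}T_{\cA'/S})$. The proof of the preceding lemma (Step 1) shows that $\ob_k=F^{k-1,*}_{\cA'}\ob_1$. Since $T_{\cA'/S}$ is pulled back from $S$, the Leray spectral sequence gives that $H^1(\cA',F^{k,*}_{\cA'}T_{\cA'/S})$ is an extension of $H^0(S, R^1f'_*\cO\otimes \mathcal{L}_k)$ by $H^1(S,\mathcal{L}_k)$, where $\mathcal{L}_k$ is the Frobenius twist of $\mathrm{Lie}(\cA'/S)$. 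All these groups are finite-dimensional $\bF_p$-semilinear modules over $H^0(S,\cO_S)$, because $H^0$ and $H^1$ of $\cO_S$ are finite-dimensional and $R^1f_*\cO$ is trivial. On such groups, Frobenius pullback acts as a $p$-linear endomorphism $\phi$. Under this identification, the classes $\ob_k$ are the iterates $\phi^{k-1}(\ob_1)$.

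Finally I would exploit freedom in the choice of lift, replacing $\tilde\cA$ by $\tilde\cA+\eta$ with $\eta\in H^1(\cA',T_{\cA'/S})$. This changes $\ob_1$ by the Frobenius pullback of $\eta$, and changes $\ob_k$ by a correspondingly twisted term. I would then use the Fitting decomposition of a $p$-linear endomorphism of a finite-dimensional space into a bijective part and a nilpotent part. On the nilpotent part, $\ob_k=0$ for $k\gg0$ automatically. On the bijective part, I would pick $\eta$ so that the component of $\ob_1$ there is killed; this uses bijectivity of $\phi$ there, via a Lang-type argument where I may need to pass to an appropriate $\eta$ solving $\eta - \phi(\eta)=\ldots$. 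This last step is where I expect the main obstacle. I have to make precise how changing the lift modifies $\ob_k$ for all $k$ simultaneously and compatibly with the iteration formula. I also have to verify that regularity of $S$ (used to make $S$ reduced, as required by \Cref{XdRn-vs-frobenius-power}) and the semilinear structure suffice to solve the resulting Artin–Schreier-type equation without extending scalars.
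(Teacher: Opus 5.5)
Your outline follows the paper's proof: choose some lift, use $\ob_k=\phi^{k-1}(\ob_1)$, modify the lift, and exploit finite-dimensionality. As written, however, it stops at the decisive step, and you misdiagnose that step. No Lang or Artin--Schreier equation $\eta-\phi(\eta)=c$ appears. That is fortunate, since over $\bF_p$ such an equation is in general unsolvable without extending scalars: already on $\bF_p$ with $\phi=\mathrm{id}$, only $c=0$ works.

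The missing point is that $dF^k_{\cA/S}=0$. So in \Cref{def:thry}, changing the \emph{source} lift $\tilde\cA$ contributes nothing (as in \Cref{rem:any-two-lifts-of-X-same-frobs}). Only the target $\tilde\cA^{(k)}$ moves, by the $k$-fold Frobenius twist of $\eta$, and pulling that back along the relative Frobenius is the absolute Frobenius pullback. Identify all the groups $H^1(\cA',F^{j,*}_{\cA'}T_{\cA'/S})$ with $V=H^1(\cA',\cO_{\cA'})^{\oplus g}$. Then this reads $\ob_k(\tilde\cA+\eta)=\ob_k(\tilde\cA)+\phi^k(\eta)$. It also follows from your own two facts, since $\ob_k=\phi^{k-1}(\ob_1)$ holds for every lift and $\ob_1(\tilde\cA+\eta)=\ob_1(\tilde\cA)+\phi(\eta)$.

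Hence you only need $\phi^{k-1}(\ob_1)\in\phi^k(V)$ for some $k$. This holds because the decreasing chain $\phi^k(V)$ stabilizes, which is exactly the paper's argument. In your Fitting language: take $\eta$ in the bijective part with $\phi(\eta)=-(\ob_1)_{\mathrm{bij}}$. The new $\ob_k$ is then $\phi^{k-1}((\ob_1)_{\mathrm{nil}})$, which vanishes for $k\gg0$.

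Regularity of $S$ is used in the paper, via Kunz's theorem and flat base change, to identify the cohomology of all twists $\cA^{(n)}$ with the single finite-dimensional group $H^1(S,\cO_S)\oplus H^0(S,\cO_S)^{\oplus g}$, so that the iteration makes sense. Reducedness for \Cref{XdRn-vs-frobenius-power} plays no role here.

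Separately, your reason for the existence of a first lift does not work as stated. An abelian scheme has no infinitesimal automorphisms, since every homomorphism $\cA\to\bG_a$ vanishes, so its deformation problem has no $H^2(S,-)$ term. Local lifts exist and form a torsor under $\mathrm{Lie}(\cA)\otimes\mathrm{Lie}(\cA^\vee)\otimes F_{S,*}\cO_S$. The obstruction to a global lift therefore lies in $H^1(S,\mathrm{Lie}(\cA)\otimes\mathrm{Lie}(\cA^\vee)\otimes F_{S,*}\cO_S)$; compare extending a map to a smooth moduli stack. The hypothesis $H^2(S,\cO_S)=0$ does not control this group. The paper only asserts this step as ``a standard argument,'' so this is not where your route differs from the paper's, but your paragraph does not establish it.
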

\begin{proof}
A standard argument shows that there exists a deformation of $\cA/S$ to $W_2(S).$ Denote the canonical map $\cA^{(n)} \to \cA^{(n-1)}$ by $\nu_{n}.$ Denote the obstruction of lifting of $F_{\cA/S}^n: \cA \to \cA^{(n)}$ by $\ob_n$ and the obstruction of lifting of $F_{A/S}: \cA^{(n-1)} \to \cA^{(n)}$ by $\ob_1^{(n)}.$ Explicitly, it is a map $\ob_n: F_{\cA/S}^{n, *}L_{\cA^{(n)}/S} \to \nu_{1, *}\cO_{A'}[1]$ and $\ob_{1}^{(n)}: F_{\cA/S}^* L_{\cA^{(n)}/S} \to \nu_{n, *}\cO_{\cA^{(n)}}[1]$. By functoriality, one has $F_{A/S}^{n-1, *}\ob_1^{(n)} = \ob_n$ under $F_{\cA/S}^{n-1, *}\nu_{n, *}\cO_{\cA^{(n)}} = \nu_{1, *}\cO_{\cA'}.$ Consider $$F_{\cA/S}^{n-1, *}: H^1(\cA^{(n-1)}, F_{\cA/S}^*T_{\cA^{(n)}/S} \otimes \nu_{n, *} \cO_{A^{(n)}}) \to H^1(\cA, F_{\cA/S}^{n, *} T_{\cA/S} \otimes \nu_{1, *} \cO_{\cA'}).$$
Using projection formula, we rewrite it as
$$F_{\cA/S}^{n-1, *}: H^1(\cA^{(n)}, F_{\cA^{(n)}}^* T_{\cA^{(n)}/S}) \to  H^1(\cA', F^{n, *}_{\cA'} T_{\cA'/S}) .$$
Recall that if $\widehat{\cA}$ is another lift, then the obstruction class of deforming $F_{\cA/S}^n: \cA \to \cA^{(n)}$ to $\widehat{\cA}$ differs from $\ob_n$ by $F_{\cA/S}^{n, *}t$ where $t: L_{\cA^{(n)}/S} \to \nu_* \cO_{\cA^{(n+1)}}[1]$ is the element measuring the difference between $\widetilde{\cA}^{(n)}$ and $\widehat{\cA}^{(n)}.$ Recall $T_{\cA/S} = e^* T_{\cA/S} \otimes_{\cO_S} \cO_{\cA} \simeq \cO_{\cA}^{\oplus g}$. Note the images of $F_{\cA/S}^{n-1}: H^1(\cA^{(n)}, \cO_{\cA^{(n)}}) \to H^1(\cA', \cO_{\cA'})$ stabilize. Indeed, by the assumption the Leray spectral sequence gives $H^1(\cA, \cO_{\cA}) = H^1(S, \cO_S) \oplus H^0(S, \cO_S)^{\oplus g}$ which is finite dimensional by the assumption. By Kunz's theorem $F_S$ is flat, thus flat base change implies $H^1(\cA^{(n)}, \cO_{\cA^{n}})$ is isomorphic to $H^1(S, \cO_S) \oplus H^0(S, \cO_S)^{\oplus g}$.  Thus, we can find $x \in H^1(\cA^{(n)}, T_{\cA^{(n)}/S} \otimes \nu_{n, *}\cO_{\cA^{(n+1)}})$ such that $F_{A/S}^{n ,*}x = F_{\cA/S}^{n-1, *} \ob_1^{(n)}.$ To finish the proof, replace $\widetilde{\cA}$ by $\widehat{\cA}$ which is obtained via such $x$.

\end{proof}

\begin{question}
   Let $X/\bF_p$ be a smooth projective variety that admits a lift to $\bZ/p^2$ together with some power of Frobenius. Is it necessarily true that, after a finite \'etale cover, it is a toric fibration over its Albanese variety? This question is motivated by the main conjecture of \cite{MR4269423}.

\end{question}

\subsection{Cohomology comparison}

Given $X/S$ with a lift $\tilde{X}/W_2(S),$ we obtain a splitting $(X/S)^{dR, \gamma} \simeq B_{X'} (T_{X'/S})$. For  $E \in \MIC^{\cdot}_{\le p-1}((X/S)^{dR})$, we obtain the corresponding Higgs module $C_{\tilde{X}}(E) \in \HIG_{\le p-1}(X'/S) $ and we would like to compare the complexes $\dR(E)$ and $\Hig(C_{\tilde{X}}(E))$ in $\cD_{qc}(X')$. The main ingredient is the conjugate-filtered version of $(X/S)^{dR, \gamma}.$ Thus, we recall its definition.
\begin{remark}
    We recall the ring stack $\bG_a^{dR, c}$ from \cite[Construction 2.7.8]{bhatt-lecture-notes}. By transmutation, for any stack $X/\bF_p$ one obtains $X^{dR, c}.$  Let us recall the main properties for a smooth $X/k.$
    \begin{itemize}
        \item There exists a map $\nu_{X/k}^c: X^{dR, c} \to \bA^1_+/\bG_m \times X'$ such that fiber over $t=1$ is $X^{dR}$ and fiber over $t=0$ is $ B_{X'}(T_{X'}^\sharp).$ Moreover, when $X/k$ is smooth, the map $\nu_{X/k}^c$ exhibits the source as a gerbe for $T_{X'/k}^\sharp\{1\} = T_{X'/k} \otimes_{\bG_a} \bG_a^\sharp\{1\}$ over the target. Here $\bG_a^\sharp\{1\}$ is a group scheme over $\bA^1_+/\bG_m$ whose underlying group scheme is $\bG_a^\sharp$ and the action of $\bG_m$ is the standard one.
        \item There exists a map $\pi_{X}^c: X \times \bA^1_+/\bG_m \to X^{dR, c}$ which is an fpqc cover for smooth $X.$
        \item The pushforward of $\cO_{X^{dR, c}}$ along $\pi_{X/k}^c$ is identified with $\Fil_*^{\conj} R\Gamma(X, \Omega^\bullet_{X/k})$ as a filtered object.
        \item  The category of vector bundles on $X^{dR, c}$ is identified with the category of triples $(E, \Fil_{\bullet}, \nabla)$ where $(E, \nabla)$ is a vector bundle with a flat connection on $X$ and $\Fil_\bullet$ is a filtration by vector subbundles preserved by the connection such that $\gr_i$ has vanishing $p$-curvature.
    \end{itemize}
\end{remark}

First, we define a relative version of the conjugate-filtered de Rham stack.
\begin{definition}
   Let $f: X \to S$ be a map over $\bF_p$. Define $(X/S)^{dR, c}$ to be the fiber product of $f^{dR, c}: X^{dR, c} \to S^{dR, c}$ and $\pi_{S}^{c}: S \times \bA^1/\bG_m \to S^{dR, c}.$
\end{definition}

\begin{definition}\label{df:weights-for-filtered-conj}
    Define $\cD_{qc, [a, b]} ((X/S)^{dR, c})$ to be the full subcategory of $\cD_{qc} ((X/S)^{dR, c})$ spanned by objects $M$ such that under the identification $\pi_{X/S, c}^* M = (\cdots \to M^i \to M^{i+1} \to \cdots)$ the map $M^{i} \to M^{i+1}$ is a quasi-isomorphism for any $i \ge b$ and $M^{i}$ is acyclic for $i < a.$
\end{definition}

We also define a conjugate-filtered analogue of $(X/S)^{dR, \gamma}.$
\begin{definition}\label{df:gamma-filtered}
    Define $(X/S)^{dR, \gamma, c}$ to be the pushout of $X^{dR, c} \to X' \times \bA^1/\bG_m$ via $B_{X'} (T_{X'/S}^{\sharp}\{1\}) \to B_{X'} (T_{X'/S}\{1\}).$
\end{definition}
\begin{remark}
    The fiber of $\nu_{X/S}^{\gamma, c}: (X/S)^{dR, \gamma, c} \to X' \times \bA^1_+/\bG_m$ over $t=1$ is $(X/S)^{dR, \gamma}.$
\end{remark}

\begin{definition}\label{df:weights-for-gamma-filtered}
    Define $\cD_{qc, [a,b]}((X/S)^{dR, \gamma, c})$ to be the full subcategory of $\cD_{qc}((X/S)^{dR, \gamma, c})$ spanned by those objects that under the pullback $\cD_{qc}((X/S)^{dR, \gamma, c}) \to \cD_{qc}((X/S)^{dR, c})$ land in the subcategory $\cD_{qc, [a, b]}((X/S)^{dR, c})$.
\end{definition}
We will show that the pullback along $(X/S)^{dR, c} \to (X/S)^{dR, \gamma, c}$ induces an equivalence of subcategories with weights $[0, p-1].$ Essentially, it amounts to showing a similar statement for the $\bA^1_+/\bG_m$-map $B(\bG_a^\sharp\{1\}) \to B(\bG_a\{1\})$. To prove it, we need a better understanding of $\cD_{qc}(B\bG_a\{1\}).$ We warn the reader that the Cartier equivalence fails for the unbounded derived category even for $B\bG_a$. The following is true.

\begin{theorem}\label{thm:BG_a-loaclly-small-modules}(\cite[Theorem C.1]{MR3995721})
\begin{enumerate}
    \item One has the equivalence $$\QCoh(B\bG_a) = \Mod^{ln} \Big(\bF_p [x_1, x_2, ..]/(x_i^p =0)\Big)$$ where the category on the right is a full subcategory of $ \Mod \Big(\bF_p [x_1, x_2, ..]/(x_i^p =0)\Big)$ consisting of locally nilpotent modules, i.e., modules such that every element is killed by all but finitely many variables.
    \item 
    The functor $\cD(\QCoh(B\bG_a)) \to \cD_{qc}(B\bG_a)$ exhibits the source as the left completion of the target. 
\end{enumerate}
\end{theorem}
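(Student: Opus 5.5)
The plan is to use Cartier duality for the affine group scheme $\bG_a$ over $\bF_p$, which reduces both parts to statements about comodules over the Hopf algebra $\cO(\bG_a) = \bF_p[t]$.

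\textbf{Part (1).} First, $\QCoh(B\bG_a)$ is equivalent to the abelian category of $\bF_p[t]$-comodules, via descent along the fppf cover $\Spec(\bF_p) \to B\bG_a$. Second, I dualize.
\begin{itemize}
\item The Cartier dual of $\bF_p[t]$ is the divided power algebra $\Gamma = \bF_p\langle \partial \rangle$, topologically spanned by the operators $\partial^{[n]}$ dual to $t^n$.
\item In characteristic $p$ we have $\Gamma \simeq \bF_p[x_1, x_2, \dots]/(x_i^p)$ with $x_i = \partial^{[p^{i-1}]}$. This follows from Lucas' theorem: $\partial^{[n]}$ is, up to a unit, $\prod_i x_i^{n_i}$, where $n = \sum_i n_i p^{i-1}$ is the base $p$ expansion.
\end{itemize}
A comodule $M \to M \otimes \bF_p[t]$ sends $m \mapsto \sum_n \partial^{[n]}(m)\, t^n$, and this sum is finite. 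Finiteness is exactly the condition that for each $m$ all but finitely many $\partial^{[n]}$ kill $m$. Equivalently, $m$ is killed by all but finitely many $x_i$ and by a power of the augmentation ideal. Here one must check that the nilpotence condition in the statement, correctly interpreted, matches this. Conversely, such a module gives a coaction by the same formula, and coassociativity corresponds to the multiplication in $\Gamma$. This yields an equivalence of abelian categories.

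\textbf{Part (2).} I would argue as follows.
\begin{itemize}
\item The stack $B\bG_a$ is affine in the sense of To\"en, since the map $\Spec \bF_p \to B\bG_a$ is affine and faithfully flat. Hence $\cD_{qc}(B\bG_a)$ is the $\infty$-category of comodules over $\bF_p[t]$ in $\cD(\bF_p)$, i.e.\ the totalization of the cobar construction $\lim_{\Delta} \cD(\bF_p[t]^{\otimes \bullet})$.
\item The category $\cD_{qc}(B\bG_a)$ carries a t-structure whose heart is $\QCoh(B\bG_a)$, and this t-structure is left complete because it is defined as a limit of left complete categories.
\item This gives a canonical t-exact functor $\cD(\QCoh(B\bG_a)) \to \cD_{qc}(B\bG_a)$.
\item To show the target is the left completion, I would check that the functor is an equivalence on bounded below objects. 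It suffices to show that Ext groups computed in the abelian category agree with cohomology on $B\bG_a$. That is, for $M, N$ in the heart, $\mathrm{Ext}^i_{\QCoh}(M, N) \simeq \pi_{-i}\Map_{\cD_{qc}}(M, N)$.
\end{itemize}
Both sides are computed by the cobar complex. The abelian category of comodules has enough injectives of the form $V \otimes \bF_p[t]$. These are acyclic for $R\Gamma(B\bG_a, -)$ because $R\Gamma(B\bG_a, V \otimes \cO(\bG_a)) = R\Gamma(\Spec \bF_p, V) = V$. So derived functors agree on bounded below complexes. Then $\cD^+(\QCoh) \simeq \cD^+_{qc}$, and left completing gives the claim.

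\textbf{Expected obstacle.} The main subtlety is the failure of an equivalence on the unbounded categories. $\cD(\QCoh(B\bG_a))$ is not left complete, since $\Gamma$ has infinitely many nilpotent generators and so infinite cohomological dimension phenomena appear. One must therefore carefully verify that the comparison functor identifies $\cD_{qc}$ with the left completion, rather than with something smaller. I would handle this through the compatibility of both sides with Postnikov limits: $\cD_{qc}(B\bG_a) \simeq \lim_n \cD_{qc}^{\ge -n}$, and each truncated piece agrees with the corresponding truncation of $\cD(\QCoh)$ by the equivalence on bounded below objects.
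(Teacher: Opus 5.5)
The paper does not prove this statement; it quotes it from the cited reference, so there is no internal argument to compare yours against. Your proof is correct and follows the standard route. For (1) you use Cartier duality. For (2) you combine two facts: the bounded-below equivalence $\cD^+(\QCoh(B\bG_a))\simeq\cD^+_{qc}(B\bG_a)$, and left completeness of $\cD_{qc}(B\bG_a)$. The first is the $B\bG_a$ case of the general fact for stacks with affine diagonal, which you check by hand via the cobar complex. Note that part (2) as printed swaps source and target. Your reading, that $\cD_{qc}(B\bG_a)$ is the left completion of $\cD(\QCoh(B\bG_a))$, is the correct one, and it agrees with the lemma on $B_X(E)$ stated right after.

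Some steps need tightening.
\begin{itemize}
\item In (1) no reinterpretation is needed. If $x_im=0$ for $i>k$, then $\partial^{[n]}m=0$ for $n\ge p^k$, and $m$ is killed by the $(k(p-1)+1)$-st power of the augmentation ideal. Conversely, if $\partial^{[n]}m=0$ for $n\ge N$, then $x_im=0$ once $p^{i-1}\ge N$. So the condition in the statement is exactly finiteness of the coaction.
\item You wrote ``$B\bG_a$ is affine in To\"en's sense because $\Spec\bF_p\to B\bG_a$ is affine and faithfully flat.'' This inference is invalid: $B\bG_m$ has such an atlas and is not an affine stack. It is also unnecessary, since fpqc descent along $e\colon\Spec\bF_p\to B\bG_a$ already gives $\cD_{qc}(B\bG_a)\simeq\lim_\Delta\cD(\bF_p[t]^{\otimes\bullet})$.
\item To compare $\mathrm{Ext}$ out of an arbitrary $M$ in the heart, acyclicity of $V\otimes\bF_p[t]$ for $R\Gamma(B\bG_a,-)$ is not enough. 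You need $\mathrm{Ext}^i_{\cD_{qc}}(M,V\otimes\bF_p[t])=0$ for $i>0$. This follows from $V\otimes\bF_p[t]\simeq e_*V$ and $R\Hom(M,e_*V)\simeq R\Hom_{\bF_p}(e^*M,V)$. Alternatively, for every $M$, mapping spaces in the totalization are totalizations of the discrete groups $\Hom_{\bF_p}(M,N\otimes\bF_p[t]^{\otimes n})$, i.e.\ the cobar complex.
\item Passing from $\mathrm{Ext}$-agreement on the heart to $\cD^+(\QCoh(B\bG_a))\simeq\cD^+_{qc}(B\bG_a)$ needs two inputs. First, a colimit-preserving t-exact comparison functor; this is standard, but cite it. Second, a d\'evissage for complexes unbounded above: write $X\simeq\operatorname{colim}_n\tau^{\le n}X$ on both sides, reduce to bounded objects, and induct on length. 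The colimit presentation holds because filtered colimits are exact in comodules and the t-structure on $\cD_{qc}(B\bG_a)$ is compatible with filtered colimits. This is the dual of Lurie's criterion in \emph{Higher Algebra}, \S1.3.3.
\end{itemize}
With these in place, your final step is right. The t-exact equivalences $\cD(\QCoh(B\bG_a))^{\ge -n}\simeq\cD_{qc}(B\bG_a)^{\ge -n}$ for all $n$ identify the left completion $\lim_n\cD(\QCoh(B\bG_a))^{\ge -n}$ with the left complete category $\cD_{qc}(B\bG_a)$. Whether the source itself fails to be left complete plays no role.
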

Let $E$ be a vector bundle on an algebraic stack $X$.
\begin{definition}
    Define a category $\QCoh(\bV(E))^{ln}$ to be a full subcategory of $\QCoh(\bV(E)) \simeq \Mod_{\QCoh(X)} (\Gamma_{X} (E^{\vee}))$ which consists of those modules $M$ such that every element is killed by $\Gamma^{\ge n} (E^{\vee})$ for some $n$.
\end{definition}
The following lemma is a straight generalization of \Cref{thm:BG_a-loaclly-small-modules}.
\begin{lemma}
    There is an equivalence $\QCoh(B_{X}(E)) \simeq \QCoh(\bV(E)^{\sharp})^{ln}$. Moreover, the natural functor $\cD(\QCoh(B_{X} (E))) \to \cD_{qc} (B_{X}(E))$ realizes the target as the left-completion of the source.
\end{lemma}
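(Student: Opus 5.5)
The plan is to reduce to the case of \Cref{thm:BG_a-loaclly-small-modules} by working locally on $X$ and then gluing. Both sides are stacks in $X$ for the smooth (indeed Zariski/fppf) topology. For $\QCoh(B_X(E))$ this holds because $B_X(E)\times_X U\simeq B_U(E|_U)$. For $\QCoh(\bV(E)^\sharp)^{ln}$ it holds because local nilpotence is a local condition: the divided power filtration $\Gamma^{\ge n}(E^\vee)$ is compatible with base change, and a section killed by $\Gamma^{\ge n}$ locally on a quasi-compact cover is killed by $\Gamma^{\ge N}$ globally on that cover. Here we read the target of the equivalence as locally nilpotent modules over the Cartier dual algebra.

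To construct the functor globally, we use Cartier duality over $X$. The group scheme $\bV(E)$ has Cartier dual the formal PD-group whose functions are $\hat\Gamma$. Comodules over $\cO(\bV(E))=\Sym(E^\vee)$ then correspond to modules over the dual algebra $\Gamma(E^{\vee})$ (more precisely $\Sym(E^\vee)^\vee$, divided powers on $E$) that are locally nilpotent. We write this down canonically in $\QCoh(X)$, so no local choices enter and gluing is automatic.

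Concretely, a $\bV(E)$-comodule $M\to M\otimes\Sym(E^\vee)$ gives, by pairing with the dual, an action of the graded dual. The comodule condition, namely that each element has coaction landing in a finite sum $\Sym^{\le n}$, is exactly the statement that it is killed by the degree $>n$ part. Conversely, a locally nilpotent module defines a coaction via the finite sums $\sum m\cdot e_I^{\vee}\otimes e_I$, which are well defined because of local nilpotence.

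For the second assertion, we note that $\QCoh(B_X(E))$ has enough injectives of the form comodules induced from $\QCoh(X)$. The left completion claim is local on $X$, since left completeness and t-structures are compatible with restriction to an open cover and with limits. So we reduce to $X=\Spec A$ affine and $E$ trivial of rank $r$. Then $B_X(E)=B\bG_a^r\times\Spec A$, and we base change the statement of \Cref{thm:BG_a-loaclly-small-modules} along $\bF_p\to A$ and take products over the $r$ factors. The factor $\bF_p[x_1,x_2,\dots]/(x_i^p)$ appearing there is exactly the dual of divided powers, $\Gamma$, in characteristic $p$, which matches the first part.

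The main obstacle is this last base change and product step. It requires that the comparison $\cD(\QCoh)\to\cD_{qc}$ for $B\bG_a$ is compatible with $-\otimes A$ and with products of classifying stacks. I would try to redo the proof of \cite[Theorem C.1]{MR3995721} directly: compute $\Ext$ via the cobar complex, observe that it commutes with filtered colimits and flat base change, and deduce that the natural functor is fully faithful on bounded-below objects and has dense image under left completion.
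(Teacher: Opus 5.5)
Your abelian-level argument is fine and is the standard one. Descent along the affine flat cover $\pi\colon X\to B_X(E)$ identifies $\QCoh(B_X(E))$ with $\Sym(E^\vee)$-comodules in $\QCoh(X)$. These are the locally nilpotent modules over the graded dual, which is the divided power algebra $\Gamma(E)$; your parenthetical ``divided powers on $E$'' is the correct reading, not $\Gamma(E^\vee)$.

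\textbf{The gap is in the left-completion part: none of your three reductions is formal.}

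First, locality on $X$. $\cD(\QCoh(B_X(E)))$ is not built as a limit over an open cover of $X$, so compatibility of left completeness with limits only controls the target. Gluing even the bounded-below part of the source already requires the Ext comparison you are trying to reduce.

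Second, base change along $\bF_p\to A$ does not follow from \Cref{thm:BG_a-loaclly-small-modules}. An $A[t]$-comodule need not come from $\bF_p$, and injective comodules over $\bF_p$ do not stay injective after $-\otimes_{\bF_p}A$. So Ext groups over $A$ are not determined by the statement over $\bF_p$.

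Third, passing from $B\bG_a$ to $B\bG_a^r$ needs K\"unneth statements for both $\cD(\QCoh(-))$ and $\cD_{qc}(-)$ that you have not justified.

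You flag the last two yourself. Your fallback (``dense image under left completion'') is too vague, and it is not the property needed.

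What closes the argument is the remark about induced injectives that you make and then drop. It needs no reduction and no use of the case over $\bF_p$.

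Say $X$ is quasi-compact with affine diagonal. Let $p\colon U\to B_X(E)$ be $\pi$ composed with a smooth cover of $X$ by an affine scheme $U$. Then $p$ is affine and faithfully flat, $p^*$ is exact with exact right adjoint $p_*$, and for $M\in\QCoh(B_X(E))$ the unit $M\to p_*p^*M$ is injective. Hence every injective of $\QCoh(B_X(E))$ is a summand of some $p_*I$ with $I$ an injective $\cO(U)$-module. Since $p_*$ is t-exact on $\cD_{qc}$,
\[
\operatorname{RHom}_{\cD_{qc}(B_X(E))}(M,p_*I)\simeq\operatorname{RHom}_{\cO(U)}(p^*M,I)\simeq\Hom_{\cO(U)}(p^*M,I).
\]
So Ext groups between objects of the heart agree in $\QCoh(B_X(E))$ and in $\cD_{qc}(B_X(E))$. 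Therefore $\cD^{+}(\QCoh(B_X(E)))\to\cD^{+}_{qc}(B_X(E))$ is fully faithful. Being t-exact and an equivalence on hearts, it is an equivalence.

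Finally, $\cD_{qc}(B_X(E))$ is left complete: it is the limit, over the \v{C}ech nerve of $p$ (whose terms are affine), of left complete categories along t-exact pullbacks. A t-exact functor into a left complete category that is an equivalence on bounded-below objects exhibits the target as the left completion of the source.

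In other words, the lemma is the general fact $\cD^{+}(\QCoh)\simeq\cD^{+}_{qc}$ for quasi-compact stacks with affine diagonal, applied to $B_X(E)$. The paper gives no separate proof, calling the lemma a straight generalization of Theorem C.1; this is presumably what that phrase means.
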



\begin{lemma}\label{gamma-and-dr-equiv-in-small-range}
    For smooth $X/S$ the map $(X/S)^{dR, c} \to (X/S)^{dR, \gamma, c}$ induces an equivalence $\cD_{qc, [a, b]} ((X/S)^{dR, \gamma, c}) \to \cD_{qc, [a, b]} ((X/S)^{dR, c})$ for $b-a < p.$
\end{lemma}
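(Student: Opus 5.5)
The plan is to reduce the statement to the universal case of the map of $\bA^1_+/\bG_m$-stacks $t: B(\bG_a^\sharp\{1\}) \to B(\bG_a\{1\})$, twisted by $T_{X'/S}$, and then analyze that case via the Cartier-dual descriptions of quasi-coherent sheaves. By \Cref{df:gamma-filtered}, $(X/S)^{dR,\gamma,c}$ is the pushout of the $T^\sharp_{X'/S}\{1\}$-gerbe $(X/S)^{dR,c} \to X' \times \bA^1_+/\bG_m$ along $T_{X'/S}^\sharp\{1\} \to T_{X'/S}\{1\}$. Hence the map $\rho: (X/S)^{dR,c} \to (X/S)^{dR,\gamma,c}$ is a torsor, over the target, for the relative group stack $\Cone(T^\sharp_{X'/S}\{1\} \to T_{X'/S}\{1\})$. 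Both sides satisfy fppf descent, and the weight conditions in \Cref{df:weights-for-conj-filtered} and \Cref{df:weights-for-gamma-filtered} are defined after pullback to $(X/S)^{dR,c}$. So it suffices to check full faithfulness and essential surjectivity of $\rho^*$ on weight $[a,b]$ objects fppf locally on $X' \times \bA^1_+/\bG_m$. Locally both gerbes split and $T_{X'/S}$ is free, so we reduce to the map $B_{X'}(T^\sharp\{1\}) \to B_{X'}(T\{1\})$ over $X' \times \bA^1_+/\bG_m$ with $T = \cO^{\oplus d}$.

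For the local model, I would use the lemma preceding the statement together with its $\sharp$-analogue from \Cref{rem:p-curv=action}:
\begin{itemize}
\item $\QCoh(B_{X'}(T\{1\}))$ consists of locally nilpotent modules over the completed divided power algebra $\Gamma(\Omega^1_{X'/S})$, with the graded twist from $\{1\}$;
\item $\QCoh(B_{X'}(T^\sharp\{1\}))$ consists of locally nilpotent modules over $\Sym(\Omega^1_{X'/S})$.
\end{itemize}
Under these identifications, $t^*$ is restriction of scalars along $\Sym \to \Gamma$. This map is an isomorphism in degrees $<p$, with kernel and cokernel concentrated in degrees $\ge p$. Over $\bA^1_+/\bG_m$ (Rees/graded objects), the condition of lying in $\cD_{qc,[a,b]}$ means the graded object is supported in weights $[a,b]$, up to the stabilizing tail above $b$. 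The Higgs-type operator $\theta$ has weight one, so a degree-$n$ element of either algebra shifts weight by $n$. When $b-a<p$, every element of degree $\ge p$ acts by zero on such objects. On weight-$[a,b]$ objects, both module structures therefore factor through $\Sym^{<p} = \Gamma^{<p}$, and the two categories coincide.

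To pass to derived categories, I would use that on the bounded-weight subcategories the relevant Ext computations only involve the truncated algebra $\Sym^{\le b-a} = \Gamma^{\le b-a}$. Concretely, full faithfulness amounts to showing that the unit $M \to \rho_*\rho^*M$ is an isomorphism in weights $[a,b]$. For this I would compute $\rho_*\cO$ as the cohomology of $B(\Cone(\bG_a^\sharp\{1\}\to\bG_a\{1\}))$ and check that it is $\cO$ in weights $0,\dots,p-1$, using that the cone has trivial cohomology in weights $<p$. The left-completion statements from \Cref{thm:BG_a-loaclly-small-modules} and the lemma after it reduce the unbounded question to the abelian hearts.

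\textbf{Main obstacle.} The hard part is the weight bookkeeping on the stack $\bA^1_+/\bG_m$ and the tail condition for $i \ge b$ in \Cref{df:weights-for-conj-filtered}. The generic fiber $t=1$ is not weight-bounded, so I must show that the stabilizing maps $M^i \to M^{i+1}$ are respected. I would handle this by tensoring with $\cO\{-a\}$ to normalize $a=0$. I would then argue that objects with weights in $[0,b]$ are generated, under colimits and extensions, by pullbacks of objects from $X'$ placed in weights $0,\dots,b$. For such generators the comparison follows from the computation of $\rho_*\cO$ in weights $<p$.
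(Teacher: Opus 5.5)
Your proposal is correct in substance, but it takes a different route from the paper.

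\textbf{The paper's route.} It works in coordinates. It reduces \'etale-locally to $\bA^n_S$, then by base change and a K\"unneth-type decomposition of the weight-bounded categories to $\bA^1_{\bF_p}$, and splits the gerbe with the standard $\delta$-lift. It identifies the two sides with graded modules over $\bF_p[x,v_+,D^p]$ and over $\bF_p[x,v_+,\gamma_i(D^p)]/(\gamma_i(D^p)^p)$. It then argues that \emph{each} side is equivalent, in weights $[0,p-1]$, to weight-bounded objects on the base $X'\times\bA^1_+/\bG_m$. This is done by computing $\operatorname{RHom}$ between pulled-back generators $\cM_k[v_+]$ (as a fiber of $D^p$, resp.\ by induction along the $A_n$) and discarding correction terms for weight reasons.

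\textbf{Your route.} You compare the two sides directly. The map $\rho$ is a torsor under the affine group stack $K=\Cone(T^\sharp_{X'/S}\{1\}\to T_{X'/S}\{1\})$. By the projection formula, everything reduces to one computation: $\cofib(\cO\to\rho_*\cO)$ is concentrated in weights $\ge p$. Locally this cofiber is the reduced relative de Rham cohomology of $\bV(T_{X'/S})$, whose linear coordinates have weight $1$. Full faithfulness follows because there are no maps from an object of weight $[a,b]$ to an object concentrated in weights $\ge a+p>b$. On $B(T\{1\})$ this can be checked on the cobar complex, since $\cO(T\{1\})$ has nonnegative weights. Essential surjectivity follows from generation by pure-weight pullbacks from the base, using weight truncations $\tau_{\le j}$; the coaction preserves these for the same reason. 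Since the weight condition on the $\gamma$-side is defined through $\rho^*$, the essential image of this fully faithful, colimit-preserving functor is closed under colimits and contains these generators.

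\textbf{Comparison.} Your route treats $T_{X'/S}$ of arbitrary rank at once. It isolates the only real input: $\Sym$ and $\Gamma$ (equivalently, the cohomologies of $B\bG_a^\sharp\{1\}$ and $B\bG_a\{1\}$) agree in weights $<p$. It also avoids the paper's stronger intermediate claim that each side is equivalent to the base in weights $[0,p-1]$. That claim rests on the grading $\deg(D^p)=-p$. With the normalization of the preceding remark (standard $\bG_m$-action on $\bG_a^\sharp\{1\}$, and $\gr_i$ of the conjugate filtration having vanishing $p$-curvature), the $p$-curvature has weight $-1$. Then a filtered connection in weights $[0,1]$ with nonzero $p$-curvature is not pulled back from the base. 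So your direct comparison is the more robust argument. The paper's route, in return, gives explicit module-theoretic models.

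\textbf{Points to fix when writing it up.}
\begin{enumerate}
\item The fiber of $\rho$ is $K$ itself, so $\rho_*\cO$ is locally $R\Gamma(K,\cO)$, not the cohomology of $BK$.
\item The Cartier-dual algebras are $\Sym T_{X'/S}$ and $\hat\Gamma T_{X'/S}$, not algebras built on $\Omega^1_{X'/S}$.
\item The unit $M\to\rho_*\rho^*M$ is not an isomorphism. Only its cofiber $M\otimes\cofib(\cO\to\rho_*\cO)$ lies in weights $\ge a+p$, which is exactly what full faithfulness needs.
\item The localization must be on $X'$ only, with covers of the form $U\times\bA^1_+/\bG_m$. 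The local splitting of the conjugate-filtered gerbe there needs justification, e.g.\ via Frobenius lifts on \'etale charts as in the paper.
\item The positivity of the weights of the coordinates on $T\{1\}$ is essential. It follows from $\gr^{\conj}_i=\Omega^i[-i]$ sitting in weight $i$.
\item The abelian-level discussion and the appeal to left completion can be dropped.
\end{enumerate}
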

\begin{proof}
    We have to check that the natural functor $\cD_{qc, [a, b]}((X/S)^{dR, \gamma, c}) \to \cD_{qc, [a, b]}((X/S)^{dR, c})$ is fully faithful and essentially surjective. Both statements are Zariski local, therefore we can assume that $X$ and $S$ are affine. In this case, by smoothness, we can find an \'etale map $X \to \bA_{S}^n$. Now there is a map of cartesian squares 
  
\[
\begin{tikzcd}
  & {X \times \mathbf{A}^1/\mathbf{G}_m} &&& {(X/S)^{dR, c}} \\
  {\mathbf{A}_S^n \times  \mathbf{A}^1/\mathbf{G}_m} &&& {(X/S)^{dR, \gamma, c}} \\
  & {X \times \mathbf{A}^1/\mathbf{G}_m} &&& {(\mathbf{A}_{S}^n)^{dR, c}} \\
  {\mathbf{A}_S^n \times  \mathbf{A}^1/\mathbf{G}_m} &&& {(\mathbf{A}_{S}^n)^{dR, \gamma, c}}
  \arrow[equals, from=1-2, to=3-2]        
  \arrow[from=1-2, to=1-5]
  \arrow[crossing over, from=2-1, to=2-4] 
  \arrow[from=3-2, to=3-5]
  \arrow[from=4-1, to=4-4]
  \arrow[from=1-2, to=2-1]
  \arrow[from=1-5, to=2-4]
  \arrow[from=3-2, to=4-1]
  \arrow[from=3-5, to=4-4]
  %
  \arrow[crossing over, from=1-5, to=3-5]              
  \arrow[crossing over, from=2-4, to=4-4]              
  %
  \arrow[equals, from=2-1, to=4-1]
\end{tikzcd}
\]
which shows that to prove $\cD_{qc, [a, b]}((X/S)^{dR, \gamma, c}) \to \cD_{qc, [a, b]}((X/S)^{dR, c})$ is an equivalence, it is enough to assume that $X=\bA^n_{S}.$
Moreover, by base change we can assume $X=\bA^n_{\bF_p}.$ Further, it is enough to assume that $X=\bA^1.$ Indeed, note for any $X, Y$ over $\bF_p$ one has the composition $$\cD_{qc, 0, [p-1]}(X^{dR, c} \times_{\bA^1_+/\bG_m} Y^{dR, c}) \mono \cD_{qc}(X^{dR, c} \times Y^{dR, c}) \simeq \cD_{qc}(X^{dR, c}) \otimes_{\cD_{qc}(\bA^1_+/\bG_m)} \cD_{qc}(Y^{dR, c}) $$
which lands in the full subcategory $\cD_{qc, [0, p-1]}(X^{dR, c}) \otimes_{\cD_{qc}(\bA^1_+/\bG_m)} \cD_{qc, [0, p-1]}(Y^{dR, c}).$ This implies a similar statement for $(-)^{dR, \gamma, c}.$ Therefore, we assume $X=\bA^1.$

\textit{Step 2.} 
Choose the standard $\delta$-lift of $\bG_a$. This gives rise to the splitting of the $F_*\bG_a^\sharp\{1\}$-gerbe $\bG_a^{dR, c} \to \bG_a \times \bA^1_+/\bG_m$. This identifies $\bG_a^{dR, c}$ with the classifying stack $B(\bG_a^\sharp\{1\})$ of a group scheme over $\bA^1_+/\bG_m \times \bG_a$. Thus, $\cD_{qc}(\bG_a^{dR, c})$ gets identified with $\cD_{gr, D^p-nilp}(\bF_p[x, v_+, D^p])$, which is a full subcategory of $\cD_{gr}(\bF_p[x, v_+, D^p])$ where $p \cdot \deg(v_+)=-\deg(D^p)=p.$ Namely, it is a full subcategory consisting of objects $\cM$ such that the action of $D^p$ on $H^*(\cM)$ is locally nilpotent. Under this equivalence, $\cD_{qc, [0, p-1]}(\bG_a^{dR, c})$ is identified with a full subcategory of $\cD_{gr, D^p-nilp}(\bF_p[x, v_+, D^p])$ consisting of objects $\cM = \bigoplus_i \cM_i$ such that $\cM_i$ is acyclic for $i \le 0$ and $\Cone(\cM_i \xrightarrow{v_+} \cM_{i+1})$ is acyclic for $i \ge p-1$.  Similarly, $\cD_{qc, [0, p-1]}(\bG_a^{dR, \gamma, c})$ is identified with a full subcategory of graded $\bF_p[x, v_+, \gamma_i(D^p)]/(\gamma_i(D^p)^p=0)$-modules with a similar constraint on weights, where $\deg(v_+)=1, \deg(\gamma_i(D^p))=-p^i$ for $i \ge 1.$ Under this equivalence, the desired functor corresponds to the restriction of scalars along the map $$ \bF_p[x, v_+, D^p] \to \bF_p[x, v_+, \gamma_i(D^p)]/(\gamma_i(D^p)^p=0) $$
sending $D^p$ to $\gamma_1(D^p).$ Let us first show that the pullback along the map $f: B(F_*\bG_a^\sharp\{1\}) \to B\bG_{m, \bA^1}$ of $\bA^1_+/\bG_m \times \bG_a$-stacks induces an equivalence on objects of weights $[0, p-1].$ Let $\cM \in \cD_{qc}(\bG_{m, \bA^1})$ with only one non-zero component $\cM_k$ in degree $k$. It gets sent to $\cM_k[v_+]$ with the trivial action of $D^p.$ To show that $f^*$ is fully faithful in the desired range, it is enough to prove that for any $0 \le k, n \le p-1$ and $\cM_k, \cM_n \in \cD_{qc}(\bA^1),$ the map $$\operatorname{RHom}_{\cD_{qc}(B\bG_m)}(\cM_k, \cM_n) \to \operatorname{RHom}_{\cD_{qc}(B\bG_a^\sharp\{1\})}(\cM_k[v_+], \cM_n[v_+]$$ 
is an equivalence. Note the right-hand side is computed by $$\fib(\operatorname{RHom}_{\cD_{gr}(\bF_p[x, v_+])} (\cM_k[v_+], \cM_n[v_+]) \xrightarrow{D^p} \operatorname{RHom}_{\cD_{gr}(\bF_p[x, v_+])}(\cM_k[v_+], \cM_n[v_+](-p)))$$
and note the right complex vanishes for weight reasons.  Now we will show that the pullback along the map $g: B(\bG_a\{1\}) \to B(\bG_{m})$ of $\bA^1_+/\bG_m \times \bG_a$-stacks induces an equivalence on subcategories of weights $[0 ,p-1].$  Denote  $A = \bF_p[x, v_+, \gamma_i(D^p)]/(\gamma_i(D^p)^p=0)$ and $A_k =\bF_p[x, v_+, \gamma_i(D^p)]/(\gamma_i(D^p)^p=0)$ where $1 \le i \le k.$ Note $$A_{n+1} = A_n \otimes_{\bF_p[v_+, x]} \bF_p[\gamma_{n+1}(D^p)]/(\gamma_{n+1}(D^p)^p=0).$$ A similar computation shows that $\cD_{gr, [0,p-1]}(A_n) \simeq \cD_{qc, [0, p-1]}(B\bG_{m, \bA^1})$ since $\deg(\gamma_i(D^p)) \le -p$. This finishes the proof.

\end{proof}

\begin{corollary}
    Let $X/S$ be a smooth map of schemes over $\bF_p.$ Let $\tilde{X}/W_2(S)$ be a lift of $X.$ Using \Cref{gamma-and-dr-equiv-in-small-range}, we contemplate the following diagram
\[\begin{tikzcd}
	{(X/S)^{dR, c}} && {B_{X' \times \bA^1_+/\bG_m}(T_{X'/S}^{\sharp}\{1\})} && {\MIC^{c}_{[k, l]}(X/S)} && {\HIG^c_{[k, l]}(X'/S)} \\
	\\
	{(X/S)^{dR, \gamma, c}} && {B_{X' \times \bA^1_+/\bG_m}(T_{X'/S}\{1\})} && {\MIC^{\gamma, c}_{[k, l]}(X/S)} && {\HIG^{\gamma, c}_{[k, l]}(X'/S)}
	\arrow[from=1-1, to=3-1]
	\arrow[from=1-3, to=3-3]
	\arrow["\simeq"{description}, no head, from=3-1, to=3-3]
	\arrow["\simeq"{description}, from=3-5, to=1-5]
	\arrow["\simeq"{description, pos=0.1}, from=3-7, to=1-7]
	\arrow["\simeq"{description}, no head, from=3-7, to=3-5]
\end{tikzcd}\]
of $X'$-stacks, where $l-k < p.$  It gives rise to an equivalence $\MIC^c_{[k, l]}(X/S) \simeq \HIG^c_{[k, l]}(X'/S).$ In particular, for any $\cM \in \HIG_{\le p-1}(X'/S)$ one obtains $\Hig(\cM) \simeq \dR(C_f(\cM)) \in \cD_{qc}(X').$
\end{corollary}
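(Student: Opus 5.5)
The plan is to read the corollary off the diagram. There are two steps: produce a commutative square of $X'\times\bA^1_+/\bG_m$-stacks, then pass to weight-truncated categories of quasi-coherent sheaves.

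\textbf{Step 1: the bottom equivalence.} By \Cref{rem:splittings-push=lifts-to-witt}, the lift $\tilde{X}/W_2(S)$ gives a splitting of the $T_{X'/S}[1]$-torsor $(X/S)^{dR,\gamma}\to X'$. I want to extend this to a splitting of the filtered version $(X/S)^{dR,\gamma,c}\to X'\times\bA^1_+/\bG_m$, which is a torsor for $B(T_{X'/S}\{1\})$. The same deformation-theoretic argument should work. The ring stack underlying $\bG_a^{dR,c}$, pushed out along $\bG_a^\sharp\{1\}\to\bG_a\{1\}$, is a square-zero extension of $F_*\bG_a$ by $F_*\bG_a\{1\}[1]$. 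Viewed as a $W_2$-algebra stack, it is the split extension $F_*\bG_a\oplus F_*\bG_a\{1\}[1]$ over $\bA^1_+/\bG_m$. This is the graded analogue of \Cref{rem-two-maps} and \Cref{rem:obvious-splittig-not-R-linear}. Hence maps into $\tilde X$ over $W_2(S)$ factor through the projection, exactly as in the proof of \Cref{rem:splittings-push=lifts-to-witt}, and this gives a section $s^c_{\tilde X}$ restricting to $s_{\tilde X}$ at $t=1$. A section of a torsor for a group stack yields an equivalence with the trivial torsor, so
\[
(X/S)^{dR,\gamma,c}\simeq B_{X'\times\bA^1_+/\bG_m}(T_{X'/S}\{1\}).
\]
\textbf{Step 2: the top row.} I do not need an equivalence $(X/S)^{dR,c}\simeq B(T^\sharp\{1\})$; that would require a strong Frobenius lift. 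I only need the two vertical maps. These are the defining maps of pushout torsors along $T^\sharp\{1\}\to T\{1\}$, and they are compatible over $X'\times\bA^1_+/\bG_m$. So the square commutes, with the left column being the canonical map.

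\textbf{Step 3: weights.} Apply $\cD_{qc,[k,l]}$. By \Cref{gamma-and-dr-equiv-in-small-range}, the left vertical pullback is an equivalence $\MIC^{\gamma,c}_{[k,l]}\simeq\MIC^c_{[k,l]}$ when $l-k<p$. The right vertical pullback $B(T^\sharp\{1\})\to B(T\{1\})$ is the relative form of the same local computation, the graded $B\bG_a^\sharp\{1\}$ versus $B\bG_a\{1\}$ comparison done in Step 2 of that proof, so it is also an equivalence in this range; here $\HIG^{c}_{[k,l]}$ denotes these weight-$[k,l]$ subcategories of $\cD_{qc}$ of the two classifying stacks. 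Composing with the bottom equivalence gives $\MIC^c_{[k,l]}(X/S)\simeq\HIG^c_{[k,l]}(X'/S)$, taking $\HIG^{\gamma,c}\to\MIC^{\gamma,c}$ and then inverting the vertical arrows.

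\textbf{Step 4: cohomology comparison.} Take $\cM\in\HIG_{\le p-1}(X'/S)$. I view it on $B(T\{1\})$ as an object of weights $[0,p-1]$, via the filtration by the kernels of powers of the Higgs field. Its image $\cN\in\MIC^c_{[0,p-1]}$ then has fiber at $t=1$ equal to $C_{\tilde X}(\cM)$ with its conjugate-type filtration. Push $\cN$ forward to $X'\times\bA^1_+/\bG_m$; pushforward commutes with the equivalence because everything is over $X'\times\bA^1_+/\bG_m$. Restricting to $t=1$, the de Rham side gives $\dR(C_{\tilde X}(\cM))$ and the Higgs side gives $\Hig(\cM)$. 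The second identification uses that the pushforward from $B_{X'}(T)$ computes the Higgs complex for nilpotent modules of level $<p$: the Koszul complex agrees with the divided-power version in this range.

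\textbf{Main obstacle.} I expect two delicate points. The first is checking that pushforward along $\nu^{c}$ and along $B(T\{1\})\to X'$ preserves the weight conditions and matches under the vertical equivalences; I would verify this by reducing to $\bA^1$ as in \Cref{gamma-and-dr-equiv-in-small-range}. The second, and the more likely obstacle, is confirming that the split square-zero structure of Step 1 persists in families over $\bA^1_+/\bG_m$.
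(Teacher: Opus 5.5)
Your Steps 1--3 follow the paper's argument. The first assertion is read off the diagram by composing $\MIC^{c}_{[k,l]}\simeq\MIC^{\gamma,c}_{[k,l]}\simeq\HIG^{\gamma,c}_{[k,l]}\simeq\HIG^{c}_{[k,l]}$. The vertical equivalences come from \Cref{gamma-and-dr-equiv-in-small-range} and from the graded $B\bG_a^\sharp\{1\}$ versus $B\bG_a\{1\}$ computation in its proof.

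The paper simply asserts the bottom equivalence. Your split-extension argument is one way to justify it, but there is a quicker cohomological route that avoids building a ring-stack model for $(X/S)^{dR,\gamma,c}$:
\begin{itemize}
\item the gerbe is classified by a class in $H^2(X'\times\bA^1_+/\bG_m,T_{X'/S}\{1\})$;
\item restriction to $t=1$ is an isomorphism on $H^1$ and $H^2$ of $T_{X'/S}\{1\}$, since otherwise $(X/S)^{dR,\gamma}$ would always split;
\item hence splittings of the family correspond to splittings at $t=1$, in particular to the one given by $\tilde X$.
\end{itemize}
So Step 1 is not where the difficulty lies.

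The genuine gap is Step 4, and it has two parts.

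First, pushing forward from $B_{X'}(T_{X'/S})$ does \emph{not} compute the Higgs complex, even at level $0$. For $\cM=\cO_{X'}$ with zero Higgs field, the pushforward is $R\mathcal{H}om_{\hat\Gamma T_{X'/S}}(\cO_{X'},\cO_{X'})$.
\begin{itemize}
\item Its $\cH^1$ is already the sheaf of additive polynomial functions on $\bV(T_{X'/S})$. The linear ones give $\Omega^1_{X'/S}$, but all $p^i$-th powers appear as well.
\item It has nonzero cohomology in infinitely many degrees, coming from the generators $\gamma_{p^i}$ and the relations $\gamma_{p^i}^p=0$.
\item In the filtered picture these classes lie outside any weight window of width $<p$. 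Restricting to $t=1$ takes a colimit over all weights and brings them back.
\end{itemize}
The Higgs complex must be computed on the top row, from $B(T^\sharp_{X'/S}\{1\})$, where modules are over $\Sym T_{X'/S}$. The de Rham complex must be computed from $(X/S)^{dR,c}$.

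Second, your claim that pushforward commutes with the equivalence ``because everything is over $X'\times\bA^1_+/\bG_m$'' is unjustified.
\begin{itemize}
\item There is no map between the two top-row stacks, and the composite equivalence exists only between weight-window subcategories.
\item These subcategories are not stable under the twists $\cO\{j\}$ pulled back from $\bA^1_+/\bG_m$.
\item The $j$-th filtered piece of a pushforward is $\mathcal{H}om(\cO\{j\},-)$, up to sign conventions. So the window equivalence identifies only those filtered pieces for which $\cO\{j\}$ and the object lie in a common window of width $<p$.
\end{itemize}
For $\cM$ of level $\ell$ this gives a truncated comparison, in the spirit of Ogus--Vologodsky's $\tau^{<p-\ell}$ statement. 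It gives the full isomorphism only under a bound such as $\ell+\dim(X/S)<p$.

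The unrestricted claim cannot follow from a $W_2(S)$-lift alone. For $\cM=\cO_{X'}$ it would make $F_{X/S,*}\dR_{X/S}$ decomposable. This fails for some liftable smooth projective varieties of dimension $p+1$ \cite{petrov2023nondecomposabilityrhamcomplexnonsemisimplicity}. The ``in particular'' therefore has to be read with such a truncation.

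The statement also writes $C_f$, although no strong Frobenius lift is assumed. With a strong Frobenius lift, the untruncated isomorphism does hold, directly from $(X/S)^{dR}\simeq B_{X'}(T^\sharp_{X'/S})$.
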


\section{Appendix}

\subsection{Deformation theory}
We refer to \cite{MR491680} and recall some facts that will be useful for us.
\begin{remark}\label{def:of:schemes}
    Let $X/S$ be a flat lci family and $S \to \tilde{S}$ a square-zero deformation with an ideal $I \in \cD_{qc}^{\le 0}(S)$. Then the transitivity triangle for $X \xrightarrow{\pi_{X}} S \to \tilde{S}$ gives $\pi_{X}^*L_{S/\tilde{S}} \to L_{X/\tilde{S}} \to L_{X/S}$. Note it stays a fiber sequence after truncating $\tau_{\le 1}$ since $\cH^{-2}L_{X/S} =0.$ Moreover, $\tau_{\le 1}\pi_{X}^* L_{X/S} = \pi_{X}^*I[1].$ Thus, we obtain the boundary map $L_{X/S} \to \pi_{X}^*I[2]$. It is equal to $\ob_{X/S}$ the obstruction of deforming $X$ to $\tilde{S}.$ In other words $\ob_{X/S}$ is the obstruction of splitting of $L_{X/\tilde{S}} \to L_{X/S}.$
\end{remark}

\begin{remark}
    Let $X/S$ be a flat lci family. Since the ideal of $S \to W_2(S)$ is $F_{S, *}\cO_{S}$, we learn from \Cref{def:of:schemes} that the obstruction of deforming $X$ to $W_2(S)$ is given by $L_{X/S} \to \pi_{X}^*F_{S, *}\cO_S[2].$ Note $\pi_{X}^*F_{S, *}\cO_{S}=\nu_{X,*}\cO_{X'}$ by contemplating the cartesian diagram for $X',$ here $\nu_{X}: X' \to X$ is the canonical map. By adjunction and the base change for the cotangent complex, we note that $\ob_{X/W_2(S)}$ gives rise to a map $L_{X'/S} \to \cO_{X'}[2]$ whose vanishing is equivalent to vanishing of $\ob_{X/W_2(S)}.$
\end{remark}

\begin{remark}\label{def:thry}
    Let $S/\bF_p$ be a scheme with square-zero deformation $S \to \tilde{S}$ with an ideal $I \in \cD_{qc}^{\le 0}(S).$ Let $f: X \to Y$ be a map over $S.$ Assume we are given lifts $\tilde{X}, \tilde{Y}$ of $X, Y$ to $\tilde{S}.$ Let us recall the obstruction to lifting $f$ to a map $\tilde{f}: \tilde{X} \to \tilde{Y}.$ Consider the diagram
\[\begin{tikzcd}
	{f^*L_{Y/S}} && {L_{X/S}} \\
	\\
	{f^*L_{Y/\tilde{S}}} && {L_{X/\tilde{S}}} \\
	{f^*L_{Y/S}} && {L_{X/S}}
	\arrow["df"', from=1-1, to=1-3]
	\arrow["{f^*s_{\tilde{Y}}}", from=1-1, to=3-1]
	\arrow[curve={height=30pt}, equals, from=1-1, to=4-1]
	\arrow["{s_{\tilde{X}}}"', from=1-3, to=3-3]
	\arrow[curve={height=-30pt}, equals, from=1-3, to=4-3]
	\arrow["{df_{/\tilde{S}}}", from=3-1, to=3-3]
	\arrow["{f^*h_{Y/\tilde{S}}}", from=3-1, to=4-1]
	\arrow["{h_{X/\tilde{S}}}"', from=3-3, to=4-3]
	\arrow["df", from=4-1, to=4-3]
\end{tikzcd}\]
and the map $$\ob'_{f, \tilde{X}, \tilde{Y}} := s_{\tilde{X}} \circ df - df_{/\tilde{S}} \circ f^*s_{\tilde{Y}}: f^*L_{Y/S} \to L_{X/\tilde{S}}.$$ The homotopies $f^*h_{Y/\tilde{S}} \circ f^*s_{\tilde{Y}} \simeq \id$ and $h_{X/\tilde{S}} \circ s_{\tilde{X}} \simeq \id$ provide for us a null-homotopy of the composition $f^*L_{Y/S} \xrightarrow{\ob'_{f, \tilde{X}, \tilde{Y}}} L_{X/\tilde{S}} \xrightarrow{h_{X/\tilde{S}}} L_{X/S}$, therefore we get a map $$\ob_{f, \tilde{X}, \tilde{Y}}: f^*L_{Y/S} \to \fib(h_{X/\tilde{S}}) \simeq \pi_{X}^*L_{S/\tilde{S}}.$$ Assume $Y/S$ is lci. Since $L_{Y/S}$ is $1$-truncated, we obtain a map $f^*L_{Y/S} \to \tau_{\le 1}\pi_{X}^*L_{S/\tilde{S}} \simeq \pi_{X}^* I[1]$ which is the obstruction of lifting $f$ to a map $\tilde{f}: \tilde{X} \to \tilde{Y}.$ Note that the set of lifts of $Y$ to $\tilde{S}$ is a torsor for $\pi_0\Map_Y(L_{Y/S}, \pi_{Y}^*I[1]).$ In particular, if we replace $\tilde{Y}$ by another lift $\check{Y},$ the obstruction $\ob_{f, \tilde{X}, \tilde{Y}}$ changes by $f^* t_{\tilde{Y}, \check{Y}}: f^*L_{Y/S} \to \pi_{X}^*I[1],$ where $t_{\tilde{Y}, \check{Y}}: L_{Y/S} \to \pi_Y^*I[1]$ measures the difference between $\tilde{Y}$ and $\check{Y}.$
\end{remark}

\begin{remark}\label{rem:any-two-lifts-of-X-same-frobs}
    Let $\tilde{S}$ be a flat scheme over $\bZ/p^2$ and $S$ its special fiber. Let $X/S$ be an lci family and $\tilde{X'}$ a lift of $X'/S$ to $\tilde{S}.$ Taking $Y=X'$ and $f=F_{X/S}$ in \Cref{def:thry}, we learn that for a fixed lift of $X'$, the torsors of lifts of Frobenii to two different lifts $\tilde{X}, \tilde{\tilde{X}}$ are canonically identified. Denote $F_{X/S}=F$ for simplicity, then $\ob'_{F, \tilde{X}, \tilde{X'}} = -dF_{/\tilde{S}} \circ F^*s_{\tilde{X'}}$ which does not depend on a lift of $X$ to $\bZ/p^2.$ Moreover, after post-composing with $F^*h_{X'/\tilde{S}}$ we get $-dF \circ F^*h_{X'/\tilde{S}} \circ F^*s_{\tilde{X'}}$ which is canonically homotopic to $0$ since $dF=0.$
\end{remark}

\begin{remark}
    Let $X/S$ be a flat lci family with a lift $\tilde{X}/W_2(S)$. In particular, we obtain a lift of $X'$ to $W_2(S)$ as $W_2(S)$ has Frobenius. From \Cref{def:thry} we learn that the obstruction of deforming $F_{X/S}$ to a map $\tilde{F}_{X/S}: \tilde{X} \to \tilde{X}^{'}$ is governed by a map $F_{X/S}^*L_{X'/S} \to \nu_* \cO_{X'}[1]$. 
\end{remark}

\subsection{Relative delta-rings.}
We refer to \cite[\S 2.1]{MR4157014} for a discussion of relative $\delta$-structures, and recall the basic definitions here. Let $A$ be a $\delta$-ring.
\begin{definition}
    A $\delta$-structure on an $A$-algebra $B$ is an $A$-linear section of $R:W_2(B) \to B$. Here the $A$-algebra structure on $W_2(B)$ is given by $A \to W_2(A) \to W_2(B)$ where the first map comes from the $\delta$-structure on $A.$
\end{definition}

\begin{lemma}
    Let $B$ be a $\delta-A$-algebra. Then $B \to W_2(B) \xrightarrow{F} B$ defines a lift of Frobenius which is compatible with the Frobenius on $A.$ 
\end{lemma}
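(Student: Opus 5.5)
We need to show: if $B$ is a $\delta$-$A$-algebra, i.e. an $A$-linear section $s\colon B\to W_2(B)$ of $R$, then $\phi_B := F\circ s\colon B\to B$ is a Frobenius lift compatible with $\phi_A$. Concretely, this means $\phi_B$ is a ring map, $\phi_B(b)\equiv b^p \bmod p$, and $\phi_B\circ g=g\circ\phi_A$, where $g\colon A\to B$ is the structure map.

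The plan is to check these three properties directly from the structure of $W_2$.

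First, $\phi_B$ is a ring homomorphism. This is immediate, because $s$ is a ring map by definition and $F\colon W_2(B)\to B$ is the second ghost component $(x_0,x_1)\mapsto x_0^p+px_1$, which is a ring map.

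Second, the Frobenius lift property. Since $R\circ s=\id$, we can write $s(b)=(b,\delta(b))$ for some function $\delta\colon B\to B$. Then
\[
\phi_B(b)=F(b,\delta(b))=b^p+p\,\delta(b)\equiv b^p \pmod p.
\]

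Third, compatibility with $\phi_A$. Let $s_A\colon A\to W_2(A)$ be the section coming from the $\delta$-structure on $A$, so that $s_A(a)=(a,\delta_A(a))$ and $F\circ s_A=\phi_A$ by the same computation as above. The $A$-linearity of $s$ means precisely
\[
s\circ g \;=\; W_2(g)\circ s_A .
\]
Applying $F$ to both sides and using naturality of the ghost map, $F\circ W_2(g)=g\circ F$, gives
\[
\phi_B\circ g \;=\; F\circ W_2(g)\circ s_A \;=\; g\circ F\circ s_A \;=\; g\circ\phi_A .
\]

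None of these steps is a genuine obstacle; the whole argument is an unwinding of the definitions. The only point needing care is the convention fixed in the definition: the $A$-algebra structure on $W_2(B)$ goes through $s_A$, rather than through some other lift such as a Teichmüller-type map. This convention is exactly what makes the naturality argument in the third step work. If $B$ has $p$-torsion, $\delta$ need not be unique, but $\phi_B$ is still well defined as the composite $F\circ s$, so nothing changes.
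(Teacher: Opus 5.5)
Your proof is correct and is essentially the paper's argument. The paper checks only your third step: the structure map $A\to W_2(A)\to W_2(B)$ sends $a$ to $(a,\delta_a)$, and $F(a,\delta_a)=a^p+p\delta_a$, which is exactly what your naturality identity $F\circ W_2(g)=g\circ F$ expresses, while the ring-map and mod-$p$ properties you verify are left implicit there.
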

\begin{proof}
    It is enough to check that $F:W_2(B) \to B$ is a map over $F_A: A \to A.$ Recall the map $A \to W_2(B)$ is given by $A \to W_2(A) \to W_2(B).$ Thus, it sends $a$ to $(a, \delta_a)$. Since the Frobenius on $A$ is given by $a^p+p \delta_a,$ the claim follows.
\end{proof}

\begin{remark}
    One obtains a category $\CAlg_{A, \delta}$ of $\delta-A$-algebras. The forgetful functor to $\CAlg_{A}$ admits a right adjoint which is given by $B \mapsto W(B).$ Indeed, let $R \to B$ be a map of $A$-algebras with $B \in \CAlg_{A, \delta}.$ Since $B$ is a $\delta$-ring, we obtain $R \to W(B).$ It is easy to see that it is a map of $\delta-A$-algebras.
\end{remark}

\begin{remark}
    Let $f:B \to C$ be a map in $\CAlg_{A, \delta}.$ Then the composition $B \xrightarrow{f} C \xrightarrow{w_C} W(C)$ is equal to $B \xrightarrow{w_B} W(B) \xrightarrow{W(f)} W(C)$
\end{remark}

\begin{lemma}\label{frob-lifts-append}
    Let $R \to S$ be a map of commutative $\bF_p$-algebras. Let $\tilde{S}$ be a flat $W_2(R)$-algebra lifting $R \to S.$ Giving a $W_2(R)$-semilinear morphism $F: \tilde{S} \to \tilde{S}$ lifting $F_S: S \to S$ is equivalent to giving a $W_2(R)$-semilinear map $\tilde{S} \to W_2(S')$.
\end{lemma}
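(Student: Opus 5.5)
We argue by the universal property of the ring of $p$-typical length-two Witt vectors $W_2(S')$, here $W_2(S')=W_2(S\otimes_{R,F_R}R)$. The key input is that $W_2$ is right adjoint to the forgetful functor from rings with a Frobenius lift, in the following truncated form. For a $p$-torsion-free (or, more generally, flat over $W_2(R)$) ring $\tilde S$ over $\bZ/p^2$, ring maps $\tilde S\to W_2(B)$ to an $\bF_p$-algebra $B$ correspond to pairs $(g,\tilde F)$. Here $g\colon\tilde S\to B$ is the restriction map, and $\tilde F$ encodes the ghost component: the map $w_1=F\colon W_2(B)\to B$ composed with $\tilde S\to W_2(B)$ gives a map $\tilde S\to B$ congruent to $g^p$, together with the datum of ``$\delta$'', i.e. $\tilde F(x)=g(x)^p+p\delta(x)$ in a $\bZ/p^2$-lift. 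Concretely, since $W_2(B)$ sits in the fibre square $W_2(B)\simeq B\times_{\bG_a^{dR,\gamma}(B)}B$ of \Cref{witt-vectors}, a map $\tilde S\to W_2(B)$ is a pair of maps to $B$ plus a homotopy in $\bG_a^{dR,\gamma}(B)=B\otimes^L\bF_p$.

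\emph{Forward direction.} Given a semilinear Frobenius lift $F\colon\tilde S\to\tilde S$, we write $F(x)=x^p+p\delta(x)$. This $\delta(x)$ is well defined modulo $p$, i.e. as an element of $S$, because $\tilde S$ is flat over $W_2(R)$, so that $p\tilde S\simeq \tilde S/p=S$ (flatness gives $p\colon \tilde S/p\xrightarrow{\sim} p\tilde S$). We then define $\tilde S\to W_2(S)$ by $x\mapsto(\bar x,\delta(x))$. The Witt addition and multiplication polynomials are exactly the identities satisfied by $\delta$, which one checks as in the usual $\delta$-ring argument. Next we examine how the $W_2(R)$-semilinearity of $F$ translates. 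The map sends the image of $r\in W_2(R)$ to the Witt vector obtained from $F_{W_2(R)}$. Hence it is $W_2(R)$-linear once $W_2(S)$ is given the $W_2(R)$-structure through $W_2$ of $R\xrightarrow{F_R}R\to S$, which is the same as a $W_2(R)$-linear map $\tilde S\to W_2(S')$, or equivalently a semilinear map to $W_2(S)$, after base changing $S$ along $F_R$. To make this precise we use the map $S'\to S$ (the relative Frobenius) and absorb the twist: a semilinear map is by definition a linear map into the Frobenius-twisted target.

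\emph{Inverse direction.} Given $h\colon\tilde S\to W_2(S')$, we compose with $F\colon W_2(S')\to W_2(S')\to$ (ghost map) and lift. The map $\tilde S\xrightarrow{h}W_2(S')\to W_2(S)\xrightarrow{w_1}$ lands in $S$ only mod $p$, so we must instead produce a map to $\tilde S$. The cleanest route is to use that $\tilde S\to S$ is a square-zero extension by $S'$ over $W_2(R)$, and that the datum $(\bar x,\delta)$ determines $x^p+p\tilde\delta$ in $\tilde S$ independently of the chosen lift $\tilde\delta$, since $p\cdot(\text{ideal})=0$. This defines $F$, and the two constructions are visibly inverse.

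The main obstacle is bookkeeping of the semilinear structures, namely matching the Frobenius twist $S'$ against $W_2(R)$-semilinearity, and verifying that $x\mapsto(\bar x,\delta(x))$ respects the $W_2(R)$-action, including the $V$-part $V[r]\mapsto p[r]$ as in the proof of \Cref{A1-split-scheme-lift-frob}. I would check this on generators $[r]$ and $V[r]$ of $W_2(R)$.
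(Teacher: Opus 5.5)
Your proposal has a genuine gap at its first step. You write $F(x)=x^p+p\,\delta(x)$ with $\delta(x)\in S$, justified by $p\tilde S\simeq\tilde S/p=S$. This is false in general.

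Tensoring $0\to V(R)\to W_2(R)\to R\to 0$ with the flat $W_2(R)$-module $\tilde S$ gives an injective map $i$ identifying $\ker(\tilde S\to S)$ with $\tilde S\otimes_{W_2(R)}V(R)\cong S\otimes_{R,F_R}R=S'$. But $p=V(1)$ in $W_2(R)$, so $p\tilde S$ is only $i(\nu_S(S))$, where $\nu_S\colon S\to S'$ is $s\mapsto s\otimes 1$. Similarly, $\tilde S/p=\tilde S\otimes_{W_2(R)}W_2(R)/V(R^p)$ is in general strictly bigger than $S$.

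Here is a concrete failure. Take $R=\bF_p[t]$, $\tilde S=W_2(R)[x]$, and the semilinear lift with $F(x)=x^p+V(t)$. Then $F(x)-x^p=V(t)$ does not lie in $p\tilde S=V(R^p)[x]$. So no $\delta(x)\in S$ exists, and your map $x\mapsto(\bar x,\delta(x))\in W_2(S)$ is undefined.

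The later step of ``absorbing the twist'' cannot repair this. The second Witt component genuinely lives in $S'$ and need not lie in $\nu_S(S)$. Also, the zeroth component must be $\nu_S(\bar x)$, not $\bar x$. Your inverse direction has the same defect: $x^p+p\tilde\delta$ only produces corrections lying in $i(\nu_S(S))$.

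The fix, which is what the paper does, is as follows. Write $F(a)=a^p+i(\delta_a)$, where $\delta_a\in S'$ is unique because $i$ is injective. Set $f(a)=(\nu_S(\bar a),\delta_a)\in W_2(S')$. Conversely, given $f$, define $F(a)=a^p+i(\delta_a)$ with $\delta_a$ the second component of $f(a)$.

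The only real input for the homomorphism checks is the identity $p\cdot\mathrm{id}_{\tilde S}=i\circ\nu_S\circ\operatorname{pr}$. It comes from tensoring the identity $p=V\circ F\circ\mathrm{Res}$ on $W_2(R)$ with $\tilde S$. This identity rewrites the cross terms $\binom pk a^kb^{p-k}$ of $(a+b)^p$ as $i\bigl(\nu_S(\tfrac1p\binom pk\bar a^k\bar b^{p-k})\bigr)$. That is exactly the correction term in the Witt addition law of $W_2(S')$. Multiplicativity only needs $i(S')^2=0$.

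The fact $p\cdot i(S')=0$, which you invoke for independence of the lift $\tilde\delta$, does not address either point. Once $\delta_a$ is placed correctly in $S'$, your semilinearity bookkeeping goes through: the restriction of $f$ to $W_2(R)$ is $F_{W_2(R)}$ followed by $W_2(R)\to W_2(S')$.
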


\begin{proof}
    We have $0 \to S' \xrightarrow{i} \tilde{S} \to S \to 0$. Given $F$, note $F(a)=a^p+i(\delta_a)$ for unique $\delta_a \in S'.$ In particular, $F$ is determined by the map $\delta:\tilde{S} \to S'$. The map $f: \tilde{S} \to W_2(S')$ defined by $f(a)=(\nu_S(\bar{a}), \delta_a)$ is a homomorphism. In other direction, if $f: \tilde{S} \to W_2(S')$ is such a map, then define $F: \tilde{S} \to \tilde{S}$ to be $F(a)=a^p+i(\delta_a)$. The main input into showing it is a homomorphism is that the composition $\tilde{S} \to S \xrightarrow{\nu_S} S' \xrightarrow{i} \tilde{S}$ is equal to multiplication by $p.$ Indeed, this composition is obtained from tensoring $W_2(R) \to R \xrightarrow{F} F_*R \xrightarrow{V} W_2(R)$ with $\tilde{S}.$
\end{proof}

\printbibliography
\end{document}